\documentclass[11pt, oneside, a4paper]{amsart}
 \usepackage{geometry}
\usepackage{hyperref}
\usepackage[utf8]{inputenc}
\usepackage{enumerate}
\usepackage[english]{babel}
\usepackage[T1]{fontenc}
\usepackage{ tipa }
\usepackage[utf8]{inputenc}
\usepackage{amsmath, amssymb,amsthm}
\usepackage{array}
\usepackage{graphicx}
\usepackage{float}
\usepackage[normalem]{ulem} % 导入 ulem 宏包并保留原来的 \emph 命令
\usepackage{caption,subcaption}
\usepackage{tikz}
\usepackage{tikz-cd}
\usetikzlibrary{graphs}
\usepackage{ upgreek }
\usetikzlibrary{matrix,arrows,decorations.pathmorphing}
\usepackage{xfrac} 
\usepackage{mathrsfs}
\usepackage{physics}
\usepackage{faktor}

\makeatletter
\newcommand*{\rom}[1]{\expandafter\@slowromancap\romannumeral #1@}
\makeatother

\usepackage{graphicx}
\newtheorem{thm}{Theorem}[section]
\newtheorem{lemma}[thm]{Lemma}
\newtheorem{example}[thm]{Example}
\newtheorem{rmk}[thm]{Remark}
\newtheorem{defi}[thm]{Definition}
\newtheorem{cor}[thm]{Corollary}
\newtheorem{prop}[thm]{Proposition}

\newtheorem{ThmInt}{Theorem}

\newtheorem*{DefInt}{Definition}
\newtheorem*{ExInt}{Example}

\def\A{\mathbb{A}}

\def\C{\mathbb{C}}
\def\D{\mathbb{D}}

\def\H{\mathbb{H}}

\def\L{\mathbb{L}}

\def\N{\mathbb{N}}

\def\P{\mathbb{P}}

\def\R{\mathbb{R}}

\def\Z{\mathbb{Z}}

\def\la{\lambda}
\def\La{\Lambda}
\def\om{\omega}

\def\P{\mathbb{P}}

\def\cC{\mathcal{C}}

\def\cE{\mathcal{E}}

\def\cO{\mathcal{O}}

\def\cR{\mathcal{R}}

\def\sH{\mathscr{H}}

\def\eps{\varepsilon}

\def\na{\mathrm{na}}
\def\tk{\tilde{k}}
\def\xg{x_g}

\DeclareMathOperator{\FL}{FL}

\DeclareMathOperator{\eva}{ev}

\DeclareMathOperator{\car}{char}

\DeclareMathOperator{\an}{an}

\DeclareMathOperator{\Rat}{Rat}
\DeclareMathOperator{\ovRat}{\overline{Rat}}
\DeclareMathOperator{\rat}{rat}

\DeclareMathOperator{\res}{\mathrm{res}}

\DeclareMathOperator{\spec}{\mathrm{Spec}}
\DeclareMathOperator{\PGL}{\mathrm{PGL}}
\DeclareMathOperator{\PSL}{\mathrm{PSL}}

\DeclareMathOperator{\id}{\mathrm{id}}
\DeclareMathOperator{\Per}{\mathrm{Per}}

\usepackage{mathtools}

\title{Generalized rescaling limits of a sequence of rational maps}

\author{Charles Favre}
\address{CNRS - Centre de Math\'ematiques Laurent Schwartz, 
	\'Ecole Polytechnique, 
	91128 Palaiseau Cedex, France}
\email{\href{charles.favre@polytechnique.edu}{charles.favre@polytechnique.edu}}

\author{Chen Gong}
\address{Institute for Theoretical Sciences of Westlake University,  Westlake University, 310030 Hangzhou, China}
\email{\href{gongchen@westlake.edu.cn}{gongchen@westlake.edu.cn}}

\date{\today}
\thanks{C.G. is  supported by a CSC-202108070159 grant  from the chinese government.}

\begin{document}

\begin{abstract}
We consider a sequence  of complex rational maps $(f_n)$ of a fixed degree $d\ge2$. 
Building on the seminal work of Kiwi, we introduce the notion of generalized rescaling limits.
These are rational maps possibly defined over a non-Archimedean field
obtained by renormalizing at some scale a fixed iterate of the sequence $(f_n)$. 
We explain that the set of all generalized rescaling limits is naturally 
organized as a tree, and bound the size of this tree in term of the degree $d$. 

We apply our theory to quadratic rational maps. Using Kiwi's classification, we describe all 
possible trees in this case, and prove a uniform bound on the number of cycles with 
small multipliers. 
\end{abstract} 

\maketitle

\setcounter{tocdepth}{1}
\setcounter{secnumdepth}{4}

\tableofcontents

%\newpage 

\section*{Introduction}

Let $(f_n)$ be  a sequence of complex rational maps of degree $d\ge2$ that degenerates
in the moduli space in the sense that there is no sequence of Möbius transformations $M_n$ for which
$g_n:= M_n\cdot f_n\cdot M_n^{-1}$ converges to a degree $d$ complex rational map. 
 This condition does not prevent $g_n$ nor any of its iterate to converge to a complex rational map of lower degree, in which case interesting
 informations can be extracted on the dynamics of $f_n$ for $n$ large from the dynamical properties of $g$.
 Such limits were first studied by Stimson~\cite{ST93} and Epstein~\cite{Ep00} in the case of quadratic rational maps. The term rescaling limits was subsequently coined by Kiwi in his seminal work~\cite{zbMATH06455745KJ15} in which he proved the finiteness of non post-critically finite
(PCF) rescaling limits.
Applications of this theory were found by Nie--Pilgrim~\cite{NP20,NP22} who proved the boundedness of certain hyperbolic components.  More recently, Kiwi~\cite{Ki25} provided an estimate for the genus of the curve of quadratic rational maps with a periodic critical point.

In a dual perspective, Luo~\cite{luo2021trees,YLuo22} explained how to naturally attach to $(f_n)$ a limiting degree $d$ rational map defined over a complete non-Archimedean metrized field. This generalizes
the pioneering work of Kiwi~\cite{Ki06} which was further exploited and extended to other situations by 
many authors including~\cite{
DMM08,DF14,DF16,zbMATH07219254CFdegeneration,YLuo22}: in  case $(f_t)$ is a holomorphic family of rational maps
parameterized by the punctured unit disk which is meromorphic at $t=0$, and $f_n=f_{t_n}$ for a sequence $t_n\to 0$, then 
this limit is the map naturally induced by the 
family over the non-Archimedean field $\C((t))$ endowed with the $t$-adic norm.

Luo's method uses ultralimits of the hyperbolic $3$-space, following the approach developed by Bestvina~\cite{Bes88} and  Paulin~\cite{Pau88} 
in the context of Kleinian groups. He deduced from his construction  the appearance of monomial rescaling limits under suitable conditions, and 
gave a characterization of hyperbolic components containing a diverging sequence with bounded multipliers.
The authors~\cite{FG25} and more recently Poineau~\cite{poi25} and Roy~\cite{Roy25} revisited his construction using Berkovich techniques in the spirit of Morgan-Shalen approach~\cite{MS85} allowing for more flexibility. Note that 
these degeneration techniques have already proven to be a powerful tool in addressing uniformity problems in arithmetic dynamics, see, e.g.,~\cite{DHY20,poi25b,Yap25} and Diophantine geometry, see~\cite{LY26}. We also refer the reader to~\cite{Hul26} for a similar approach from the perspective of model theory.

Our aim is to define a notion of $g$-rescaling limit\footnote{$g$ stands for generalized} encompassing all these limiting procedures, 
and to develop a general framework in order to take into account 
all possible limits of the iterates of $(f_n)$ whether defined over an Archimedean or  a non-Archimedean field. 
We  prove that these limits can be naturally organized into a simplicial tree.
\[\diamond\]
Let us now explain in more details our main results.  

The first difficulty to overcome is that $(f_n)$ could be dense in the space $\Rat_d$  of rational map of degree $d$. It is thus necessary to extract a subsequence in order to have a chance to get limits.  Furthermore, we want the limit to capture non trivial dynamical contents.  In order to do so, one needs  to extract a subsequence of $(f_n)$
that converges \emph{simultaneously} in all projective compactifications of $\Rat_d$. 
Ultrafilters turn out to be the adequate tool to canonically extract converging subsequences in all compact set at the same time in a compatible way, see~\cite[\S{3}]{Gol22}.

We thus shall fix once and for all a non-principal ultrafilter $\om$ of the set of integers. It is an element of the power set of $\N$ whose elements are called $\om$-big, and
allows one to take ultralimits.
In particular, the sequence $(f_n)$ has a limit along $\om$ in \emph{any} projective compactification of $\Rat_d$. 

  A crucial aspect of our theory is the analysis of the degeneration of $(f_n)$ at multiple scales. This fact was noticed by Luo, see~\cite[Remark, p.3]{luo2021trees} and the discussion on~\cite[p.3805]{Luo24}, and naturally appears in studying degenerations of Riemann surfaces, see~\cite{BCGG24,NO22,NO25}.
  
  Formally, a scale\footnote{a similar but more restrictive notion is used in the recent paper~\cite{Hel25}} is determined by 
a bounded sequence of positive real numbers, and we identify two sequences $\eps_n, \eta_n$ when the limit along $\om$ of their
quotients $\eps_n/\eta_n$ is finite and non-zero. The set of scales is naturally totally ordered so that for instance
$(e^{-n}) < (1/n) < (1/\log n) < (1)$. It admits a unique maximal element $(1)$, which we refer to as the trivial scale. 
It is convenient to include the sequence $(0)$ as an extended scale which becomes a minimum among all scales. 

An (extended) scale $\eps$ determines an algebraically closed field $\sH_\eps$ defined by a variation on the construction of ultraproducts (see~\cite{zbMATH05768468} for a formal definition of this notion  and its extensive applications to commutative algebra).
By definition, $\sH_\eps$ is the quotient of the set of sequences of complex numbers $(z_n)$ such that $\lim_\om |z_n|^{\eps_n} <\infty$
by the ideal of sequences such that $\lim_\om |z_n|^{\eps_n} =0$. The function $|(z_n)|:= \lim_\om |z_n|^{\eps_n}$ defines a norm
for which $\sH_\eps$ is complete. 
When $\eps=(1)$, then $\sH_\eps$ is isomorphic to $\C$; when $(0) < \eps < (1)$, then $\sH_\eps$ is a spherically complete non-Archimedean field;
and when $\eps =(0)$, then it is a trivially valued complex analog of a model of the Robinson field of hyperreals. 

Let us write $f_n=[P_n:Q_n]$ in projective coordinates where $P_n, Q_n$ are homogeneous polynomials normalized so that the maximum of their coefficients have norm $1$.  
Given a scale $\eps$, each coefficient of $P_n$ and $Q_n$  determines an element in $\sH_\eps$, and 
in this way we get  a limit $f_\eps$ which is a (possibly constant) rational map of degree $\le d$  defined over $\sH_\eps$. 
We can now define a workable notion of $g$-rescaling. 

\begin{DefInt}
A restricted $g$-rescaling adap\-ted to $(f_n)$ is a pair of a sequence of Möbius transformations $M=(M_n)$ and a scale $\eps>(0)$ such that 
$(M \cdot f \cdot M^{-1})_\eps$ is a rational map of positive topological entropy. 
\end{DefInt}

When $\eps =(1)$,  positive entropy is equivalent to have degree at least $2$. When $(0) < \eps < (1)$, the condition
is stronger and is equivalent to say that the map has degree $\ge 2$ and does not have potential good reduction, see~\cite{CRL26}. 
A rescaling in the sense of Kiwi is a restricted $g$-rescaling with $\eps = (1)$. 
We shall say that two restricted $g$-rescalings $(M,\eps)$ and $(L,\eta)$ are equivalent iff $\eps= \eta$ and $(L^{-1}\cdot M)_\eps$
has degree $1$.

It is also important to consider $g$-rescalings possibly having degree $1$ limits, 
but the definition is more technical.  We refer to \S\ref{sec:g-rescaling} for the general definition.

With this terminology, we prove the basic fact that there exists a \emph{unique} adapted $g$-rescaling whose limit has degree $d$. 
We call it the fundamental $g$-rescaling of $(f_n)$. One can obtain it either through Luo's construction or
using Berkovich theory~\cite{FG25,Roy25}. The associated scale is obtained  by minimizing the normalized resultant
of $f_n$ on its $\PGL(2)$-orbit in $\Rat_d$, see \S\ref{sec:funda-scale}.

The following example features a fundamental rescaling, a Kiwi rescaling, and a $g$-rescaling that belongs to neither category, see Example~\ref{eg: ezchain} for details.

\begin{ExInt}
Consider $f_n(z)=e^{-n}z^4+n^{-1}z^3+z^2$. Here, $(\mathrm{id}, (1/n))$ is the fundamental rescaling and $(\mathrm{id}, (1))$ is a Kiwi rescaling whose limit is $z^2$. In contrast, $(\mathrm{id}, (1/\log n))$ is a $g$-rescaling that is neither fundamental nor Kiwi: its limit is a degree $3$ polynomial $a z^3+z^2$ defined over a non-Archimedean field with $|a|<1$.
\end{ExInt}

To have a full-fledged theory of $g$-rescalings, it is important to include $g$-rescalings adapted to iterates of $f_n$ which leads to the notion
of cycles of $g$-rescalings adapted to $(f_n)$. Such a  cycle is determined by a scale $\eps$, and a sequence of $g$-rescalings
$(M_{i,n}, \eps)$ which is periodic of period 
$l$
so that 
$M_{i+l,n}=M_{i,n}$
for all $i,n$. They are required to  satisfy the following conditions for all $i$:
\begin{itemize}
\item
$(M_{i+1} \cdot f \cdot M_i^{-1})_\eps$ has degree at least $1$;
\item
$(M_i,\eps)$ is adapted to 
$(f_n^l)$.
\end{itemize} 
We refer to \S\ref{sec: cycles} for more details. We call $(M_i \cdot f^l \cdot M_i^{-1})_\eps$ the (rescaling) limit associated with the cycle
even though it might depend on $i$. Note that all limits are semi-conjugated, therefore share many dynamical properties, e.g.,
being PCF.

We denote by $\cR_+(f)$ the set of equivalence classes of cycles of restricted $g$-rescalings adapted to $(f_n)$, and  endow $\cR_+(f)$ with a partial order $(M_*,\eps) \le (L_*, \eta)$
iff $\eps \le \eta$ and $(M_*,\eps)$ is equivalent to $(L_*, \eps)$. 
Shifting indices \[\sigma_f(M_*,\eps)=(M_{*+1},\eps)\] determines a bijective map on $\cR_+(f)$
for which every point has a finite orbit, and preserving the partial order.

\smallskip

The next theorem summarizes the main properties of the set $\cR_+(f)$.

\begin{ThmInt}~\label{thmint:1}
\begin{enumerate}
\item
The poset $\cR_+(f)$ is a simplicial tree having the fundamental rescaling as its unique minimal element. 
\item
Any increasing interval of $\cR_+(f)$ is of cardinality at most $2d-1$. 
\item
The set $\cR'_+(f)$ of cycles of $g$-rescalings 
whose rescaling limits are
not PCF  forms a finite subtree of $\cR_+(f)$. The set of $\sigma_f$-orbits of $\cR'_+(f)$ is finite of cardinality is bounded from above by a constant depending only on $d$.  
\end{enumerate}
\end{ThmInt}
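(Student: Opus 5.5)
We argue through Berkovich geometry over the fields $\sH_\eps$. We identify a cycle of restricted $g$-rescalings $(M_*,\eps)$ with its projection to the fundamental scale $\eps_0$ of the fundamental rescaling $(M^0,\eps_0)$. Writing $F:=(M^0\cdot f\cdot (M^0)^{-1})_{\eps_0}$, a map of degree $d$ over the spherically complete field $\sH_{\eps_0}$, each sequence $M_n$ with $\eps>\eps_0$ determines a point $\xi_M\in\P^{1,\an}_{\sH_{\eps_0}}$, or more precisely a type II/III point together with a rescaling parameter. This is the point where the ball $M_n^{-1}(\D)$, measured at scale $\eps$, sits. The condition that $(M_{i+1}\cdot f\cdot M_i^{-1})_\eps$ has degree $\ge1$ translates into $F(\xi_{M_i})=\xi_{M_{i+1}}$. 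Adaptedness to $f^l$ says that $\xi_{M_0}$ is a periodic point of $F$ of period $l$ whose reduction map at the appropriate scale has positive entropy.

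\textbf{Step 1: order and tree structure.} I would first show the following monotonicity. If $(M,\eps)$ is a restricted $g$-rescaling and $\eps_0\le\eta\le\eps$, then $(M,\eta)$ is adapted, and in fact non-restricted adapted for the general definition of \S\ref{sec:g-rescaling}. This gives the chain from the fundamental rescaling up to any element. Since equivalence at scale $\eta$ is coarser than equivalence at scale $\eps$ whenever $\eta\le\eps$, the set of elements below a given one is totally ordered. Uniqueness of the minimal element then follows from uniqueness of the fundamental $g$-rescaling, which I take as established. These two facts give a rooted tree. Simpliciality, meaning that intervals are finite, follows from Step 2.

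\textbf{Step 2: the bound $2d-1$.} Along an increasing chain $(M,\eps_1)<\dots<(M,\eps_k)$, each limit $(M\cdot f^l\cdot M^{-1})_{\eps_j}$ has positive entropy. I expect the degree of these limits to be nonincreasing in $j$. At each jump in scale, something must degenerate: either the degree strictly drops, or the limit has potential good reduction at the next scale. The latter would contradict positive entropy, unless a critical point separates from the others, that is, unless the configuration of the $2l(d-1)$ critical points of $f^l$ changes.

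The real count should be done for $f$ rather than $f^l$, using the orbit of points $\xi_{M_i}$. Between consecutive scales in the chain, some critical point of $f$ (up to $2d-2$ of them) must leave the relevant ball along the cycle. This gives at most $2d-2$ strict increments, hence at most $2d-1$ elements.

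This is the main obstacle. We must make precise an invariant, such as the number of critical points of $f$ lying in the union of the balls $M_{i,n}^{-1}(\D)$ at scale $\eps$, and prove that it strictly decreases at each step while remaining positive. Positivity holds because positive entropy forces at least one critical point in the cycle. I would try a Riemann–Hurwitz count on the directions at $\xi_{M_i}$, applied to the local degrees.

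\textbf{Step 3: non-PCF part.} A non-PCF limit has a critical point with infinite orbit. Combining Step 2 with Kiwi's finiteness argument for non-PCF rescaling limits, each $\sigma_f$-orbit in $\cR'_+(f)$ captures a critical point of $f$ whose orbit is "free" at that scale. Cycles with disjoint periodic orbits must capture distinct critical points; this is the standard Kiwi-type count, giving at most $2d-2$ such orbits for each level of the chain. The set $\cR'_+(f)$ is closed downward because a non-PCF limit forces the lower-scale limit to be non-PCF via semiconjugacy. So $\cR'_+(f)$ is a subtree. Combining with the depth bound $2d-1$, the number of $\sigma_f$-orbits is bounded by a constant depending only on $d$, roughly $(2d-2)(2d-1)$. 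Finiteness of the subtree itself then follows because each orbit is finite.
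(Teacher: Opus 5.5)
Your Step 2 is the heart of statement (2), and it is not just unproven: the invariant you propose does not strictly decrease. Take Kiwi's example of Case (3b) in \S\ref{sec: QR4}. The chain $(\eps_\star,2)<(\eta,2)$ lies in $\cR_+(f)$, and the critical points of $f$ are $0$ and $2a/(1+a^2)$. In the coordinate $M_0(z)=a^{-1}z$ of the level-one cycle they become $0$ and $2$, which are exactly the two critical points of the level-one limit $b_\eta+z^2/(z-1)$ of $f^2$. So no critical point of $f$ leaves along the cycle, and the largest single-step degree stays equal to $2$; what changes is the period, from $1$ to $2$. The degree of the limit of $f^l$ does drop strictly at each step (Theorem~\ref{thm:chain}), but that only bounds chains by a constant depending on $l$.

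The paper gets a uniform bound by a completely different, algebraic route (Theorem~\ref{thm:boundscales}). After normalizing so that $\eps(f)=\eps_\star(f)$, the coefficients of $f$ generate a field whose algebraic closure $K\subset\sH_0$ has transcendence degree at most $2d-2$. Lemma~\ref{lem:special} shows that, up to equivalence, every $g$-rescaling adapted to an iterate has its limit defined over the image of $K\cap B[0]$. At each non-trivial adapted scale $\eps$ this yields an element of $K$ lying in $R^+_\eps\setminus R^-_\eps$: a multiplier in degree $1$, or a cross-ratio of four periodic points in degree $\ge2$. Abhyankar's inequality (Theorem~\ref{thm:numbervaluationring}) then bounds the number of such scales by $2d-2$. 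Since scales strictly increase along a chain, this gives (2), and it is also the depth bound your Step 3 needs.

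Your Step 1 also rests on a false claim: $(M,\eta)$ is \emph{not} adapted for every $\eps_0\le\eta\le\eps$. In Example~\ref{eg: ezchain}, $(\mathrm{id},(1))$ is a restricted $g$-rescaling above the fundamental one at $(1/n)$. But at $\eta=(1/\sqrt n)$ the limit is $bz^3+z^2$ with $|b|=1$, which has good reduction, so (A2) fails. In general, Theorem~\ref{thm:chain} shows that only finitely many scales below $\eps$ are adapted for $M$, and the limit has good reduction at all other scales in $(\eps_0,\eps)$. Your claim would in fact contradict the finiteness you want from Step 2.

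Your observation that equivalence coarsens as the scale decreases, so down-sets are totally ordered, is correct. For the root, however, you need the full statement of Theorem~\ref{thm:funda}: any adapted $(M,\eta)$ has $\eta\ge\eps_\star(f)$, and $(M,\eps_\star(f))$ is equivalent to the fundamental rescaling. This must be applied to $f^l$, using $\eps_\star(f^l)=\eps_\star(f)$ (Corollary~\ref{cor:iterate}) and Lemma~\ref{lem:equivcycle}.

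Step 3 has the right architecture. You attach a critical point of $f$ to each non-PCF vertex; a critical point attached to two vertices forces them to be comparable, so anti-chains have at most $2d-2$ elements. But downward closure is not a semiconjugacy statement. The higher limit is semiconjugate to a tangent map $T_xg^p$ of the lower limit $g$, not to $g$ itself. When a critical direction is eventually bad, you need Kiwi's lemma to find a critical point of $g$ in that ball with the right orbit. This is Lemma~\ref{lem:assocriti}, which must then be pushed down to a critical point of $f$ over $\sH_0$ (Proposition~\ref{prop:attach-crit}).
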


The proof of (1) is constructive in the sense that we explain how to build $\cR_+(f)$ starting from the fundamental limit. This works as follows.
Consider a limit $g$ defined over $\sH_\eps$ associated with some cycle of $g$-rescaling adapted to 
$(f_n)$. If $\eps<(1)$, then $\sH_\eps$ is a non-Archimedean field, and we show that any periodic type II point $x$ of $g$ whose local degree is $\ge2$
determines a new cycle of $g$-rescalings that we call the baby $g$-rescaling associated with $x$. 
We proceed with proving that any $g$-rescaling is obtained from the fundamental one by taking successive baby $g$-rescalings 
in a unique fashion which gives the tree structure to $\cR_+(f)$.

The second statement follows from the fact that the set of scales $\eps$ for which there exists a cycle of $g$-rescalings $(M_*,\eps)$ adapted to $(f_n)$
is finite and has at most $2d-1$ elements. The proof of this fact is algebraic in nature, and follows the argument of~\cite{CG25} based on valuation theory (Abhyankar's inequalities). Note that this bound is optimal,  as we show in \S\ref{sec: optimal}.

The third statement is a consequence of McMullen's rigidity theorem~\cite{zbMATH04032044,zbMATH07691681}. 
All baby $g$-rescalings of a PCF limit give rise  to  either  monomial or Chebyshev limits. 
This implies $\cR'_+(f)$ to be connected.
The finiteness of $\cR'_+(f)/\langle \sigma_f\rangle$ is the generalization to our setting of the main theorem of~\cite{zbMATH06455745KJ15}. 
Following the strategy of Kiwi, we show that any non PCF $g$-rescaling limit attracts a critical point
of the sequence $(f_n)$. The bound on the size of $\cR'_+(f)$ then follows by combining this information with (2). 
\[\diamond\]

Our previous result can be seen through the lenses of renormalization theory. Recall that renormalization is an essential tool in dynamics, see~\cite{Av11} and~\cite{BGH25}, 
and more specifically in the dynamics of complex polynomials. 
We refer to the books of McMullen~\cite{mcmullen1994complex}, to the notes by Lyubich,~\cite{Lyu23}, or to the recent survey by Dudko~\cite{Dud25} for a thorough discussion of this theory and its far-reaching applications in holomorphic dynamics. 

An important observation is that we can encapsulate the degeneration of a sequence of rational map $(f_n)$ along an ultrafilter $\omega$ into 
a single degree $d$ rational map $f$ defined over the ultraproduct $\sH_0$ of the field of complex numbers mentioned above.

Cycles of $g$-rescalings adapted to $(f_n)$ are obtained by iterating and zooming $f$ at some scale, so that we can loosely interpret them as a form of renormalization of $f$. 
Since renormalization theory has proved to be an essential tool in understanding bifurcations, we expect the notion of $g$-rescalings to play an analogous role in the study of bifurcation phenomena at infinity in the moduli space of rational maps. 
Results by Luo~\cite{Luo22,Luo24} and Kiwi~\cite{Ki25} are first instances of this idea.

\[\diamond\]

Our theory of $g$-rescalings is sufficiently concrete to allow explicit calculations: explicit examples are detailed in the main body of the paper,
see Examples \ref{eg: compute-fundascale}, \ref{ex:concrete}, \ref{eg: deg1},~\ref{eg: ezchain}, \ref{eg: trivialtree}, \ref{ex:cycle} and \S\ref{sec:example}.

Suppose that  
$(f_n)$ is induced from a meromorphic family
$(f_t)$ parameterized by the unit disk so that $f_n = f_{t_n}$ for a sequence $t_n\to 0$. 
We prove that any adapted $g$-rescaling that is not the fundamental one is defined over the trivial scale (1)
hence is a rescaling in the sense of Kiwi. In that case, our results bring nothing new to~\cite{zbMATH06455745KJ15}. 

When $(f_n)$ is a sequence of quadratic rational maps, then we may
build on the classification of
their dynamical properties in the non-Archimedean case due to Kiwi~\cite{Ki14}, 
and give a classification of all possible trees of $g$-rescalings (Theorem~\ref{thm: quadratic}).

As a consequence of this classification, we obtain the following
bound on the existence of periodic cycles having small multipliers. 

\begin{ThmInt}\label{thm:int3}
For every $L \in \mathbb{N}^*$, there exists 
$\varepsilon=\eps(L) > 0$
such that the number of repelling periodic cycles $p, \cdots, f^{l-1}(p)$ of 
period
$l\le L$ whose multiplier satisfies
\[
\left| (f^l)'(p) \right| < (1+\varepsilon)^l
\]
is at most $4$ for all $f\in\Rat_2(\C).$
\end{ThmInt}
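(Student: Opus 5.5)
We argue by contradiction and compactness, in the spirit of Theorem~\ref{thmint:1}. Suppose the statement fails for some $L$. Then for every $n$ we can pick $f_n\in\Rat_2(\C)$ with at least $5$ distinct repelling cycles of periods $l\le L$ and multipliers satisfying $|(f_n^l)'(p)|<(1+1/n)^l$. Passing to a subsequence, we may assume these five cycles have fixed periods $l_1,\dots,l_5\le L$.

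\emph{Non-degenerate case.} Suppose some conjugates $M_n\cdot f_n\cdot M_n^{-1}$ converge along $\om$ to a quadratic map $g$. Each cycle then converges to a cycle of $g$ of period dividing $l_j$. Its multiplier has modulus $\le 1$, since the bound $(1+1/n)^l$ tends to $1$. Two issues have to be handled:
\begin{itemize}
\item distinct cycles of $f_n$ might collide in the limit, which produces a parabolic multiplier equal to a root of unity;
\item period might drop in the limit.
\end{itemize}
Both are controlled by the fact that the non-repelling cycles of a quadratic map number at most $2$ (Fatou--Shishikura), with parabolic collisions counted with multiplicity. A cycle of $f_n$ of period $l$ collapsing onto a cycle of period $k\mid l$ forces the multiplier of $g^k$ to be an $(l/k)$-th root of unity. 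The multiplicity of the resulting parabolic point bounds the number of cycles absorbed, by the holomorphic index formula and Epstein's refinement of the Fatou--Shishikura count, which counts parabolic petals. This yields at most $4$ cycles in the limit, a contradiction. I expect this count to be the delicate step. I would try first to bound the number of cycles of period $\le L$ converging to non-repelling cycles of $g$ by $2\cdot 2=4$, using that each such limiting cycle attracts a critical point, each with at most two petal-cycles.

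\emph{Degenerate case.} Otherwise $(f_n)$ degenerates. We then use the tree of Theorem~\ref{thm: quadratic} together with Kiwi's classification~\cite{Ki14} of quadratic non-Archimedean dynamics. The key step is a lemma: a repelling cycle of $f_n$ with multiplier $<(1+1/n)^l$ and bounded period must, after rescaling, be captured by some $g$-rescaling limit $g_\eps$ of the cycle tree.
\begin{itemize}
\item If the cycle is detected at a scale $\eps<(1)$ inside the Berkovich line, its multiplier in $\sH_\eps$ has norm $\le1$. It therefore lies in a type II periodic point of local degree $\ge2$, or in an indifferent point, and so corresponds to a baby $g$-rescaling.
\item Descending along the tree, the cycle is finally seen by a Kiwi rescaling at scale $(1)$, which is a complex map of degree $\ge2$ or degree $1$, or at a terminal non-Archimedean map.
\end{itemize}

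The classification of Theorem~\ref{thm: quadratic} shows that the tree has very few branches. By Theorem~\ref{thmint:1}(2) each increasing interval has length $\le 3$ when $d=2$, and the quadratic classification allows at most two cycles of rescalings, each quadratic. Applying the non-degenerate count to each complex limit, and using that degree-one limits and the fundamental non-Archimedean map contribute few cycles with norm-one multipliers, gives a total of at most $4$, since the critical points are shared along the tree. The bookkeeping needed to ensure no cycle is counted twice across distinct rescalings of the same $\sigma_f$-orbit is routine once the capturing lemma is established.
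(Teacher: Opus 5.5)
Your non-degenerate case is fine. It is in fact more careful than the paper's one-line appeal to Shishikura, which tacitly assumes the five cycles stay distinct in the limit. By Hurwitz, a limit cycle with $\nu$ cycles of petals absorbs at most $\nu+1$ cycles of $f_n$, so at most $N_{\mathrm{nonrep}}+N_{\mathrm{petal}}\le 2+2$ cycles are accounted for, by Shishikura and Fatou.

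The degenerate case is where all the content lies, and there the proposal has a genuine gap. The ``capturing lemma'' and the ``routine bookkeeping'' are exactly what must be proved. The mechanism you suggest, applying the non-degenerate count to each rescaling limit and adding up, fails for two concrete reasons.

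First, that count needs locally uniform convergence near the limit cycle, which breaks down at holes. In Case (4) of Theorem~\ref{thm: quadratic}, $f_\star$ has a fixed Rivera domain $U$ bounded by the repelling cycle of $x$, and the direction at $x$ pointing to $U$ is bad for $f_\star^l$. By Lemma~\ref{lem:baddy}, its image in the baby limit at $x$ lies in the support of the hole divisor, and that image is precisely the multiple fixed point $\alpha$ of that complex quadratic map. The cycles of $f$ contained in $U$ collapse into this hole. There Lemma~\ref{lem:period} only describes $\Per_l$ up to an uncontrolled hole contribution and says nothing about multipliers. The paper counts these cycles at the fundamental scale instead: by Rivera-Letelier's fixed point formula, $U$ contains exactly two fixed points and no other periodic cycle.

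Second, even granting per-limit bounds, adding them overshoots $4$ as soon as there are two quadratic limits, as in Case (4). The phrase ``critical points are shared along the tree'' is not a proved statement. The paper's count there is explicit: $\alpha$ already occupies one of the two slots in Shishikura's bound for the parabolic limit at $x$, so at most one further cycle of Rivera domains is attached to the $x$-cycle. Lemma~\ref{lem:1233} shows at most one cycle of Rivera domains avoids the $x$-cycle, whence $2+1+1=4$.

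Contrary to what you state, the classification does not only produce quadratic limits. The degree-one and non-Archimedean cases need an actual bound rather than ``contribute few cycles''. In Case (2) the only baby limit has degree one, so no complex count applies. The paper instead uses that the tangent map at the indifferent type II Julia point is a M\"obius map of infinite order, hence has at most two periodic directions, which allows at most two such cycles. In Case (3b) the quadratic baby limit lives at a scale $\eta<(1)$, and this Case (2) argument must be rerun for it.

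Finally, your capturing lemma is misformulated. A rigid cycle whose multiplier has norm one in $\sH_{\eps_\star}$ (exactly one, since the cycles of $f_n$ are repelling) does not lie ``in a type II periodic point''. It lies in a Rivera domain of $f_\star$, and what matters is which cycle of type II Julia points bounds that domain and in which direction at that cycle it sits. Organizing the count by Rivera domains, as the paper does, is what makes the bookkeeping work.
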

Our proof does not give any bound on $\eps(L)$. The Feigenbaum polynomial has periodic cycles of  period $2^l$ for all $l$ with uniformly bounded multipliers, so that 
one necessarily have $\eps(L) \lesssim 1/L$. Recall that a rational map
$f$ having no small cycles in the sense that there exists $\eps>0$ such that 
$\left| (f^l)'(p) \right|^{1/l} > (1+\varepsilon)$ for all $p$ of period $l$
 is topological Collet-Eckmann, see~\cite{RSP}.

\[\diamond\]
The plan of our paper is as follows. In \S\ref{sec:gen}, we collect some facts in non-Archimedean dynamics.
This section also contains a discussion on scales and their associated fields. 

Section~\ref{sec:seq} contains a preliminary discussion on the family of rational maps determined by $(f_n)$ when the scale is varying from $(0)$ to $(1)$. The important notion of hole is introduced following~\cite{zbMATH05004325demarcoiteration}, and its impact on critical points and on the multipliers of periodic points for $f$ at all scales is detailed. 

The notion of fundamental scale is studied in the short Section~\ref{sec:funda-scale}. 

We introduce the formal definition of $g$-rescalings adapted to $(f_n)$ in \S\ref{sec:g-rescaling}. We explain (Theorems~\ref{thm:baby2} and~\ref{thm:baby1}) how to build new $g$-rescalings from type II periodic points of the limits of a given $g$-rescaling. 
Theorem~\ref{thm:chain} is essential to the proof of the tree structure of the set of $g$-rescalings to be given in the next section. 

In Section~\ref{sec:tree}, we give the definition of cycles of $g$-rescalings, and prove the tree structure of the set $\cR(f)$ of adapted $g$-rescalings up to equivalence (Theorem~\ref{thm:tree-rf}). We then proceed with the bound on the set of scales of adapted $g$-rescalings (Theorem~\ref{thm:boundscales}). 

The next Section~\ref{sec:more-on-tree} contains deeper informations on $\cR(f)$ and completes the proof of Theorem~\ref{thmint:1}. 
A first key result is Theorem~\ref{thm:periodic} which shows that a cycle of $g$-rescalings adapted to an iterate of $f$ induces a cycle of $g$-rescaling adapted to $f$. 
We then describe $\cR(f)$ when $(f_n)$ is induced by a meromorphic family (Theorem~\ref{thm:family}). With all this material, we are able to bound the size of the subtree of $\cR(f)$ of cycles of $g$-rescalings whose limit is not PCF (Theorem~\ref{thm:anti-chain}).

We conclude our article with a series of examples (Section~\ref{sec:example}).  We classify all trees of $g$-rescalings in the case of quadratic rational maps based on~\cite{Ki14}, and deduce Theorem~\ref{thm:int3} from this analysis. We add a construction of a degenerating sequence of rational maps for which our bound on the number of adapted scales is optimal.

\medskip

\subsection*{Acknowledgements}
Both authors thank Y. Luo for sharing his insights on degeneration of rational maps, and X. Buff for discussions on small multipliers. 
This article is an expanded and more precise version of the second part of thesis of the second author. She extends her thanks to the referees of her Phd thesis J. Kiwi and J. Poineau
for their careful reading and their remarks.  She also thanks Z. Ji and J.-W. Yap for helpful discussions on the content of this paper.

%%%%%%%%%%%%
\section{Generalities}\label{sec:gen}
In this section, we fix some terminology and notation. We recall some basics on the dynamics of rational maps in metrized fields, see~\cite{zbMATH07045713benedetto};
and on scales as discussed in~\cite{FG25,CG25}. 
Our metrized fields are always algebraically closed, complete, and of residual characteristic $0$ (when non-Archimedean).

\subsection{Rational maps on metrized fields}

\subsubsection{Metrized fields}
Let $k$ be an algebraically closed field endowed with a complete multiplicative norm. 
We shall always assume that the norm is non-trivial, i.e., there exists $a\in k$ such that $|a|\not=1$. 
When the norm is Archimedean, then $k$ is isometric to $(\C, |\cdot|^\eps)$ for some $0< \eps \le 1$, where $|\cdot|$ denotes the standard Euclidean norm on $\C$.

When the norm is non-Archimedean, i.e., $|x+y|\le \max\{|x|, |y|\}$ for all $x,y$, then we denote by $k^\circ = \{|x| \le 1\}$
its ring of integers, $k^{\circ\circ} = \{|x|<1\}$ and $\tk = k^{\circ}/k^{\circ\circ}$ its residue field. 

We shall always assume that the characteristic of $\tk$ is $0$. This implies $k$ to be of characteristic $0$ as well. 
It is spherically complete if the intersection of any decreasing sequence of closed balls is non-empty.
We shall also impose all our fields to be spherically complete.

\subsubsection{Berkovich projective line}
Let $k$ be a complete metrized field as above. The Berkovich projective line
$\P^{1,\an}_k$ is the space of pairs $(\xi,|\cdot|)$ where $\xi$ is a scheme-theoretic point 
in $\P^1_k$ and $|\cdot|$ is a norm on its residue field. It is endowed with a canonical compact topology 
rendering all evaluation maps continuous $x \mapsto |P(x)|\in [0,+\infty]$ for all rational function $P$. 

Once an affine coordinate $z$ is fixed so that  $\P^1_k = \spec k[z] \cup \spec k[z^{-1}]$, then $\P^{1,\an}_k$ can be identified with the space $\A^{1,\an}_k $ of all multiplicative semi-norms 
on $k[z]$ restricting to the standard norm on $k$, together with the function sending any non-constant polynomial to $\infty$. We write $\P^{1,\an}_k = \A^{1,\an}_k \cup \{\infty\}$. 
Note that a point in $\P^{1,\an}_k$ is determined by its values on polynomials, and further since $k$ is algebraically closed on degree $1$ polynomials. 

The Berkovich projective line is also endowed with a canonical sheaf of analytic  functions, see~\cite{berkovich2012spectral}
for a definition. 
When $k=(\C,|\cdot|_\infty^\eps)$ is Archimedean, then $\P^{1,\an}_k$ is canonically isomorphic to the Riemann sphere. 
We shall restrict our attention to the case $k$ is non-Archimedean. Then  $\mathbb{P}^{1,\an}_k$  carries the structure of a compact \( \mathbb{R} \)-tree:  
any two points $x,y$ can be joined by a unique continuous injective map $[0,1]\to [x,y]\subseteq \P^{1,\an}_k$ up to reparameterization. 
A  point $x \in \P^{1,\an}_k$ belongs to one of the following class of examples. 

If there exists a rational function $R$ such that $|R(x)|=0$, then we say that $x$ is rigid or of type I. In that case $x$ can be identified with a point in $\P^1(k)$, and the induced semi-norm
is given on rational functions by the evaluations map $R\mapsto |R(x)|$. Any rigid point $x$ is an endpoint of the tree in the sense that $\P^{1,\an}_k\setminus\{x\}$ is still connected.
The set of rigid points is dense in $\P^{1,\an}_k$.

For any $a\in k$ and $r>0$, we denote by $\zeta(a,r)$ the point in $\P^{1,\an}_k$ defined by 
$|P(\zeta(a,r))|:= \sup_{|z-a|\le r} |P(z)|$ for any polynomial $P$. 
The point $\zeta(0,1)$ corresponding to the closed unit ball bears a special name
and is called the Gauss point. We denote it by $\xg$.

When $r$ belongs to the value group of $k$, we say 
$\zeta(a,r)$ is of type II. Otherwise it is of type III.  
When $k$ is spherically complete (which will be our case), then points in $\P^{1,\an}_k$ are either of type I, II or III.
When the value group $|k^*|$ is $\R^*_+$, then there is no type III points (again this will be our case). 
Note that for a fixed $a\in k$, the map $r\mapsto \zeta(a,r)$ defines a geodesic in $\P^{1,\an}_k$ for its tree structure. 

\subsubsection{Action of $\PGL(2,k)$}
The group $\PGL(2,k)$ acts naturally  on $\P^{1,\an}_k$. The action is continuous and extends the natural action on $\P^1(k)$. 
If  $M\in\PGL(2,k)$ and $x$ is a non-rigid point, then we have
$|R(M(x))|:= |(R\cdot M)(x)|$
for any rational function $R$. 

Assume $(k,|\cdot|)$ is non-Archimedean.
This action preserves the type of points. More precisely, it is $3$-transitive on type I points, and transitive on type II points. In other words, 
for any type II point, one can find 
$M\in\PGL(2,k)$ such that $M(x)=\xg$.

Write $\H_k= \P^{1,\an}_k\setminus \P^1(k)$. It is a connected subtree of $\P^{1,\an}_k$ endowed with a canonical distance $d_\H$
which is uniquely determined by the following conditions. 
We have $d_\H(\zeta(a,r),\zeta(a,r'))= \log|r/r'|$ for any $a\in k$, and any $0<r<r'$; and $\PGL(2,k)$ is acting by isometries on $\H_k$. 

Pick any type II point $x\in \H_k$. A direction $\vec{v}$ at $x$ is by definition a connected component of $\P^{1,\an}_k\setminus \{x\}$. 
When we want to emphasize that this connected component is open, then we write $U(\vec{v})$.
The set of directions at $x$ is denoted by $Tx$. A point $y \neq x$ determines a unique direction at $x$, and two
points $y,y'$ determine the same direction iff the two segments $[y,x)$ and $[y',x)$ intersect. Any direction is 
determined by at least one rigid point since the set of rigid points is dense. 

Choose any affine coordinate on the complement of a point in $\P^1(k)$. Two rigid points $y, y'\in k$ determines the same direction at $\xg$
iff either $|y-y'|<1$, or $\min\{|y|,|y'|\}>1$. It follows that there is a canonical bijection $T\xg \to \P^1(\tk)$. 
Changing coordinates is equivalent to post-compose by a Möbius transformation defined over $\tk$.
When $x$ is an arbitrary type II point, and $\xg=M(x)$ for some $M\in\PGL(2,k)$, then 
$M$ induces a bijection between $Tx$ and $T\xg$. We get a canonical 
bijection $Tx \to \P^1(\tk)$ defined up to post-composition by elements in $\PGL(2,\tk)$.

\subsubsection{Rational maps on the projective plane}
A rational map $f$ of degree $d\geq 1$ defined over $k$ can be defined in affine coordinates,
but it is more convenient to work in homogeneous coordinates $[z_0:z_1]$. 
Then $f$ is determined by two homogeneous polynomials $P, Q\in k[z_0,z_1]$ of degree $d$
without common factors.

The canonical map on rigid points $f\colon \P^1(k)\to \P^1(k)$ extends continuously to 
the Berkovich space $\P^{1,\an}_k$ so that
$|R(f(x))| = |(R\cdot f)(x)|$
for any rational function, and any $x\in\H_k$.
The map $f\colon \P_{k}^{1,\an}\rightarrow \P_{k}^{1,\an}$ is continuous, finite, open and surjective, see, e.g.,~\cite[Proposition 4.3]{Jonsson}.

One can define a local degree $\deg_{x}(f)\in\{1,\cdots,d\}$ at any point $x\in\P^{1,\an}_k$ so that 
\[\sum_{f(y)=x} \deg_y(f)=d\] for any $x\in\P^{1,\an}_k$.

When $x$ is rigid, and $f(z)=w=\sum_{k\ge 1} a_k z^k$ is expanded into power series in affine charts centered at $x$ and $f(x)$ respectively, then 
$\deg_x(f)$ is the least integer $k$ such that $a_k\neq 0$. A rigid point is critical iff $\deg_x(f)\ge 2$. The 
 set of all rigid critical points has cardinality at most $2d-2$.

Suppose that $(k,|\cdot|)$ is non-Archimedean. When $x$ is a type II point, then $f$ induces a canonical map $T_xf \colon Tx \to Tf(x)$.
Indeed if $\vec{v}$ is a direction such that $U(\vec{v})$ does not intersect $f^{-1}(f(x))$, then
$f(U(\vec{v}))=U(\vec{w})$ for a unique $w\in Tf(x)$, and we set $T_xf (\vec{v}):= \vec{w}$.
When $U(\vec{v})\cap f^{-1}(f(x))$ is non-empty, then we say that $\vec{v}$ is a bad direction
(implicitly for $f$ at $x$). In that case, the image of the connected component of 
$U(\vec{v})\setminus f^{-1}(f(x))$ having $x$ in its boundary is mapped onto some
$U(\vec{w})$ with $w\in Tf(x)$ and we set  $T_xf (\vec{v}):= \vec{w}$.
Note that the set of bad directions is always finite.

Choose canonical identifications as above $Tx \simeq \P^1(\tk)$ and $Tf(x) \simeq \P^1(\tk)$. Then there
exists a unique non-constant rational map $\tilde{f}_x \colon   \P^1(\tk)\to  \P^1(\tk)$
such that the following diagram commutes:
 \begin{center}
\begin{tikzcd}
T x \arrow[d,"T_{x}f"] \arrow[r,"\simeq"] 
  & \P^1(\tilde{k}) \arrow[d,"\tilde{f}_x"] \\
T f(x) \arrow[r,"\simeq"] 
  & \P^1(\tilde{k})
\end{tikzcd}
\end{center}
The rational map $\tilde{f}_x$ is defined up to post- and pre-composition by Möbius transformations in $\PGL(2,\tk)$, 
and its degree is equal to $\deg_x(f)$.

\subsubsection{Fatou-Julia theory and periodic cycles}
Let $f$ be any rational map of degree $d\ge 2$. The exceptional set $\cE$ of $f$ is the largest totally invariant finite subset 
of $\P^1(k)$. It has at most two points. The Julia set $J(f)$ consists of those points $x\in\P^{1,\an}_k$
such that $\cup_{n\ge 0} f^n(U)$ contains $\P^{1,\an}_k\setminus \cE$ for all neighborhoods $U$ of $x$.
It is a closed totally invariant subset, and its complement $F(f)$ is the Fatou set. 
The Fatou set is by definition an open set, and a connected component of the Fatou set is called a Fatou component.

Suppose $x \in \P^1(k)$ is a rigid point which is periodic of period $l$. Its multiplier $(f^l)'(x)\in k$ is then well-defined, and we say that 
$x$ respectively superattracting, attracting, indifferent or repelling when 
$|(f^l)'(x)|= 0$, respectively $|(f^l)'(x)|<1$, $|(f^l)'(x)|=1$, or $|(f^l)'(x)|>1$. 
The point $x$ lies in the Julia set iff it is repelling. 

Suppose that $(k,|\cdot|)$ is non-Archimedean. A  periodic point $x\in \H_k$ which lies in the Julia set is necessarily of type II. 
Conversely, suppose $x$ is a type II periodic point of period $l$. 
The degree of the cycle is the degree of $\deg_x(f^l)$. 
When $\deg_x(f^l)\ge2$, then $x$ lies in the Julia set, and we say it is repelling. When $\deg_x(f^l)=1$,
we say that the point is indifferent. 
In this case,
$x$ lies in the Julia set iff there exists at least one bad direction $\vec{v}\in Tx$
which has an infinite orbit under $T_xf$. The existence of an indifferent type II fixed point in the Julia set
implies the existence of a critical point having an infinite orbit by~\cite[Theorem~4.1]{zbMATH06455745KJ15}.

Finally, recall that $f$ is said to have good reduction when $J(f)=\{\xg\}$. 
Since $J(f)$ is totally invariant, this happens iff $\xg$ is fixed by $f$, and 
$\deg_{\xg}(f)$ is maximal equal to $d$. 

When $f$ is conjugated by a Möbius transformation to a map having good reduction, then 
we say that $f$ has potential good reduction. 
This is equivalent to say that $J(f)$ is reduced to a singleton, or that $f$ has zero entropy (see~\cite{FRL10}). 

\subsubsection{Fatou components}
Let $x$ be an attracting periodic point of 
period $l \ge 1$. Then there exists an open neighborhood $V$ of $x$ such that 
$f^{nl}(y)\to x$ for any $y\in V$. The attracting basin of the cycle
is then equal to $\bigcup_{n \ge 0} f^{-n}(V)$ and is included in the Fatou set.  
Any periodic connected component of the attracting basin
is called an attracting component. The immediate basin of the cycle is the union of all periodic components of the attracting basin.
Observe that an attracting component contains a unique
periodic point, and this point is attracting. 

Suppose $(k,|\cdot|)$ is Archimedean. A rotation domain $U$ is a periodic Fatou component of period $l$ which is conformally equivalent 
to either a disk or an annulus, and the dynamics of $f^l|_U$ is conjugated to an irrational rotation. 
A parabolic basin is a periodic Fatou component $U$ of period $l$ such that for any point $y\in U$,  $f^{nl}(y)$
converges to fixed point of $f^l$ whose multiplier is a root of unity. By 
the Fatou-Leau 
flower theorem, for any periodic point $x$ of period $l$ whose multiplier is a root of unity, then one can find
a parabolic basin $U$ containing $x$ in its boundary so that $f^{nl}|_U \to x$.

When $(k,|\cdot|)$ is non-Archimedean, we say a Fatou component $U$ for $f$  is called a Rivera domain if there exists an integer $l\ge 1$ such that $f^l\colon U\to U$ is injective. 
One can show in this case that $\partial U$ consists of a finite union of type II Julia periodic points. 
The following classification of periodic Fatou components is due to Fatou in the Archimedean case~\cite[\S4]{Be13}, and to Rivera-Letelier in the non-Archimedean case, 
see~\cite[Proposition 4.3, Theorem 9.14]{zbMATH07045713benedetto}.

\begin{thm}
\label{thm: classificationfatou}
When  $(k,|\cdot|)$ is Archimedean, then any periodic Fatou component is either an attracting component, or a parabolic basin, 
or a rotation domain.  

When $(k,|\cdot|)$ is non-Archimedean, then any periodic Fatou component is either an attracting component or a Rivera domain. 
\end{thm}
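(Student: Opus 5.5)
The statement is classical, and the plan is to reduce each half to standard results rather than re-derive it. Both cases follow the same strategy: replace $f$ by $f^l$ so that $U$ is a fixed component, then study the family of iterates $(f^{nl}|_U)$ on $U$.

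\emph{Archimedean case.} Here $U$ is a hyperbolic Riemann surface, since the Julia set of a degree $\ge 2$ map is infinite.
\begin{enumerate}
\item By Montel's theorem, $(f^{n}|_U)$ is normal, so every subsequential limit is either a constant or a non-constant holomorphic map.
\item Suppose some limit is non-constant. A Cartan-type argument on hyperbolic Riemann surfaces shows that $f|_U$ is an automorphism lying in a compact subgroup of $\mathrm{Aut}(U)$.
\item By the uniformization of Riemann surfaces with non-discrete compact automorphism group, $U$ is then a disk or an annulus, and $f|_U$ is conjugate to an irrational rotation. This gives a rotation domain.
\item Suppose instead that all limits are constant. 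Using Denjoy--Wolff type arguments, the limit constants form a single fixed point $x$ lying in $\overline{U}$.
\item If $x \in U$, then $x$ is attracting by the Schwarz lemma, and $U$ is an attracting component.
\item If $x \in \partial U$, then $x$ lies in the Julia set, so $|f'(x)|\ge 1$. The snail lemma then forces $f'(x)=1$ (after passing to an iterate if the multiplier is a root of unity), and $U$ is a parabolic basin.
\end{enumerate}
This is Fatou's original argument as presented in~\cite[\S4]{Be13}, and I would cite it essentially as is.

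\emph{Non-Archimedean case.} The plan runs in three steps.
\begin{enumerate}
\item \textbf{Structure of $U$.} First I show that $U$ is an open connected affinoid-like domain, that $\partial U$ is a finite set of type II points, and that $f\colon U\to U$ is a proper surjective map of some degree $m$. This uses that $f$ is open and finite, together with the tree structure of $\P^{1,\an}_k$.
\item \textbf{Injective case.} If $f^j|_U$ is injective for some $j$, then $U$ is by definition a Rivera domain.
\item \textbf{Non-injective case.} Otherwise $f|_U$ has degree $\ge 2$ or, more generally, fails to be injective. In this case I would invoke Rivera-Letelier's fixed point theorem: a non-injective self-map of such a domain has a fixed rigid point $x\in U$ with $|f'(x)|<1$.
\end{enumerate}
To produce the attracting point in step 3, I would use the following heuristic. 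Non-injectivity means some direction at a point of the convex hull of $\partial U$ is contracted. The map $f$ is non-expanding for $d_\H$ on the part of $U$ where it has local degree $1$, and it contracts by the local degree elsewhere. Iterating a suitably chosen closed ball in $U$ then gives a nested decreasing sequence of balls. Spherical completeness guarantees that the intersection is non-empty, and this intersection yields the attracting fixed point. Since $x$ lies in $U$ and is attracting, $U$ is its attracting component.

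The main obstacle is the non-Archimedean step 3: passing from "not injective" to the existence of an attracting rigid fixed point. This is exactly~\cite[Proposition 4.3, Theorem 9.14]{zbMATH07045713benedetto}, and I would quote it rather than reprove it, because it requires the full machinery of Rivera-Letelier's analysis of the dynamics on disks and annuli.
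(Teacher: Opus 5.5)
Your route is the paper's. The paper gives no argument for this statement: it quotes Fatou's classification from \cite[\S4]{Be13} and Rivera-Letelier's from \cite[Proposition 4.3, Theorem 9.14]{zbMATH07045713benedetto}, which is exactly where your proposal ends up, so as a proof by citation it stands. Your Archimedean outline is the standard one, except that in step 3 you should also rule out the punctured disk, whose puncture would be an isolated point of the perfect set $J$.

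Two ingredients of your non-Archimedean sketch are wrong, however, and should be deleted rather than ``shown''.

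\emph{Step 1 is false.} A periodic Fatou component need not be affinoid-like, nor have finite boundary. Take $f(z)=z^2+c$ with $|c|>1$, the example of \S\ref{sec: QR1}. Then $J(f)$ is a Cantor set of type I points, and the Fatou set is a single totally invariant attracting component, the basin of $\infty$, on which $f$ has degree $2$. Its boundary is that whole Cantor set, with no type II points at all. Finiteness of $\partial U$ is a property of Rivera domains, as the paper notes right after their definition, and it is an output of Rivera-Letelier's theory rather than an input. The dichotomy you actually need only uses that $f^l\colon U\to U$ is finite and surjective, hence either injective or of degree $\ge 2$.

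\emph{Your metric heuristic is backwards.} Along any segment of $\H_k$ on which the local degree is constantly $m$, a rational map multiplies $d_\H$ by exactly $m$. So $f$ is a local isometry where the local degree is $1$ and expands elsewhere; it never contracts. Attraction toward a rigid fixed point shows up as points moving out toward an end of the tree, not as $d_\H$-contraction. For example, $z\mapsto z^2$ sends $\zeta(0,r)$ to $\zeta(0,r^2)$, doubling the distance to $\xg$.

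The genuinely nontrivial input is the existence of a closed disk $B\subset U$ such that $f(B)$ is a strictly smaller disk inside $B$. Once you have such a $B$, the non-Archimedean Schwarz lemma makes $f$ a strict contraction of $B$ for $|x-y|$. Plain completeness, not spherical completeness, then gives an attracting fixed point. Your sketch does not produce such a disk, so the citation is doing all the work, exactly as in the paper.
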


\subsection{Scales and Robinson fields}

\subsubsection{Non-principal ultrafilters}
Let $\om$ be a non-principal ultrafilter in $\N$. 
Recall that this is determined by a non-trivial finitely additive measure on $\N$
with values in $\{0,1\}$. 
More precisely, this is 
a function on the power set $m_\om\colon 2^\N \to \{0, 1\}$
such that $m_\om(E)\le m_\om(F)$ if $E\subset F$; $m_\om(E\cup F)= m_\om(E)+m_\om(F)$ if $E\cap F=\emptyset$;
$m_\om(\N)=1$, and $m_\om(\{n\})=0$ for all $n\in\N$.
The existence of a non-principal ultrafilter relies on the axiom of choice. 

It is customary to say that $E$ is $\om$-big when $m_\om(E)=1$.
The  ultrafilter technique allows one to extract a generalized subsequence in any compact Hausdorff space $K$, even when $K$ is not sequentially compact. More precisely, for any sequence $x=(x_n)\in K^{\N}$, there exists a (unique) point $x_\omega \in K$ such that for every neighborhood $V$ of $x_\omega$, the set
$\{n \in \N : x_n \in V\}$
is $\omega$-big. We call $x_\omega$ the limit of $x$ along $\omega$. Note that if $(x_n)$ converges in the usual sense, then $x_\omega$ coincides with the standard limit of a sequence.

\smallskip

We fix once and for all a non-principal ultrafilter $\om$ in $\N$. 
Since this ultrafilter is fixed, we shall drop the reference to $\om$ and simply write $\lim x = x_\om$.

\subsubsection{Scales}

We say that two bounded sequences of positive real numbers  
\((\eps_n)\) and \((\eta_n)\) are equivalent
iff  $\lim \frac{\eps_n}{\eta_n}\in(0,\infty)$. 
We write $\eps\asymp \eta$. 

A scale is an equivalence class of bounded sequence of positive reals. 

A scale is said to be trivial if it is represented by the constant sequence $(1)$. 
This is equivalent to say that \(\lim \eps_n > 0\).

We define a partial order on the set of scales by declaring \(\eps \le \eta\) if and only if
$\lim\frac{\eps_n}{\eta_n} < \infty .$
In particular, \(\eps < \eta\) holds precisely when
$\lim \frac{\eps_n}{\eta_n} = 0 .$
Since  $\lim \eps_n / \eta_n$ always exists in $[0,\infty]$,  the set of scales is a totally ordered set with a unique maximal element $(1)$.

It is convenient to add the constant sequence $(0)$ as an extended scale, and declare $(0) < \eps$ for any scale. 

\subsubsection{Complex Robinson fields} 
\label{sec:complexrobinsonfield}
References include~~\cite[\S6]{YLuo22} and~\cite{FG25,CG25}.
Let $\eps=(\eps_n)$ be any bounded sequence of positive real numbers. The complex Robinson field associated with $\eps$ is defined as follows: 
\[
\sH_{\eps} = \left\{(z_n) \in \mathbb{C}^{\mathbb{N}} \mid \lim |z_n|^{\eps_n} < \infty \right\} \Big/ \left\{(z_n) \mid \lim |z_n|^{\eps_n} = 0\right\}.
\]
Observe that the field only depends on the scale determined by $\eps$.
The map sending $c\in \C$ to the constant sequence is an embedding
$\C\subset \sH_\eps$ hence the latter field has characteristic $0$.
Endow $\sH_{\eps}$ with the norm \( |(z_n)| := \lim  |z_n|^{\min\{1,\eps_n\}} \).
Note that if $(\eps'_n)$ is equivalent to $(\eps_n)$, then the two norms are not equal, but there are related by 
the identity $|\cdot|'= |\cdot|^a$ with $a= \lim \min\{1,\eps'_n\}/\min\{1,\eps_n\}>0$.
In the sequel, we shall abuse notation and talk about the metrized field $\sH_\eps$ where $\eps$ is a scale, even though
the norm is defined up to exponentiation.

Observe that when $\eps = (1)$, then $\sH_\eps$ is isometric to the field of complex numbers. 
When $\eps < (1)$, then $\sH_{\eps}$ is a  non-Archimedean complete non-trivially valued field, of residual characteristic $0$ and spherically complete.

Recall that we introduced the extended scale $(0)$. The associated field $\sH_0$  is the set of all sequences of complex numbers 
modulo the ones that are $0$ on an $\om$-big set. It is trivially valued. 

\smallskip

For any scale $\eta$, we set  
\[
R^{+}_{\eta} = \left\{(z_n) \in \mathbb{C}^{\mathbb{N}},  \lim |z_n|^{\eta_n} <\infty\right\} \Big/ \left\{(z_n),  \{n\in\N, z_n= 0\} \text{ is } \om-\text{big}\right\}.
\]
and 
\[
R^{-}_{\eta} = \left\{(z_n) \in \mathbb{C}^{\mathbb{N}}, \lim |z_n|^{\eta_n} \leq 1 \right\} \Big/\left\{(z_n),  \{n\in\N, z_n= 0\} \text{ is } \om-\text{big}\right\}.
\]
When $\eta < (1)$, then $R^\pm_\eta$ are valuation rings such that 
$\C \subseteq R^-_\eta\subsetneq R^+_\eta\subseteq \sH_0$
since \( (\exp(1/\eta_n))\in R^{+}_{\eta} \setminus R^{-}_{\eta} \). 
Note moreover that there are canonical surjective
morphisms $R^+_{\eta}\to \sH_\eta$ and $R^-_{\eta}\to \sH^\circ_\eta$. 

When $\eta =(1)$, then  $R^-_{(1)}$ is not a ring, whereas $R^+_{(1)} =\sH_0$.

\smallskip

The next result follows from Abhyankar's inequality. 
A slightly different version was proved by the second author, see~\cite[Theorem 3.15]{CG25}.
 \begin{thm}
 \label{thm:numbervaluationring}
Let \( K \) be a field with \( \C \subseteq K \subseteq \sH_{0} \). Suppose that \( K \) has finite transcendence degree \( d \) over $\C$. Then the maximal number of distinct scales  
$\eta^1 < \eta^2 < \dots < \eta^l < (1)$ such that
\[
R^-_{\eta^i} \cap K \subsetneq R^+_{\eta^i} \cap K \quad \text{for each } i = 1, \dots, l,
\]
is bounded from above by $d$.
\end{thm}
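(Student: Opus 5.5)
We reduce the statement to Abhyankar's inequality for the chain of valuations induced on $K$. For each scale $\eta<(1)$, set $A_i^\pm := R^\pm_{\eta^i}\cap K$. Since $R^\pm_{\eta}$ are valuation rings of $\sH_0$ containing $\C$, each $A_i^\pm$ is a valuation ring of $K$ containing $\C$. Moreover $R^+_{\eta}\subseteq R^-_{\eta'}$ whenever $\eta<\eta'$: if $\lim|z_n|^{\eta_n}<\infty$ then $|z_n|^{\eta'_n}=(|z_n|^{\eta_n})^{\eta'_n/\eta_n}$, and this tends to a limit $\le 1$ because $\eta'_n/\eta_n\to\infty$, after checking the cases where $|z_n|^{\eta_n}$ tends to $0$ or to a positive finite value. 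We therefore obtain a chain of valuation rings of $K$,
\[
A_1^-\subsetneq A_1^+\subseteq A_2^-\subsetneq A_2^+\subseteq\cdots\subseteq A_l^-\subsetneq A_l^+\subseteq K .
\]
It contains at least $l+1$ distinct rings, namely $A_1^-$ and the $A_i^+$, with the possible addition of $K$.

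Next we translate this into rank. Valuation rings of $K$ containing a fixed valuation ring $A_1^-$ are totally ordered and correspond bijectively to convex subgroups (equivalently, to prime ideals) of the value group $\Gamma$ of $A_1^-$. A strictly increasing chain $A_1^-\subsetneq A_1^+\subsetneq A_2^+\subsetneq\cdots\subsetneq A_l^+$ therefore yields at least $l$ distinct proper nontrivial isolated subgroups, so $\operatorname{rank}(A_1^-)\ge l$.

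The main point is to check that $A_i^+\ne A_{i+1}^+$, which does not follow from the inclusions alone. It does follow from the hypothesis at level $i+1$: we have $A_i^+\subseteq A_{i+1}^-\subsetneq A_{i+1}^+$. We must also check that $A_l^+\neq K$ is not needed; it is not, since the chain $A_1^-\subsetneq A_1^+\subsetneq\cdots\subsetneq A_l^+$ already has length $l$ (that is, $l+1$ rings). We also have to guarantee that $A_1^-$ is a genuine valuation ring of $K$ whose residue field contains $\C$. This holds because $\C\subseteq R^-_\eta$, so each valuation is trivial on $\C$.

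Finally we apply Abhyankar's inequality to the valuation $v$ of $K/\C$ attached to $A_1^-$:
\[
\operatorname{rank}(v)\le \operatorname{rat.rk}(\Gamma)\le \operatorname{rat.rk}(\Gamma)+\operatorname{trdeg}_\C(\kappa_v)\le \operatorname{trdeg}_\C K=d .
\]
Combined with $\operatorname{rank}(v)\ge l$, this gives $l\le d$. One subtlety remains. Rank is the number of proper convex subgroups, so a chain of $l+1$ distinct valuation rings $A_1^-\subsetneq\cdots\subsetneq A_l^+$, possibly followed by $K$, forces rank at least $l$. This count is consistent with the inequality, since the trivial valuation ring $K$ corresponds to the whole group and a chain of $l+1$ distinct rings ending at or below $K$ needs at least $l$ proper nonzero convex subgroups. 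This is the bookkeeping step to verify carefully.
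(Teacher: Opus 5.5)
Your strategy matches the paper's: build a chain of valuation rings of $K$ over $\C$ and bound its length with Abhyankar's inequality. However, the inclusion you build the chain on is backwards. For $\eta<\eta'$ we have $\eta'_n/\eta_n\to\infty$ along $\om$. So in exactly the case you say you will check, $|z_n|^{\eta_n}\to c\in(1,\infty)$, the quantity $(|z_n|^{\eta_n})^{\eta'_n/\eta_n}$ tends to $\infty$, not to something $\le 1$. Concretely, take $z_n=e^{1/\eta_n}$. Then $|z_n|^{\eta_n}=e$, so $z\in R^+_\eta$, while $|z_n|^{\eta'_n}=e^{\eta'_n/\eta_n}\to\infty$, so $z\notin R^+_{\eta'}$, let alone $R^-_{\eta'}$. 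Hence $R^+_\eta\not\subseteq R^-_{\eta'}$, and your chain $A_1^-\subsetneq A_1^+\subseteq A_2^-\subsetneq\cdots$ is unjustified as written. The true inclusion is the one the paper records, $R^+_{\eta^{i+1}}\subset R^-_{\eta^i}$, and it goes the other way. If $|z_n|^{\eta'_n}\le C$ with $C\ge1$ on an $\om$-big set, then $|z_n|^{\eta_n}\le C^{\eta_n/\eta'_n}\to1$, so $R^+_{\eta'}\subseteq R^-_\eta$: larger scales give smaller rings.

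Once you reverse the indexing, everything else you wrote goes through. The chain becomes $A_l^-\subsetneq A_l^+\subseteq A_{l-1}^-\subsetneq A_{l-1}^+\subseteq\cdots\subseteq A_1^-\subsetneq A_1^+\subseteq K$, and the strictness $A_{i+1}^+\subsetneq A_i^+$ now comes from the hypothesis at level $i$. You then apply Abhyankar's inequality to the valuation ring $A_l^-=R^-_{\eta^l}\cap K$, not to $A_1^-$. This ring has at least $l+1$ distinct overrings counting itself, hence rank $\ge l$.

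Your insistence on including the bottom ring $R^-$ in the count is correct and actually necessary, since $A_1^+$ can equal $K$ (e.g.\ $K=\C(x)$ with $x_n=e^{1/\eta^1_n}$). The paper's sketch, which lists only the rings $R^+_{\eta^i}\cap K$, leaves this extra ring implicit.

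One minor point: a chain of $l+1$ rings gives $l$ proper convex subgroups counting $\{0\}$, not $l$ proper nontrivial ones. Your final paragraph has the count right.
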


\begin{proof}[Sketch of proof]
Pick any finite sequence of scales $\eta^1 < \eta^2 < \dots < \eta^l < (1)$ satisfying
$R^-_{\eta^i} \cap K \subsetneq R^+_{\eta^i} \cap K$ for all $i$. Note that 
$R^+_{\eta^{i+1}} \subset R^-_{\eta^i}$. It follows that 
$R^+_{\eta^l}\subsetneq R^+_{\eta^{l-1}}\subsetneq \cdots \subsetneq R^+_{\eta^1}$ forms a strictly increasing sequence of valuation rings
in $K$. They thus define a rank $l$ valuation on $K$ whose rank is bounded from above by $d$ by Abhyankar's inequality, see~\cite[\S5]{MV00}.
The result follows.
\end{proof}

Finally for any extended scale $\eps <(1)$ (including $\eps=(0)$), we also introduce the following ring
\[
B[\eps] :=
\left\{(z_n) \in \mathbb{C}^{\mathbb{N}}, \sup |z_n| <\infty\right\} \Big/\left\{(z_n), \lim |z_n|^{\eps_n}=0\right\}.
\]
This  is a valuation subring of $\sH_\eps$, and in particular, it is integrally closed by~\cite[Proposition~5.18]{zbMATH03279238}, 
and its fraction field is $\sH_\eps$. It is contained in $\sH^\circ_\eps$ when $\eps>(0)$ is a scale.

\begin{lemma}\label{lem:intclosed}
Let $K$ be an algebraically closed field of $\sH_0$, and consider the image 
$\tilde{B}_K[\eps]$ of $K\cap B[0]$ in $\widetilde{\sH}_\eps$ for a given scale $(0)<\eps<(1)$.

Any polynomial with coefficients in $\tilde{B}_K[\eps]$ splits in the fraction field of $\tilde{B}_K[\eps]$.
\end{lemma}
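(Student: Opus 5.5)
We need to understand the statement. $K\subset\sH_0$ is an algebraically closed subfield. $B[0]$ is the set of bounded sequences modulo sequences vanishing on an $\om$-big set. $\widetilde{\sH}_\eps$ is the residue field of $\sH_\eps$. The map is $B[0]\to B[\eps]\subset\sH^\circ_\eps\to\widetilde{\sH}_\eps$, and $\tilde B_K[\eps]$ is the image of $K\cap B[0]$ under it. The claim is that polynomials with coefficients in $\tilde B_K[\eps]$ split in its fraction field.

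The plan is to lift the polynomial to $K$, factor it there, and then reduce the roots. Let $\pi\colon K\cap B[0]\to\widetilde{\sH}_\eps$ denote the composite map, which is a ring morphism. Take $\tilde P=\sum_{j=0}^m \tilde a_j T^j$ with $\tilde a_j\in\tilde B_K[\eps]$ and $\tilde a_m\neq 0$, and choose lifts $a_j\in K\cap B[0]$ with $\pi(a_j)=\tilde a_j$. Set $P=\sum a_j T^j\in K[T]$. Because $K$ is algebraically closed, $P=a_m\prod_{i=1}^m (T-\alpha_i)$ with $\alpha_i\in K$.

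The difficulty is that the roots $\alpha_i$ need not lie in $B[0]$, since they may be unbounded sequences. To handle this, I pass to homogeneous factorizations. Write each linear factor as $(\beta_i T-\gamma_i)$ with $\beta_i,\gamma_i\in K$, and normalize each pair so that $\max(|\beta_{i,n}|,|\gamma_{i,n}|)=1$ for each $n$. The concern is that this normalization must stay inside $K$. Dividing by $\max$ is not algebraic, so instead I would divide by whichever of $\beta_i,\gamma_i$ is larger on an $\om$-big set. By the ultrafilter, either $|\alpha_{i,n}|\le1$ on an $\om$-big set or $|\alpha_{i,n}|>1$ on an $\om$-big set, so the ultrafilter chooses one case for us. In the first case the factor is $(T-\alpha_i)$ with $\alpha_i\in K\cap B[0]$. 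In the second case the factor is $\alpha_i(\alpha_i^{-1}T-1)$ with $\alpha_i^{-1}\in K\cap B[0]$. Hence
\[
P = c\prod_i (\beta_iT-\gamma_i),
\]
where $\beta_i,\gamma_i\in K\cap B[0]$, $\max(|\beta_i|,|\gamma_i|)=1$ along $\om$, and $c\in K$.

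It remains to control $c$. The product $Q:=\prod_i(\beta_iT-\gamma_i)$ has coefficients in $K\cap B[0]$. By Gauss's lemma for the non-Archimedean norm on $\sH_\eps$, applied to the images in $B[\eps]\subset \sH_\eps^\circ$, the image of $Q$ has Gauss norm $1$ in $\sH_\eps$, since each factor does. The key point is that the images of $\beta_i,\gamma_i$ in $\sH_\eps$ have max norm exactly $1$, because one of them equals $1$. So the reduction $\tilde Q$ is a nonzero product of nonzero linear forms $\tilde\beta_iT-\tilde\gamma_i$ over $\tilde B_K[\eps]$. Since $\tilde P\ne0$, $P$ has Gauss norm $1$ in $\sH_\eps$ after choosing the lifts compatibly. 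Rather than rely on compatible lifts, I would take any lifts and argue as follows. We have $P=cQ$ with $Q$ of Gauss norm $1$, and the coefficients of $P$ lie in $B[0]\subset$ norm $\le1$, so $|c|\le1$. Concretely, $c$ is the ratio of a coefficient of $P$ to a coefficient of $Q$ of norm $1$, so $c\in K\cap B[0]$ possibly after shrinking to an $\om$-big set; this is the most delicate point. Reducing then gives $\tilde P=\tilde c\,\prod(\tilde\beta_iT-\tilde\gamma_i)$ with $\tilde c\neq 0$. In each nonconstant factor $\tilde\beta_i\neq0$, and the factor has root $\tilde\gamma_i/\tilde\beta_i$, which lies in the fraction field of $\tilde B_K[\eps]$. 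Factors with $\tilde\beta_i=0$ are constants, and they absorb into the leading coefficient.

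I expect the main obstacle to be membership of $c$ in $B[0]$ rather than merely in $\sH_\eps^\circ$. The issue is that $B[0]$ requires boundedness of the sequence, not only boundedness of $|c_n|^{\eps_n}$. To address this, I would pick the index $j$ at which $Q$'s coefficient $q_j$ has norm $1$ in $\sH_\eps$, so that $|q_{j,n}|^{\eps_n}\to1$. I would then possibly replace $c$ by $c\cdot u$, where $u=q_j/q_{j}$-type units are adjusted. Alternatively, I would note that the integral closedness of $B[\eps]$ already gives $\tilde c\in$ the image. If needed, I would simply replace $c$ by $a_j/q_j$ and use that $\tilde c=\tilde a_j/\tilde q_j$ lies in the fraction field, which suffices for splitting in the fraction field.
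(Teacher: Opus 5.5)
Your proof is correct and takes a different route from the paper. However, the step you flag as delicate is mishandled in your main line, and it turns out not to matter.

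\textbf{The constant $c$.} Writing $c=a_j/q_j$ with $|q_j|_\eps=1$ only gives $\lim|c_n|^{\eps_n}=|a_j|_\eps\le 1$, i.e.\ $c\in R^-_\eps$. It does not give boundedness of $(c_n)$: $q_{j,n}$ can tend to $0$ while $|q_{j,n}|^{\eps_n}\to 1$ (e.g.\ $q_{j,n}=1/n$ and $\eps_n=(\log n)^{-2}$). Boundedness is in fact true, but for an Archimedean reason you did not supply. On an $\om$-big set, $|c_n|=|a_{m,n}|\prod_i\max(1,|\alpha_{i,n}|)$ is the Mahler measure of $P_n$, and Landau's inequality bounds it by $\sqrt{m+1}\max_j|a_{j,n}|$.

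None of this is needed. Since $B[0]\subset R^-_\eps$ and $R^-_\eps\to\sH^\circ_\eps\to\widetilde{\sH}_\eps$ is a ring morphism, $c\in R^-_\eps$ already lets you reduce $P=cQ$ to $\tilde P=\tilde c\,\tilde Q$. Then $\tilde c\neq 0$ because $\tilde P\neq 0$. So the roots of $\tilde P$ are exactly the $\tilde\gamma_i/\tilde\beta_i$ with $\tilde\beta_i\neq 0$, and it does not matter where $\tilde c$ lies. This is what your final sentence does. The appeal to integral closedness of $B[\eps]$ is unsupported and should be dropped.

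\textbf{Comparison with the paper.} The paper avoids the constant altogether with the monic trick. The polynomial $Q(S)=a_m^{m-1}P(S/a_m)$ is monic with coefficients in $K\cap B[0]$. Its roots $r_i$ lie in $K$, and being integral over the integrally closed ring $B[0]$ (concretely, bounded by the Cauchy bound), they lie in $K\cap B[0]$. Reducing $a_m^{m-1}P(T)=\prod_i(a_mT-r_i)$ and dividing by $\tilde a_m^{m-1}\neq0$ gives the splitting, with roots $\tilde r_i/\tilde a_m$. This buys brevity: no ultrafilter case split and no Gauss-norm bookkeeping.

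Your route is the projective one. You write each root as a point of $\P^1(K)$ with coordinates in $K\cap B[0]$, one of them equal to $1$, which is essentially the fact that $B[0]$ is a valuation ring of $\sH_0$. You pay for this with Gauss's lemma over $\sH_\eps$ to control $c$. In exchange, your argument applies verbatim to binary forms whose reduced leading coefficient vanishes: the factors with $\tilde\beta_i=0$ simply become nonzero constants.
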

Note that 
$\tilde{B}_{\sH_0}[\eps] =
\left\{(z_n) \in \mathbb{C}^{\mathbb{N}}, \sup |z_n| <\infty\right\} \Big/\left\{(z_n), \lim |z_n|^{\eps_n}<1\right\}$.
\begin{proof}
We may suppose that 
the polynomial is the image in $\widetilde{\sH}_\eps$ of
 $P(T)= b T^d + a_{d-1} T^{d-1}\cdots + a_0$ with $b, a_i \in K\cap B[0]$, and $\tilde{b}_\eps\neq0$. 
Then $Q(T) = T^d + b a_{d-1} T^{d-1} + \cdots + b^{d-1} a_0$ splits over $K\cap B[0]$
since $K$ is algebraically closed and $B[0]$ is integrally closed. 
It follows that $\tilde{Q}_\eps(T)=\prod_i (T-\alpha_i)$ with $\alpha_i\in\tilde{B}_K[\eps]$, 
and 
$\tilde{P}_\eps(T)= \tilde{b}_\eps^{1-d} \prod_i (\tilde{b}_\eps T-\alpha_i)$.
\end{proof}

\subsubsection{Limits in projective varieties} \label{sec:limits-proj}

Suppose $X$ is a complex projective variety, and pick any field extension $k/\C$. 
Let $X_k$ be the projective variety over $k$ obtained by base change.
A (closed) point $x\in X(k)\subset X_k$ is determined by an 
affine chart $U\subset X$ and a ring morphism $\eva_{x,U,k}\colon k[U]=\C[U]\otimes_\C k \to k$ 
trivial on $k$ modulo the  equivalence $\eva_{x,U,k}|_{k[U\cap V]}= \eva_{x,V,k}|_{k[U\cap V]}$
for any other chart $V$. Observe that $\eva_{x,U,k}$ is determined by its restriction to $\C[U]$.

Pick any sequence $(x_n)\in X(\C)^\N$. Since the complex variety $X$ is compact, the limit 
$x_\C:= \lim x_n$ exists as a point in $X(\C)$. 
Pick an affine chart $U$ containing $x_\C$, and let $\eps<(1)$ be any scale. 
The map $\phi \mapsto (\phi(x_n))$ defines a morphism
$\eva_{x,\eps} \colon  \C[U] \to B[\eps]$, and this defines a point 
$x_\eps \in X(\sH_\eps)$. Composing the evaluation map with the canonical morphism 
$B[\eps]\to \sH^\circ_{\eps} \to \widetilde{\sH}_\eps$, we also get a point $\tilde{x}_\eps\in X(\widetilde{\sH}_\eps)$.

When $X = \P^d$,  this construction can be done as follows. Choose
homogeneous coordinates $[z_0: \cdots :z_d]$. 
Choose any normalized representative $x_n= [z_{0,n}: \cdots :z_{d,n}]$
in the sense that $\max |z_{i,n}| =1$ for all $n$. 
For any scale $\eps$, we let 
$z_{i,\eps}\in \sH_\eps$ be the point determined by the sequence $(z_{i,n})$. Then we have 
$x_\eps = [z_{0,\eps}: \cdots :z_{d,\eps}]$
(and similarly for $\tilde{x}_\eps$).

Observe that the constructed maps $X(\C)^\N \to X(\sH_\eps)$ (resp.  $X(\C)^\N \to X(\widetilde{\sH}_\eps)$) are natural in the sense
that for any regular map $f\colon X\to Y$ and any sequence $x\in X^\N$
then $f(x_\eps)=f(x)_\eps$ (and $f(\tilde{x}_\eps)=\widetilde{f(x)}_\eps$).

\smallskip

We now compare these maps when we vary the scales. Pick any pair of scales $\eps < \eta$. 
We first construct a natural map $X(\sH_{\eps})\to X(\sH_\eta)$ as follows. 
Pick any point $x\in X(\sH_\eps)$.  We can then find an affine chart $U$
such that $\eva_{x,U} \colon \sH_\eps[U]\to B[\eps]$ maps
$\C[U]$ to $B[\eps]$. Indeed, we can embed $X$ into a projective space $\P^d$, 
suppose that $x$ has homogeneous  coordinates $[1:x_1:\cdots :x_d]$
so that $x_i$ is represented by a sequence of complex numbers of norm $\le 1$, and take $U= X\cap \{z_0\neq0\}$.

Since $\lim |z_n|^{\eps_n} =0$ implies
$\lim |z_n|^{\eta_n} =0$,
we have a well defined morphism $B[\eps]\to \sH_\eta$, and
the composite morphism $\C[U]\to B[\eps]\to  \sH_\eta$ determines a point in $X(\sH_\eta)$.
This defines the required map $X(\sH_{\eps})\to X(\sH_\eta)$.

Observing that $\lim |z_n|^{\eps_n} <1$ implies
$\lim |z_n|^{\eta_n} =0$, we get a similar natural map $X(\widetilde{\sH}_{\eps})\to X(\sH_\eta)$.

To summarize the above discussion, the natural maps $X(\C)^\N\to X(\sH_\eps)$, and $X(\C)^\N\to X(\widetilde{\sH}_\eps)$
all fit into the following commutative diagram. 
\begin{figure}[h]
\centering
\begin{tikzcd}
X(\C)^{\N} \arrow[r, "x\mapsto x_\eps"] \arrow[rrd, swap,"x\mapsto x_\eta"] \arrow[rr, bend left, "x\mapsto \tilde{x}_\eps"]
 \arrow[rrrd, bend right, swap, "x\mapsto x_\C"]
& X(\sH_{\eps}) \arrow[r] \arrow[rd]
& X(\widetilde{\sH_{\eps}}) 
\arrow[d] & \\
&    & X(\sH_{\eta})   \arrow[r]  &  X(\C)                                                
\end{tikzcd}                    

\caption{Relating limits in projective spaces (with $\eps < \eta$)}
\label{diagram:residue}
\end{figure}

\begin{rmk}
We may  replace $X(\C)^\N$ by $X(\sH_{0})$ in the diagram above.
\end{rmk}

\begin{rmk}
If $(k,|\cdot|)$ is a non-Archimedean field, the projective distance on $\P^1(k)$ is defined
by setting \[d([z:1],[z':1])=|z-z'|/(\max(1,|z|)\max(1,|z'|).\]
The map $\P^1(\sH_\eps) \to \P^1(\sH_\eta)$ are $1$-Lipshitz for the projective distance if $\eps<\eta<(1)$ .
Observe that points at projective distance $<1$ from $[0:1]$ (i.e.,  the open unit ball in $\sH_\eps$) are all
mapped to the point $[0:1]$ in $\P^1(\sH_\eta)$ so that 
$\P^1(\sH_\eps) \to \P^1(\sH_\eta)$ does not extend continuously to the Berkovich analytifications. 
\end{rmk}

%%%%%%%%%%%%
\section{Sequences of rational maps}\label{sec:seq}
As in the previous section, we let $k$ be an algebraically closed field of characteristic $0$,
endowed with a non-trivial complete norm. 
In most cases, we have $k= \sH_\eps$ for some scale $\eps$, so that $k$ is of residual characteristic $0$
when non-Archimedean. 

We shall also fix homogeneous coordinates $[z_0:z_1]$ on $\P^1$, and affine coordinates $[z:1]$.

\subsection{The space of degree $d$ rational maps} 
 Let $\overline{\Rat}_{d}(k)$ be the space of pairs of homogeneous polynomials of degree $d\ge 1$ and coefficients in $k$
 modulo multiplication by scalars $k^*$. It is naturally identified with the space of $k$-points of the projective space of dimension $2d+1$. 
 
 A point in $\ovRat_d(k)$ can be naturally interpreted as a pair of a rational map $f$ of degree $\delta \le d$
 and an effective divisor $D$ on $\P^1_k$ of degree $d-\delta$. 
 The divisor $D$ is the divisor of zeroes of the common factor of the two polynomials. 
 The set of holes of an element $(f,D) \in \ovRat_d(k)$
 is by definition the support $|D|$ of $D$. 
 
\medskip
 
 The space $\ovRat_d(k)$ contains the set $\Rat_{d}(k)$ of pairs of polynomials $[P:Q]$ whose homogeneous resultant $\Res(P,Q)$ is non-zero. 
 It is the space of $k$-points of a Zariski open subset $\Rat_d$ inside $\ovRat_d$.
 The space $\Rat_d(k)$ can be identifed with the space of all rational maps $f\colon \P^1_k\to \P^1_k$ of degree $d$.

Since $k$ is a metrized field, and the homogeneous resultant is homogeneous of degree $2d$ in each variable, we may define
\[|\Res(f)|=\frac{|\Res(P,Q)|}{\max_{0\leq i, j\leq d}\{|a_i|,|b_j|\}^{2d}}\]
when $f=[P:Q]$, $P(z_0,z_1)=\sum a_i z_0^iz_1^{d-i}$, $Q(z_0,z_1)=\sum b_i z_0^iz_1^{d-i}$. 
This defines a continuous function on $\ovRat_d(k)$ which is positive exactly on $\Rat_d(k)$, i.e., 
$\{f\in\ovRat_d(k), |\Res(f)|>0\}= \Rat_d(k)$.

Observe that this function extends continuously to the Berkovich analytification of $\ovRat_d$ which is compact, 
hence it attains its maximum.  We denote by $C_d>0$ its supremum over $\Rat_d(k)$. 

 When $k$ is non-Archimedean, then we have $|\Res(f)|\le 1$ by the strong triangle inequality. Also we have $|\Res(f)|= 1$
 iff $f$ has good reduction. To see this, normalize $P$ and $Q$ so that $\max_{0\leq i, j\leq d}\{|a_i|,|b_j|\}=1$ and observe 
 that the reduction has non zero resultant in the residue field $\tk$.

 \medskip

 The group $\PGL(2)$ acts by conjugacy on $\Rat_d(k)$ by $(M,f) \mapsto M\cdot f \cdot M^{-1}$. 
 Assume from now on that $d\ge2$. From the existence of the moduli space $\rat_d:= \Rat_d/\PGL(2)$~\cite{SJ98} and Kempf-Ness theory, 
 one deduces the following lemma, see~\cite[Lemma 3.4]{FG25} for a proof.
\begin{lemma}
\label{lem:compareresultant}
 When $d\ge2$, there exist some positive real numbers $C,\alpha>1$ such that for all $M\in\PGL_{2}(k)$  and  $f\in\Rat_{d}(k)$, we have
\begin{multline*}
\label{eq:loj-GIT}
C \min\{|\Res(M)|, |\Res(f)|\}^{1/\alpha}
\ge 
\min\{|\Res(M\cdot f\cdot M^{-1})|,|\Res(f)|\}
\\
\ge 
C^{-1} \min\{|\Res(M)|, |\Res(f)|\}^\alpha.
\end{multline*} 
\end{lemma}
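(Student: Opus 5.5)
We have to prove Lemma~\ref{lem:compareresultant}: there are constants $C,\alpha>1$ such that for all $M\in\PGL_2(k)$ and $f\in\Rat_d(k)$ both comparison inequalities hold. The idea is to write $|\Res(\cdot)|$ as the value of a norm on invariant sections, and then obtain both inequalities from the algebraic fact that conjugation is a regular map on an affine GIT-type quotient.

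\emph{Reformulation.} Normalize $M$ by $|\Res(M)|=|\det M|/\|M\|^2$ and $f=[P:Q]$ by $\max|\text{coeffs}|=1$. The conjugate $M\cdot f\cdot M^{-1}$ has coefficients that are bihomogeneous polynomials in the entries of $M$ (via $M$ and its adjugate) and in the coefficients of $f$. By the classical transformation formula, its homogeneous resultant equals $\det(M)^{a}\Res(P,Q)$ for an explicit integer $a=a(d)$, since $\Res$ is $\mathrm{SL}_2$-invariant under change of variables. We then get an identity of the form
\[
|\Res(M\cdot f\cdot M^{-1})| = |\Res(M)|^{a'}\,|\Res(f)|\cdot\frac{\|M\|^{N}}{\|M\cdot f\cdot M^{-1}\|^{2d}}
\]
with explicit exponents. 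Everything therefore reduces to comparing the unnormalized norm of the conjugate, $\|M\cdot f\cdot M^{-1}\|$, with $\|M\|^{N'}$ up to powers of $\min\{|\Res(M)|,|\Res(f)|\}$.

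\emph{Upper bound on the conjugate's norm.} We have trivially $\|M\cdot f\cdot M^{-1}\|\le c\,\|M\|^{N'}$, with $c=1$ in the non-Archimedean case. This gives one direction, $|\Res(M\cdot f\cdot M^{-1})|\gtrsim |\Res(M)|^{a'}|\Res(f)|$, and hence the right-hand inequality with $\alpha\ge a'+1$.

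\emph{Lower bound on the conjugate's norm.} This is the key step, and it uses the quotient $\rat_d$ and Kempf–Ness/Nullstellensatz. The group $\mathrm{SL}_2$ acts on $\Rat_d$, which is affine (the complement of $\{\Res=0\}$), with finite stabilizers and closed orbits, since every $f\in\Rat_d$ is GIT-stable by Silverman~\cite{SJ98}. The action map $\mathrm{SL}_2\times\Rat_d\to\Rat_d\times\Rat_d$ is therefore proper. Translating properness into algebra, the coordinate functions of $(M,f)$ are integral over the ring generated by the coordinates of $(f, M\cdot f\cdot M^{-1})$ together with $1/\Res$. Taking norms of the integral dependence equations, in the non-Archimedean case via the ultrametric inequality and in the Archimedean case via the root bound, yields a Łojasiewicz-type estimate:
\[
\|M\|^{\text{normalized}}\le C\,\bigl(\min\{|\Res(f)|,|\Res(M\cdot f\cdot M^{-1})|\}\bigr)^{-\beta}.
\]
Rephrased in terms of $|\Res(M)|$, this is exactly the left-hand inequality. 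The right-hand inequality follows by symmetry: apply the same estimate to the pair $(M^{-1}, M\cdot f\cdot M^{-1})$, using $|\Res(M^{-1})|=|\Res(M)|$.

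\emph{Main obstacle.} The difficulty is to make the properness-to-inequality step uniform over all metrized fields $k$. The constants must come from a single integral dependence relation defined over $\Z$, or over $\C$ with coefficients of absolute value at most $1$ after scaling. This is possible because the relations have coefficients in $\Q$, and in the residue characteristic $0$ setting their norms are $1$; in the Archimedean case one absorbs them into $C$.
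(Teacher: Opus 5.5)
Your argument is correct, but it takes a different route from the one the paper indicates. The paper gives no proof in the text. It deduces the lemma from the existence of $\mathrm{rat}_d$ (Silverman's stability of every point of $\Rat_d$ for $d\ge2$) together with Kempf--Ness theory, and refers to \cite[Lemma 3.4]{FG25}.

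You use the same GIT input, in the form of Mumford's properness of the action map $\Psi\colon \mathrm{SL}_2\times\Rat_d\to\Rat_d\times\Rat_d$ on the stable locus. Where you differ is in how properness is made quantitative: you replace Kempf--Ness by a purely algebraic device.
\begin{itemize}
\item $\Psi$ is a proper morphism between affine varieties, hence finite.
\item So the entries of $M\in\mathrm{SL}_2(k)$ (every element of $\PGL_2(k)$ lifts, $k$ being algebraically closed) satisfy monic equations. Their coefficients have the form $H(F,G)/(\mathrm{Res}(F)^p\mathrm{Res}(G)^q)$ with $H$ bihomogeneous.
\item Evaluating at normalized representatives and using the root bound gives $\|M\|\le C\min\{|\mathrm{Res}(f)|,|\mathrm{Res}(M\cdot f\cdot M^{-1})|\}^{-\beta}$.
\item This is the left-hand inequality, since $|\mathrm{Res}(M)|=\|M\|^{-2}$ when $\det M=1$.
\end{itemize}
The right-hand inequality comes, as you say, from $\mathrm{Res}(M\circ F\circ M^{\mathrm{adj}})=\det(M)^{d^2+d}\mathrm{Res}(F)$ and the trivial bound $\|M\circ F\circ M^{\mathrm{adj}}\|\le c\|M\|^{d+1}$.

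What your route buys is elementarity once Mumford's properness is granted, and uniformity in the field. Since $\Psi$ is defined over $\mathbb{Q}$ and finiteness descends along $\mathbb{Q}\subset\C$, one relation with rational coefficients serves every non-Archimedean field of residue characteristic $0$, where these coefficients have norm $\le 1$. For $(\C,|\cdot|^\eps)$ the inequality is just the $\eps$-th power of the complex one. A Kempf--Ness argument works instead with the norm function along $\mathrm{SL}_2(\C)$-orbits. It is most natural over $\C$, which is in fact the only field on which the paper applies the lemma (to complex sequences $f_n$, $M_n$ in \S\ref{sec:funda-scale}).

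Two small corrections to your write-up. First, the sentence saying that the right-hand inequality ``follows by symmetry'' from the properness estimate applied to $(M^{-1},M\cdot f\cdot M^{-1})$ is wrong. That substitution just reproduces the left-hand inequality, which is already symmetric in $f$ and $M\cdot f\cdot M^{-1}$. The right-hand inequality is the one you had already obtained from the trivial bound, so nothing is lost. Second, to reach the stated form with $\min\{|\mathrm{Res}(M)|,|\mathrm{Res}(f)|\}$, use that $|\mathrm{Res}|$ is bounded above (by $C_d$, or by $1$ in the non-Archimedean case). Then take $\alpha\ge\max\{2\beta,\,d^2+d+1\}$.
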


\begin{rmk}
Lemma~\ref{lem:compareresultant} is not valid when $\deg(f)=1$ as the centralizer of $f$ is non-compact. 
\end{rmk}

It is natural to introduce the minimal\footnote{the terminology is due to Rumely who considered the function $-\log|\res|$ which is natural to minimize along an orbit.} resultant function on $\Rat_d(k)$ by setting
\[
|\res(f)| := \sup\{|\Res(M\cdot f \cdot M^{-1})|, M\in \PGL(2,k)\}
\]
When $d\ge 2$, since each $\PGL(2)$-orbit is closed,
this quantity is attained for some $M\in\PGL(2,k)$. 
In particular, when $k$ is non-Archimedean, then $|\res(f)|\le1$ with equality iff $f$ has potential good reduction; and in the Archimedean case
$|\res|$ is bounded from above by $C_d$.

\begin{rmk}\label{rem:good1}
Assume $k$ to be non-Archimedean, and suppose $M\in \PGL(2,k)$. We shall say that 
$M$ has good reduction (resp. potential good reduction) if $|\Res(M)|=1$ (resp. $|\res(M)|=1$). 
Note that $M$ has good reduction iff it belongs to $\PGL(2,k^\circ)$; and it
does not have potential good reduction iff it is conjugate to a homothety $z\mapsto \la z$
with $|\la|<1$.
\end{rmk}

\subsection{Limits of a sequence of rational maps} 
Suppose that $f=(f_n)$ is a sequence of complex rational maps of degree $d\ge 1$. 
Since $\ovRat_d$ is a complex projective variety, it follows from the discussion in \S\ref{sec:limits-proj}, that 
for any scale $\eta$, we may consider the limit point of the sequence 
$(f_\eta,D_\eta(f)) \in \ovRat_d(\sH_\eta)$ (resp. $(\tilde{f}_\eta,\tilde{D}_\eta(f)) \in \ovRat_d(\widetilde{\sH}_\eta)$). 
Observe that $f_\eta$ is a rational map of degree $\delta \le d$, and $D_\eta(f)$ is a divisor
in $\P^1(\sH_\eta)$ of degree $d-\delta$. We call it the 
\emph{hole divisor}
associated with $(f_n)$. 
The same is valid over $\widetilde{\sH}_\eta$. 
\begin{prop}
\label{prop: compo}
   Let $(f_n)$ and $(g_n)$ be two sequences of rational maps of degrees $d$ and $d'$, respectively. Assume that $f_{\varepsilon}$ (resp. $\tilde{f}_\eps$) is not a constant rational map whose image is contained in the support of $D_{\varepsilon}(g)$ (resp. $\tilde{D}_{\varepsilon}(g)$). 
Then
$(g \cdot f)_{\varepsilon} = g_{\varepsilon} \cdot f_{\varepsilon}$, (resp. $\widetilde{(g \cdot f)}_{\varepsilon} = \tilde{g}_{\varepsilon} \cdot \tilde{f}_{\varepsilon}$)
and $
D_{\varepsilon}(g \cdot f) = d' \, D_{\varepsilon}(f) + f_{\varepsilon}^{*}\bigl(D_{\varepsilon}(g)\bigr)$, 
(resp. $\tilde{D}_{\varepsilon}(g \cdot f) = d' \, \tilde{D}_{\varepsilon}(f) + f_{\varepsilon}^{*}\bigl(\tilde{D}_{\varepsilon}(g)\bigr)$.
\end{prop}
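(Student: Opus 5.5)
Write $f_n=[P_n:Q_n]$ and $g_n=[U_n:V_n]$ with coefficients normalized to have maximum modulus $1$. Then $g_n\cdot f_n$ is represented by the pair of degree $dd'$ homogeneous polynomials
\[
\bigl(U_n(P_n,Q_n),\,V_n(P_n,Q_n)\bigr),
\]
which has no common factor for each $n$. The plan is to take limits of these polynomial pairs coefficientwise and compare the result with the normalized representative of $g_n\cdot f_n$.

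First I fix notation for the limits. Write $P_\eps, Q_\eps, U_\eps, V_\eps$ for the limits of the coefficient sequences in $\sH_\eps$; these exist because all coefficients are bounded by $1$. Factor out the common parts,
\[
P_\eps=H\,p,\quad Q_\eps=H\,q,\quad U_\eps=K\,u,\quad V_\eps=K\,v,
\]
where $f_\eps=[p:q]$, $g_\eps=[u:v]$, $\operatorname{div}(H)=D_\eps(f)$ and $\operatorname{div}(K)=D_\eps(g)$. The construction in \S\ref{sec:limits-proj} is compatible with ring operations, so the coefficient limits of $U_n(P_n,Q_n)$ are $U_\eps(P_\eps,Q_\eps)$, and similarly for $V$. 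Homogeneity then gives
\[
U_\eps(P_\eps,Q_\eps)=H^{d'}\,K(p,q)\,u(p,q),\qquad V_\eps(P_\eps,Q_\eps)=H^{d'}\,K(p,q)\,v(p,q).
\]

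The main point, and the only delicate step, is normalization. The pair $(U_n(P_n,Q_n),V_n(P_n,Q_n))$ has coefficients bounded by a constant depending only on $d,d'$, but it need not be normalized. Its limit in $\ovRat_{dd'}(\sH_\eps)$ is obtained from the coefficientwise limit only when that limit is nonzero; otherwise one must rescale by the maximal coefficient before taking limits. So I must check that the product above is not identically zero.
\begin{itemize}
\item $H\neq0$ and $u(p,q),v(p,q)$ are not both zero, since $[u:v]$ has no common factor and $(p,q)\neq(0,0)$.
\item $K(p,q)\equiv 0$ happens exactly when $f_\eps$ is constant with value a zero of $K$, that is, a point of $|D_\eps(g)|$. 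This is precisely the case excluded by the hypothesis.
\end{itemize}
When the coefficientwise limit is nonzero, the maximal coefficient modulus $c_n$ of the composite pair satisfies $\lim c_n^{\eps_n}\in(0,\infty)$, which is a unit in $\sH_\eps$. Dividing by $c_n$ therefore changes the limit only by a scalar, and the limit point in $\ovRat_{dd'}(\sH_\eps)$ is $[H^{d'}K(p,q)\,u(p,q):H^{d'}K(p,q)\,v(p,q)]$.

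It remains to read off the rational map and its hole divisor from this limit pair. Since $u,v$ are coprime, the common factor of $u(p,q)$ and $v(p,q)$ is the hole divisor of the composition of the maps $g_\eps$ and $f_\eps$ when $f_\eps$ is nonconstant. I will argue this via coprimality of $p,q$, or note it is empty in degree terms by counting. That also covers $f_\eps$ constant with value outside $|D_\eps(g)|$, where the composite map is simply constant. Hence
\[
(g\cdot f)_\eps=g_\eps\cdot f_\eps,
\]
and the divisor of the remaining factor $H^{d'}K(p,q)$ is $d'D_\eps(f)+f_\eps^*D_\eps(g)$.

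The residue-field version over $\widetilde{\sH}_\eps$ is identical, using reductions of the coefficients in place of their $\sH_\eps$-limits.
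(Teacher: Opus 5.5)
Your proposal is correct and is essentially the paper's argument. You factor out the common divisors, use homogeneity to write the limit pair as $H^{d'}K(p,q)\,u(p,q)$ and $H^{d'}K(p,q)\,v(p,q)$, use the hypothesis exactly to ensure $K(p,q)\not\equiv 0$, and conclude from the coprimality of $u(p,q)$ and $v(p,q)$.

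Your check that the coefficientwise limit is nonzero, so renormalizing only changes it by a unit, is a detail the paper leaves implicit. The sentence about the common factor of $u(p,q),v(p,q)$ should simply say this factor is trivial: a common zero $z$ would make $(p(z),q(z))$ a common zero of the coprime pair $u,v$, hence $p(z)=q(z)=0$, contradicting the coprimality of $p,q$.
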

In particular, when $\deg(f_\eta)\ge 1$, then for all integer $l\ge1$ we have
\begin{equation}\label{eq:hole-iterate}
d^{-l} D_\eta(f^l)=\sum_{j=0}^{l-1}d^{-j} f^{-j} D_\eta(f)~.
\end{equation}
Note that the same equation holds for $\tilde{D}_\eta(f)$.

\begin{proof}
We only treat the computation of 
$(g\cdot f)_{\varepsilon}$
since the proof for 
$\widetilde{g\cdot f}_\eps$
is completely analogous. 

Let \( f_n = [P_n : Q_n] \) (resp. \( g_n = [U_n : V_n] \)) be normalized representations of \( f_n \) (resp. \( g_n \)), so that the maximum of the absolute values of the coefficients of $P_n$ and $Q_n$ (resp. of $U_n$ and $V_n$) is equal to \(1\). 
Write $P_\eps$, etc. for the homogeneous polynomial induced by the sequence $P_n$ in $\sH_\eps$. 
Let \(H\) (resp. \(G\)) denote the greatest common divisor of \(P_\varepsilon\) and \(Q_\varepsilon\) (resp. of \(U_\varepsilon\) and \(V_\varepsilon\))
so that $P_\eps = H \hat{P}_\eps$, $Q_\eps = H \hat{Q}_\eps$, etc.
Then we have 
\begin{align*}
(U_n(P_n, Q_n))_\varepsilon 
&
= U_\eps (P_\eps, Q_\eps)
= H^{d'}U_\eps(\hat{P}_\eps, \hat{Q}_\eps)
\\
&
=
H^{d'}\, G(\hat{P}_\eps, \hat{Q}_\eps)\, \hat{U}_\eps(\hat{P}_\eps, \hat{Q}_\eps)
\end{align*}
and similarly 
\[
(V_n(P_n, Q_n))_\eps
= 
H^{d'}\, 
G(\hat{P}_\eps, \hat{Q}_\eps))
\, 
\hat{V}_\eps
(\hat{P}_\eps, \hat{Q}_\eps)~.
\]
If \( f_\varepsilon=[\hat{P}_\eps:\hat{Q}_\eps] \) is not a constant map with values in \( |D_{\varepsilon}(g)| =(G=0) \), then $G(\hat{P}_\eps, \hat{Q}_\eps)$ is not identically $0$. Since $\hat{U}_\eps(\hat{P}_\eps, \hat{Q}_\eps)$ and $\hat{V}_\eps(\hat{P}_\eps, \hat{Q}_\eps)$ are coprime, the result follows. 
\end{proof}

In the remaining of the paper, we consider the auxiliary function 
$\theta(s) = \frac{-1}{\log s}$ if $0\le s\le 1/e$, and 
$\theta(s)=1$ for all $s\ge 1/e$. 
Observe that $\theta$ is increasing, that
$s_n\to 0$ iff $\theta(s_n)\to 0$ and when it is the case, then 
$|s_n|^{\theta(s_n)}= 1/e<1$ for $n$ large enough.
Also for any $\alpha>0$, we have $\theta(s_n^\alpha) \asymp\theta(s_n)$.

\begin{thm}
\label{thm:trichotomy}
Pick any sequence of degree $d\ge 1$ rational maps  \( (f_n) \in \operatorname{Rat}_d(\mathbb{C})^{\N} \) and consider the scale 
$\eps(f) :=(\theta(|\Res(f_n)|))$.
Then the following holds:
\begin{enumerate}
    \item if $\eta<\eps(f)$, then $\deg(f_{\eta})=d$ and $f_{\eta}$ has good reduction;
    \item if $\eta=\eps(f)$, then $\deg(f_{\eta})=d$ and $f_{\eta}$ does not have good reduction if $\eps(f) <(1)$;
    \item if $\eta>\eps(f)$, then we have 
    \[\deg(f_{\eta})\leq \deg(\tilde{f}_{\eps(f)})\leq \deg_{x_g}(f_{\eps(f)})\leq d-1~.\]
\end{enumerate}
\end{thm}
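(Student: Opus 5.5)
Write $f_n=[P_n:Q_n]$ with normalized coefficients, so that $|\Res(f_n)|=|\Res(P_n,Q_n)|=:s_n\in(0,C_d]$. Set $\eps=\eps(f)=(\theta(s_n))$. The plan is to compute the norm of the resultant of the limit $f_\eta$ in $\sH_\eta$ directly from the sequence $(s_n)$. The key observation is that the resultant is a polynomial with integer coefficients in the coefficients of $P,Q$. By naturality of limits (\S\ref{sec:limits-proj}), the element $\Res(P_\eta,Q_\eta)\in\sH_\eta$ is represented by the sequence $(\Res(P_n,Q_n))$. The coefficients are normalized, so some coefficient has norm $1$ in $\sH_\eta$ and the representative $[P_\eta:Q_\eta]$ is still normalized. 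Hence, for $\eta<(1)$,
\[|\Res(f_\eta \text{ as a point of } \ovRat_d)| = \lim_\om s_n^{\eta_n}.\]
If $\eps(f)=(1)$, then $\lim s_n>0$. In that case every $\eta$ satisfies $\eta\le\eps$, and (1)–(2) reduce to the statement that $f_\eta$ has degree $d$. For $\eta<(1)$ this holds with $|\Res|=1$, i.e.\ good reduction. For $\eta=(1)$ it holds because the limit in $\C$ has nonzero resultant. So assume from now on $s_n\to0$, so that $\theta(s_n)=-1/\log s_n$ for $\om$-most $n$.

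\emph{Cases (1) and (2).} We have $s_n^{\eta_n}=\exp(-\eta_n/\theta(s_n))$. If $\eta<\eps$, then $\eta_n/\theta(s_n)\to0$, so $|\Res(f_\eta)|=1$. Hence $f_\eta$ has degree $d$ (no hole) and good reduction. If $\eta=\eps$, we may choose the representative $\eta_n=\theta(s_n)$, which gives $|\Res(f_\eta)|=e^{-1}\in(0,1)$. So $\deg f_\eta=d$, and since the norm is $<1$, $f_\eta$ does not have good reduction, using the criterion recalled after the definition of $|\Res|$.

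\emph{Case (3).} For $\eta>\eps$, we have $\eta_n/\theta(s_n)\to\infty$, so $\Res(P_\eta,Q_\eta)=0$ and $\deg f_\eta\le d-1$. The finer chain of inequalities I would obtain in three steps.

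First, by the natural map $\ovRat_d(\widetilde{\sH}_\eps)\to\ovRat_d(\sH_\eta)$ (Figure~\ref{diagram:residue}), the point $(f_\eta,D_\eta)$ is the image of $(\tilde f_\eps,\tilde D_\eps)$. That map is obtained by reducing, coefficientwise, a normalized representative over the valuation ring $\tilde B[\eps]$ from Lemma~\ref{lem:intclosed}. Taking the gcd of the two reduced polynomials can only enlarge the common factor, so $\deg f_\eta\le\deg\tilde f_\eps$.

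Second, I would identify $\tilde f_\eps$ (the reduction of the normalized pair $[P_\eps:Q_\eps]$ over $\widetilde{\sH}_\eps$) with the reduction map at the Gauss point, up to the hole. The standard fact is that $\tilde f_\eps=[\tilde P:\tilde Q]$ with the common factor removed, and that $\deg_{x_g}(f_\eps)$ equals the degree of this reduced map when it is nonconstant. When it is constant, $\deg_{x_g}\ge1=\max(\deg,1)$ still holds, so the inequality $\deg\tilde f_\eps\le\deg_{x_g}f_\eps$ is fine. One should double-check that $f_\eps(x_g)=x_g$ in the nonconstant case; I expect this to be the main technical point, and it follows from the fact that $\tilde f$ nonconstant forces the image of the Gauss point to be the Gauss point.

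Third, $\deg_{x_g}(f_\eps)\le d-1$ follows because $|\Res(f_\eps)|=e^{-1}<1$. The reductions $\tilde P,\tilde Q$ then have vanishing resultant in $\widetilde{\sH}_\eps$, so they share a factor and the reduced map has degree $\le d-1$.
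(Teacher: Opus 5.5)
Your argument is the paper's argument. The identity $|\mathrm{Res}(P_\eta,Q_\eta)|=\lim s_n^{\eta_n}=\lim\exp(-\eta_n/\theta(s_n))$ gives (1), (2) and the bound $\deg f_\eta\le d-1$. The chain in (3) then comes from pushing $\tilde f_\eps$ to $f_\eta$ along the natural map $\widetilde{\sH}_\eps\to\sH_\eta$, and from identifying $\tilde f_\eps$ with the reduction at $\xg$. Your Gauss-lemma justification of $\deg f_\eta\le\deg\tilde f_\eps$ is fine: a primitive common factor of $\tilde P,\tilde Q$ over the valuation ring $\tilde B[\eps]$ maps to a nonzero common factor of $P_\eta,Q_\eta$ of the same degree. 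It is a more explicit version of the paper's one-line semi-conjugacy claim.

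One step does not go through, and the paper's proof has the same flaw. Your third step only shows $\deg\tilde f_\eps\le d-1$. To turn this into $\deg_{\xg}(f_\eps)\le d-1$ you need $\deg_{\xg}(f_\eps)=\deg\tilde f_\eps$, and your second step gives that only when $\tilde f_\eps$ is nonconstant.

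When $\tilde f_\eps$ is constant, the inequality is actually false. Take $f_n(z)=e^nz^2$, normalized as $[z_0^2:e^{-n}z_1^2]$. Then $|\mathrm{Res}(f_n)|=e^{-2n}$ and $\eps(f)=(1/n)<(1)$. Moreover $f_\eps(z)=z^2/a$ with $|a|<1$ and $\tilde f_\eps\equiv\infty$. Finally $f_\eps(\xg)=\zeta(0,|a|^{-1})\neq\xg$, while $\deg_{\xg}(f_\eps)=2=d$, so the last inequality of (3) fails.

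The paper hides the same issue in its unqualified equality $\deg(\tilde f_\eps)=\deg_{\xg}(f_\eps)$, which holds only when $f_\eps$ fixes $\xg$. What your argument does prove is $\deg f_\eta\le\deg\tilde f_\eps\le d-1$ in general. It also gives $\deg\tilde f_\eps=\deg_{\xg}(f_\eps)$ whenever $\tilde f_\eps$ is nonconstant, in particular whenever $\deg f_\eta\ge 1$. That is the case singled out in the remark following the theorem and the one relevant for the later applications. You should state this restriction explicitly rather than claim the full chain.
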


\begin{rmk}
Observe that in the last case, $\deg(f_{\eta})\ge 1$ implies $\xg$ is fixed by $f_{\eps(f)}$.
\end{rmk}

\begin{proof}
Write $\eps_n= \theta(|\Res(f_n)|)$, and $\eps= (\eps_n)$.
To compute $f_\eta$, we represent each rational map $f_n$ by a pair of homogeneous 
polynomials $P_n, Q_n$ of degree $d$ normalized so that the maximum of the modulus
of all coefficients is $1$. Note that in that case, we have $|\Res(f_n)| = |\Res(P_n,Q_n)|$
so that  \[|\Res(f_\eta)|= \lim |\Res(f_n)|^{\eta_n} = \lim \exp(-\frac{\eta_n}{\eps_n})~.\] 
When $\eta< \eps$, we obtain  $|\Res(f_\eta)|=1$ hence $f_\eta$ has degree $d$ and good reduction. 
When $\eta= \eps$, we obtain  $0<|\Res(f_\eta)|<1$ hence $f_\eta$ still has degree $d$ but has not good reduction. 
Finally when  $\eta> \eps$, then $|\Res(f_\eta)|=0$ hence $\deg(f_\eta)$ has degree at most $d-1$. 

The upper bound $\deg(f_{\eta})\leq\deg_{x_g}(f_{\eps})$ follows from the naturality of the commutativity of the diagram
in Figure~\ref{diagram:residue}. Indeed, the natural map 
 $\P^1(\widetilde{\sH}_\eps)\to \P^1(\sH_\eta)$
semi-conjugates $\tilde{f}_\eps$ to $f_\eta$. Since $\deg(\tilde{f}_\eps)=\deg_{\xg}(f_\eps)$, the result follows.
\end{proof}

Later, we shall use the following observation. 
\begin{lemma}\label{lem:upto}
Suppose $(g_n)\in \Rat_d(\C)^\N$, and $(h_n)\in \Rat_{\delta}(\C)^\N$ are sequences of complex rational maps of degree $d$ and $\delta$
respectively such that 
$\tilde{g}_\eps=\tilde{h}_\eps$ for some scale $\eps<(1)$. Then we have $g_\eta=h_\eta$ for any scale $\eta>\eps$.
\end{lemma}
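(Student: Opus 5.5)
The proof is a direct application of the natural maps from Figure~\ref{diagram:residue}, which send $\P^{N}(\widetilde{\sH}_\eps)$ to $\P^N(\sH_\eta)$ for $\eta>\eps$. We view $g$ and $h$ as sequences of points in projective spaces. The subtlety is that they live in different spaces, $\ovRat_d$ and $\ovRat_\delta$, so we first have to make sense of the hypothesis $\tilde{g}_\eps=\tilde{h}_\eps$. It says that the reductions define the same rational map over $\widetilde{\sH}_\eps$, ignoring holes.

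Choose normalized representatives $g_n=[P_n:Q_n]$ and $h_n=[U_n:V_n]$, with coefficients of maximal modulus $1$. Reducing in $\widetilde{\sH}_\eps$, write $\tilde P_\eps=G\hat P$, $\tilde Q_\eps=G\hat Q$, $\tilde U_\eps=H\hat U$ and $\tilde V_\eps=H\hat V$, where $G,H$ are the gcd's. The hypothesis gives $[\hat P:\hat Q]=[\hat U:\hat V]$ as rational maps. We may take the two pairs to be equal up to a scalar in $\widetilde{\sH}_\eps^*$, which we absorb into $G$. Hence
\[
\tilde P_\eps \tilde V_\eps H\cdot G^{-1}\ldots
\]
More cleanly, the polynomial identity $\tilde P_\eps\tilde V_\eps-\tilde Q_\eps\tilde U_\eps=0$ holds in $\widetilde{\sH}_\eps[z_0,z_1]$.

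Next we lift this identity. Each coefficient of $P_nV_n-Q_nU_n$ is a bounded sequence whose image in $\widetilde{\sH}_\eps$ vanishes. Its $\lim|\cdot|^{\eps_n}$ is then $<1$, so by the observation preceding Figure~\ref{diagram:residue} it has $\lim|\cdot|^{\eta_n}=0$ for every $\eta>\eps$. Therefore $P_\eta V_\eta=Q_\eta U_\eta$ in $\sH_\eta[z_0,z_1]$. The pairs $(P_\eta,Q_\eta)$ and $(U_\eta,V_\eta)$ are each nonzero, since normalization keeps some coefficient of absolute value $1$. So the cross-multiplication identity implies that $[P_\eta:Q_\eta]$ and $[U_\eta:V_\eta]$ define the same rational map after removing common factors, that is $g_\eta=h_\eta$. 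The only care needed is the degenerate situation where one of $P_\eta,Q_\eta$ vanishes identically, and it is handled directly. If $P_\eta\equiv0$, then $Q_\eta\not\equiv0$, so $Q_\eta U_\eta=0$ forces $U_\eta\equiv0$, and both maps are the constant $[0:1]$. The other cases are symmetric.

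The main obstacle is making sure the hypothesis really yields the exact identity $\tilde P\tilde V=\tilde Q\tilde U$ of homogeneous polynomials of bidegree $d+\delta$, rather than an identity only after cancelling $G$ and $H$. This follows because $\tilde P\tilde V-\tilde Q\tilde U=GH(\hat P\hat V-\hat Q\hat U)$, and the parenthesis vanishes once $\hat P\hat V=\hat Q\hat U$. That in turn is exactly the meaning of $[\hat P:\hat Q]=[\hat U:\hat V]$, including when the maps are constant.
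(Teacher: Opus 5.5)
Your proof is correct and is essentially the paper's argument. The hypothesis is equivalent to the cross-multiplication identity $\tilde P_\eps\tilde V_\eps=\tilde Q_\eps\tilde U_\eps$, and a bounded sequence $(c_n)$ whose image in $\widetilde{\sH}_\eps$ vanishes satisfies $\lim|c_n|^{\eta_n}=0$ for every $\eta>\eps$; so the identity lifts to $\sH_\eta$ and gives $g_\eta=h_\eta$. Your handling of the degenerate constant case is a welcome extra detail, and you should delete the unfinished expression ``$\tilde P_\eps\tilde V_\eps H\cdot G^{-1}\ldots$'', which plays no role.
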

\begin{proof}
Let $a_n$ and $b_n$ be two sequences complex numbers of bounded norm $\le R$. 
Suppose that $\tilde{a}_\eps = \tilde{b}_\eps \in \widetilde{\sH}_\eps$. Observe that
$|a_\eps|=|b_\eps|=1$ since $\eps<(1)$.  Then we have $|a_\eps - b_\eps|<1$, i.e.
$\lim |a_n-b_n|^{\eps_n} <1$. This implies $\lim |a_n-b_n|^{\eta_n}=0$ for any scale
$\eps >\eta$. 

To conclude the proof we represent $g_n$ and $h_n$ by sequences of  polynomials
$A_n, B_n$, and $P_n, Q_n$  of degree $d$ and $\delta$ respectively, so that the maximum of the norm of their coefficients
is equal to $1$. Then $\tilde{g}_\eps=\tilde{h}_\eps$  iff 
$\tilde{A}_\eps \tilde{Q}_\eps = \tilde{P}_\eps \tilde{B}_\eps$ and the result follows from the previous remark. 
\end{proof}

\subsection{Critical and periodic points of the non-Archimedean limits} 
When $\eta = (1)$, then $f_\eta$ is a complex rational map, and it is well-known that $(f_n)$ converges uniformly on compact
subsets to $f_\eta$ outside the support of the hole divisor, see, e.g.,~\cite[Lemma 4.2]{zbMATH05004325demarcoiteration}, or~\cite[\S{5.1}]{zbMATH06455745KJ15}.
In general, we can still relate dynamical quantities of the sequence $(f_n)$ to those of $f_\eta$.

For any rational map $f\in \Rat_d(k)$, denote by 
\[\cC(f):=\sum_{p\in\P^1(k)}(\deg_f(p)-1) [p]\] its critical divisor,
whose degree is equal to $2d-2$. If $(f_n)$ is a sequence of complex rational maps, and $\eta$ is any scale, then
$\cC(f_n)$ admits a limit as a divisor in $\P^1(\sH_\eta)$ (resp. in $\P^1(\widetilde{\sH}_\eta)$) which we denote by $\cC(f)_\eta$ (resp. $\widetilde{\cC}(f)_\eta$). 
\begin{lemma}\label{lem:crit}
For any sequence of complex rational maps $(f_n)$ and for any scale $\eta$,
we have 
\[
\cC(f)_\eta = 
\cC(f_\eta) + 2 D_\eta(f),
\text{ and } \widetilde{\cC}(f)_\eta = 
\cC(\tilde{f}_\eta) + 2 \tilde{D}_\eta(f)
\]
as divisors in $\P^1(\sH_\eta)$ (resp. in $\P^1(\widetilde{\sH}_\eta)$).
\end{lemma}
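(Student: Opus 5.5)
The idea is to compute the critical divisor algebraically from the Wronskian, which behaves well under taking limits. Write $f_n=[P_n:Q_n]$ with $P_n,Q_n$ homogeneous of degree $d$, normalized so that the maximum modulus of the coefficients is $1$. The critical divisor of $f_n$ is the zero divisor of the Jacobian (Wronskian) determinant
\[
W(P_n,Q_n)=\frac{\partial P_n}{\partial z_0}\frac{\partial Q_n}{\partial z_1}-\frac{\partial P_n}{\partial z_1}\frac{\partial Q_n}{\partial z_0},
\]
a homogeneous polynomial of degree $2d-2$. It is not identically zero, since $\deg f_n\ge1$ and we are in characteristic $0$. Thus $\cC(f_n)$ is the point of $\P^{2d-2}(\C)=\P(H^0(\cO(2d-2)))$ determined by the coefficients of $W(P_n,Q_n)$. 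These coefficients are universal polynomials with integer coefficients in the coefficients of $P_n,Q_n$.

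Because the construction of \S\ref{sec:limits-proj} is natural with respect to the map $\ovRat_d\dashrightarrow \P^{2d-2}$, $(P,Q)\mapsto W(P,Q)$, the limit divisor $\cC(f)_\eta$ should be the zero divisor of $W(P_\eta,Q_\eta)$. Here $P_\eta,Q_\eta$ are the reductions in $\sH_\eta$ of the normalized coefficient sequences. This is the only point that needs care, because $W$ is a rational map of projective spaces, defined only where $W\neq0$. The coefficient sequence of $W(P_n,Q_n)$ is bounded but not necessarily normalized. It defines an element of $B[\eta]$-coefficients whose image in $\sH_\eta$ is exactly $W(P_\eta,Q_\eta)$, by polynomiality. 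So the argument reduces to checking that $W(P_\eta,Q_\eta)\not\equiv0$. When this holds, the max-norm of the coefficients of $W(P_n,Q_n)$ is bounded below along $\om$ at scale $\eta$, and renormalizing by a sequence with $\lim|c_n|^{\eta_n}\in(0,\infty)$ does not change the projective point. Hence $\cC(f)_\eta=\mathrm{div}\,W(P_\eta,Q_\eta)$.

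The main computation is then purely algebraic over $k=\sH_\eta$. Write $P_\eta=H\hat P$ and $Q_\eta=H\hat Q$, where $H=\gcd(P_\eta,Q_\eta)$ has degree $d-\delta$ and $\mathrm{div}(H)=D_\eta(f)$, and $f_\eta=[\hat P:\hat Q]$ has degree $\delta$. Expanding with the Leibniz rule, the terms involving derivatives of $H$ cancel by antisymmetry:
\[
W(H\hat P,H\hat Q)=H^2W(\hat P,\hat Q)+H\bigl(\hat Q\,W(H,\hat P)-\hat P\,W(H,\hat Q)\bigr)\cdot(\text{sign}).
\]
I expect the second group to collapse via the Euler relation. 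The identity $W(HA,HB)=H^2W(A,B)$ holds whenever $A,B$ have equal degree, which follows from Euler's formula $z_0\partial_0+z_1\partial_1=\deg$. This step is the one to verify carefully. Concretely, the cross terms equal $H\,[A\,(\partial_0H\,\partial_1B-\partial_1H\,\partial_0B)+B(\partial_1H\,\partial_0A-\partial_0H\,\partial_1A)]$. Contracting with Euler shows this vanishes because $\deg A=\deg B$.

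There are two cases. If $\delta\ge1$, then $W(\hat P,\hat Q)\neq0$ has zero divisor $\cC(f_\eta)$, so $W(P_\eta,Q_\eta)=H^2W(\hat P,\hat Q)\neq0$ and the divisor is $\cC(f_\eta)+2D_\eta(f)$. If $\delta=0$, then $\hat P,\hat Q$ are constants, $W=0$, and $\cC(f_\eta)$ is the zero divisor, so the formula would have to read $2D_\eta(f)$, a divisor of degree $2d$ instead of $2d-2$. This degenerate constant case is the main obstacle. I would handle it by interpreting $\cC(f_\eta)$ for a constant map as $-2[\,\cdot\,]$ of degree $-2$, or by excluding it under the standing convention $\deg f_\eta\ge1$. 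In practice I would first check how the paper uses the lemma and restrict to $\deg(f_\eta)\ge1$, noting the degree count $2(\delta-1)+2(d-\delta)=2d-2$ as a consistency check. The residue-field statement follows verbatim by replacing $\sH_\eta$ with $\widetilde{\sH}_\eta$ and using the reductions $\tilde P_\eta,\tilde Q_\eta$, since the construction is again natural.
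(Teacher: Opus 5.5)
Your strategy is the paper's: realize $\cC(f_n)$ as the zero divisor of the Jacobian $P_{n,0}Q_{n,1}-P_{n,1}Q_{n,0}$, pass to the limit coefficientwise, and factor out $H^2$ with $H=\gcd(P_\eta,Q_\eta)$. You also check that $W(P_\eta,Q_\eta)\not\equiv 0$, so that the limit projective point really is the divisor of $W(P_\eta,Q_\eta)$. And you observe that the statement tacitly needs $\deg f_\eta\ge1$ (resp. $\deg\tilde f_\eta\ge1$). The paper's two-line proof passes over both of these points silently, so they are worth keeping.

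However, the step you flagged as the one to verify is false as you stated it. The cross terms do not vanish, and $W(HA,HB)\neq H^2W(A,B)$ in general. For instance, $H=A=z_0$ and $B=z_1$ give $W(z_0^2,z_0z_1)=2z_0^2$, whereas $H^2W(z_0,z_1)=z_0^2$. What Euler's relation actually gives, for homogeneous $F,G$, is
\[
z_1W(F,G)=(\deg G)\,\partial_0F\cdot G-(\deg F)\,F\,\partial_0G .
\]
Apply this to $F=HA$, $G=HB$ (degree $d$) and to $A,B$ (degree $\delta$). Since $\partial_0F\cdot G-F\,\partial_0G=H^2(\partial_0A\cdot B-A\,\partial_0B)$, you obtain
\[
\delta\, W(HA,HB)= d\, H^2\, W(A,B).
\]
So the cross term equals $\frac{d-\delta}{\delta}H^2W(A,B)$, which is nonzero as soon as there are holes.

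The repair is immediate. When $\delta\ge1$, the factor $d/\delta$ is a nonzero constant in characteristic $0$, since both $\sH_\eta$ and $\widetilde{\sH}_\eta$ contain $\C$. The zero divisors therefore coincide, and you still get $\cC(f_\eta)+2D_\eta(f)$. This is exactly why the paper asserts the equality only at the level of divisors $[\,\cdot\,]$, not as a polynomial identity.
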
 

\begin{proof}
To see this, we work directly in homogeneous coordinates, and 
write $f_n= [P_n:Q_n]$ where the maximum of the modulus of
the coefficients of $P_n$ and $Q_n$ is equal to $1$. 
Then $P_\eta = H \hat{P}_\eta$ and $Q_\eta = H \hat{Q}_\eta$
where $D_\eta(f)$ is the divisor of zeros of $H$, and 
$f_\eta = [\hat{P}_\eta:\hat{Q}_\eta]$. 

For any homogeneous polynomial $R$, let 
$[R]$ be its divisor or poles, and 
$R_0$, respectively $R_1$
be the partial derivative with respect to $z_0$ and $z_1$ respectively. 
Since $\cC(f_n)$ is the divisor of zeroes of 
$P_{n,0}Q_{n,1} - P_{n,1}Q_{n,0}$, we get
\[
\cC(f)_\eta = 
[P_{\eta,0}Q_{\eta,1} - P_{\eta,1}Q_{\eta,0}]
= [H^2(\hat{P}_{\eta,0}\hat{Q}_{\eta,1} -\hat{P}_{\eta,1}\hat{Q}_{\eta,0}]
\]
using the identities 
$P_\eta = H \hat{P}_\eta$ and $Q_\eta = H \hat{Q}_\eta$.
The same argument applies over $\widetilde{\sH}_\eta$, and implies the result. 
\end{proof}

Denote by $\Per_l(f)$ the divisor of periodic points of period dividing $l$ of a rational map $f$, 
where each point is counted with its multiplicity as a fixed point of $f^l$. Note that the degree of  $\Per_l(f)$  is equal to $1+d^l$. 
If $(f_n)$ is a sequence of rational maps, let  $\Per_l(f)_\eta$ (resp. $\widetilde{\Per}_l(f)_\eta$)
be the limit of $\Per_l(f_n)$ in $\P^1(\sH_\eta)$ (resp. in $\P^1(\widetilde{\sH}_\eta)$).

\begin{lemma}\label{lem:period}
For any sequence of complex rational maps $(f_n)$, for any integer $l\ge1$, and for any scale $\eta$,
we have 
\[
\Per_l(f)_\eta = 
\Per_l(f_\eta) + D_\eta(f^l),
\text{ and }
\Per_l(f)_\eta = 
\Per_l(\tilde{f}_\eta) + \tilde{D}_\eta(f^l)\]
as divisors in $\P^1(\sH_\eta)$ (resp. in $\P^1(\widetilde{\sH}_\eta)$).

Moreover, let $p$ be a periodic point of period $l$ of $f_\eta$ (resp. of $\tilde{f}_\eta$) whose orbit does not intersect $|D_\eta(f)|$ (resp. $|\widetilde{D}_\eta(f)|$), and
$p_n\in |\Per_l(f_n)|$ be a sequence satisfying $p_\eta = p$. Then the multipliers are related by the identity
\[(f^l_\eta)'(p) = \left( (f^l_n)'(p_n)\right)_\eta \in \sH_\eta~.\]
\end{lemma}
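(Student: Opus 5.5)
The plan is to imitate the proofs of Lemma~\ref{lem:crit} and Proposition~\ref{prop: compo}, working in homogeneous coordinates. Write $f_n^l=[P_n^{(l)}:Q_n^{(l)}]$ with normalized coefficients. The divisor $\Per_l(f_n)$ is the divisor of zeroes of the degree $d^l+1$ homogeneous polynomial
\[ \Phi_n := z_1 P^{(l)}_n - z_0 Q^{(l)}_n, \]
taken in the convention where $f=[z_0:z_1]\mapsto[P:Q]$; the other convention only changes signs. It is not identically zero because $f_n^l\neq \id$.

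To compute the limit, note that the limit divisor of $\Per_l(f_n)$ in $\P^1(\sH_\eta)$ is the divisor of the limit polynomial $(\Phi_n)_\eta$. This holds provided $(\Phi_n)_\eta\not\equiv 0$, which follows from the naturality discussed in \S\ref{sec:limits-proj} applied to the map sending a polynomial to its divisor. By Proposition~\ref{prop: compo} we have $P^{(l)}_\eta = H\hat P$ and $Q^{(l)}_\eta=H\hat Q$, where $[\hat P:\hat Q]=(f^l)_\eta=f_\eta^l$ and $\mathrm{div}(H)=D_\eta(f^l)$. Hence
\[ (\Phi_n)_\eta = H\,(z_1\hat P - z_0\hat Q), \]
and the second factor is nonzero with divisor $\Per_l(f_\eta)$, since $f_\eta$ has degree $\ge1$ and is not the identity. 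The first case to watch is when $\deg f_\eta\ge 1$ but $f_\eta^l=\id$. There $\hat P$ and $\hat Q$ are proportional to $z_0,z_1$ and the factor vanishes identically. I would handle this by adopting the convention $\Per_l(\id)=$ the whole line, or by noting the identity then must be read as the statement that $(\Phi_n)$ needs renormalizing; most likely the lemma tacitly assumes $f_\eta^l\neq\id$. A second degenerate case is when $f_\eta$ is constant: then $(f^l)_\eta$ may fail to equal $f_\eta^l$, and I would use $(f^l)_\eta$ directly in place of $f_\eta^l$ throughout. The residue-field version is identical, with $\widetilde{\sH}_\eta$ in place of $\sH_\eta$.

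For the multiplier statement, I would work in an affine chart around the orbit of $p$ and use local uniform convergence. Conjugating by a fixed Möbius map in $\PGL(2,\C)$, one may assume the orbit $p,\dots,f_\eta^{l-1}(p)$ avoids $\infty$. The assumption that the orbit avoids $|D_\eta(f)|$ gives, by \eqref{eq:hole-iterate}, that $p\notin |D_\eta(f^l)|$; hence $H(p)\neq0$. Write $f_n^l(z)=A_n(z)/B_n(z)$ with normalized coefficients. Then $(f_n^l)'(z)=(A_n'B_n-A_nB_n')/B_n^2$, and the coefficientwise limit of the numerator and denominator is $H^2(\hat A'\hat B-\hat A\hat B')$ over $H^2\hat B^2$. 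Evaluating at the sequence $p_n$ with $(p_n)_\eta=p$ commutes with taking limits, since polynomial evaluation is a ring morphism to $B[\eta]$ or $\sH_\eta$. Because $H(p)\hat B(p)\neq0$, the $H^2$ factors cancel, giving $((f_n^l)'(p_n))_\eta=(f_\eta^l)'(p)$.

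The main obstacle is this last denominator step. One must check that $|H(p)|\neq 0$ and $\hat B(p)\neq0$ in $\sH_\eta$, which needs $f^l_\eta(p)\neq\infty$ and $p$ outside the holes of $f^l$ rather than just those of $f$. This is exactly where the hypothesis on the orbit and \eqref{eq:hole-iterate} enter. For $\tilde f_\eta$ the same argument runs in $\widetilde{\sH}_\eta$, with the identity holding in $\widetilde{\sH}_\eta$.
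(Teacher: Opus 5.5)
Your proposal is correct. For the divisor identity it is the paper's argument: the paper also takes the divisor of $z_1P-z_0Q$, after silently replacing $f$ by $f^l$ (``we may suppose $l=1$''), and factors its limit as $H(z_1\hat P-z_0\hat Q)$ with $[H]=D_\eta(f^l)$.

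For the multiplier your route is slightly different.
\begin{itemize}
\item The paper conjugates by a sequence $M_n$ in a fixed maximal compact subgroup of $\PGL(2,\C)$ so that $p_n=0$. It then reads the multiplier as the coefficient ratio $a_{1,n}/b_{0,n}$, so only coefficientwise limits are needed and the factor $H(0,1)^{-1}$ cancels.
\item You keep the coordinates fixed, up to one constant Möbius map, and evaluate the quotient-rule expression along the moving point $p_n$. There the cross terms $HH'\hat A\hat B$ cancel and $H(p)\hat B(p)\neq0$.
\end{itemize}
Your version needs one small extra remark: $\lim|p_n|^{\eta_n}<\infty$. This holds because $p\neq\infty$, so the second normalized homogeneous coordinate of $p_n$ has nonzero image in $\sH_\eta$, and then evaluation at $p_n$ commutes with passing to $\sH_\eta$. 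The paper's version needs instead that $M_\eta$ has good reduction, so that the conjugation does not disturb limits, holes or multipliers. The two arguments are of comparable length.

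Your two caveats are genuine, and the paper's proof does not address them.
\begin{itemize}
\item When $(f^l)_\eta=\id$, the limit of $\Phi_n$ vanishes identically. For the paper's own example $f_n(z)=z\frac{z+an^{-1}}{z+n^{-1}}$ at scale $(1)$, one gets $z_1P_n-z_0Q_n=\frac{a-1}{n}z_0z_1^2$.
\item When $f_\eta$ is a constant lying in its own hole set, $(f^l)_\eta$ and $f_\eta^l$ can differ, and the identity written with $f_\eta^l$ fails. For example, $f_n(z)=\frac{(z-1)^2+1/n}{(z-1)^2+2/n}$ at scale $(1)$ has $f_{(1)}\equiv1$ but $(f^2)_{(1)}\equiv1/2$.
\end{itemize}
As you say, what the argument proves in general is the identity with $(f^l)_\eta$. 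This agrees with the stated identity whenever $\deg(f_\eta)\ge1$, by Proposition~\ref{prop: compo}. In the multiplier statement, the hypothesis that the orbit of $p$ avoids $|D_\eta(f)|$ already guarantees $(f^l)_\eta=f^l_\eta$.
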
 
The next example shows that the assumption on the periodic cycle to avoid the support of the hole divisor is necessary.
\begin{example}
Pick any $a\in \C^*\setminus\{1\}$, and consider the sequence of quadratic maps
$f_n(z)= z\frac{z+an^{-1}}{z+n^{-1}}$.
Then $0$ is a fixed point having multiplier $a$, but the limit at the trivial scale $(1)$ is equal to 
$f_{(1)}(z)=z$.
\end{example}

\begin{proof}
We only argue over $\sH_\eta$, the same proof works over $\widetilde{\sH}_\eta$. 
Write $f_n[z_0:z_1]=[P_n:Q_n]$. We may suppose $l=1$ so that the divisor $\Per_1(f)$ is determined by the divisor
$[z_1P_{n}-z_0Q_{n}]$. Taking the limit at the scale $\eta$ and writing as in the previous proof 
$P_\eta = H \hat{P}_\eta$ and $Q_\eta = H \hat{Q}_\eta$ with $[H]=D_\eta(f)$ and $f_\eta=[\hat{P}_\eta:\hat{Q}_\eta]$, the result follows.

For the next statement, observe that since the orbit of $p$ avoids $|D_\eta(f)|$, then 
$p$ does not lie in the support of $D_\eta(f^l)$ by~\eqref{eq:hole-iterate}.
We may thus suppose that $l=1$, and further conjugate $f_n$ by a sequence $M_n\in\PGL(2,\C)$
sending $p_n$ to $0$ with $M_n$ converging to some $M\in\PGL(2,\C)$ (pick for instance $M_n$ in
a fixed maximal compact subgroup of $\PGL(2,\C)$).
Replacing $f_n$ by $M_n\cdot f_n \cdot M_n^{-1}$, we have
$p_n=0$ for all $n$, so that in the coordinates $[z:1]$, we can write
\[
P_n(z)= a_{1,n} z+ \cdots + a_{d,n}z^d
\text{ and } 
Q_n(z)= b_{0,n}+ b_{1,n} z+ \cdots + b_{d,n}z^d
\]
with 
$f'_n(p_n)=a_{1,n}/b_{0,n}$. 
Taking the limit at the scale $\eta$ yields
$P_\eta(z)= a_{1,\eta} z+ \cdots + a_{d,\eta}z^d$, and
$Q_\eta(z)= b_{0,\eta}+ b_{1,\eta} z+ \cdots + b_{d,\eta}z^d$. 
And since $H(0,1)\neq0$, we have 
$\hat{P}_\eta(z)= H(0,1)^{-1}  a_{1,\eta} z+ O(2)$, 
$\hat{Q}_\eta(z)= H(0,1)^{-1}  b_{0,\eta} z+ O(1)$,
so that 
\[(f'_n(p_n))_\eta=a_{1,\eta}/b_{0,\eta}=f'_\eta(p_\eta)\]
as required.
\end{proof}

%%%%%%%%%%%%
\section{The fundamental scale of a sequence of rational maps}
\label{sec:funda-scale}
Let $(f_n)$ be a sequence of complex rational maps of degree $d\ge 1$. 
We define its fundamental scale by setting $\eps_\star(f) := (\theta(|\res(f_n)|))$. 
In this section, we explore the limits of a sequence of rational maps $(f_n)$ of degree $d\ge2$ 
at the scale $\eps_\star(f)$.

Observe that for any sequence of complex Möbius transformations $(M_n)$
then $\eps_\star(f) = \eps_\star(M\cdot f\cdot M^{-1})$, and there always exist
some sequence $(M_n)$ such that 
$|\Res(M_n\cdot f_n\cdot M_n^{-1})|=|\res(f_n)|$ for all $n$.
This condition implies the equality of scales $\eps(M\cdot f \cdot M^{-1})=\eps_\star(f)$, 
but the converse is not true in general. 

We have $\eps_\star(f)=(1)$ iff $f_n$ is bounded in moduli, i.e., there exists a sequence of Möbius transformations
such that $M_n^{-1} \cdot f_n \cdot M_n$ converges to a degree $d$ rational map (or equivalently
the set of conjugacy classes $[f_n]$ is relatively compact in $\rat_d$).

\subsection{Changes of coordinates realizing the minimal resultant}
In this section, we give a characterization of those projective change of coordinates which minimizes the resultant. 

\begin{thm}\label{thm:unique-rescaling}
Let $(f_n)$ be any sequence of degree $d\ge 2$ complex rational maps, and 
let $(M_n)$ be a sequence of Möbius transformations.
The following statements are equivalent:
\begin{enumerate}
\item
$\eps(M \cdot f \cdot M^{-1}) = \eps_\star(f)$;
\item
$(M \cdot f \cdot M^{-1})_{\eps(M \cdot f \cdot M^{-1})} $ does not have potential good reduction;
\item
for all sequence of Möbius transformations $(L_n)$,
$(LM^{-1})_{\eps(L\cdot f \cdot L^{-1})}$ has degree $1$.
\end{enumerate}
\end{thm}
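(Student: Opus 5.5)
Throughout write $g_n := M_n\cdot f_n\cdot M_n^{-1}$ and $\eps:=\eps(g)=(\theta(|\Res(g_n)|))$. Since $|\Res(g_n)|\le |\res(f_n)|$ and $\theta$ is increasing, we always have $\eps(g)\le \eps_\star(f)$. The main tool is Lemma~\ref{lem:compareresultant}, applied to $N_n:=L_nM_n^{-1}$ and $h_n := L_n f_n L_n^{-1} = N_n g_n N_n^{-1}$. The key translation is the following: a sequence $(N_n)$ of Möbius maps has a degree $1$ limit at a scale $\eta$ iff $\lim |\Res(N_n)|^{\eta_n}>0$, where the resultant is normalized. The same holds for $N_n^{-1}$, whose normalized resultant is comparable to that of $N_n$. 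Similarly, a degree $d$ sequence has degree $d$ limit at $\eta$ iff $\lim|\Res|^{\eta_n}>0$. The case $\eps_\star(f)=(1)$ is then essentially trivial: all scales are $(1)$, and (2) has to be read in the Archimedean sense. So I would focus on $\eps_\star(f)<(1)$, where $\lim|\res(f_n)|^{\eps_{\star,n}}=1/e<1$.

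I would prove (1)$\Leftrightarrow$(3) first. Assume (1) and let $L$ be arbitrary, with $\eta:=\eps(h)\le\eps_\star(f)=\eps(g)$. Then $\lim|\Res(h_n)|^{\eta_n}=1/e$ and $\lim|\Res(g_n)|^{\eta_n}>0$ because $\eta\le \eps(g)$. The first inequality of Lemma~\ref{lem:compareresultant}, with $M=N_n$ and $f=g_n$, gives $\min\{|\Res N_n|,|\Res g_n|\}^{1/\alpha}\ge C^{-1}\min\{|\Res h_n|,|\Res g_n|\}$. Raising to the power $\eta_n$ shows $\lim|\Res(N_n)|^{\eta_n}>0$, i.e.\ $(LM^{-1})_\eta$ has degree $1$.

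Conversely, assume (3) and choose $L_n$ realizing $|\res(f_n)|$, so that $\eps(h)=\eps_\star(f)$ and $N_\star:=(LM^{-1})_{\eps_\star}$ has degree $1$. Apply the second inequality of Lemma~\ref{lem:compareresultant} to $g_n=N_n^{-1}h_nN_n$:
\[
\min\{|\Res g_n|,|\Res h_n|\}\ge C^{-1}\min\{|\Res N_n^{-1}|,|\Res h_n|\}^\alpha .
\]
At scale $\eps_\star$ the right-hand side has positive limit, hence $\lim|\Res g_n|^{\eps_{\star,n}}>0$. This gives $\eps(g)\ge\eps_\star(f)$, and therefore equality.

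For (2)$\Rightarrow$(1) I would argue by contraposition. Suppose $\eps(g)<\eps_\star(f)$, take $L$ realizing the minimal resultant and put $\eta=\eps(g)$. Then $\lim|\Res h_n|^{\eta_n}=1$, and by the argument of the previous paragraph $N_\eta:=(LM^{-1})_\eta$ has degree $1$. Proposition~\ref{prop: compo} applies here since there are no holes in degree $d$ and degree $1$ limits, and it gives $h_\eta=N_\eta\cdot g_\eta\cdot N_\eta^{-1}$. By Theorem~\ref{thm:trichotomy}(1), $h_\eta$ has good reduction. Hence $g_\eta$ has potential good reduction, contradicting (2).

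For (1)$\Rightarrow$(2), suppose instead that $g_\eps$ (with $\eps=\eps_\star$) has potential good reduction via some $N\in\PGL(2,\sH_\eps)$. I would lift $N$ to a sequence $(N_n)$ of complex Möbius maps whose limit at scale $\eps$ is $N$, by choosing representatives of its coefficients. Proposition~\ref{prop: compo} then gives $(NgN^{-1})_\eps=N g_\eps N^{-1}$, which has good reduction, so $\lim|\Res(N_ng_nN_n^{-1})|^{\eps_n}=1$. But $|\Res(N_ng_nN_n^{-1})|\le|\res(f_n)|$, and $\lim|\res(f_n)|^{\eps_n}<1$ because $\eps\asymp\eps_\star$. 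This is a contradiction.

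The main delicate step is this last lifting argument. One must check that a representative sequence of $N$ consists of genuine invertible matrices on an $\om$-big set, and that the normalizations of the resultants are compatible when $\eps$ is only equivalent to, and not equal to, $\theta(|\res(f_n)|)$.
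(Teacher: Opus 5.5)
Your proof is correct and follows the same route as the paper. Specifically, you prove (1)$\Leftrightarrow$(3) via Lemma~\ref{lem:compareresultant} applied to $LM^{-1}$, and (2)$\Rightarrow$(1) by contraposition, using an $L$ realizing $|\res(f_n)|$ together with Proposition~\ref{prop: compo} and Theorem~\ref{thm:trichotomy}(1). For (1)$\Rightarrow$(2) you lift a conjugacy over $\sH_{\eps_\star}$ to a sequence of complex M\"obius maps, and your direct use of $|\mathrm{Res}(N_n g_n N_n^{-1})|\le|\res(f_n)|$ merely replaces the paper's appeal to Lemma~\ref{lem:compareresultant} at that step.

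One small correction: when $\eps_\star(f)=(1)$ it is not true that ``all scales are $(1)$'', since $\eps(M\cdot f\cdot M^{-1})$ can be non-trivial for a bad choice of $M$. However, your arguments for (2)$\Rightarrow$(1) and (1)$\Leftrightarrow$(3) never use $\eps_\star(f)<(1)$, so they already cover that case as written.
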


The last statement is equivalent to say that either
$LM^{-1}$ or $ML^{-1}$ belong to $\PGL(2,\sH_{\eps(L\cdot f \cdot L^{-1})})$.
Here is a way to use this result in a simple concrete example. 
\begin{example}
\label{eg: compute-fundascale}
Let \( f_n(z) = z^2 + n z \), and set 
$\eps = \left(1/\log n \right).$
Define $a =(n)\in\sH_0$, and observe that 
$|a|_\eps = e>1$.
Since $\deg(f_\eta)=0$ if $\eta>\eps$, and $\deg(f_\eps)=2$, it follows that $\eps(f)=\eps$ by Theorem~\ref{thm:trichotomy}.

Note that \( 0 \) is a repelling fixed point of $f_{\eps}$
hence this map does not have potential good reduction. Theorem~\ref{thm:unique-rescaling}
implies
$\varepsilon_\star(f) = \varepsilon(f) = ( 1/\log n)$.
\end{example}
\begin{proof}
The implication (1) $\Rightarrow$ (2) was claimed in~\cite[Theorem~4.17]{FG25}, but the argument was inacurate.
We thus provide  a new proof for the convenience of the reader.
To simplify notation set $\eps_\star= \eps_\star(f)$, and $g=M \cdot f \cdot M^{-1}$.

By Theorem~\ref{thm:trichotomy} (2), $g_{\eps_\star} $ is a rational map of degree $d$.
Since no complex rational map has potential good reduction by convention, 
we may suppose that $\eps_\star<(1)$ so that $\sH_{\eps_\star}$ is non-Archimedean.
 We need to show that $|\res(g_{\eps_\star})|\in(0,1)$.

Pick any sequence $(L_n)\in\PGL(2,\C)^\N$ such that 
$L_{\eps_\star} \in \PGL(2,\sH_{\eps_\star})$ and  $|\Res(L_{\eps_\star} \cdot g_{\eps_\star} \cdot L_{\eps_\star}^{-1})|=|\res(g_{\eps_\star})|$.
Observe that ${\eps_\star} \le \theta(|\Res(L)|)$ by Theorem~\ref{thm:trichotomy} (1). 
It follows from Lemma~\ref{lem:compareresultant} applied to $g_n$ and $L_n$
that 
\begin{align*}
\min\left\{
\theta(|\Res(L\cdot g\cdot L^{-1})|)
,
\theta(|\Res(g)|)
\right\}
\asymp 
 \min 
 \left\{
\theta(|\Res(L)|)
,
\theta(|\Res(g)|)
\right\}
\end{align*}
which implies 
\[
\theta(|\Res(L\cdot g\cdot L^{-1})|)
\asymp \eps_\star\]
since ${\eps_\star} \asymp \theta(|\Res(g)|)$.
We conclude that 
\[
|\res(g_{\eps_\star})|
=
|\Res(L_{\eps_\star} \cdot g_{\eps_\star} \cdot L_{\eps_\star}^{-1})|
=
\lim 
|\Res(L_n \cdot g_n \cdot L_n^{-1})|^{\eps_{\star,n}}
\in (0,1)\]
which implies the result.

\smallskip

We prove (2) $\Rightarrow$ (1) by contraposition. We may assume $M=\id$ and $\eps(f)< \eps_\star(f)$.
We need to show that $f_{\eps(f)}$ has potential good reduction. 
Pick any sequence $(L_n)$ of Möbius transformations such that $|\Res(L_n\cdot f_n \cdot L_n^{-1})|=|\res(f_n)|$ for all $n$.
Observe that $|\Res(L\cdot f\cdot L^{-1})|_{\eps(f)}=1$ hence $(L\cdot f\cdot L^{-1})_{\eps(f)}$ has good reduction.
By Lemma~\ref{lem:compareresultant} applied to 
$L_n\cdot f_n\cdot L_n^{-1}$ and $L_n^{-1}$, we get $\eps(L)\ge \eps(f)$
hence $\deg(L_{\eps(f)})=1$  by Theorem~\ref{thm:trichotomy}.
It follows from Proposition~\ref{prop: compo} that
$(L\cdot f\cdot L^{-1})_{\eps(f)}= L_{\eps(f)} \cdot f_{\eps(f)} \cdot L^{-1}_{\eps(f)}$
hence $f_{\eps(f)}$ has potential good reduction. 

\smallskip

Finally, the equivalence (1) $\Leftrightarrow$ (3) is a consequence of Lemma~\ref{lem:compareresultant} applied to 
$M\cdot f \cdot M^{-1}$ and $LM^{-1}$. 
Indeed we get $\eps(M\cdot f\cdot M^{-1}) \ge  \eps(L\cdot f\cdot L^{-1})$  for all $L$ iff
$\eps (LM^{-1}) \ge \eps(L\cdot f\cdot L^{-1})$
for all $L$.
\end{proof}

\subsection{Limits at the fundamental scale in degree $\ge2$}
We can now state the main result of this section.
\begin{thm}
\label{thm:trichotomy2}
Let $(f_n)$ be any sequence of degree $d\ge 2$ complex rational maps. 
Pick any sequence of Möbius transformations $(M_n)$ such that
$\eps_\star(f)=\eps(M \cdot f \cdot M^{-1})$. Set $g_n = M_n \cdot f_n \cdot M_n^{-1}$.
Then  the following holds. 
\begin{enumerate}
    \item If $\eta<\eps_\star(f)$, then $\deg(g_{\eta})=d$ and $g_{\eta}$ has good reduction;
    \item if $\eta=\eps_\star(f)$, then $\deg(g_{\eta})=d$ and $g_{\eta}$ does not have potential good reduction;
    \item if $\eta>\eps_\star(f)$, then we have 
    \[\deg(g_{\eta})\leq\deg(\tilde{g}_{\eps_\star(f)})\leq
    \deg_{x_g}(g_{\eps_\star(f)})\leq d-1~.\]
\end{enumerate}
\end{thm}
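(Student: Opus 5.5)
The plan is to reduce everything to Theorem~\ref{thm:trichotomy} applied to the sequence $(g_n)$, combined with Theorem~\ref{thm:unique-rescaling}. By hypothesis $\eps(g)=\eps(M\cdot f\cdot M^{-1})=\eps_\star(f)$, so the scale $\eps(g)$ appearing in Theorem~\ref{thm:trichotomy} coincides with the fundamental scale.

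\emph{Step 1.} Items (1) and (3) follow immediately from Theorem~\ref{thm:trichotomy} applied to $(g_n)$, replacing $\eps(g)$ by $\eps_\star(f)$:
\begin{itemize}
\item if $\eta<\eps_\star(f)=\eps(g)$, then $g_\eta$ has degree $d$ and good reduction;
\item if $\eta>\eps_\star(f)$, the chain $\deg(g_\eta)\le\deg(\tilde g_{\eps_\star(f)})\le\deg_{\xg}(g_{\eps_\star(f)})\le d-1$ is exactly Theorem~\ref{thm:trichotomy}~(3).
\end{itemize}
In the degenerate case $\eps_\star(f)=(1)$, there is no scale $\eta>\eps_\star(f)$, so (3) is vacuous. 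Any $\eta<(1)$ is then handled by (1).

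\emph{Step 2.} For item (2), Theorem~\ref{thm:trichotomy}~(2) gives $\deg(g_{\eps_\star(f)})=d$. It remains to show that $g_{\eps_\star(f)}$ does not have potential good reduction.
\begin{itemize}
\item If $\eps_\star(f)=(1)$, the field is $\C$. By the convention recalled in the proof of Theorem~\ref{thm:unique-rescaling}, no complex rational map has potential good reduction, so there is nothing to prove.
\item If $\eps_\star(f)<(1)$, this is precisely the implication (1)$\Rightarrow$(2) of Theorem~\ref{thm:unique-rescaling}, applied to the chosen $(M_n)$.
\end{itemize}

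\emph{Main obstacle.} The only substantive content is the potential good reduction claim in (2), and it is already carried by Theorem~\ref{thm:unique-rescaling}. The ingredients behind it are Lemma~\ref{lem:compareresultant}, used to compare $\theta(|\Res(L\cdot g\cdot L^{-1})|)$ with $\eps_\star(f)$, and the computation $|\res(g_{\eps_\star})|=\lim|\Res(L_n g_n L_n^{-1})|^{\eps_{\star,n}}\in(0,1)$. Given that theorem, the proof reduces to bookkeeping. The one point needing care is the case distinction between $\eps_\star(f)=(1)$ and $\eps_\star(f)<(1)$.
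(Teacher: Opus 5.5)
Your proposal is correct and is the paper's argument: the paper proves this theorem in one line, as a direct consequence of Theorem~\ref{thm:trichotomy} applied to $(g_n)$ (using $\eps(g)=\eps_\star(f)$) together with the implication (1)$\Rightarrow$(2) of Theorem~\ref{thm:unique-rescaling}. Your separate treatment of the case $\eps_\star(f)=(1)$ is harmless but not needed, since the proof of Theorem~\ref{thm:unique-rescaling} already handles the trivial scale by convention.
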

\begin{proof}
This result is a direct consequence of Theorems~\ref{thm:trichotomy}, and~\ref{thm:unique-rescaling}. 
\end{proof}
Note that having (potential) good reduction is invariant under iteration. 
The next result thus follows directly from Theorems~\ref{thm:trichotomy} and~\ref{thm:trichotomy2}.
\begin{cor}\label{cor:iterate}
For any sequence of rational maps $(f_n)$ of degree $d\ge1$, and for any integer $l\ge 1$, we have 
$\eps(f^l) =  \eps(f)$,
and 
$\eps_\star(f^l) = \eps_\star(f)$.
\end{cor}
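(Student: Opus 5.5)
The plan is to read off both equalities of scales from the trichotomy results, using that (potential) good reduction and degree behave well under iteration.

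\emph{First equality: $\eps(f^l)=\eps(f)$.} Write $\eps=\eps(f)$. For every scale $\eta$, all the maps $f_\eta,\dots,f^l_\eta$ are defined over the same field $\sH_\eta$, so I will compare $(f^l)_\eta$ with $(f_\eta)^l$.

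When $\eta\le\eps$, Theorem~\ref{thm:trichotomy} (1)--(2) gives $\deg(f_\eta)=d$ and $D_\eta(f)=0$. Proposition~\ref{prop: compo} applied inductively then yields $(f^l)_\eta=(f_\eta)^l$ and $D_\eta(f^l)=0$. Hence $\deg((f^l)_\eta)=d^l$.

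If $\eta<\eps$, then $f_\eta$ has good reduction. So $(f_\eta)^l$ also has good reduction: the Gauss point is fixed with maximal local degree, equivalently $J=\{\xg\}$. This means $|\Res((f^l)_\eta)|=1$, and by Theorem~\ref{thm:trichotomy} applied to $(f^l_n)$ we get $\eta<\eps(f^l)$ whenever $\eps(f^l)<(1)$.

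At $\eta=\eps<(1)$, the map $f_\eps$ does not have good reduction. Since $J(f_\eps^l)=J(f_\eps)\ne\{\xg\}$, neither does $(f^l)_\eps$. By the trichotomy for $f^l$, $\eps$ cannot be $<\eps(f^l)$. Since $\deg((f^l)_\eps)=d^l$, it cannot be $>\eps(f^l)$ either, by item (3). Therefore $\eps(f^l)=\eps$.

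The case $\eps=(1)$ is handled separately. Here all scales are $\le(1)$, so $\deg((f^l)_\eta)=d^l$ at every scale. Item (3) of the trichotomy for $f^l$ then forbids $\eps(f^l)<(1)$.

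\emph{Second equality: $\eps_\star(f^l)=\eps_\star(f)$.} Choose $(M_n)$ with $\eps(g)=\eps_\star(f)$, where $g=M\cdot f\cdot M^{-1}$. Then $g^l=M\cdot f^l\cdot M^{-1}$, and by the first part $\eps(g^l)=\eps(g)=\eps_\star(f)$.

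By Theorem~\ref{thm:unique-rescaling} (1)$\Rightarrow$(2), the map $g_{\eps_\star(f)}$ does not have potential good reduction. Potential good reduction is equivalent to the Julia set being a singleton, and $J(h^l)=J(h)$. So $(g_{\eps_\star(f)})^l=(g^l)_{\eps(g^l)}$ does not have potential good reduction either; the identity of these two maps comes from Proposition~\ref{prop: compo} as above.

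Applying Theorem~\ref{thm:unique-rescaling} (2)$\Rightarrow$(1) to the sequence $(f^l_n)$ gives $\eps(g^l)=\eps_\star(f^l)$, hence $\eps_\star(f^l)=\eps_\star(f)$. The Archimedean case $\eps_\star=(1)$ is covered by the same argument, since no complex map has potential good reduction.

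The only delicate point is the compatibility $(f^l)_\eta=(f_\eta)^l$ without holes, which Proposition~\ref{prop: compo} supplies. Everything else is a routine use of Theorems~\ref{thm:trichotomy} and~\ref{thm:unique-rescaling}, together with the iteration invariance of $J$.
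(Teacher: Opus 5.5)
This is essentially the paper's own argument. The paper deduces the corollary in one line from Theorems~\ref{thm:trichotomy} and~\ref{thm:trichotomy2}, together with the invariance of (potential) good reduction under iteration. Since Theorem~\ref{thm:trichotomy2} is just Theorem~\ref{thm:trichotomy} combined with Theorem~\ref{thm:unique-rescaling}, your write-up is a correct, detailed expansion of that proof for $d\ge 2$.

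Like the paper's proof, yours silently uses $d\ge 2$: you rely on Julia sets and on Theorem~\ref{thm:unique-rescaling}. For $d=1$ the first equality is in fact false. Take $f_n(z)=n^2/z$. Then $|\mathrm{Res}(f_n)|=n^{-2}$, so $\varepsilon(f)=(1/\log n)<(1)$. But $f_n^2=\mathrm{id}$, so $\varepsilon(f^2)=(1)$. The limit $f_\varepsilon(z)=c/z$ with $|c|=e$ lacks good reduction while its square has it. So your proof covers exactly the range where the statement holds.
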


\begin{rmk}
The former result implies the properness of the iteration map on the moduli space, a result due to DeMarco~\cite{LD07}, see also~\cite[Corollary~1.2]{FG25}.
\end{rmk}

\subsection{Fundamental scale and semi-conjugacy}

\begin{thm}\label{thm:semi-funda}
Let $(h_{1,n})$ and $(h_{2,n})$ be two sequences of rational maps of degree $d_1$ and $d_2$ respectively such that 
$d_1d_2\ge2$.
Let 
$f = h_1\cdot h_2$ and $g=h_2\cdot h_1$.

Then we have $\eps_\star(f)=\eps_\star(g)$. Moreover, if $\eps(f)=\eps_\star(f)$ and $\eps(g)=\eps_\star(g)$, then we have
$\deg(h_{1,\eps_\star})\ge 1$, and $\deg(h_{2,\eps_\star})\ge 1$ with $\eps_\star=\eps_\star(f)=\eps_\star(g)$. 
\end{thm}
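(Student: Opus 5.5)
The plan is to reduce both assertions to one comparison of resultants at a single scale, exploiting the symmetry between $f=h_1\cdot h_2$ and $g=h_2\cdot h_1$.

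\emph{Normalization.} Conjugating $f$ by $(M_n)$ amounts to replacing $h_1$ by $M\cdot h_1$ and $h_2$ by $h_2\cdot M^{-1}$. This changes $f$ into $M\cdot f\cdot M^{-1}$ and leaves $g$ unchanged, so neither the hypotheses nor $\eps_\star(f)$, $\eps_\star(g)$ are affected. We may therefore assume $\eps(f)=\eps_\star(f)=:\eta$. By Theorem~\ref{thm:trichotomy2}, $f_\eta$ has degree $d=d_1d_2\ge 2$ and does not have potential good reduction.

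\emph{Key step: $\deg h_{1,\eta}=d_1$ and $\deg h_{2,\eta}=d_2$.} Suppose the hypothesis of Proposition~\ref{prop: compo} holds for the pair $(h_2,h_1)$, i.e. $h_{2,\eta}$ is not a constant lying in $|D_\eta(h_1)|$. Then $f_\eta=h_{1,\eta}\cdot h_{2,\eta}$, so $d=\deg f_\eta\le \deg h_{1,\eta}\deg h_{2,\eta}\le d_1d_2$. Hence both limits have full degree and both hole divisors vanish.

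The remaining case is the main obstacle: $h_{2,\eta}\equiv c$ with $c$ a hole of $h_1$.
\begin{itemize}
\item For the second statement of the theorem, the extra hypothesis $\eps(g)=\eps_\star(g)$ should settle it. After the normalization $g$ becomes a conjugate of the original $g$, which has the same $\eps$ and $\eps_\star$, so the hypothesis still gives $\eps(g)=\eta$ once the first claim is known. Then $g_\eta$ must also have degree $d$. But $g_\eta=h_{2,\eta}\cdot h_{1,\eta}$ is the constant $c$ unless $h_{1,\eta}$ is itself a constant lying in $|D_\eta(h_2)|$.
\item In that doubly degenerate situation, I would first try to analyze the normalized homogeneous lifts. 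The product structure $f_n=[U_n(A_n,B_n):V_n(A_n,B_n)]$ forces the common factor $H$ of $(U_\eta,V_\eta)$ to divide the whole composite. With both factors collapsing, I expect this to force $\deg f_\eta<d$, giving a contradiction.
\item For the first statement I would try to rule out the degenerate case directly by the same homogeneous computation at the scale $\eta$. If that fails, the fallback is to choose the normalizing conjugacy more carefully, using Theorem~\ref{thm:unique-rescaling}(3).
\end{itemize}

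\emph{Conclusion from the key step.} Once both limits have full degree, Proposition~\ref{prop: compo} also gives $g_\eta=h_{2,\eta}\cdot h_{1,\eta}$, of degree $d$. The multiplicativity of the homogeneous resultant under composition gives
\[
|\Res(f_\eta)|=|\Res(h_{1,\eta})|^{d_2}|\Res(h_{2,\eta})|^{d_1^2}.
\]
The analogous formula holds for $g_\eta$ with the roles of $h_1,h_2$ swapped. In particular $|\Res(g_\eta)|>0$, and by Theorem~\ref{thm:trichotomy} this yields $\eps(g)\ge\eta$, hence $\eps_\star(g)\ge\eps(g)\ge\eta=\eps_\star(f)$.

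Running the same argument with $f$ and $g$ interchanged gives $\eps_\star(f)\ge\eps_\star(g)$, so $\eps_\star(f)=\eps_\star(g)$. The degree statement $\deg(h_{i,\eps_\star})\ge1$ under the hypothesis $\eps(f)=\eps_\star(f)$ and $\eps(g)=\eps_\star(g)$ is then exactly the key step, since that hypothesis lets us skip the normalization.
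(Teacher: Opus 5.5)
\textbf{There is a genuine gap: the case you call the main obstacle.} The whole proof depends on excluding it, and you never do.

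\emph{First statement.} In the setting you choose here, with only $f$ normalized, the degenerate case genuinely occurs, so your key step is false there. The factorization has the residual freedom $(h_1,h_2)\mapsto(h_1\cdot L,\,L^{-1}\cdot h_2)$. This leaves $f$ unchanged, hence also $\eta=\eps(f)$, and only conjugates $g$. If $L_\eta$ is a constant outside $|D_\eta(h_1)|$, then by Proposition~\ref{prop: compo} the new $h_{1,\eta}$ is constant.

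Concretely, take $d_1=1$, $h_{1,n}(z)=\lambda_n z$ with $\lim|\lambda_n|^{\eta_n}=\infty$, and $h_2=h_1^{-1}\cdot f$. Then $h_{1,\eta}\equiv\infty$ with hole at $0$, and $h_{2,\eta}\equiv 0$. Meanwhile $f_\eta$ still has degree $d$. The theorem holds trivially here, since $g=h_1^{-1}\cdot f\cdot h_1$; it is your argument that fails.

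So the degenerate case cannot be ruled out ``directly''. Your deduction $\eps_\star(g)\ge\eps(g)\ge\eps_\star(f)$ is therefore unsupported. The fallback of also normalizing $g$ just puts you back in the doubly normalized situation, where nonconstancy of the $h_{i,\eta}$ is exactly what must be proved.

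\emph{Second statement.} In the doubly degenerate situation, the homogeneous computation you propose cannot give a contradiction. Write $h_1=[A:B]$ and $h_2=[C:D]$ with $C_\eta=\alpha G$, $D_\eta=\beta G$, where $[\alpha:\beta]$ is a zero of the common factor of $A_\eta,B_\eta$. Then $A_\eta(C_\eta,D_\eta)=B_\eta(C_\eta,D_\eta)=0$. Every coefficient of the composite lift is infinitesimal at scale $\eta$. Computing $f_\eta$ therefore requires renormalizing by a factor that is invisible in $h_{1,\eta}$ and $h_{2,\eta}$. Nothing at this level prevents $\deg f_\eta=d$. Your bullet also uses the first claim to obtain $\eps(g)=\eta$, which is circular, since that claim rests on the same key step.

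\textbf{What is missing is a dynamical input.} The paper supplies it as follows.
\begin{enumerate}
\item It normalizes $f$ and $g$ simultaneously via $(M\cdot h_1\cdot L,\;L^{-1}\cdot h_2\cdot M^{-1})$, and works at any $\eps\le\min\{\eps(f),\eps(g)\}$, where there are no holes.
\item The map $h_2$ sends $|\Per_l(f)|$ bijectively onto $|\Per_l(g)|$.
\item By Lemma~\ref{lem:period}, specialization carries $\Per_l(f)$ to $\Per_l(f_\eps)$. A collision of two points would create a multiple periodic point, hence a parabolic cycle, and $f_\eps$ has only finitely many. So for suitable large $l$, specialization is injective on $|\Per_{2l}(f)|\setminus|\Per_l(f)|$, and likewise for $g$.
\item Two such points $p_\eps\neq q_\eps$ chosen off $|D_\eps(h_2)|$ then have distinct images $h_2(p)_\eps\neq h_2(q)_\eps$. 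Hence $h_{2,\eps}$ is nonconstant and $h_{2,\eps}\cdot f_\eps=g_\eps\cdot h_{2,\eps}$.
\item Equality of fundamental scales then follows at $\eps=\min\{\eps_\star(f),\eps_\star(g)\}$ from invariance of potential good reduction under semi-conjugacy.
\end{enumerate}
Your concluding degree-count trick does not replace that last step. Once both maps are normalized, it only gives $\eps(g)\ge\eps$ at the smaller of the two scales, which is already known.
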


\begin{rmk}
Combined with~\cite[Theorem~1.1]{zbMATH07673790}, we obtain that the fundamental scales of two sequences of semi-conjugated rational maps are identical. 
\end{rmk}

\begin{proof}
We may replace $h_1$ and $h_2$ by $M\cdot h_1 \cdot L $ and $L^{-1} \cdot h_2\cdot M^{-1}$ for suitable sequences of Möbius transformations $M, L$,
and assume that $\eps(f)=\eps_\star(f)$ and $\eps(g)=\eps_\star(g)$.
Observe first that for any integer $l$, the map 
$h_2\colon |\Per_l(f)| \to |\Per_l(g)|$ is bijective with inverse 
$(h_1\cdot h_2)^{l-1} \circ h_1$.

Pick any scale $\eps \le \min\{\eps(f),\eps(g)\}$. 
Note that $\deg(f_\eps) = \deg(g_\eps)=d_1d_2$, hence $D_\eps(f)=D_\eps(g)=0$. 
It follows from Lemma~\ref{lem:period} that for each $l$,
the canonical map $\varphi\colon \P^1(\sH_0) \to \P^1(\sH_\eps)$
sends the divisors $\Per_l(f)$, $\Per_l(g)$ to 
$\Per_l(f_\eps)$ and $\Per_l(g_\eps)$ respectively.

Observe that if a point in $\Per_l(f_\eps)$ has several preimages under $\varphi$, then 
its multiplicity as a periodic point is at least $2$, hence it belongs to a parabolic cycle. 
The set of parabolic cycles of $f_\eps$  is finite (as the characteristic of $\sH_\eps$ is zero), so that 
for all $l$ large enough, the map
\[\varphi \colon |\Per_{2l}(f)|\setminus |\Per_l(f)|\to |\Per_{2l}(f_\eps)|\setminus |\Per_l(f_\eps)|\] is bijective, 
and similarly for $g$.

Pick $l$ large enough so that $|\Per_{2l}(f_\eps)|\setminus |\Per_l(f_\eps)|$ contains at least 
two points $p_\eps\neq q_\eps$ outside $D_\eps(h_2)$. 
Denote by $p$ and $q$ their (unique) lift in $ |\Per_{2l}(f)|\setminus |\Per_l(f)|$. 
Since both maps 
$\varphi \colon |\Per_{2l}(g)|\setminus |\Per_l(g)|\to |\Per_{2l}(g_\eps)|\setminus |\Per_l(g_\eps)|$ and 
$\Per_{2l}(f)\to\Per_{2l}(g)$ are bijective, 
then $h_2(p)_\eps \neq h_2(q)_\eps$ belong to  $|\Per_{2l}(g_\eps)|\setminus |\Per_l(g_\eps)|$, 
hence $h_{2,\eps}$ has degree at least $1$. We get in particular
$h_{2,\eps} \cdot f_\eps = g_\eps \cdot h_{2,\eps}$.

Having potential good reduction is invariant under semi-conjugacy, so that
$\eps_\star(f)=\eps_\star(g)$, and the proof is complete.
\end{proof}

\section{$g$-rescalings adapted to  a sequence of rational maps}
\label{sec:g-rescaling}

\subsection{Definition of $g$-rescalings}
A generalized rescaling, or simply a $g$-rescaling, is a pair $(M,\eta)$
where $M\in\PGL(2,\C)^\N$ is a sequence of complex Möbius transformations and $\eta$ is a scale.

Two $g$-rescalings $(M,\eta)$ and $(L,\eps)$ are 
\emph{equivalent}
if $\eta=\eps$ and 
$(M\cdot L^{-1})_\eta\in\PGL(2,\sH_\eta)$.
This is equivalent to say that
$\theta(|\Res(M\cdot L^{-1})|) \ge \eta$.

Observe that this relation is an equivalence relation. If
$M$ is equivalent to $L$ and $L$ is equivalent to $T$, since the limit 
$(M\cdot L^{-1})_\eta$
is not a constant map, 
it then follows from Proposition~\ref{prop: compo} that
$(M\cdot T^{-1})_\eta = (M\cdot L^{-1})_\eta\circ (L\cdot T^{-1})_\eta$,
and 
$M$ is equivalent to $T$. 
Note also that by Theorem~\ref{thm:trichotomy}, $\eta \le\theta(|\Res(M)|)$ implies $(M,\eta)$ to be equivalent to  $(\id,\eta)$. 

The space of sequences of complex Möbius transformations 
is a group in which sequences $(M_n)$ such that $\deg(M_\eta)=1$ form a subgroup $K_\eta$.
It is the set of sequences such that $\eta \le\theta(|\Res(M)|)$.

It follows that the space of $g$-rescalings at a given scale $\eta$ modulo equivalence can be identified
with the space of left classes $K_\eta\backslash\PGL(2,\sH_0)$. 

\subsection{Adapted $g$-rescalings}\label{sec:adapted-gr}
Pick any sequence of rational maps $f=(f_n)$ of degree $d\ge1$. 
A $g$-rescaling $(M,\eta)$ is said to be 
\emph{adapted}
to $f$ if the following conditions are satisfied:
\begin{itemize}
\item[(A1)]
$(M\cdot f \cdot M^{-1})_\eta$ is a rational map of degree $\delta$ at least $1$; 
\item[(A2)] 
$(M\cdot f \cdot M^{-1})_\eta$ does not have potential good reduction if $\eta <(1)$;
\item[(A3)]
if $\delta=1$, then there exists a scale $\eps<\eta$ and a direction $v$
at $\xg$ such that :
\begin{itemize}
\item
$(M\cdot f \cdot M^{-1})_\eps$ fixes $\xg$  and has local degree $1$ at this point;
\item
$v$ is a bad direction;
\item
the image $w$ of $v$ in $\P^1(\sH_\eta)$
has an infinite orbit under  $(M\cdot f \cdot M^{-1})_\eta$.
\end{itemize}
\end{itemize}

Observe that (A3) implies $v$ to have an infinite orbit under  $\widetilde{(M\cdot f \cdot M^{-1})}_\eps$ so that 
$\xg$ lies in the Julia set of $(M\cdot f \cdot M^{-1})_\eps$.

It follows from the next lemma that $w$ lies in the support of the hole divisor 
$D_\eta(M\cdot f \cdot M^{-1})$.
\begin{lemma}\label{lem:baddy}
Suppose that $\xg$ is fixed by $f_\eps$ for some $\eps <(1)$, and $v\in T\xg$ is a bad direction. 
Then the image of $v$ in $\P^1(\widetilde{\sH}_\eps)$  lies in the support of the hole divisor $\tilde{D}_\eps(f)$.
\end{lemma}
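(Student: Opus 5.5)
We work in homogeneous coordinates with the normalized representative $f_n=[P_n:Q_n]$, so that the coefficients of $P_n,Q_n$ have maximum modulus $1$. Passing to $\sH_\eps$ gives $P_\eps,Q_\eps$ with coefficients in $\sH_\eps^\circ$, not all in $\sH_\eps^{\circ\circ}$. Their reductions $\tilde P_\eps,\tilde Q_\eps$ over $\widetilde{\sH}_\eps$ satisfy $\tilde P_\eps=\tilde H\,\hat P$ and $\tilde Q_\eps=\tilde H\,\hat Q$, where $\tilde f_\eps=[\hat P:\hat Q]$ and the divisor of zeroes of $\tilde H$ is $\tilde D_\eps(f)$.

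Since $\xg$ is fixed by $f_\eps$, the standard non-Archimedean fact is that the map $\tilde f_\eps$ is non-constant and coincides (up to the canonical identifications $T\xg\simeq\P^1(\widetilde{\sH}_\eps)$) with the reduction map $\tilde{(f_\eps)}_{\xg}$ induced on directions. The plan is to prove the contrapositive: if the direction $v$, identified with a point $\tilde a\in\P^1(\widetilde{\sH}_\eps)$, is not a zero of $\tilde H$, then $v$ is not bad. That is, we show $U(v)\cap f_\eps^{-1}(\xg)=\emptyset$.

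After a change of coordinates by an element of $\PGL(2,\sH_\eps^\circ)$, which preserves $\xg$, reductions and hole divisors, we may assume $\tilde a=0$. Then $U(v)$ is the open unit disk $\{|z|<1\}$. On this disk we have $|P_\eps(z,1)-P_\eps(0,1)|<1$, and similarly for $Q_\eps$. Because $\tilde H(0)\neq 0$, not both $\tilde P_\eps(0,1)$ and $\tilde Q_\eps(0,1)$ vanish, so $\max(|P_\eps(0,1)|,|Q_\eps(0,1)|)=1$.

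Consequently, for every rigid point $z$ of $U(v)$, the value $f_\eps(z)$ reduces to the single point $\tilde f_\eps(0)$. The image $f_\eps(U(v))$ is connected and contains no point reducing elsewhere, so it lies in one open residue class, namely the direction $\tilde f_\eps(0)$ at $\xg$. In particular $\xg\notin f_\eps(U(v))$. (On non-rigid points this follows by continuity and density of rigid points, or from the fact that the image of an open disk under a rational map is either a disk or all of $\P^{1,\an}$.) Hence $v$ is not a bad direction, which contradicts the hypothesis, and the image of $v$ must lie in $|\tilde D_\eps(f)|$.

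The main obstacle is the rigorous identification of the Berkovich reduction at $\xg$ with the reduction of the normalized coefficients, which is what handles the common factor $\tilde H$. I would establish it via the estimate $|R(z)-R(0)|<1$ on the disk for polynomials $R$ with coefficients in $\sH_\eps^\circ$, together with the non-vanishing of $(\tilde P_\eps,\tilde Q_\eps)$ at $\tilde a$.
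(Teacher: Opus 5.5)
Your proof is correct and follows the same route as the paper. Both argue by contraposition with $v$ placed at $0$: when the reduced pair $(\tilde P_\eps,\tilde Q_\eps)$ does not vanish at $0$, the open unit disk is sent into a single residue class, so it cannot meet $f_\eps^{-1}(\xg)$. Your normalization $\max(|P_\eps(0,1)|,|Q_\eps(0,1)|)=1$ is slightly cleaner than the paper's tacit assumption $\tilde Q_\eps(0,1)\neq 0$. One caveat: continuity and density of rigid points alone only place $f_\eps(U(v))$ in $U(\vec w)\cup\{\xg\}$, so excluding $\xg$ requires the openness of $f_\eps$, or the disk-image fact you also cite.
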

A proof is given at the end of this section.

The limit of the $g$-rescaling adapted to $f$ is the rational map
$(M\cdot f \cdot M^{-1})_\eta$ over $\sH_\eta$ which has degree in $\{1,\cdots,d\}$. 
The hole divisor of the $g$-rescaling is $D_\eta(M\cdot f \cdot M^{-1})$.

Observe that if $M$ and $L$ are two $g$-rescalings which are equivalent
then $M$ is adapted to $f$ iff $L$ is adapted to $f$. 
In that case, Proposition~\ref{prop: compo} implies that the two limits $(M\cdot f \cdot M^{-1})_\eta$ and $(L\cdot f \cdot L^{-1})_\eta$ 
are conjugated by a Möbius transformation defined over $\sH_\eta$.

\begin{rmk}
The notion of rescaling limit was first introduced by Kiwi~\cite{zbMATH06455745KJ15} in the Archimedean case.
A rescaling in the sense of Kiwi is a $g$-rescaling in our sense for which $\eps=(1)$
and $\deg(M\cdot f \cdot M^{-1})_{(1)} \ge2$,  so that the limit is a complex rational map
of degree at least $2$.  
\end{rmk}

\begin{prop}
\label{prop:definedoverc}
Suppose $(M,\eta)$ is a $g$-rescaling adapted to $(f_n)$ whose limit
$(M\cdot f\cdot M^{-1})_\eta$ is conjugated over $\sH_\eta$ to a complex rational map. 

Then we have $\eta =(1)$.
\end{prop}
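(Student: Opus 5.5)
The plan is to argue by contradiction using condition (A2) in the definition of an adapted $g$-rescaling. Suppose $\eta<(1)$, and write $g:=(M\cdot f\cdot M^{-1})_\eta$, a rational map of degree $\delta\in\{1,\dots,d\}$ over $\sH_\eta$. By hypothesis there is $L\in\PGL(2,\sH_\eta)$ such that $h:=L\cdot g\cdot L^{-1}$ has all its coefficients in the subfield $\C\subset\sH_\eta$ of constant sequences. The goal is to show that $h$ has good reduction. Then $g$ has potential good reduction, which contradicts (A2).

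The key observation is that the norm of $\sH_\eta$ restricts to the trivial norm on $\C$ when $\eta<(1)$. Indeed, for $c\in\C^*$ we have $|c|=\lim |c|^{\min\{1,\eta_n\}}=1$, because $\eta_n\to 0$ along $\om$.

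I then write $h=[P:Q]$ with $P,Q\in\C[z_0,z_1]$ homogeneous of degree $\delta$ and coprime. After scaling by a complex number, at least one coefficient is nonzero, so the maximum of the norms of the coefficients equals $1$. The resultant $\Res(P,Q)$ is a nonzero complex number, so $|\Res(P,Q)|=1$ in $\sH_\eta$. Hence $|\Res(h)|=1$, and $h$ has good reduction by the discussion following the definition of $|\Res|$. This argument works for every $\delta\ge1$; in the case $\delta=1$ it is exactly the notion of good reduction for Möbius maps in Remark~\ref{rem:good1}.

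It follows that $|\res(g)|\ge|\Res(L\cdot g\cdot L^{-1})|=1$, so $g$ has potential good reduction, contradicting (A2). Therefore $\eta=(1)$. I do not expect a real obstacle. The only point requiring care is to check that the conjugacy by $L$ is taken over $\sH_\eta$ itself, so that the definition of $|\res|$ applies directly, and that (A2) is imposed regardless of the degree $\delta$; both follow from the definitions in \S\ref{sec:adapted-gr}.
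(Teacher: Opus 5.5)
Your proof is correct and follows the paper's argument. Assuming $\eta<(1)$, the norm of $\sH_\eta$ is trivial on $\C$, so the conjugate with complex coefficients satisfies $|\Res|=1$ and has good reduction; the limit therefore has potential good reduction, contradicting (A2). You only spell out details the paper leaves implicit: why $|c|=1$ for $c\in\C^*$, the normalization of coefficients, and the degree-one case via Remark~\ref{rem:good1}.
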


\begin{proof}
We may assume that $M=\id$, and 
\( f_{\eta} \) has complex coefficients. 
If \( \eta < (1) \), then $\lim \eta_n=0$, and 
we have \( |\Res(f_{\eta})| = 1 \), so that \( f_{\eta} \) has good reduction. This leads to a contradiction.    
\end{proof}

\begin{proof}[Proof of Lemma~\ref{lem:baddy}]
We may suppose that $f_\eps[z_0:z_1]=[P_\eps:Q_\eps]$ with $P_\eps, Q_\eps$ homogeneous polynomials of degree $d$
whose coefficients have all norm $\le 1$, and $v=[0:1]$. 
If $[0:1]$ does not lie in the hole divisor $\tilde{D}_\eps(f)$, then $\tilde{Q}_\eps(0,1) \neq 0$
so that the power series $z \mapsto P_\eps(z,1)/Q_\eps(z,1)$ has a radius of convergence equal to $1$. 
It follows that the image of the open unit ball $|z|<1$ is also a ball, hence $v$ cannot be a bad direction. 
\end{proof}

Let us explain how our definition works in concrete examples. 

\begin{example}\label{ex:concrete}
Let \( f_n(z) = e^{-n} z^3 + z^2 + nz \). By Example~\ref{eg: compute-fundascale}, the $g$-rescaling  $(\mathrm{id},\eps)$ with $\eps= (1/\log n)$
satisfies both conditions (A1) and (A2), hence is adapted to $f$. Write $a=(n)\in\sH_0$.
Its limit is $z^2+a_\eps$, and has degree $2$.
\end{example}

Here is a $g$-rescaling having a degree $1$ limit.
\begin{example}
\label{eg: deg1}
Let $f_n(z)=z+1+\frac{e^{-n}}{z}$. Set $\eps=(1/n)$ and $a=(e^{-n})$. Then $|a_\varepsilon|=e^{-1}$, and
\[
f_\varepsilon(z)=z+1+\frac{a_\varepsilon}{z}.
\]
Since the reduction satisfies $\tilde{f}_\varepsilon(z)=z+1$, the point $\xg$ has local degree $1$. Moreover, $0$ determines a bad direction at $\xg$, whose orbit under $f_{(1)}$ is infinite. Therefore, $(\mathrm{id},(1))$ satisfies the two conditions (A1) and (A3), hence defines a $g$-rescaling adapted to the sequence $(f_n)$.
\end{example}

\subsection{The fundamental rescaling}
\begin{thm}\label{thm:funda}
Pick any sequence of complex rational maps $(f_n)$ of degree $d\ge 2$.

Then there exists a $g$-rescaling $(M_\star,\eps_\star(f))$ adapted to $f$, and its limit has degree $d$.

Moreover, for any other $g$-rescaling $(M,\eta)$ adapted to $f$, we have $\eta \ge \eps_\star(f)$, and 
$(M,\eps_\star(f))$ is equivalent to $(M_\star,\eps_\star(f))$. In particular, there exists a unique
$g$-rescaling adapted to $f$ at scale $\eps_\star(f)$ up to equivalence.
\end{thm}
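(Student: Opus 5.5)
Existence is handled by choosing Möbius transformations that realize the minimal resultant. For each $n$ pick $M_{\star,n}$ with $|\Res(M_{\star,n}\cdot f_n\cdot M_{\star,n}^{-1})|=|\res(f_n)|$. Then $\eps(M_\star\cdot f\cdot M_\star^{-1})=\eps_\star(f)$. Theorem~\ref{thm:trichotomy2}~(2) shows that the limit $(M_\star\cdot f\cdot M_\star^{-1})_{\eps_\star(f)}$ has degree $d\ge 2$ and, when $\eps_\star(f)<(1)$, does not have potential good reduction. So (A1) and (A2) hold, and (A3) is vacuous because $\delta=d\ge2$. This gives an adapted $g$-rescaling at the fundamental scale whose limit has degree $d$.

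Next, let $(M,\eta)$ be any adapted $g$-rescaling; we show $\eta\ge\eps_\star(f)$. Set $g=M\cdot f\cdot M^{-1}$ and argue by contradiction, assuming $\eta<\eps_\star(f)$. Since $\eps(g)\le\eps_\star(f)$, there are two cases.
\begin{itemize}
\item If $\eta<\eps(g)$, Theorem~\ref{thm:trichotomy}~(1) says $g_\eta$ has good reduction. Hence $\delta=d\ge2$, and (A2) is violated.
\item If $\eps(g)\le\eta<\eps_\star(f)$, then $\eps(g)<\eps_\star(f)$. The contrapositive proof of (2)$\Rightarrow$(1) in Theorem~\ref{thm:unique-rescaling} produces $L$ with $L\cdot g\cdot L^{-1}$ realizing the minimal resultant. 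It also shows $\eps(L)\ge\eps(g)$, and the same Lemma~\ref{lem:compareresultant} argument gives $\eps(L)\ge\eps_\star(f)>\eta$, so $\deg L_\eta=1$. Since $\theta(|\Res(L\cdot g\cdot L^{-1})|)\asymp\eps_\star(f)>\eta$, Theorem~\ref{thm:trichotomy}~(1) shows that $(L\cdot g\cdot L^{-1})_\eta$ has degree $d$ and good reduction. Proposition~\ref{prop: compo} gives $(L\cdot g\cdot L^{-1})_\eta=L_\eta\cdot g_\eta\cdot L_\eta^{-1}$. Hence $g_\eta$ has degree $d\ge2$ and potential good reduction, again violating (A2).
\end{itemize}
Therefore $\eta\ge\eps_\star(f)$.

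The heart of the uniqueness statement is the equivalence at the fundamental scale, which needs three inputs.
\begin{itemize}
\item First, the adapted rescaling at scale $\eps_\star(f)$ has limit of degree $d$. Here $\eps(g)\le\eps_\star(f)$, and if $\eps(g)<\eps_\star(f)$ then Theorem~\ref{thm:trichotomy}~(3) forces $\deg g_{\eps_\star}\le d-1$. I expect this to be the main obstacle: one must rule out a lower-degree limit at scale $\eps_\star(f)$, possibly of degree $1$ satisfying (A3). My first attempt would be to compare $g$ with $g'=M_\star\cdot f\cdot M_\star^{-1}$ via Lemma~\ref{lem:compareresultant}. This should show that whenever $(MM_\star^{-1})_{\eps_\star}$ is degenerate, $g_{\eps_\star}$ is constant, contradicting (A1).
\item Second, once $\eps(g)=\eps_\star(f)$ is known, apply Theorem~\ref{thm:unique-rescaling}~(1)$\Rightarrow$(3) with $M$ and $L=M_\star$. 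This gives $\deg(M_\star M^{-1})_{\eps(M_\star\cdot f\cdot M_\star^{-1})}=1$.
\item Third, since $\eps(M_\star\cdot f\cdot M_\star^{-1})=\eps_\star(f)$, the second step says exactly that $(M\cdot M_\star^{-1})_{\eps_\star}\in\PGL(2,\sH_{\eps_\star})$. That is the asserted equivalence, and uniqueness up to equivalence follows.
\end{itemize}
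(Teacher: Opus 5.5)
You have the existence argument, the case $\eta<\eps(g)$, and the reduction of the equivalence to $\eps(g)=\eps_\star(f)$ via (1)$\Rightarrow$(3) of Theorem~\ref{thm:unique-rescaling} right. Your second case, however, rests on a false inequality.

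Take $h=L\cdot g\cdot L^{-1}$ realizing the minimal resultant, so $|\mathrm{Res}(g_n)|\le|\mathrm{Res}(h_n)|$. Both inequalities of Lemma~\ref{lem:compareresultant}, applied to $h$ and $L^{-1}$, then give $\eps(g)\asymp\min\{\eps(L),\eps_\star(f)\}$. When $\eps(g)<\eps_\star(f)$ this forces $\eps(L)\asymp\eps(g)$, not $\eps(L)\ge\eps_\star(f)$. Your argument therefore only covers $\eta=\eps(g)$. For $\eps(g)<\eta<\eps_\star(f)$ the limit $L_\eta$ is constant, so Proposition~\ref{prop: compo} is unavailable. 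Your conclusion that $g_\eta$ has degree $d$ even contradicts Theorem~\ref{thm:trichotomy}~(3) there. This untreated situation (a nonconstant limit of degree $<d$ at a scale strictly above $\eps(g)$) is the same obstacle you flag and leave open in the uniqueness part. So neither $\eta\ge\eps_\star(f)$ nor the equivalence is established.

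Your suggested way around the obstacle is false, and (A1) alone cannot settle it. In Example~\ref{eg: deg1}, $f_n(z)=z+1+e^{-n}/z$ has $\eps_\star(f)=(1/n)$ with $M_\star=\id$. For $M_n(z)=e^{-n^2}z$ the limit $M_{\eps_\star}$ is constant. Yet $g_n(z)=z+e^{-n^2}+e^{-n-2n^2}/z$ satisfies $g_\eta=\id$ for every $\eta\ge\eps_\star(f)$, so (A1) holds at every scale and only (A2)--(A3) can exclude such an $M$.

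The missing idea is to use (A2)--(A3) through the structure of maps with potential but not good reduction. Suppose $\eps(g)<\eta$. Since $g_\eta$ is nonconstant, $\xg$ is fixed by $g_{\eps(g)}$ with $\deg_{\xg}g_{\eps(g)}\ge\deg g_\eta$. Suppose $g_{\eps(g)}$, which lacks good reduction by Theorem~\ref{thm:trichotomy}~(2), had potential good reduction. Then its Julia set would be a single type II point $x\neq\xg$. So $\xg$ would be a Fatou fixed point of local degree $1$, which forces $\deg g_\eta=1$ and puts (A3) in force. But then $\tilde D_{\eps(g)}(g)$ is carried by the direction at $\xg$ pointing to $x$, and that direction is fixed. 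Since $\deg\tilde g_{\eps(g)}=\deg g_\eta$, its image is $D_\eta(g)$. By Lemma~\ref{lem:baddy} the image of the bad direction is then a fixed point of $g_\eta$, contradicting (A3).

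Hence $g_{\eps(g)}$ has no potential good reduction, and (2)$\Rightarrow$(1) of Theorem~\ref{thm:unique-rescaling} gives $\eps(g)=\eps_\star(f)$. This yields $\eta\ge\eps_\star(f)$ and, through your last two bullets, the equivalence. The case $\eps(g)=\eta$ is handled directly by the same theorem.
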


The (equivalence class of) $g$-rescaling defined in the above theorem is called the 
\emph{fundamental $g$-rescaling adapted to $(f_n)$.}

\begin{proof}
The existence of the $g$-rescaling follows by choosing for each $n$ a Möbius transformation
$M_n$ such that $|\Res(M_n\cdot f_n \cdot M_n^{-1})|=|\res(f_n)|$. Indeed, 
we have $\eps(M\cdot f \cdot M^{-1})=\eps_\star(f)$, and
Theorem~\ref{thm:trichotomy2} (2)
implies that $(M\cdot f \cdot M^{-1})_{\eps_\star(f)}$ has degree $d\ge 2$ and does not have potential good reduction. 
Hence $(M,\eps_\star(f))$ is a $g$-rescaling adapted to $f$, and the limit has degree $d$ as claimed. 

Replacing $f$ by $M\cdot f \cdot M^{-1}$ we may suppose that $\eps_\star(f)=\eps(f)$. 
Pick any $g$-rescaling $(L,\eta)$ adapted to $f$. We shall prove that $\eta\ge \eps_\star(f)$, and 
$(L,\eps_\star(f))$ is equivalent to $(\id,\eps_\star(f))$.

By Theorem~\ref{thm:trichotomy}, we have $\eps(L\cdot f \cdot L^{-1}) \le \eta$. 
It is clear when  $\deg (L\cdot f \cdot L^{-1})_\eta=1<d$. 
Otherwise, we get $\deg (L\cdot f \cdot L^{-1})_\eta\ge 2$, and 
(A2) implies that the map $(L\cdot f \cdot L^{-1})_\eta$ does not have potential good reduction.

If $\eps(L\cdot f \cdot L^{-1}) = \eta$, then $\deg (L\cdot f \cdot L^{-1})_\eta= d$
by Theorem~\ref{thm:trichotomy}, and
Theorem~\ref{thm:unique-rescaling} (2) and (3) directly apply:
we get  $\eps(L\cdot f \cdot L^{-1})=\eps_\star(f)$ and $\deg(L_{\eps_\star(f)})=1$ so that 
$(L,\eps_\star(f))$ is equivalent to $(\id,\eps_\star(f))$.

Suppose now that $\eps(L\cdot f \cdot L^{-1}) < \eta$. 
To simplify notation write $g= L\cdot f \cdot L^{-1}$.
The residue map $\widetilde{g}_{\eps(g)}$
is then semi-conjugated to $g_\eta$ under the canonical map 
$\P^1_{\widetilde{\sH}_{\eps(g)}}\to \P^1_{\sH_\eta}$.
Recall also that
the local degree of $g_{\eps(g)}$ at $\xg$
is greater or equal to $\deg(g_\eta)$.  It follows from (A1) that $\xg$ is fixed. 
We shall prove that $g_{\eps(g)}$ does not have potential good reduction. Granting this claim, then Theorem~\ref{thm:unique-rescaling} (1) and (3)
imply $\eps(g)=\eps_\star(g)$ and 
$\deg(L_{\eps_\star(g)})=1$,
and we conclude as before that  $(L,\eps_\star(f))$ is equivalent to $(\id,\eps_\star(f))$.

Recall that $g_{\eps(g)}$ is a rational map which does not have good reduction by Theorem~\ref{thm:trichotomy}, and 
is fixing $\xg$. Suppose by contradiction that  $g_{\eps(g)}$ has potential good reduction. Then its local degree at $\xg$ is necessarily $1$, and 
the hole divisor $\tilde{D}_{\eps(g)}(g)$ is supported on a  single point $w$ corresponding to the direction at $\xg$
pointing towards the unique totally invariant point of $g_{\eps(g)}$. Observe that $w$ is fixed by $\tilde{g}_{\eps(g)}$.
Since the degree of $\tilde{g}_{\eps(g)}$ and $g_\eta$ are the same, $D_\eta(g)$ is the push-forward of  
$\tilde{D}_{\eps(g)}(g)$, hence is a fixed point by $g_\eta$. This contradicts (A3) and concludes the proof of the theorem.
\end{proof}

\subsection{Type II fixed points of local degree at least $2$ and $g$-rescaling}
Pick any scale $\eps$, and any sequence of complex rational maps $(f_n)$. 
Its limit $f_\eps$ is a (possibly constant) rational map of degree $\delta \le d$ defined over $\sH_\eps$.
Its reduction $\tilde{f}_\eps$ is a rational map of degree $\le \delta$. 
In homogeneous coordinates, $f_\eps=[P:Q]$ where $P$ and $Q$ are homogeneous polynomials 
of degree $\delta$ whose maximum of the modulus of their coefficients is $1$ and
$\tilde{f}_\eps=[\tilde{P}:\tilde{Q}]$.
Recall that $\tilde{f}_\eps$ is non constant iff $f_\eps(\xg)=\xg$.
\begin{thm}\label{thm:baby2}
Pick any sequence of complex rational maps $(f_n)$ of degree $d\ge 2$. Let $(M,\eps)$ be any 
$g$-rescaling adapted to $f$ such that  the limit  $(M\cdot f\cdot M^{-1})_\eps$ 
fixes some  type II point  $x\in\P^{1,\an}_{\sH_\eps}$, and  has local degree at least $2$ at $x$.  

Then there exists a $g$-rescaling $(L,\eps_+)$ with $\eps_{+} >\eps$ such that the following holds:
\begin{itemize}
\item[(B1)]
$(L,\eps_+)$ is adapted to $f$;  
\item[(B2)]
$(L\cdot M^{-1})_\eps (x)=x_g$ (so that $(L,\eps)$ is equivalent to $(M,\eps)$);
\item[(B3)]
$\deg(\widetilde{L\cdot f \cdot L^{-1}})_\eps=\deg(L\cdot f \cdot L^{-1})_{\eps_+}$. 
\end{itemize}
This $g$-rescaling is also unique up to equivalence. 
\end{thm}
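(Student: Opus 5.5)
We first normalize. Replacing $f$ by $M\cdot f\cdot M^{-1}$, we may assume $M=\id$. Since $\PGL(2,\sH_\eps)$ acts transitively on type II points, pick $A\in\PGL(2,\sH_\eps)$ with $A(x)=\xg$, and represent $A$ by a sequence $(A_n)$ of complex Möbius maps; this is possible because every element of $\sH_\eps$ lifts to a sequence. Setting $L=A\cdot M$ gives $(L\cdot M^{-1})_\eps=A$. Then $(L,\eps)$ is equivalent to $(M,\eps)$ and (B2) holds. Write $g=L\cdot f\cdot L^{-1}$. By Proposition~\ref{prop: compo}, $g_\eps=A\cdot f_\eps\cdot A^{-1}$, so $g_\eps$ fixes $\xg$ with $\deg_{\xg}(g_\eps)=\deg(\tilde g_\eps)=:\delta\ge2$.

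Next we choose $\eps_+$. The natural candidate is the largest scale at which $g$ still has degree $\delta$. The maps $\P^1(\widetilde{\sH}_\eps)\to\P^1(\sH_\eta)$ for $\eta>\eps$ semi-conjugate $\tilde g_\eps$ to $g_\eta$, as in the proof of Theorem~\ref{thm:trichotomy}. Hence $\deg g_\eta\le\delta$ for all $\eta>\eps$. Equality holds for $\eta$ slightly above $\eps$: coefficient sequences of $g$ whose reductions vanish have $\lim|z_n|^{\eps_n}<1$, so they die at every larger scale, and $g_\eta$ is the image of $\tilde g_\eps$ with its nonzero resultant preserved.

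To get a scale, write $\tilde g_\eps=\tilde H\cdot[\tilde P:\tilde Q]$ with $\tilde{D}_\eps(g)$ the zero divisor of $\tilde H$. Lift $\tilde P,\tilde Q$ to sequences of degree-$\delta$ polynomials $P_n,Q_n$, chosen as factors of the normalized representatives of $g_n$ modulo the hole factor. Set $\eps_+:=\theta(|\Res(P_n,Q_n)|)$, in analogy with Theorem~\ref{thm:trichotomy}. Then $\eps_+>\eps$, since the resultant has norm $1$ at scale $\eps$. For $\eps<\eta<\eps_+$, $g_\eta$ has good reduction and degree $\delta$. At $\eta=\eps_+$ it has degree $\delta$ but not good reduction, and beyond $\eps_+$ the degree drops. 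This gives (B3) and (A1).

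The main obstacle is (A2): $g_{\eps_+}$ could have potential good reduction. The remedy is to imitate Theorem~\ref{thm:unique-rescaling} with $\delta$ in place of $d$. Among all sequences $B_n$ with $(B_n)$ equivalent to $\id$ at every scale $\le\eps$ (i.e.\ $B\in K_\eta$ for $\eta$ just above $\eps$), minimize the resultant of $B\cdot g\cdot B^{-1}$ in degree $\delta$. We then redefine $\eps_+$ as the analogue of $\eps_\star$ for this restricted class, so that Lemma~\ref{lem:compareresultant} (valid since $\delta\ge2$) gives $\eps_+$ as a well-defined scale. Using $K_{\eta}$-conjugations rather than arbitrary ones is what preserves (B2), since they do not move $\xg$ at scale $\eps$. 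The argument of Theorem~\ref{thm:unique-rescaling} (1)$\Rightarrow$(2) then shows $g_{\eps_+}$ has no potential good reduction. The delicate points are controlling the hole-divisor factor $\tilde H$ uniformly along the sequence, and checking that $\eps_+<(1)$ or else $\eps_+=(1)$ with nothing further needed.

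Uniqueness follows as in the proof of Theorem~\ref{thm:funda}. Suppose $(L',\eps'_+)$ also satisfies (B1)–(B3). By (B2), $L'L^{-1}\in K_\eps$ maps $\xg$ to $\xg$, so it has good reduction at $\eps$ and hence lies in $K_\eta$ for all $\eta\le\eps$. By (B3), $L'\cdot f\cdot L'^{-1}$ keeps degree $\delta$ beyond $\eps$. Minimality of the resultant (statement (3) of Theorem~\ref{thm:unique-rescaling} transposed to degree $\delta$) then forces $\eps'_+=\eps_+$ and $(L'L^{-1})_{\eps_+}\in\PGL(2,\sH_{\eps_+})$.
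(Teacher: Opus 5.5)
Your strategy matches the paper's. You lift the reduction at scale $\eps$ to a sequence $h$ of degree $\delta$, and note that the limits of $h$ and of the conjugated $f$ agree at every scale $\eta>\eps$. This is Lemma~\ref{lem:upto}, which absorbs the hole factor $\tilde H$ without any uniform control. You then take $\eps_+=\eps_\star(h)$ rather than $\eps(h)$ to get (A2). The gap is in how you keep (B2) while doing this. You minimize the resultant only over conjugacies $B$ whose limit at scale $\eps$ has good reduction. That class is cut out by an $\om$-limit condition, so you cannot minimize $|\Res(B_n\cdot h_n\cdot B_n^{-1})|$ index by index inside it. Nothing in your argument shows that the supremum of the scales $\theta(|\Res(B\cdot h\cdot B^{-1})|)$ over the class is attained. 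Citing Lemma~\ref{lem:compareresultant} to call $\eps_+$ ``well defined'' is exactly the step that is missing. Separately, ``equivalent to $\id$ at every scale $\le\eps$'' means $B\in K_\eps$, which does not force $B_\eps(\xg)=\xg$; you need $\theta(|\Res(B)|)>\eps$.

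The missing observation is that no restriction is needed.
\begin{itemize}
\item Choose $L_n$ realizing $|\res(h_n)|$ for each $n$, with no constraint.
\item Since $\delta\ge2$, Theorem~\ref{thm:unique-rescaling}~(3), with the identity as competitor, gives $L_{\eps(h)}\in\PGL(2,\sH_{\eps(h)})$, i.e.\ $\theta(|\Res(L)|)\ge\eps(h)>\eps$.
\item By Theorem~\ref{thm:trichotomy}~(1), $L_\eps$ has good reduction and fixes $\xg$, which is (B2).
\item The reductions of $L\cdot f\cdot L^{-1}$ and $L\cdot h\cdot L^{-1}$ at scale $\eps$ then agree. Lemma~\ref{lem:upto} identifies their limits at $\eps_+=\eps_\star(h)$, even when $L_{\eps_+}$ itself is degenerate.
\item Theorem~\ref{thm:unique-rescaling} gives (A2). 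The case $\eps_+=(1)$ holds by convention, so it is not a delicate point.
\end{itemize}

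For uniqueness, your route through minimality is viable and differs from the paper's. The paper shows $L_{\eps_+}$ is non-constant by transporting the divisors $\Per_l$, then applies Theorem~\ref{thm:trichotomy}. However, Theorem~\ref{thm:unique-rescaling} does not apply directly to the degree-$d$ sequence $L'\cdot f\cdot L'^{-1}$. You must again pass to degree-$\delta$ lifts $h$ and $N\cdot h\cdot N^{-1}$ with $N=L'L^{-1}$. Then use (A2) to show $\eps(h)=\eps_\star(h)=\eps_+$ before applying statement (3).
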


\begin{rmk}
Write $g:=L\cdot f \cdot L^{-1}$. Then
the rational map $g_{\eps_+}$ has the same degree as  $\tilde{g}_\eps$, and
since 
\[g_\eps(\xg) = (L\cdot M^{-1})_\eps \circ(M \cdot  f \cdot M^{-1})_\eps\circ (L\cdot M^{-1})^{-1}_\eps (\xg)
\]
the rational map  $\tilde{g}_\eps$ is conjugated to the tangent map $T_x(M\cdot f\cdot M^{-1})_\eps$.
\end{rmk}

Any $g$-rescaling $(L,\eps_+)$ as in the theorem is called \emph{the baby $g$-rescaling}
of $(M,\eps)$ associated with the fixed point $x$. 

\begin{proof}
Replacing $f$ by $M\cdot f\cdot M^{-1}$
we may suppose that $M=\id$. Since $\PGL(2,\sH_\eps)$ acts transitively on the set of type II points, 
we may conjugate further $f$ so that $M=\id$ and $x=\xg$.

We first construct a $g$-rescaling satisfying (B1) -- (B3).
By assumption $\deg(f_\eps)\ge2$, and since $(\id,\eps)$ is adapted to $f$, then $f_\eps$
does not have potential good reduction so that
\[
\delta:= 
\deg(T_{\xg}f_\eps)=
\deg(\tilde{f}_\eps) < \deg(f_\eps)~.
\] 
Choose any sequence of complex rational maps $(h_n)$ of degree $\delta$
such that $\tilde{h}_\eps = \tilde{f}_\eps$. Observe that $h_\eps$ has good reduction, hence
$\eps < \eps(h) \le \eps_\star(h)$. 

Choose $L_n$ such that $\eps(L\cdot h \cdot L^{-1})=\eps_\star(h)$. 
Since $\delta\ge2$, Theorem~\ref{thm:unique-rescaling} (3) implies $L_{\eps(h)} \in \PGL(2,\sH_{\eps(h)})$. 
 By Theorem~\ref{thm:trichotomy}, this implies $L_{\eps} \in \PGL(2,\sH_{\eps})$ has good reduction, hence
 $L_\eps(\xg)=\xg$, 
 and 
 $(L,\eps)$ is equivalent to $(\id,\eps)$ and (B2) holds. 
  
 Since $\tilde{f}_\eps=\tilde{h}_\eps$, we get $f_\eta = h_\eta$
 for all $\eta > \eps$ (by naturality of Figure~\ref{diagram:residue}), and it follows that 
 the limits of $L\cdot h \cdot L^{-1}$, and $L\cdot f \cdot L^{-1}$
 are the same at scale $\eps_\star(h)$. Set $\eps_+:= \eps_\star(h)$, and recall
 that $\delta\ge2$ by assumption.

 When $\eps_+=(1)$, then  $(L\cdot h \cdot L^{-1})_{\eps_+}=(L\cdot f \cdot L^{-1})_{\eps_+}$
does not have potential good reduction by convention. 
When  $\eps_+<(1)$, then Theorem~\ref{thm:unique-rescaling}
 shows that  $(L\cdot h \cdot L^{-1})_{\eps_+}$
does not have potential good reduction.  
This proves (B1), and (B3) follows since  
 \[
 \deg(\widetilde{L\cdot f \cdot L^{-1}})_\eps=
 \deg(\widetilde{L\cdot h\cdot L^{-1}})_\eps
 = \deg(L\cdot h \cdot L^{-1})_{\eps_+}= \deg(L\cdot f \cdot L^{-1})_{\eps_+}~.
\]
 Now we discuss the uniqueness of the $g$-rescaling. 
 Conjugating $f$ by a suitable sequence of Möbius transformations, we may suppose that
 $(\id,\eps_+)$ satisfies (B1), (B2)  and (B3), and 
pick  another
 $g$-rescalings $(L,\eta_+)$ satisfying the same three conditions. 
 We may assume that $\eps_+\le \eta_+$.

 We claim that $L_{\eps_+}$ is not constant. Grant this claim. 
By Proposition~\ref{prop: compo},
 $(L\cdot f\cdot L^{-1})_{\eps_+}=L_{\eps_+}\cdot f_{\eps_+}\cdot L^{-1}_{\eps_+}$ 
 is conjugated to
 $f_{\eps_+}$  which does not have potential good reduction by (B1). 
 By Theorem~\ref{thm:trichotomy}, it follows that 
 $\deg(L\cdot f\cdot L^{-1})_{\eta}<\deg(\widetilde{L\cdot f \cdot L^{-1}})_\eps$ for all $\eps_+<\eta$, which implies $\eta_+\le\eps_+$, 
 so that $\eta_+=\eps_+$, and both $g$-rescalings are equivalent.

 To prove the claim, we observe that the hole divisor $\widetilde{D}_\eps(f)$ has the same degree
 as $D_{\eps_+}(f)$ since by condition (B3) the rational maps $\widetilde{f}_\eps$ and $f_{\eps_+}$ have the same degree. 
 It follows that the image of $\widetilde{D}_\eps(f)$ under the canonical map $\P^1(\widetilde{\sH}_\eps)\to \P^1(\sH_{\eps_+})$
   is equal to $D_{\eps_+}(f)$. 
 By Lemma~\ref{lem:period}, it follows that for all integer $l$ that the divisor $\widetilde{\Per}_l(f)_{\eps}$
is also mapped to  $\Per_l(f)_{\eps_+}$. The same argument shows that $\widetilde{\Per}_l(L\cdot f\cdot L^{-1})_{\eps}$
is also mapped to  $\Per_l(L\cdot f\cdot L^{-1})_{\eps_+}$ since $\eps_+\le \eta_+$. 
\[\begin{tikzcd}
\P^1(\widetilde{\sH}_\eps)
 \arrow[r, "\tilde{L}_\eps"] \arrow[d] 
 &
\P^1(\widetilde{\sH}_\eps)\arrow[d] 
\\
\P^1(\sH_{\eps_+})
 \arrow[r, "L_{\eps_+}"] 
&
\P^1(\sH_{\eps_+})
 \end{tikzcd}                    
\]
Now choose $l$ large enough so that the support of both divisors  $\Per_l(f)_{\eps_+}$ and  $\Per_l(L\cdot f\cdot L^{-1})_{\eps_+}$
is not reduced to a single point. Since the diagram above commutes, $L_{\eps_+}$ maps  $\Per_l(f)_{\eps_+}$ to $\Per_l(L\cdot f\cdot L^{-1})_{\eps_+}$, 
hence $L_{\eps_+}$ is not a constant map, as required.
\end{proof}

\subsection{Type II Julia fixed points of local degree $1$ and $g$-rescaling}
Let $(f_n)$ be  a sequence of complex rational maps  of degree $d\ge 2$, and $(M,\eps)$  a 
$g$-rescaling adapted to $f$ such that  the limit  $(M\cdot f\cdot M^{-1})_\eps$ has degree $\ge 2$. 

Assume that
$x$ is a type II fixed point lying in the Julia set of $(M\cdot f\cdot M^{-1})_\eps$ having local degree $1$, 
and pick  a bad direction $v\in Tx$ having an infinite orbit. 

The tangent map induced by $(M\cdot f\cdot M^{-1})_\eps$ on $Tx$ is a Möbius transformation
defined over $\widetilde{\sH}_\eps$: it is either conjugated to a translation and we set $\la=1$; 
or to a homothety. In the latter case, the map is determined (up to conjugacy) by a sequence
of complex homotheties $z\mapsto \la_n z$ with $|\la_n|\le 1$ (when $|\la_n|=1$, we also impose
$\Im(\la_n)\ge0$ so that $\la_n$ is always uniquely determined). 
In that case, we write $\la := \lim_\om \la_n \in \bar{\D} \subset \C$.

\begin{thm}\label{thm:baby1}
Assume that either $\la$ is not  a root of unity, or $\la=1$. 
Then there exists a $g$-rescaling $(L,\eps_+)$ with $\eps_{+} >\eps$ such that the following holds:
\begin{itemize}
\item[(C1)]
$(L,\eps_+)$ is adapted to $f$;  
\item[(C2)]
$(L\cdot M^{-1})_\eps (x)=x_g$ (so that $(L,\eps)$ is equivalent to $(M,\eps)$);
\item[(C3)] 
the image of $T_x(ML^{-1})\cdot v\in\P^1({\widetilde{\sH}_\eps})$ in $\P^1(\sH_{\eps_{+}})$ 
lies in the support of the hole divisor of  $(L\cdot f \cdot L^{-1})_{\eps_+}$, and
has an infinite orbit under this rational map;
\item[(C4)]
$\deg (\widetilde{L\cdot f \cdot L^{-1}})_\eps= \deg(L\cdot f \cdot L^{-1})_{\eps_+}=1$.
\end{itemize}
Moreover, this $g$-rescaling is unique up to equivalence. 
\end{thm}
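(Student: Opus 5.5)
We follow the scheme of the proof of Theorem~\ref{thm:baby2}, replacing the minimal resultant of an auxiliary degree $\ge 2$ sequence by a ``minimal translation/multiplier'' normalization adapted to degree one maps. First, conjugating $f$ by $M$ and by a Möbius transformation over $\sH_\eps$ with good reduction, we assume $M=\id$ and $x=\xg$. Then $\tilde f_\eps$ is a Möbius map over $\widetilde{\sH}_\eps$. In the homothety case we normalize $\tilde f_\eps(z)=\tilde\la z$, realized by the complex sequence $(\la_n)$ with $|\la_n|\le 1$. In the parabolic case ($\la=1$) we normalize $\tilde f_\eps(z)=z+1$. By Lemma~\ref{lem:baddy}, the bad direction $v$ is an atom of $\tilde D_\eps(f)$. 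Since $v$ has infinite orbit, $v\neq 0,\infty$ in the homothety case, and $v\ne\infty$ in the translation case. After a further good-reduction conjugacy we take $v=1$, respectively $v=0$.

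\emph{Construction.} We choose a sequence $h_n$ of complex Möbius maps lifting $\tilde f_\eps$, namely $h_n(z)=\la_n z$ or $h_n(z)=z+1$. We also choose a sequence $\hat v_n$ of complex points lifting $v$ and the hole point of $\tilde D_\eps(f)$. By Lemma~\ref{lem:upto}, $f_\eta=h_\eta$ for all $\eta>\eps$. The orbit of $v$ escapes at these scales, since $h_{(1)}$ is an irrational rotation or a translation. We then look for the smallest scale $\eps_+>\eps$ at which the degree one limit of a suitable rescaling $L_n$ (dilations $z\mapsto r_n z$ or translations, so $L_\eps$ has good reduction and (C2) holds) still sees a hole with infinite orbit. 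Concretely, we take $\eps_+$ to be the sup of the scales $\eta$ for which $(L f L^{-1})_\eta$ has a hole. This scale is governed by the size of the perturbation $|f_n-h_n|$ near $v$, in the spirit of $\eps(f)=\theta(|\Res|)$. We define $\eps_+:=\theta(\delta_n)$, where $\delta_n$ measures the distance from $(f_n)$ to the locus of degree one maps (for instance, the normalized resultant of the factor of $f_n$ away from the hole, combined with the position of the holes). We then check (C4): at scale $\eps_+$ the limit still has degree $1$ by Theorem~\ref{thm:trichotomy}(3) applied along the chain of scales, and $\eps_+>\eps$ because $\tilde f_\eps$ already has degree one.

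\emph{Verification of (C1) and (C3).} (A1) is (C4). (A3) is witnessed by the scale $\eps$ and the direction $v$, provided we show that the image $w$ of $v$ in $\P^1(\sH_{\eps_+})$ has infinite orbit and lies in $|D_{\eps_+}(LfL^{-1})|$. The hole statement follows from the push-forward of $\tilde D_\eps$ under $\P^1(\widetilde{\sH}_\eps)\to\P^1(\sH_{\eps_+})$, as in the proof of Theorem~\ref{thm:baby2}, using equal degrees. The infinite orbit is where the hypothesis on $\la$ enters. The limit $(LfL^{-1})_{\eps_+}$ is $z\mapsto \la_{\eps_+}z$ or a translation, and we need $w\ne$ its fixed points together with $\la_{\eps_+}$ not a root of unity, respectively a nonzero translation. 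If $\la$ is not a root of unity, then $\la_n^k\not\to 1$ for every $k$, so $|\la^k_{\eps_+}-1|$ is nontrivial and the orbit of $w=1$ is infinite. If $\la=1$, the translation by $1$ survives in $\sH_{\eps_+}$.

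\emph{Uniqueness, and the main obstacle.} Uniqueness goes exactly as in Theorem~\ref{thm:baby2}. Given two such rescalings with $\eps_+\le\eta_+$, we show $L_{\eps_+}$ is nonconstant by tracking the hole divisors and an infinite orbit (rather than $\Per_l$, which is unavailable in degree $1$) through the commutative square. Maximality of $\eps_+$ then forces $\eta_+=\eps_+$. The main obstacle is the construction step. One must choose $L_n$ and $\eps_+$ so that simultaneously the hole remains a hole, the limit stays of degree $1$, and (A2) is meaningful, while remaining canonical enough for uniqueness. I would first try $L_n(z)=(z-c_n)/r_n$ with $r_n$ chosen so that the holes of $L_nf_nL_n^{-1}$ are at unit distance from the fixed point of $h_n$, and then set $\eps_+=\theta$ of that normalized hole–fixed point configuration.
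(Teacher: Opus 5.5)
You set up the reduction correctly: $M=\id$, $x=\xg$, a degree-one lift $h_n$ of $\tilde f_\eps$, and $f_\eta=h_\eta$ for all $\eta>\eps$ by Lemma~\ref{lem:upto}. Your uniqueness sketch matches the paper's argument. But the existence part, which is the content of the theorem and which you yourself flag as the obstacle, is not supplied, and the heuristic you give for $\eps_+$ points the wrong way.

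Precisely because $f_\eta=h_\eta$ above $\eps$, neither the perturbation $f_n-h_n$ nor the position of the holes can influence the baby scale. At every $\eta>\eps$ the hole divisor is just the push-forward of $\tilde D_\eps(f)$, so ``the sup of scales where $(LfL^{-1})_\eta$ has a hole'' singles out nothing. Moreover, any scale $(\theta(\delta_n))$ built from the size of $f_n-h_n$ (coefficients that die in the residue at scale $\eps$, so $\lim|\delta_n|^{\eps_n}<1$) satisfies $(\theta(\delta_n))\le\eps$. It therefore cannot be $\eps_+>\eps$.

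What actually pins down $\eps_+$ is condition (A2), which you never check. In the homothety case, $h_\eta(z)=\mu_\eta z$ with $|\mu_n|\le1$ has good reduction if $|\mu|_\eta=1$, is constant if $|\mu|_\eta=0$, and fails potential good reduction only if $0<|\mu|_\eta<1$. This forces $\eps_+=(\theta(|\mu_n|))$ when that scale is $<(1)$; then $|\mu|_{\eps_+}=1/e$ and $1$ has infinite orbit. Otherwise $\eps_+=(1)$, namely in the translation case or when $\la=\lim\mu_n\neq0$. If your ``normalized resultant of the factor away from the hole'' means $|\Res(h_n)|=|\mu_n|$, that is the right quantity, but then holes and perturbation play no role. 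Note also that $\eps_+>\eps$ comes from $|\mu|_\eps=1$, not from $\deg\tilde f_\eps=1$. Likewise, degree one at $\eps_+$ comes from $|\mu|_{\eps_+}>0$; Theorem~\ref{thm:trichotomy}(3) only gives an upper bound on the degree.

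The second gap is your case split: $\la=1$ does not mean that $\tilde f_\eps$ is a translation. The residue map can be a homothety $\tilde\mu z$ with $\tilde\mu\notin\{0,1\}$ in $\widetilde{\sH}_\eps$ while $\mu_n\to1$ in $\C$, for instance $\mu_n=1-1/n$ with $\eps=(1/n)$. Then $(\theta(|\mu_n|))=(1)$. With $L=\id$ or any dilation, the limit at scale $(1)$ is the identity, so the image of $v$ is fixed and (C3) fails. Your claim that $h_{(1)}$ is ``an irrational rotation or a translation'' overlooks this subcase, as well as the case $|\la|<1$.

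The missing idea is the affine conjugacy $L_n(z)=a_nz+b_n$ with $b_n=(1-\mu_n)^{-1}$ and $a_n=1-b_n$. It fixes $1$ and pushes the fixed point of $h_n$ to $b_n\to\infty$ in $\C$. It transforms $z\mapsto\mu_nz$ into $w\mapsto\mu_nw+1$, whose limit at scale $(1)$ is $w+1$, so the image of $v$ has infinite orbit. Finally, $L_\eps$ has good reduction exactly because $\tilde\mu\notin\{0,1\}$, which gives (C2). Your suggested normalization, holes at unit distance from the fixed point of $h_n$, is already satisfied by $L=\id$ once $v=1$, so it does not produce this map.
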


\begin{rmk}
Observe that in the case $\la$ is a primitive root of unity of order $N$, then 
we can apply the previous theorem to $f^N$. Note also that $\eps_+ = (1)$ iff $\la \neq 0$. 
\end{rmk}

As in the previous section, any $g$-rescaling $(L,\eps_+)$ as in the theorem is called \emph{the baby $g$-rescaling}
of $(M,\eps)$ associated with the fixed point $x$ and the bad direction $v$ of infinite orbit. 
In the remaining of the paper, we shall abuse terminology and say that the baby $g$-rescaling is only attached to 
to the fixed point $x$ without referring to the bad direction. 

\begin{proof} 
 Let us first construct the $g$-rescaling. 
 Conjugating by a suitable sequence of Möbius transformations  equivalent to $M$, we may assume that 
  $M=\id$ and   $x=\xg$. 
 It follows that $f_\eps$ is a rational map of degree $\ge 2$ on $\P^1_{\sH_\eps}$ fixing $\xg$, and 
 its residue map $\tilde{f}_\eps$ is a Möbius transformation of infinite order. The direction $v$ is identified with a point
 in $\tilde{v}\in \P^1(\widetilde{\sH}_\eps)$, and since $v$ is a bad direction, $\tilde{v}$ lies in the hole divisor 
 $\tilde{D}_\eps(f)$ by Lemma~\ref{lem:baddy}.
 
 Conjugating further by a suitable sequence of Möbius transformations
 (defined over $\sH^\circ_\eps$), we may assume in some affine coordinates $[z:1]$ that either 
  $\tilde{f}_\eps(z)=z+1$ and $v=1$; or $\tilde{f}_\eps(z)=\tilde{\mu}_\eps z$ with $\mu \in \sH_0$, and $v=1$. 
 
 In the former case, we set $\eps_+=(1)$. Then $f_{(1)}(z)=z+1$, and $v\in \P^1(\sH^\circ_\eps)$ is mapped to $1\in\P^1(\sH_{(1)})=\P^1(\C)$ which is a point of infinite orbit in the Riemann sphere. 
 It follows that $(\id,(1))$ is adapted and the three other conditions (C2) -- (C4) are satisfied. 
  
Suppose now we are in the latter case. We may represent $\mu$ by a sequence of complex numbers $(\mu_n)$
and assume that $|\mu_n|\le 1$ (possibly by conjugating $f$ by the inversion $z\mapsto z^{-1}$).
 Consider the scale $(\theta(\mu))$. When $(\theta(\mu))<(1)$, we set $\eps_+=(\theta(\mu))$ and observe that $|\mu|_{\eps_+} = 1/e<1$ in $\sH_{\eps_+}$
 so that $f_{\eps_+}(z)=\mu_{\eps_+} z$ does not have potential good reduction. Again $v$ is mapped to $1$ in $\P^1(\sH_{\eps_+})$
 and has infinite orbit,  hence (C1) -- (C4) are satisfied. 
  
 When $(\theta(\mu))=(1)$, then $f_{(1)}(z)=\la z$ (with $\la$ as in the statement of the theorem). 
 If $\la$ is not a root of unity, set $\eps_+=(\theta(\mu))=(1)$ as above. Then $v=1$ has an infinite orbit which implies  (C1) -- (C4). 
 
 When $\la$ is a root of unity, then  $\la$ is equal to $1$ by assumption. Set $\eps_0 = \theta(|\mu-1|)$.  
 Observe that $\eps_0> (0)$ since $\tilde{f}_\eps$ is of infinite order.
Choose $a_n\in \C^*$ and $b_n\in \C$ such that 
$b_n = (1-\mu_n)^{-1}$ and $a_n+b_n=1$, and set $L_n(z) = a_n z + b_n$. 
 It follows that $(L\cdot f \cdot L^{-1})_{(1)}= z+1$, and the image of $v$
 has infinite orbit. 
 The $g$-rescaling $(L,(1))$ satisfies all requirements. 
 
 \smallskip

 Let us now prove the uniqueness. We suppose that $M=\id$,  and assume that 
 $(\id, \eps_+)$ and $(L,\eta_+)$ with $\eps_+\le \eta_+$ both satisfy all  conditions (C1) -- (C4). 
 Note that in particular $x=\xg$.
 We claim that $\deg(L_{\eps_+})=1$. Grant this claim. Then $(L,\eps_+)$ is equivalent to $(\id,\eps_+)$ hence
 is adapted to $(f_n)$. When $\eps_+=(1)$ then we get $\eta_+=(1)$ and  $(L,\eta_+)$ and $(\id,\eps_+)$ are equivalent as required. 
 When $\eps_+<(1)$, then by (A2) $(L\cdot f\cdot L^{-1})_{\eps_+}$ does not have good reduction on $\P^1_{\sH_{\eps_+}}$
so that $\deg(L\cdot f\cdot L^{-1})_{\eta}=0$ for all $\eta >\eps_+$. It follows that $\eta_+=\eps_+$ also in this case and 
$(L,\eta_+)$ is  equivalent to $(\id,\eps_+)$ as well.

To prove that $\deg(L_{\eps_+})=1$ we consider the following commutative diagram:
\[\begin{tikzcd}
\P^1(\widetilde{\sH}_\eps)
 \arrow[r, "\tilde{L}_\eps"] \arrow[d] 
 &
\P^1(\widetilde{\sH}_\eps)\arrow[d] 
\\
\P^1(\sH_{\eps_+})
 \arrow[r, "L_{\eps_+}"] 
&
\P^1(\sH_{\eps_+})
 \end{tikzcd}                    
\]
Note that $\deg(\tilde{L}_{\eps})=1$ and by (C3) the image of $v$  in 
$\P^1(\sH_{\eps_+})$  has an infinite orbit under $f_{\eps_+}$. Similarly 
the image of $\tilde{L}_\eps(v)\in\P^1(\sH_{\eta_+})$  has an infinite orbit under $f_{\eta_+}$
which implies to have an infinite orbit under $f_{\eps_+}$ as well. 
We conclude that $L_{\eps_+}$ maps an infinite orbit to another one hence cannot have degree $0$.
\end{proof}

%%%%%%%%%%%%%%%%%%%

\subsection{$g$-rescalings with varying scales}

Let $(M,\eps)$ be any $g$-rescaling adap\-ted to $(f_n)$. 
We shall describe all $g$-rescalings adapted to $(f_n)$ of the form 
$(M,\eta)$ with $\eta < \eps$.

Recall that a  baby $g$-rescaling
of $(M,\eps)$ is any $g$-rescaling adapted to $(f_n)$ which is obtained
by choosing a  type II fixed Julia point of $(M\cdot f\cdot M^{-1})_\eps$ and apply either 
Theorem~\ref{thm:baby2} or~\ref{thm:baby1}.

The following theorem shows that all $g$-rescalings adapted to $(f_n)$ are descendants of the fundamental $g$-rescaling, i.e., 
obtained by iterating the baby construction. 

\begin{thm}\label{thm:chain}
Let $(M,\eps)$ be any $g$-rescaling adapted to a sequence of complex rational maps
$(f_n)$ of degree $d\ge2$. Suppose that $\eps_\star(f) < \eps$. 

There exists an increasing finite set of scales 
$\eps_\star(f)=\eps_0< \eps_1 < \cdots <\eps_m=\eps$ such that 
\begin{enumerate}
\item
 for all $i\ge 0$, $(M,\eps_i)$ is adapted to $(f_n)$;
\item
for all $i\ge1$ we have 
\[\deg(M\cdot f\cdot M^{-1})_{\eps_i} = \deg(\widetilde{M\cdot f\cdot M^{-1}})_{\eps_{i-1}}
< \deg(M\cdot f\cdot M^{-1})_{\eps_{i-1}};\]
\item 
for all $i\le m-1$, and for all scale $\eps_i < \eta < \eps_{i+1}$, the limit map $(M\cdot f\cdot M^{-1})_{\eps_i}$ has good reduction.
\end{enumerate}
Moreover, $(M,\eps_i)$ is a baby $g$-rescaling of $(M,\eps_{i-1})$ for all $i\ge 1$.
\end{thm}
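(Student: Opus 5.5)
We first conjugate by $M$, which lets us assume $M=\id$ and write $g_\eta:=f_\eta$ for every scale $\eta$. The idea is to follow the function $\eta\mapsto \deg(f_\eta)$ as $\eta$ increases from $\eps_\star(f)$ to $\eps$. This function takes values in $\{0,\dots,d\}$, and we will show it is non-increasing and can only drop at finitely many scales. By Theorem~\ref{thm:trichotomy} and the diagram in Figure~\ref{diagram:residue}, for $\eta<\eta'$ the natural map $\P^1(\widetilde{\sH}_\eta)\to\P^1(\sH_{\eta'})$ semi-conjugates $\tilde f_\eta$ to $f_{\eta'}$ whenever the latter is nonconstant. This gives
\[\deg f_{\eta'}\le \deg\tilde f_\eta\le \deg f_\eta .\]
The scales are then defined recursively. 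Set $\eps_0=\eps_\star(f)$. Given $\eps_{i-1}$ with $\deg f_{\eps_{i-1}}\ge 1$, put $\eps_i:=\eps(h^{(i)})$, where $h^{(i)}$ is a sequence of rational maps of degree $\deg\tilde f_{\eps_{i-1}}$ with $\tilde h^{(i)}_{\eps_{i-1}}=\tilde f_{\eps_{i-1}}$, chosen as in the proof of Theorem~\ref{thm:baby2}.

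The first step is the base case. Theorem~\ref{thm:funda} shows that $(\id,\eps_\star(f))$ is equivalent to the fundamental rescaling, so it is adapted; this uses $\eps_\star(f)<\eps$ and the fact that $(M,\eps)$ is adapted. It also gives $\eps(f)=\eps_\star(f)$, since otherwise Theorem~\ref{thm:unique-rescaling} would be contradicted.

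For the inductive step, assume $(\id,\eps_{i-1})$ is adapted and $\eps_{i-1}<\eps$. Lemma~\ref{lem:upto} gives $f_\eta=h^{(i)}_\eta$ for all $\eta>\eps_{i-1}$. Applying Theorem~\ref{thm:trichotomy} to $h^{(i)}$ then yields three facts. For $\eps_{i-1}<\eta<\eps_i$, the map $f_\eta$ has good reduction, which is (3). At $\eta=\eps_i$ the degree is preserved, which is the equality in (2). For $\eta>\eps_i$ the degree drops strictly.

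The strict inequality in (2) comes from (A2) at $\eps_{i-1}$: a map without potential good reduction cannot have $\deg\tilde f_{\eps_{i-1}}=\deg f_{\eps_{i-1}}$ at $\xg$. To use this we need $\xg$ to be fixed, i.e.\ $\deg \tilde f_{\eps_{i-1}}\ge1$. This holds because $\deg f_\eps\ge1$ and the degree is monotone.

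We also need $\eps_i\le\eps$. If $\eps_i>\eps$, then $f_\eps$ would have good reduction, which contradicts (A2) when $\deg f_\eps\ge2$. When $\deg f_\eps=1$ it contradicts (A3), because a good-reduction Möbius map has no holes and hence no bad direction with infinite orbit. Since the degree strictly decreases at each step, the process terminates, and it terminates exactly when $\eps_m=\eps$.

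The main obstacle is proving that each $(\id,\eps_i)$ with $i<m$ is adapted and is the baby $g$-rescaling of $(\id,\eps_{i-1})$ at $\xg$. The uniqueness parts of Theorems~\ref{thm:baby2} and~\ref{thm:baby1} should handle this, once we check that $(\id,\eps_i)$ satisfies (B1)--(B3), respectively (C1)--(C4).

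If $\delta:=\deg\tilde f_{\eps_{i-1}}\ge2$, then (B2) and (B3) hold by construction. For (B1) we would run the argument of Theorem~\ref{thm:baby2} with $L=\id$. The key point is that $\eps(h^{(i)})=\eps_\star(h^{(i)})$. I would derive this from the adaptedness at the larger scale $\eps$ by a contrapositive argument, mirroring the proof of Theorem~\ref{thm:funda}: apply Theorem~\ref{thm:unique-rescaling}(1)$\Leftrightarrow$(3) to $h^{(i)}$ and the trivial sequence, and use (A2)/(A3) at $\eps$ to rule out a change of coordinates that would realise a smaller resultant.

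If $\delta=1$, then $\eps_i$ must be the last scale $\eps$. In that case we verify the hypotheses of (C1)--(C4), including the condition on $\la$ in Theorem~\ref{thm:baby1}, from (A3) at $\eps$. The bad direction of (A3) projects onto the hole divisor with infinite orbit, and this forces $\la$ to be either non-torsion or equal to $1$.
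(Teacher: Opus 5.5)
Your proposal is correct and follows essentially the paper's proof. You lift $\tilde f_{\eps_{i-1}}$ to a sequence $h^{(i)}$ of the same degree, set $\eps_i=\eps(h^{(i)})$, and use Lemma~\ref{lem:upto} and Theorem~\ref{thm:trichotomy} to get (2)--(3). You then run the Theorem~\ref{thm:funda}/Lemma~\ref{lem:adapt-cool} argument on $h^{(i)}$ to get (B1) when $\delta\ge 2$, and invoke Theorem~\ref{thm:baby1} when $\delta=1$. In that last case, as the paper does explicitly, you should note that the (A3)-scale must equal $\eps_{i-1}$: $\xg$ is a Julia point at that scale, all intermediate limits have good reduction, and $\deg\tilde f_{\eps_j}\ge 2$ for $j<i-1$.

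One small slip concerns $\eps_i\le\eps$. You do not need (A3) there, and your reason (``a good-reduction Möbius map has no holes'') is wrong, since $D_\eps(f)$ has degree $d-1$. Instead, note that $\eps_i>\eps$ forces $\eps<(1)$, so (A2) applies in every degree.
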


\begin{rmk}
It follows from (2) that 
$\xg$ is a fixed point of 
$(M\cdot f\cdot M^{-1})_{\eps_i}$ for all $i=0, \cdots,m-1$.
\end{rmk}

\begin{rmk}\label{rmk:length}
It is convenient to define the length of the adapted $g$-rescaling $(M,\eps)$ as the integer $m$ such that $\eps_m=\eps$. 
Theorem~\ref{thm:funda} states that there exists a unique adapted $g$-rescaling of length $0$
up to equivalence. Note that any baby $g$-rescaling of a
$g$-rescaling of length $m$ has length $m+1$.
\end{rmk}

\begin{proof}[Proof of Theorem~\ref{thm:chain}]
It is sufficient to treat the case $M=\id$. 
Assume we have proved:
\begin{lemma}\label{lem:adapt-cool}
We have $\eps(f) = \eps_\star(f)$ and 
the $g$-rescaling $(\id,\eps_\star(f))$ is adapted to $f$. 
\end{lemma}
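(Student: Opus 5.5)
The plan is to reduce everything to the argument already carried out in the proof of Theorem~\ref{thm:funda}, applied to the adapted $g$-rescaling $(\id,\eps)$; recall that $M=\id$ has already been arranged. First I would locate $\eps(f)$ relative to $\eps$. If $\eps<(1)$ and $\eps<\eps(f)$, then Theorem~\ref{thm:trichotomy}~(1) says $f_\eps$ has good reduction, which contradicts (A2). If $\eps=(1)$, the inequality $\eps(f)\le\eps$ is automatic. So in all cases $\eps(f)\le \eps$.

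Next I would show that $f_{\eps(f)}$ does not have potential good reduction. There are two cases.
\begin{itemize}
\item If $\eps(f)=\eps$, this is (A2) itself. It is meaningful here because $\eps_\star(f)<\eps$ forces $\eps<(1)$ in this case: otherwise $\eps(f)=(1)$, and then $\eps_\star(f)=(1)$ as well.
\item If $\eps(f)<\eps$, I would repeat the last part of the proof of Theorem~\ref{thm:funda} with $g=f$.
  \begin{itemize}
  \item By the semi-conjugacy of Figure~\ref{diagram:residue} and Theorem~\ref{thm:trichotomy}~(3), $\deg(\tilde f_{\eps(f)})\ge \deg f_\eps\ge 1$, so $\xg$ is fixed by $f_{\eps(f)}$.
  \item Suppose for contradiction that $f_{\eps(f)}$ has potential good reduction. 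Since it does not have good reduction, its local degree at $\xg$ is $1$.
  \item Then the hole divisor $\tilde D_{\eps(f)}(f)$ is supported on the single direction at $\xg$ pointing to the totally invariant point, and this direction is fixed by $\tilde f_{\eps(f)}$.
  \item Hence $\deg f_\eps=1$, and $D_\eps(f)$ is the push-forward of this fixed point, so it is a fixed point of $f_\eps$.
  \item By Lemma~\ref{lem:baddy}, the bad direction $v$ of (A3) maps into $|D_\eps(f)|$, so its image is a fixed point. This contradicts the infinite orbit required in (A3).
  \end{itemize}
\end{itemize}
The delicate point here is matching the scale $\eps'<\eps$ appearing in (A3) with $\eps(f)$. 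I expect to handle it by observing that $f_{\eps'}$ fixes $\xg$ with local degree $1$, which gives $\eps'\ge\eps(f)$. The image of the hole set at scale $\eps'$ is then contained in the image of the hole set at scale $\eps(f)$, by the natural maps of \S\ref{sec:limits-proj}.

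Once $f_{\eps(f)}$ is known not to have potential good reduction, Theorem~\ref{thm:unique-rescaling} ((2)$\Rightarrow$(1), with $M=\id$) gives $\eps(f)=\eps_\star(f)$. Finally, Theorem~\ref{thm:trichotomy2}~(2) with $M=\id$ shows that $f_{\eps_\star(f)}$ has degree $d\ge2$ and does not have potential good reduction. So (A1) and (A2) hold, (A3) is vacuous, and $(\id,\eps_\star(f))$ is adapted to $f$.

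An alternative route to adaptedness is to use Theorem~\ref{thm:funda} directly. It gives that $(\id,\eps_\star(f))$ is equivalent to the fundamental $g$-rescaling, and adaptedness is invariant under equivalence. The main obstacle is the middle step: the bookkeeping that the bad direction from (A3) survives down to the hole divisor at scale $\eps(f)$.
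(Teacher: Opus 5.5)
Your proposal is correct and is essentially the paper's proof: the paper assumes $\eps(f)<\eps_\star(f)$, gets potential good reduction of $f_{\eps(f)}$ from Theorem~\ref{thm:unique-rescaling}, and then runs exactly your hole-divisor argument (local degree $1$ at $\xg$, $\deg\tilde f_{\eps(f)}=\deg f_\eps=1$, a single fixed hole contradicting (A3) via Lemma~\ref{lem:baddy}), which is just the contrapositive arrangement of your steps. Your initial case split is superfluous, because $\eps(f)\le\eps_\star(f)<\eps$ always holds (the minimal resultant dominates the resultant), so the case $\eps(f)=\eps$ is vacuous; your treatment of the auxiliary scale in (A3), and your derivation of adaptedness from Theorem~\ref{thm:trichotomy2}, fill in points the paper leaves implicit.
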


Set $\eps_0=\eps_\star(f)<\eps$ and choose a sequence $(h_n)$ of complex rational maps of  degree
$d_1=\deg(\tilde{f}_{\eps_0})$ such that $\tilde{f}_{\eps_0}=\tilde{h}_{\eps_0}$. 
By Lemma~\ref{lem:upto}, we have $f_\eta=h_\eta$ for all $\eta>\eps_0$. Note that $h_{\eps_0}$
has good reduction so that $\eps_1:=\eps(h) > \eps_0$ by Theorem~\ref{thm:trichotomy}. 
We also infer that $f_\eta (=h_\eta)$ has good reduction of constant degree 
$d_1$ when $\eps_1>\eta>\eps_0$. In particular, we get $\eps \ge \eps_1$. 

\smallskip

Suppose first that $d_1=1$. 
Observe that $\deg(f_\eps) \le d_1$ since $\eps>\eps_0$, hence $f_\eps$ is a degree $1$ map. 
Note also that $\eps = \eps_1$ otherwise we would have $\eps_1<(1)$ and 
 $f_{\eps_1} =h_{\eps_1}$ would have good reduction
by Theorem~\ref{thm:trichotomy}, which is absurd.

We now show that $(\id,\eps)$ is a baby $g$-rescaling of $(\id,\eps_0)$. 
Since $(\id,\eps)$ is adapted to $f$, by (A3) there exists a scale 
$\eps' < \eps$ such that $\xg$ is fixed by $f_{\eps'}$ and has local degree $1$, and
there is a bad direction $v$ at $\xg$ whose image $w$ in $\P^1(\sH_\eps)$ has
an infinite orbit. The orbit of $v$ under $\tilde{f}_{\eps'}$ is also infinite, 
hence $\xg$ lies in the Julia set  so that $f_{\eps'}$ does not have
potential good reduction. We infer that $\eps'=\eps_0$.

The map $\tilde{f}_{\eps_0}$ is either conjugated to a translation or to a homothety determined by a 
sequence of complex dilatations $z\mapsto \la_n z$ with $|\la_n|\le 1$. We need to show that 
$\la:=\lim_\om \la_n \in\bar{\D}$ is equal to $1$ when it is a root of unity. 
If $\eps < (1)$, then $f_\eps$ is conjugated to a homothety of multiplier $|\mu|<1$ in $\sH_\eps$ so that $\la=\mu_\C=0$.
When $\eps=(1)$, then either $f_\eps$ is a homothety of infinite order with multiplier $\la$, so that $\la$ is not a root of unity;
or $f_\eps$ is a translation and $\la$ is the multiplier at the unique fixed point of $f_\eps$ hence is equal to $1$.

We now observe that $(\id,\eps)$ satisfy all conditions of Theorem~\ref{thm:baby1} so that
$(\id,\eps)$ is a baby $g$-rescaling of 
$(\id,\eps_0)$
thereby finishing the proof in this case.

\smallskip

Suppose next that $d_1\ge2$. Then we may apply Lemma~\ref{lem:adapt-cool} which implies $(\id,\eps_1)$ to be adapted
to $h$, hence to $f$. Observe that $(\id,\eps_1)$ satisfies all conditions of Theorem~\ref{thm:baby2} (with $M=L=\id$, $\eps= \eps_0$ and $\eps_+=\eps_1$). 
This implies that $(\id,\eps_1)$ is a baby $g$-rescaling of $(\id,\eps_0)$. 
When $\eps_1=\eps$, then the proof is complete. 
When $\eps_1<\eps$, then we repeat the above construction by choosing a new sequence of rational maps
$(g_n)$ such that $\tilde{g}_{\eps_1}=\tilde{h}_{\eps_1}=\tilde{f}_{\eps_1}$ and $\deg(g_n)=d_2:=\deg(\tilde{g}_{\eps_1})<d_1$ for all $n$.
When $d_2=1$, then the previous argument applies and the proof is complete. Otherwise $d_2=2$, and we proceed  until
either $d_m\ge2$ and $\eps_m=\eps$, or $d_m=1$. This concludes the proof.
\end{proof}

\begin{proof}[Proof of Lemma~\ref{lem:adapt-cool}]
Suppose by contradiction that $\eps':= \eps(f) < \eps_\star(f)$. Then $f_{\eps'}$ has potential good reduction 
by Theorem~\ref{thm:funda}. On the other hand $f_{\eps'}$ does not have good reduction by Theorem~\ref{thm:trichotomy}.
Denote by $x$ the support of the Julia set of $f_{\eps'}$: it is the unique type II point which is totally invariant. 
The Gauss point is fixed by $f_{\eps'}$ as $\deg(\tilde{f}_{\eps'}) \ge \deg(f_\eps)$, and and its local degree is $1$.
It follows that  $\deg(\tilde{f}_{\eps'}) = \deg(f_\eps)=1$.

Now the hole divisor $\tilde{D}_{\eps'}(f)$ is supported on the single direction at $\xg$ pointing towards $x$, and this direction 
is fixed. On the other hand, the image of $\tilde{D}_{\eps'}(f)$ in $\P^1(\sH_\eps)$ is $D_\eps(f)$ because the degree 
of $\tilde{f}_{\eps'}$ and $f_\eps$ are the same. We infer that the only point  in the support of the hole divisor  $D_\eps(f)$
is fixed which violates the condition (A3)  of adapted $g$-rescaling by Lemma~\ref{lem:baddy}.
\end{proof}

The following example take, from the introduction provides a concrete illustration of Theorem~\ref{thm:chain}.
\begin{example}
\label{eg: ezchain}
  Set \[f_n(z) = e^{-n}z^4 + n^{-1}z^3 + z^2.\]
and $\eps_\star:=(1/n)$. Then $f_{\eps_\star}(z)= az^4+bz^3+z^2$ with $|a|<1$ and $|b|=1$, and $\xg$ is fixed of local degree $3$ so that $f_{\eps_\star}$ does not have potential good reduction. We infer from Theorem~\ref{thm:funda} that $\eps(f)=\eps_\star(f)=\eps_\star$.

The baby $g$-rescaling associated with the fixed point $\xg$ is 
$(\mathrm{id}, \eps_1)$ with $\eps_1 = (1/\log n)$. Note that 
$f_{\eps_1}(z)= b'z^3+z^2$ with $|b'|<1$, and it is fixing $\xg$ which has local degree $2$.
The associated baby $g$-rescaling is $(\mathrm{id}, (1))$ which is  a Kiwi rescaling having limit $z^2$.
\end{example}

%%%%%%%%%%%%%
\section{The tree of adapted \texorpdfstring{$g$}{g}-rescalings}\label{sec:tree}
Let $(f_n)$ be a sequence of complex rational maps of degree $d\ge2$. 
It may occur that $(f_n)$ admits a single $g$-rescaling (the fundamental one), 
while some of its iterate admit several. 
Such a perspective naturally leads to the notion of cycles of $g$-rescalings.

We then organize the set of all cycles of $g$-rescalings up to equivalence of $(f_n)$
and prove it naturally defines a tree, and subsequently bound the number of adapted scales.

%%%%%%%%%%%%%

\subsection{Cycles of $g$-rescalings}
\label{sec: cycles}
A  cycle of Möbius transformations of period $l\in\N_{+}$ is a mapping \( M_* \colon \mathbb{Z} \to \PGL_2(\mathbb{C})^{\mathbb{N}} \) that factors through \( \mathbb{Z}/l\mathbb{Z} \), so that \( M_{i+l} = M_i \) for all \( i \in \mathbb{Z} \).

A  \emph{cycle of \( g \)-rescalings} is a pair \( (M_*, \eta) \), where \( M_* \) is a cycle of Möbius transformations, and \( \eta \) is a scale.  The smallest positive integer $l$ such that $M_{i+l}=M_{i}$ for all $i\in\Z$ is the period of the cycle. 
Observe that a $g$-rescaling is a cycle of period $1$.

We extend the notion of equivalence of $g$-rescaling as follows. Two cycles of $g$-rescalings \((M_*, \eta)\) and \((L_*, \eta')\) are \emph{equivalent} if  for all \( i \in \mathbb{Z} \), \( (M_{i}, \eta) \) is equivalent to \( (L_i, \eta') \) as $g$-rescalings. In particular, we have $\eta=\eta'$. 
\begin{defi}
\label{def: cycle}
We say that a cycle of $g$-rescalings $(M_*,\eta)$ of period $l$ is adapted to \( (f_n) \) if it satisfies the following conditions:  
\begin{enumerate}
    \item[(D1)] for all $i\in\Z$,  $(M_{i+1} \cdot f \cdot M_{i}^{-1})_{\eta}$ is of degree at least $1$ ;
    \item[(D2)] for all \( i \in \mathbb{Z} \),  the $g$-rescaling $(M_i,\eta)$ is adapted to $(f_n^l)$. 
\end{enumerate}
\end{defi}
In the remaining of this section, we collect some elementary facts on cycles of $g$-rescalings. 

\begin{lemma}\label{lem:semi-conj}
Suppose that $(M_{*},\eta)$ is a cycle of $g$-rescalings of period $l$ adapted to $(f_n)$. 
Then $g_i = (M_i\cdot f^l \cdot M_i^{-1})_\eta$ is a non-constant rational map defined over $\sH_\eta$ for all $i$, 
and all these rational maps are semi-conjugated over $\sH_\eta$.
\end{lemma}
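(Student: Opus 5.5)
The plan is to derive both claims from condition (D1), condition (D2), and the composition rule Proposition~\ref{prop: compo}. The semi-conjugacies will be the limits
\[
h_i := (M_{i+1}\cdot f\cdot M_i^{-1})_\eta .
\]

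First, non-constancy. By (D2), $(M_i,\eta)$ is adapted to $(f_n^l)$. Condition (A1) then says that $g_i=(M_i\cdot f^l\cdot M_i^{-1})_\eta$ is a rational map of degree at least $1$ over $\sH_\eta$. Similarly, (D1) says that $h_i$ has degree at least $1$ for every $i$. Since the cycle has period $l$, we also have $g_{i+l}=g_i$ and $h_{i+l}=h_i$.

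Second, the conjugation identity $h_i\cdot g_i=g_{i+1}\cdot h_i$. For each $n$ we have two factorizations of the same complex rational map:
\[
M_{i+1,n}\, f_n^{l+1}\, M_{i,n}^{-1}
=(M_{i+1,n} f_n M_{i,n}^{-1})\circ(M_{i,n} f_n^l M_{i,n}^{-1})
=(M_{i+1,n} f_n^l M_{i+1,n}^{-1})\circ(M_{i+1,n} f_n M_{i,n}^{-1}).
\]
Apply Proposition~\ref{prop: compo} to each factorization.
\begin{itemize}
\item For the first one, its hypothesis is that the limit of the inner sequence, $g_i$, is not a constant map with values in the hole set of the outer sequence. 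This holds because $g_i$ is non-constant, so we get $(M_{i+1}\cdot f^{l+1}\cdot M_i^{-1})_\eta=h_i\circ g_i$.
\item For the second one, the inner limit is $h_i$, which is non-constant by (D1), so we get $(M_{i+1}\cdot f^{l+1}\cdot M_i^{-1})_\eta=g_{i+1}\circ h_i$.
\end{itemize}
Comparing the two expressions gives $h_i\circ g_i=g_{i+1}\circ h_i$. Thus $h_i$ is a non-constant rational map over $\sH_\eta$ semi-conjugating $g_i$ to $g_{i+1}$.

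Third, the general pair. For $j>i$, the composite $h_{j-1}\circ\cdots\circ h_i$ is non-constant, because each factor is non-constant over a field. Chaining the identities gives
\[
(h_{j-1}\circ\cdots\circ h_i)\circ g_i=g_j\circ(h_{j-1}\circ\cdots\circ h_i).
\]
Hence any $g_i$ is semi-conjugated to any $g_j$, with indices read modulo $l$ via periodicity.

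The only delicate point is checking the non-constancy hypotheses of Proposition~\ref{prop: compo}, and both are supplied directly by (A1) and (D1). No hole-divisor analysis is needed beyond that.
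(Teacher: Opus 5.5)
Your proof is correct and is essentially the paper's argument: the semi-conjugacies are the same one-step limits $h_i=(M_{i+1}\cdot f\cdot M_i^{-1})_\eta$, and both proofs rest on Proposition~\ref{prop: compo}. The only difference is bookkeeping. The paper factors $g_i=h_{i+l-1}\circ\cdots\circ h_i$ using (D1) alone, so non-constancy of $g_i$ needs no appeal to (A1), and $h_i\circ g_i=g_{i+1}\circ h_i$ follows formally from $h_{i+l}=h_i$. You instead compare two factorizations of $(M_{i+1}\cdot f^{l+1}\cdot M_i^{-1})_\eta$ and take non-constancy of $g_i$ from (A1) via (D2).
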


\begin{proof}
By Proposition~\ref{prop: compo},
for each $i$ we have
\[(M_{i,n} \cdot f_n^l \cdot M_{i,n}^{-1})_{\eta}
= 
(M_{i,n} \cdot f_n \cdot M_{{i-1},n}^{-1})_{\eta}\circ
\cdots \circ
(M_{i+1,n} \cdot f_n \cdot M_{i,n}^{-1})_{\eta}
\]
hence
\[
(M_{i+1,n} \cdot f_n \cdot M_{i,n}^{-1})_{\eta}\circ g_i = g_{i+1} \circ (M_{i+1,n} \cdot f_n \cdot M_{i,n}^{-1})_{\eta}
~.\]
which implies the result.
\end{proof}

\begin{lemma}\label{lem:shift}
Suppose that $(M_{*},\eta)$ is a cycle of $g$-rescalings adapted to $(f_n)$. 
For any integer $e\in\N_+$ and for any $k\in\Z$, the cycle of $g$-rescalings 
$(M_{e*+k},\eta)$ is adapted to $f^e$.
\end{lemma}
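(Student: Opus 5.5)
The plan is to write $N_i := M_{ei+k}$ and check directly that $(N_*,\eta)$ is a cycle of $g$-rescalings adapted to $(f^e_n)$. Since $N_{i+l}=M_{ei+k+el}=M_{ei+k}$, $N_*$ is a cycle whose period $l'$ divides $l$. Three things must be checked in turn: condition (D1) for $f^e$; a general ascent/descent principle for adaptedness under iteration; and then (D2) for $f^{el'}$.

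\emph{(D1).} For each $i$ we have
\[
N_{i+1}\cdot f^e\cdot N_i^{-1} = (M_{j+e}\cdot f\cdot M_{j+e-1}^{-1})\circ\cdots\circ(M_{j+1}\cdot f\cdot M_j^{-1}), \qquad j=ei+k .
\]
By (D1) for $f$, every factor has a non-constant limit at scale $\eta$. Proposition~\ref{prop: compo} therefore applies inductively: a non-constant map never has image contained in the finite set of holes. Hence the limit of the composition is the composition of the limits, which has degree at least $1$.

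\emph{Ascent and descent.} The key auxiliary claim is: for a sequence $g$ and an integer $m\ge1$, if the limits $g_\eta$ and $(g^m)_\eta$ are both non-constant, then $(N,\eta)$ is adapted to $g$ iff it is adapted to $g^m$.
\begin{itemize}
\item[(A1)] This is automatic from Proposition~\ref{prop: compo}, since degrees multiply.
\item[(A2)] Potential good reduction is invariant under iteration: $g_\eta$ has it iff $g^m_\eta=(g_\eta)^m$ has it, since the Julia set is the same.
\item[(A3)] Both maps have degree $1$ simultaneously. Ascent is easy. If $x_g$ is fixed with local degree $1$ for $g_\varepsilon$, the same holds for $g^m_\varepsilon$. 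A bad direction $v$ for $g_\varepsilon$ remains bad for $g^m_\varepsilon$: a point $y\in U(v)$ with $g(y)=x_g$ satisfies $g^m(y)=x_g$. Finally, an infinite orbit under the Möbius map $g_\eta$ stays infinite under its $m$-th power. For descent, non-constancy of $g_\eta$ and $\deg g_\eta\le\deg\tilde g_\varepsilon$ (from the diagram of Figure~\ref{diagram:residue}) force $x_g$ to be fixed by $g_\varepsilon$, with local degree $1$ since local degrees multiply. Given a bad direction $v$ for $g^m_\varepsilon$, some iterate $T_{x_g}g^j_\varepsilon(v)$ with $0\le j<m$ is a bad direction for $g_\varepsilon$; this matches the decomposition~\eqref{eq:hole-iterate} of the hole divisor of $g^m$ into pullbacks of that of $g$. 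Its image in $\P^1(\sH_\eta)$ is $g_\eta^j(w)$, which still has infinite orbit under $g_\eta$.
\end{itemize}

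\emph{(D2).} With $g=N_i\cdot f\cdot N_i^{-1}$ (a sequence), all relevant limits are non-constant by the (D1) argument. We need $(N_i,\eta)$ adapted to $f^{el'}$. By hypothesis $(M_{ei+k},\eta)$ is adapted to $f^l$. Apply ascent to pass to $f^{L}$ with $L=\mathrm{lcm}(l,el')$, then descent to pass from $f^{L}$ down to $f^{el'}$. The required non-constancy of $(N_i\cdot f^{el'}\cdot N_i^{-1})_\eta$ comes from composing (D1) for $N_*$.

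The main obstacle I expect is the descent step in (A3): locating a bad direction for $g_\varepsilon$ itself from one for $g^m_\varepsilon$, and checking that its image in $\P^1(\sH_\eta)$ keeps an infinite orbit. I would first try to handle this through Lemma~\ref{lem:baddy} together with the formula~\eqref{eq:hole-iterate} for the holes of iterates.
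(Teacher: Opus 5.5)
Your proof is correct and has the same skeleton as the paper's. (D1) comes from Proposition~\ref{prop: compo} applied to the chain of factors $M_{j+1}\cdot f\cdot M_j^{-1}$, and (D2) from the fact that adaptedness passes to iterates.

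The difference is in (D2). The paper, after reducing to $k=0$, proves only the ascent direction, with (A3) justified in one line by the hole-divisor inclusion from \eqref{eq:hole-iterate}. It then checks that $(M_{ei},\eta)$ is adapted to $f^{el}$, so it is using $l$ as a period of $M_{e*}$. Your descent direction is what is needed if (D2) is read with the minimal period $l'$. That period divides $l/\gcd(e,l)$; the paper's remark that it equals $\gcd(e,l)$ is off. When $l'=l/\gcd(e,l)$, the exponent $el'=\mathrm{lcm}(e,l)$ is a multiple of $l$, so ascent suffices. In degenerate cases it can fail: for $M_*=(A,B,A,C)$ and $e=2$ one gets $l'=1$, and $el'=2$ is not a multiple of $l=4$. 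Your two-way principle shows that (D2) does not depend on which period is used, a point the paper leaves implicit.

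The descent step you flagged as the main obstacle works as you sketched it.
\begin{itemize}
\item Suppose no $T_{\xg}g^j_\eps(v)$ with $0\le j<m$ were bad for $g_\eps$. Then $g^j_\eps(U(v))=U(T_{\xg}g^j_\eps(v))$ for all $j\le m$, so $U(v)$ would miss $g_\eps^{-m}(\xg)$, contradicting that $v$ is bad for $g^m_\eps$.
\item Since $\deg\tilde g_\eps=\deg g_\eta=1$, the natural map $\P^1(\widetilde{\sH}_\eps)\to\P^1(\sH_\eta)$ semi-conjugates $\tilde g_\eps$ to $g_\eta$ everywhere, with no holes to avoid. Hence the image of $T_{\xg}g^j_\eps(v)$ is $g^j_\eta(w)$, which still has infinite orbit.
\end{itemize}

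Two small precisions. First, for a cycle of period at least $2$, the limit of $N_i\cdot f\cdot N_i^{-1}$ itself may be constant. The principle should therefore be applied with base maps $N_i\cdot f^{l}\cdot N_i^{-1}$ and $N_i\cdot f^{el'}\cdot N_i^{-1}$, which is what your (D2) step actually uses. Second, the degree-$1$ case of (A2) should be handled by Remark~\ref{rem:good1} rather than by Julia sets.
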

Note that the period of $M_{e*+k}$ is the greatest common divisor of $e$ and $l$.

\begin{proof}
It is clear that $(M_{e*+k},\eta)$ is adapted to $f^e$ iff $(M_{e*},\eta)$ is also, so we may suppose $k=0$. 
By Proposition~\ref{prop: compo}, we have
\[
(M_{e(i+1)} \cdot f^e \cdot M_{ei}^{-1})_{\eta}
=
(M_{e(i+1)} \cdot f \cdot M_{e(i+1)-1}^{-1})_{\eta}
\circ\cdots\circ
(M_{ei+1}\cdot f \cdot M_{ei}^{-1})_{\eta}
\]
so that (D1) is satisfied.
We need to check that $(M_{ei},\eta)$ is adapted to $f^{el}$
which boils down to prove that $(L,\eta)$ is adapted to $f^e$
as soon as it is adapted to $f$.
To check this fact, observe that again by Proposition~\ref{prop: compo}, we have
\[
(L\cdot f^e \cdot L^{-1})_\eta=
\underbrace{(L\cdot f \cdot L^{-1})_\eta\circ \cdots\circ (L\cdot f \cdot L^{-1})_\eta}_{e \text{ times}}
\]
This implies (A1) and (A2), as a rational map has potential good reduction iff one of its iterate does (a rational map has potential good reduction iff its Julia set is reduced to a point).
Finally (A3) holds since the hole divisor of $L\cdot f^e \cdot L^{-1}$ is more effective than $D_\eta(L\cdot f \cdot L^{-1})$ by~\eqref{eq:hole-iterate}.
\end{proof}

\begin{rmk}
We shall see later that any $g$-rescaling adapted to $(f_n^l)$ gives rise to a unique cycle of $g$-rescalings
adapted to $(f_n)$ up to equivalence. 
This will imply that it is only necessary to check (D2) for some index $i$.
\end{rmk}

\begin{lemma}
\label{lem:equivcycle}
Let $(M_*,\eta)$, $(L_*,\eta)$ be two cycles of $g$-rescalings adapted to $(f_n)$. 
Then $(M_*,\eta)$ is equivalent to $(L_*,\eta)$ if and only \( (M_i, \eta) \) is equivalent to \( (L_i, \eta) \) for some $i\in \Z$.
\end{lemma}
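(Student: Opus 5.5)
The forward implication is immediate from the definition of equivalence of cycles. For the converse, I would propagate the equivalence from index $i$ to $i+1$. Invoking the periodicity $M_{i+l}=M_i$, $L_{i+l}=L_i$, this step yields equivalence for all indices.

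So assume $(M_i,\eta)\sim(L_i,\eta)$, i.e. $A:=(L_i\cdot M_i^{-1})_\eta\in\PGL(2,\sH_\eta)$. Set
\[F_M:=(M_{i+1}\cdot f\cdot M_i^{-1})_\eta,\qquad F_L:=(L_{i+1}\cdot f\cdot L_i^{-1})_\eta .\]
Both are non-constant by (D1). Write $B:=L_{i+1}\cdot M_{i+1}^{-1}$, a sequence of Möbius maps. We must show $\deg B_\eta=1$. We have the identity of sequences
\[L_{i+1}\cdot f\cdot L_i^{-1}=B\cdot (M_{i+1}\cdot f\cdot M_i^{-1})\cdot (L_i M_i^{-1})^{-1}.\]
Since $A$ has degree $1$ (no holes), Proposition~\ref{prop: compo} gives
\[(M_{i+1}\cdot f\cdot M_i^{-1}\cdot (L_iM_i^{-1})^{-1})_\eta=F_M\circ A^{-1},\]
which is non-constant. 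Suppose $B_\eta$ has degree $0$. Then $B_\eta$ is a constant $c$ with a single hole $h$. Proposition~\ref{prop: compo} applies unless $F_M\circ A^{-1}$ is constant with value $h$, which it is not. Applying it to the composite shows $F_L=B_\eta\circ F_M\circ A^{-1}$ is constant, contradicting (D1). Hence $\deg B_\eta=1$, so $(M_{i+1},\eta)\sim(L_{i+1},\eta)$.

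Running the same argument backwards from $i$ to $i-1$ is not even needed, because periodicity covers all residues mod $l$. There is one subtlety: the periods of $M_*$ and $L_*$ might differ, say $l$ and $l'$. The induction from $i$ upward still gives equivalence at every $j\ge i$. For $j<i$, pick $j+kll'\ge i$ and use periodicity of both cycles. The main (mild) obstacle is exactly checking the non-constancy hypothesis in Proposition~\ref{prop: compo} when $B_\eta$ is degenerate; this is handled by the fact that $F_M\circ A^{-1}$ is non-constant.
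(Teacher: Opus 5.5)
Your argument is correct and is essentially the paper's proof. The paper goes from index $0$ to index $i$ in one stroke: it writes $(M_i\cdot f^i\cdot M_0^{-1})_\eta$ as $(M_i\cdot L_i^{-1})_\eta\circ(L_i\cdot f^i\cdot L_0^{-1})_\eta\circ(L_0\cdot M_0^{-1})_\eta$ and uses (D1) with Proposition~\ref{prop: compo} to force $\deg(M_i\cdot L_i^{-1})_\eta=1$, whereas you make the same comparison one step at a time and induct, spelling out the non-constancy check for Proposition~\ref{prop: compo} that the paper leaves implicit.
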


\begin{proof}
Suppose, without loss of generality, that \( (M_0, \eta) \) is equivalent to \( (L_0, \eta) \), so that   $
(M_{0} \cdot L_{0}^{-1})_{\eta} \in \mathrm{PGL}_2(\sH_{\eta}).$
We may assume that both cycles have the same period $l$. 
Pick any integer $1\leq i\leq l$. 
Since 
$(M_{j+1}\cdot f\cdot M_{j}^{-1})_{\eta}$ 
has degree at least $1$ for all $j$, by Proposition~\ref{prop: compo},
we obtain that the map
\[(M_{i} \cdot f^i \cdot M_{0}^{-1})_{\eta}
= 
(M_{i} \cdot f \cdot M_{{i-1}}^{-1})_{\eta}
\circ\cdots \circ
(M_{1} \cdot f \cdot M_{0}^{-1})_{\eta}
\]
has degree at least $1$. 
Similarly, we get $\deg(L_{i,n}^{-1} \cdot f_n^i \cdot L_{0,n})_{\eta}\geq 1$. Finally, we have
\[
(M_{i} \cdot f^i \cdot M_{0}^{-1})_{\eta}
= (M_{i} \cdot L_{i}^{-1})_{\eta}
\circ (L_{i}^{-1} \cdot f^i \cdot L_{0})_{\eta}
\circ (L_{0}^{-1} \cdot M_{0})_{\eta}.
\]
which implies \( \deg (M_{i} \cdot L_{i}^{-1})_{\eta} = 1 \). Since both \( M_* \) and \( L_* \) are periodic, we conclude that \( (M_*, \eta) \) is equivalent to \( (L_*, \eta) \). 
\end{proof}

\begin{lemma}
\label{lem:equiv-adapt}
Suppose $(M_*,\eta)$ and $(L_*,\eta)$ are equivalent cycles of $g$-rescalings.
If $(M_*,\eta)$ is adapted to $(f_n)$ then $(L_*,\eta)$ as well.
\end{lemma}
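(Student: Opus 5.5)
We have to check (D1) and (D2) for $(L_*,\eta)$, using only that $(M_i,\eta)$ is equivalent to $(L_i,\eta)$ for every $i$. Set $A_i:=(L_i\cdot M_i^{-1})_\eta\in\PGL(2,\sH_\eta)$. Writing $L_i=(L_iM_i^{-1})\cdot M_i$ and $L_{i+1}^{-1}$ similarly, I will use Proposition~\ref{prop: compo} to compute the limits of composites factor by factor.

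For (D1), write
\[
L_{i+1}\cdot f\cdot L_i^{-1}=(L_{i+1}M_{i+1}^{-1})\cdot(M_{i+1}\cdot f\cdot M_i^{-1})\cdot (M_iL_i^{-1}).
\]
The outer factors have degree one limits at scale $\eta$. Their hole divisors are trivial: a sequence whose limit has full degree has $D_\eta=0$, since the degree of the hole divisor is $d-\deg$. So Proposition~\ref{prop: compo} applies at each step. It needs the inner map's limit not to be a constant landing in the holes, and with empty holes that is automatic. Hence
\[
(L_{i+1}\cdot f\cdot L_i^{-1})_\eta=A_{i+1}\circ(M_{i+1}\cdot f\cdot M_i^{-1})_\eta\circ A_i^{-1},
\]
which has degree at least $1$ by (D1) for $M_*$. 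The same computation applied to $f^l$ gives
\[
(L_i\cdot f^l\cdot L_i^{-1})_\eta=A_i\circ g_i\circ A_i^{-1},\qquad g_i:=(M_i\cdot f^l\cdot M_i^{-1})_\eta .
\]
This is conjugate to $g_i$, so (A1) and (A2) for $(L_i,\eta)$ and $f^l$ follow at once, since potential good reduction is conjugacy invariant. For the same reason the hole divisors correspond, $D_\eta(L_i f^l L_i^{-1})=A_i{}_*D_\eta(M_i f^lM_i^{-1})$, because the composition formula adds no holes from the degree one outer factors.

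The main obstacle is (A3), which involves a smaller scale $\eps<\eta$ and the Gauss point there. The difficulty is that $L_iM_i^{-1}$ has a degree one limit at $\eta$, but at $\eps$ it need not fix $\xg$. I would avoid a direct transport and argue as follows. Take the data from (A3) for $M_i$: a scale $\eps<\eta$ with $(M_if^lM_i^{-1})_\eps$ fixing $\xg$ with local degree $1$, and a bad direction $v$ whose image $w$ in $\P^1(\sH_\eta)$ has infinite orbit. Equivalence at $\eta$ means $\theta(|\Res(L_iM_i^{-1})|)\ge\eta>\eps$. By Theorem~\ref{thm:trichotomy}(1), $(L_iM_i^{-1})_\eps$ then has good reduction, so it fixes $\xg$ and induces the Möbius map $\tilde A$ on $T\xg$. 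Conjugating by this good-reduction map, the limit $(L_if^lL_i^{-1})_\eps$ again fixes $\xg$ with local degree $1$. The direction $\tilde A(v)$ is bad for it, since conjugation preserves bad directions. Finally, the naturality in Figure~\ref{diagram:residue} sends $\tilde A(v)$ to $A_i(w)$ in $\P^1(\sH_\eta)$, and $A_i(w)$ has infinite orbit under $A_ig_iA_i^{-1}$. This completes (A3), and hence (D2) for every $i$.
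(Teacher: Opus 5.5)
Your proof is correct and follows the same route as the paper. (D1) comes from factoring $L_{i+1}\cdot f\cdot L_i^{-1}$ through the degree-one limits $(L_{i+1}M_{i+1}^{-1})_\eta$ and $(M_iL_i^{-1})_\eta$ via Proposition~\ref{prop: compo}, and (D2) follows from the resulting conjugacy $(L_i\cdot f^l\cdot L_i^{-1})_\eta=A_i\circ g_i\circ A_i^{-1}$. The paper simply declares (D2) clear, and your check of (A3) fills in that omitted detail correctly: for $\eps<\eta$ the map $(L_iM_i^{-1})_\eps$ has good reduction, so it carries the bad direction at $\xg$ compatibly with the canonical map $\P^1(\widetilde{\sH}_\eps)\to\P^1(\sH_\eta)$.
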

\begin{proof}
Let $l$ be a common period  of  $M_*$ and $L_*$.
Since $M_*$ and $L_*$ are equivalent, $M_i \cdot L_i^{-1}\in \PGL(2,\sH_\eta)$ for all $i$, 
it  follows from Proposition~\ref{prop: compo} that
\[
(L_{i+1}\cdot f \cdot L_i^{-1})_\eta
=
(L_{i+1}\cdot M_{i+1}^{-1})_\eta\circ(M_{i+1}\cdot f \cdot M_i^{-1})_\eta\circ
(M_i\cdot L_i^{-1})_\eta
\]
has degree at least $1$. 
It is clear that $f^l$ is adapted to $L_0$ so that both conditions (D1) and (D2) are satisfied, and the result follows.
\end{proof}

\subsection{The tree of adapted cycles of $g$-rescalings}
\label{sec: simplicialtree}
An equivalence class of cycles of $g$-rescalings is said to be 
\emph{adapted}
to $(f_n)$ iff
it contains one cycle adapted to $(f_n)$. 
By Lemma~\ref{lem:equiv-adapt}, an equivalence class is adapted iff all cycles 
of $g$-rescalings in this class are adapted. 

We denote by $\cR(f)$ the set of equivalence classes of cycles of $g$-rescalings adapted to $(f_n)$. 
We endow $\cR(f)$ with a partial order $(M_*,\eps) \le (L_*, \eta)$
iff $\eps \le \eta$ and $(M_*,\eps)$ is equivalent to $(L_*, \eps)$.

Recall that a poset $(X,\le)$ is a (simplicial) tree rooted at $x_\star$, if
$x_\star$ is the unique minimal element of $X$; for any $x\in X$
the set $\{ x_\star \le y \le x\}$ is isomorphic (as a poset) to an
interval $\{1,\cdots, N\}$ of the integers endowed with its canonical ordering.
We say that a pair of points $x, x'$ is consecutive if $x<x'$ and
$\{ y,  x \le y \le x'\} = \{x, x'\}$. We attach
an edge $[x,x']$ to any consecutive points. 
\begin{thm}\label{thm:tree-rf}
The poset $\cR(f)$ is a simplicial tree rooted at the fundamental $g$-rescaling. 

Moreover, pick any consecutive points $(M_*,\eps)<(L_*,\eta)$, and denote by $l$ the period of $M_*$.
Then $(M_0\cdot f^l \cdot M_0^{-1})_\eta$ admits a type II periodic Julia point $x$,
and $(L_0,\eps)$ is the baby $g$-rescaling associated with $x$ and $f^l$. 
\end{thm}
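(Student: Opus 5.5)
The plan is to reduce everything to Theorem~\ref{thm:chain} applied to the iterate $f^l$, and to transfer the chain back to cycles with Lemmas~\ref{lem:shift}, \ref{lem:equivcycle} and~\ref{lem:equiv-adapt}.

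\emph{Unique root.} Take a class $(M_*,\eta)\in\cR(f)$ of period $l$. By (D2), each $(M_i,\eta)$ is adapted to $f^l$. By Theorem~\ref{thm:funda} applied to $f^l$, together with Corollary~\ref{cor:iterate} (which gives $\eps_\star(f^l)=\eps_\star(f)$), we get $\eta\ge\eps_\star(f)$. Moreover, each $(M_i,\eps_\star(f))$ is equivalent to the fundamental rescaling $(M_\star,\eps_\star)$ of $f^l$. The fundamental rescaling of $f^l$ is equivalent to that of $f$, because the latter is adapted to $f^l$ by Lemma~\ref{lem:shift}. Hence $(M_*,\eps_\star)$ is equivalent to the constant cycle $(M_\star,\eps_\star)$, which is adapted to $f$. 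So the fundamental class lies below every element, and it is the unique minimum.

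\emph{Down-set is a finite chain.} Fix $x=(M_*,\eta)$. Apply Theorem~\ref{thm:chain} to $(M_0,\eta)$ and $f^l$. This produces scales $\eps_0<\cdots<\eps_m=\eta$ such that each $(M_0,\eps_j)$ is adapted to $f^l$.
\begin{itemize}
\item First I must show that $(M_*,\eps_j)$ is an adapted cycle for $f$. Condition (D1) at scale $\eps_j\le\eta$ should follow from the natural maps $X(\sH_{\eps_j})\to X(\sH_\eta)$ of Figure~\ref{diagram:residue}: a map whose limit at the larger scale $\eta$ has positive degree also has positive degree at $\eps_j$.
\item For (D2) at index $i$, I would use the semi-conjugacy of Lemma~\ref{lem:semi-conj} to transfer adaptedness from $M_0$ to $M_i$.
\end{itemize}
Conversely, suppose $y=(L_*,\eps)\le x$. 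Then $(L_*,\eps)$ is equivalent to $(M_*,\eps)$. Item (3) of Theorem~\ref{thm:chain} says the limit has good reduction strictly between consecutive $\eps_j$, and $(A2)$ excludes such scales. So $\eps\in\{\eps_j\}$, and $y$ is the class $(M_*,\eps_j)$ by Lemma~\ref{lem:equivcycle}. Thus $\{x_\star\le y\le x\}$ is $\{(M_*,\eps_j)\}_{0\le j\le m}$, which is order-isomorphic to $\{0,\dots,m\}$.

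\emph{Main obstacle.} The hard part is compatibility of periods. The cycle $(M_*,\eps_j)$ may have a different minimal period after passing to equivalence classes, and one must check it is a well-defined element of $\cR(f)$ that does not depend on the chosen representative. I expect this to need a careful application of Lemma~\ref{lem:equivcycle} at each scale.

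\emph{Consecutive points.} Consecutive points $(M_*,\eps)<(L_*,\eta)$ correspond to $\eps=\eps_{j-1}$ and $\eta=\eps_j$ in the chain built for $(L_0,\eta)$ and $f^l$; here $l$ can be taken to be the period of $L_*$, with a comparison to the period of $M_*$ again via Lemma~\ref{lem:shift}. The last assertion of Theorem~\ref{thm:chain} then says that $(L_0,\eps_j)$ is the baby $g$-rescaling of $(L_0,\eps_{j-1})\sim(M_0,\eps_{j-1})$ for $f^l$. This rescaling is associated with the fixed type II Julia point $\xg$ of $(L_0\cdot f^l\cdot L_0^{-1})_{\eps_{j-1}}$. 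Transporting this point by $(M_0L_0^{-1})_\eps$ gives a periodic type II Julia point $x$ of $(M_0\cdot f^l\cdot M_0^{-1})_\eps$, as required.
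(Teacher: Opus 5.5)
Your proposal is correct and follows the paper's proof essentially step by step. The root comes from Theorem~\ref{thm:funda} applied to $f^l$ together with Corollary~\ref{cor:iterate}; the down-set of a vertex comes from Theorem~\ref{thm:chain} applied to $(M_0,\eta)$ and $f^l$, with (D1) obtained by degree monotonicity in the scale and intermediate scales excluded by good reduction; and consecutive points are handled by the baby clause of Theorem~\ref{thm:chain}.

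Your semi-conjugacy argument for (D2) at every index $i$ usefully fills a step the paper passes over quickly. The ``main obstacle'' you flag is not actually one: $M_*$ keeps the same period at every scale, equivalence at $\eta$ implies equivalence at all smaller scales, and the argument only ever needs one fixed representative.
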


\begin{example}
\label{eg: trivialtree}
When $\eps_\star(f)=(1)$, then $\cR(f)$ is reduced to a single point. 
\end{example}

\begin{proof}
We claim that the fundamental $g$-rescaling as defined in Theorem~\ref{thm:funda} is the unique minimal element
of $\cR(f)$. We may and shall assume that this $g$-rescaling is given by $(\id,\eps_\star(f))$. 
Recall from Corollary~\ref{cor:iterate} that $\eps_\star(f^l)=\eps_\star(f)$ for all $l\in \N_+$. 

Let $(M_*,\eps)$ be any cycle of $g$-rescalings of period $l$ adapted to $f$. 
For each $i\in\Z$, the $g$-rescaling $(M_i,\eps)$ is adapted to $f^l$. By Theorem~\ref{thm:funda} applied to $f^l$, we get $\eps\ge \eps_\star(f^l) = \eps_\star(f)$,
and  $(M_i,\eps_\star(f))$ is equivalent to $(\id,\eps_\star(f))$. 
Lemma~\ref{lem:equivcycle} implies that $(M_*,\eps_\star(f))$ and $(\id,\eps_\star(f))$ are equivalent, 
and this proves the fundamental $g$-rescaling is the unique minimal element of $\cR(f)$. 

\smallskip

We now prove that $\cR(f)$ has a tree structure. Let $(M_*,\eps)$ be any cycle of $g$-rescalings of period $l$ adapted to $f$. 
Theorem~\ref{thm:chain} applied to $f^{l}$ and $(M_0,\eps)$ yields a finite set of scales \[\eps_0=\eps_\star(f)< \eps_1 < \cdots < \eps_m=\eps\]
such that $(M_0,\eps_j)$ is adapted to $f^{l}$, 
\[\deg (M_0\cdot f^{l}\cdot M_0^{-1})_{\eps_j} > \deg  (M_0\cdot f^{l}\cdot M_0^{-1})_{\eps}\ge1\]
if $j < m$, and the map $(M_0\cdot f^{l}\cdot M_0^{-1})_\eta$ has good reduction if 
$\eta \in (\eps_\star(f),\eps)$ and $\eta \notin\{\eps_j\}_{j=0}^m$.
Observe that $\deg(M_{i+1}^{-1} \cdot f \cdot M_{i})_{\eta}\ge \deg(M_{i+1}^{-1} \cdot f \cdot M_{i})_{\eps}\ge 1$, for any $\eta<\eps$. 
It follows that for any $j$, the cycle of $g$-rescalings $(M_*,\eps_j)$ is adapted to $f$ as it satisfies both conditions (D1) and (D2). 
We have proved that the set of points in $\cR(f)$ that are lower or equal to $(M_*,\eps)$ contains
$(M_*,\eps_0) < \cdots < (M_*,\eps_m)=(M_*,\eps)$.

Pick any other cycle of $g$-rescalings $(L_*,\eta)$ which is adapted to $f$, and whose point in $\cR(f)$ is lower or equal to $(M_*,\eps)$. 
Then $\eta\le \eps$ and $(L_*,\eta)$ is equivalent to $(M_*,\eta)$.  Let $p$ be the least common multiple
of the periods of $M_*$ and $L_*$.
Then $(L_0\cdot f^p \cdot L_0^{-1})_\eta$
is conjugated to $(M_0\cdot f^m \cdot M_0^{-1})_\eta$.  If $\eta$ does not belong to $\{\eps_j\}_{j=0}^m$, then we also have that $(L_0\cdot f^p \cdot L_0^{-1})_\eta$ has potential good reduction, and this contradicts (D2).
We conclude that $\eta\in\{\eps_j\}_{j=0}^m$ hence the set of points in $\cR(f)$ that are lower or equal to $(M_*,\eps)$
is equal to $\{(M_*,\eps_j)\}_{j=0}^m$. We have proved that  $\cR(f)$ is a simplicial tree rooted at the fundamental $g$-rescaling. 

Now suppose that $(L_*,\eta)<(M_*,\eps)$ are consecutive points in $\cR(f)$. By what precedes, 
$(L_*,\eta)$ is equivalent to $(M_*,\eps_{m-1})$ and the theorem follows.
\end{proof}

%%%%%%%%%%%%%%%%%%

%%%%%%%%%

\subsection{Boundedness of the set of adapted scales}
In this section, we prove a uniform bound depending only on the degree on the set of scales appearing in adapted $g$-rescalings. Compare with~\cite[Theorem~B]{CG25}.
\begin{thm}\label{thm:boundscales}
The number of non-trivial scales $\eps<(1)$ for which there exists a $g$-rescaling $(M,\eps)$ adapted to some iterate $f^l$ with $l \in \mathbb{N}_+$ is bounded above by $2d - 2$. In particular, the total number of adapted scales is at most $2d - 1$.
\end{thm}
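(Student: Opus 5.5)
We will deduce the bound from Theorem~\ref{thm:numbervaluationring}, applied to a field of transcendence degree $2d-2$ built from the conjugacy class of $(f_n)$. The idea, following~\cite{CG25}, is that adapted scales are detected by degenerations of conjugacy invariants, and there are at most $\dim \rat_d = 2d-2$ independent such invariants.

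\emph{Step 1: the field.} Let $K\subset\sH_0$ be the algebraic closure of the subfield generated over $\C$ by the values $\phi(f)=(\phi(f_n))\in\sH_0$, where $\phi$ ranges over the regular functions on the affine variety $\rat_d=\Rat_d/\PGL(2)$ (see~\cite{SJ98}). Since $\rat_d$ has dimension $2d-2$, the field generated by the $\phi(f)$ is the residue field of the image point, so $\mathrm{trdeg}_\C K\le 2d-2$. Every multiplier $(f_n^l)'(p_n)$ at a periodic point of $f_n$ is algebraic over this field, because the symmetric functions of the multipliers of period-$l$ points are regular functions on $\rat_d$. Hence every multiplier lies in $K$, and so do all elementary symmetric functions of the multipliers of $f^l$.

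\emph{Step 2: each adapted scale $\eta<(1)$ is detected by $K$.} Let $(M,\eta)$ be adapted to $f^l$, and put $g=(M\cdot f^l\cdot M^{-1})_\eta$. By (A2) and, in the degree-one case, (A3) together with Lemma~\ref{lem:baddy}, we have to show that $R^-_\eta\cap K\subsetneq R^+_\eta\cap K$. For this we must produce an element $z\in K$ with $1<\lim|z_n|^{\eta_n}<\infty$, or at least one with $\lim|z_n|^{\eta_n}\in(0,\infty)\setminus\{1\}$, possibly after inverting it.
\begin{itemize}
\item If $\deg g\ge2$, then $g$ lacks potential good reduction, so its Julia set is not a point. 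We would then like a repelling periodic point of $g$ of some period $m$ that avoids the hole divisor. Its multiplier $\mu$ satisfies $|\mu|_\eta>1$, and $|\mu|_\eta<\infty$ holds automatically. By Lemma~\ref{lem:period}, $\mu$ is the $\eta$-limit of multipliers of $f_n^{lm}$, which lie in $K$.
\item If $\deg g=1$, then $g$ is a Möbius map of infinite order on $\P^1(\sH_\eta)$. When $\eta<(1)$, the proof of Theorem~\ref{thm:baby1} shows that $g$ is a homothety whose multiplier $\mu$ satisfies $|\mu|_\eta=1/e$. This $\mu$ is the multiplier of a fixed point, again away from holes, hence comes from multipliers of $f^{l}$.
\end{itemize}

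\emph{Step 3: conclusion.} The adapted scales $<(1)$ therefore form a chain to which Theorem~\ref{thm:numbervaluationring} applies with $\mathrm{trdeg}_\C K\le 2d-2$, so there are at most $2d-2$ of them. Adding the trivial scale gives the bound $2d-1$.

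The main obstacle is Step 2, and specifically the guarantee that the chosen periodic point of $g$ avoids $|D_\eta|$ and is a genuine limit of periodic points of $f_n^{lm}$. Two things need justifying. First, repelling periodic points of $g$ are dense in its Julia set, whereas the hole divisor and its finitely many preimages up to period $m$ form a finite set. Second, Lemma~\ref{lem:period} lifts points of $\Per_m(g)$ that lie outside the support of $D_\eta(f^{lm})$.

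A further subtlety is that multipliers can be $\infty$-like, that is, the sequence may be unbounded at scale $\eta$. If the naive choice fails, I would instead take $z$ to be a symmetric function of the multipliers over a full period-$m$ divisor, and use the non-Archimedean triangle inequality to isolate the dominant term. Failing that, I would replace multipliers by a coefficient of $f_n$ normalized by $M_n$, as in~\cite{CG25}, and work in the transcendence-degree-$(2d+1)$ field of coefficients modulo $\PGL(2)$.
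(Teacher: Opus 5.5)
Your Step~1 is fine: the algebraic closure $K$ of the field of moduli has transcendence degree at most $2d-2$ and contains every multiplier of every $f^l$. The gap is in Step~2, where you detect each adapted scale by multipliers, and this cannot work in general.

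When $\deg g\ge 2$ you need a \emph{rigid} repelling periodic point of $g$. In non-Archimedean dynamics, repelling periodic points are dense in the Julia set only if type~II repelling points are counted, and those have no multiplier in $\sH_\eta$. The Julia set may even contain no rigid point at all. Take $d\ge 4$ a square and $(f_n)$ flexible Lattès maps whose elliptic curves leave every compact set of the moduli space, as in Theorem~\ref{thm:flexible-lattes}.
\begin{itemize}
\item Then $\eps_\star(f)<(1)$ is an adapted scale, and the fundamental limit has Julia set a segment in $\H$.
\item Since $f_n\circ\pi_n=\pi_n\circ\psi_n$ with $\psi_n(z)=\sqrt{d}\,z+z_{0,n}$, every point of period $m$ of every $f_n$ has multiplier $\pm d^{m/2}$ or $d^m$.
\item So all multipliers of all iterates are integers independent of $n$, and everything algebraic over them is a constant in $\C$.
\item Hence no expression in multipliers, single or symmetric, has $\eps_\star$-norm outside $\{0,1\}$. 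This is exactly the flexible Lattès exception in McMullen's theorem.
\end{itemize}
Your fallbacks do not help. Symmetric functions of integers are integers. The raw coefficient field of $f$ only gives the bound $2d+1$, unless you prove that the rescalings can be chosen over a field of transcendence degree $\le 2d-2$; that is precisely the content of Lemma~\ref{lem:special}.

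The degree-one bullet has a separate gap: nothing forces a fixed point of the limiting homothety to avoid $|D_\eta|$. Bad directions at $\xg$ of the lower-scale map that are fixed by its reduction are pushed into the hole divisor, and then Lemma~\ref{lem:period} says nothing about their multipliers. For instance, take $\eps<\eta<(1)$, $a_n=e^{-1/\eps_n}$, $b_n=e^{-1/\eta_n}$ and $f_n(z)=b_nz+\frac{a_n}{z-1}+\frac{a_n}{z}+a_nz^2$. Then $(\id,\eta)$ is adapted (through the bad direction $1$ at scale $\eps$), $f_\eta(z)=b_\eta z$, and $D_\eta(f)=[0]+[1]+[\infty]$. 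The five fixed points of $f$ have multipliers of $\eta$-norm $1,1,1,0,\infty$, so none of them detects $\eta$.

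The paper avoids both issues by reading the scale off the \emph{coefficients} of the rescaled limit. It takes $K$ to be the algebraic closure of the coefficient field of a minimal-resultant conjugate. Lemma~\ref{lem:special}, an induction along the chain of Theorem~\ref{thm:chain}, makes each adapted rescaling equivalent to one whose limit is defined over the image of $K\cap B[0]$. It then uses the homothety multiplier when $\deg g=1$. When $\deg g\ge2$ it uses a cross-ratio of four rigid periodic points near two distinct Julia points, relying on density of periodic (not repelling) type~I points \cite[Theorem~B]{FRL10}.

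Your own framework can be repaired by the same swap, replacing multipliers with cross-ratios of periodic points of $f$:
\begin{itemize}
\item These cross-ratios lie in your $K$, because $f$ is $\PGL(2,\sH_0)$-conjugate to a map defined over $K$.
\item They are insensitive to holes: the divisor identity of Lemma~\ref{lem:period} shows that every periodic point of $g$, and every point in the hole divisors of its iterates, is a limit of periodic points of $M\cdot f^l\cdot M^{-1}$.
\item In degree one, the lifts of $0,\infty,w,g^{-1}(w)$ have cross-ratio reducing to $\mu^{\pm1}$ in $\sH_\eta$.
\end{itemize}
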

This bound is optimal, see \S\ref{sec: optimal}.
Endow $\cR(f)$ with the unique tree metric such that the length of any edge is equal to $1$. 
\begin{cor}\label{cor:boundlength}
The maximal length of a totally ordered segment in $\cR(f)$ is $\le 2d-2$.
\end{cor}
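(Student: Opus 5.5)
Corollary~\ref{cor:boundlength} follows from Theorem~\ref{thm:boundscales} together with the description of chains in $\cR(f)$ obtained in the proof of Theorem~\ref{thm:tree-rf}. Since $\cR(f)$ is a simplicial tree rooted at the fundamental $g$-rescaling, any totally ordered segment lies in some interval $\{(M_*,\eps_0)<(M_*,\eps_1)<\cdots<(M_*,\eps_m)\}$ lying below a point $(M_*,\eps)$ of $\cR(f)$. Its length is then the number of edges, which is at most $m$. So it suffices to bound $m$, the length of a maximal increasing chain starting at the root, by $2d-2$.

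The plan is to show that the scales $\eps_0<\eps_1<\cdots<\eps_m$ along such a chain are pairwise distinct adapted scales for a single iterate. Let $l$ be the period of $M_*$. By the proof of Theorem~\ref{thm:tree-rf}, which uses Theorem~\ref{thm:chain} applied to $f^l$ and $(M_0,\eps)$, each $(M_0,\eps_j)$ is a $g$-rescaling adapted to $f^l$. The points of the chain are distinct elements of $\cR(f)$, and two cycles lying under the same $(M_*,\eps)$ at the same scale are equivalent. Hence the scales $\eps_j$ are strictly increasing, and the chain has $m+1$ distinct adapted scales.

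Theorem~\ref{thm:boundscales} bounds the total number of scales admitting a $g$-rescaling adapted to some iterate of $f$ by $2d-1$. Hence $m+1\le 2d-1$, that is $m\le 2d-2$, which gives the corollary.

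The only point needing care is that the metric on $\cR(f)$ puts length $1$ on each edge. This means length equals the number of consecutive pairs, and consecutive pairs along a chain correspond exactly to consecutive scales $\eps_{j-1}<\eps_j$; Theorem~\ref{thm:tree-rf} guarantees there are no intermediate points. A general totally ordered segment $[x,y]$ is contained in $[\text{root},y]$, so its length is also at most $2d-2$. I expect no real obstacle: the substance is Theorem~\ref{thm:boundscales}, and the only step to verify is that distinct points of a chain have distinct scales. That holds because the order on $\cR(f)$ identifies points of a chain at equal scale.
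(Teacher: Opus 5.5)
Your argument is correct and is essentially the paper's: strictly comparable vertices of $\cR(f)$ have strictly different scales, directly from the definition of the order. A chain therefore carries pairwise distinct adapted scales, at most $2d-1$ of them by Theorem~\ref{thm:boundscales}, so its length is at most $2d-2$. The detour through the proof of Theorem~\ref{thm:tree-rf} is harmless but unnecessary, since condition (D2) already makes each scale in the chain an adapted scale for some iterate $f^l$.
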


\begin{proof}
This is a direct consequence of the previous theorem since $\eps > \eps'$
if $(L_*,\eps) > (L_*',\eps')$ in $\cR(f)$.
\end{proof}

\begin{proof}[Proof of Theorem~\ref{thm:boundscales}]
Conjugating $f_n$ by suitable Möbius transformations, we may assume that $|\res(f_n)|=|\Res(f_n)|$ for all $n$, 
so that $\eps(f)=\eps_\star(f)$. In addition, we write $f_n[z_0:z_1]=[P_n:Q_n]$ 
where $P_n$ and $Q_n$ are homogeneous polynomials of degree $d$
normalized so that the maximum of the norms of all coefficients of $P_n$ and $Q_n$ is equal to $1$. 
In other words, we have $f=[P:Q]$ with $P, Q \in B[0] [z_0,z_1]$, where $B[0]$
is the ring of bounded sequences of complex numbers modulo equalities on an $\om$-big set, see \S\ref{sec:complexrobinsonfield}.

Consider the intermediate field extension $\C\subset K' \subset \sH_0$ generated by the coefficients of $P$ and $Q$, and 
let $K$ be its algebraic closure. It is still an intermediate extension $\C\subset K \subset \sH_0$ whose
transcendence degree over $\C$ is at most $2d-2$ by~\cite[Lemma~4.6]{CG25}.

Recall that for any scale $\eps$, there is a canonical map $B[0]\to \sH_\eps$.

\begin{lemma}\label{lem:special}
Let $(M,\eps)$ be a $g$-rescaling adapted to $f^l$ for some $l\in\N_+$. 

Then one can find a $g$-rescaling  $(L, \eps)$ which is equivalent to $(M,\eps)$ and such
that  the limit $(L\cdot f^l \cdot L^{-1})_\eps$ is a rational map whose coefficients belong
to the image of $K\cap B[0]$ in $\sH_\eps$.
\end{lemma}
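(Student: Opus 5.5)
We argue by induction on the length $m$ of the adapted $g$-rescaling $(M,\eps)$ for $f^l$, as defined in Remark~\ref{rmk:length} and made possible by Theorem~\ref{thm:chain} applied to $f^l$. Replacing $f$ by $f^l$ is harmless: the coefficients of $f^l=[P^{(l)}:Q^{(l)}]$ are polynomials with integer coefficients in those of $P,Q$. After renormalising by a coefficient of maximal modulus, they become quotients of elements of $K\cap B[0]$ with bounded modulus. These lie in $K$ and are bounded, hence in $K\cap B[0]$, because $K$ is a field. So we may assume $l=1$ and that the coefficients of $f$ lie in $K\cap B[0]$.

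\emph{Base case $m=0$.} Here $\eps=\eps_\star(f)$, and by Theorem~\ref{thm:funda} every adapted $g$-rescaling at this scale is equivalent to $(\id,\eps_\star(f))$, since we arranged $\eps(f)=\eps_\star(f)$ (Lemma~\ref{lem:adapt-cool}). We take $L=\id$. The coefficients of $f_\eps$ are the images of those of $P,Q$, which lie in $K\cap B[0]$, possibly after dividing by the gcd $H$ arising from a hole divisor. The gcd can be handled with Lemma~\ref{lem:intclosed}-type splitting, since $K$ is algebraically closed and $B[0]$ is integrally closed.

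\emph{Inductive step.} Suppose $(M,\eps_{m-1})$ has already been replaced by an equivalent $L'$ such that $g:=L'\cdot f\cdot L'^{-1}$ has coefficients in $K\cap B[0]$. By Theorem~\ref{thm:chain}, $(M,\eps)$ is the baby $g$-rescaling attached to a fixed type II point $x$ of $g_{\eps_{m-1}}$. We will show that $x$ can be moved to $\xg$ by a Möbius map $N$ whose coefficients lie in $K\cap B[0]$. The type II point $x=\zeta(a,r)$ is Julia and fixed, so it is cut out by algebraic data of $g$:
\begin{itemize}
\item when $\deg_x\ge2$, $x$ is the convex hull point of suitable preimages and periodic points, which are roots of polynomials with coefficients in $K\cap B[0]$;
\item when $\deg_x=1$, $x$ is determined by periodic points, critical points and the bad directions.
\end{itemize}
All of these are roots of polynomials over $K$, so we can choose the centre $a\in K$. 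We can also choose the radius as $|c|$ with $c\in K$, for example a difference of two such algebraic points. Then $N(z)=(z-a)/c$, after exchanging with $z\mapsto 1/z$ if needed so that the entries are bounded, has entries in $K\cap B[0]$, and $N\cdot g\cdot N^{-1}$ still has coefficients in $K\cap B[0]$ after normalisation.

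Next we follow the explicit constructions in the proofs of Theorems~\ref{thm:baby2} and~\ref{thm:baby1}.
\begin{itemize}
\item In the degree $\ge2$ case, the auxiliary map $h$ can be chosen with $\tilde h_\eps=\tilde g_\eps$ and coefficients in $K\cap B[0]$: Lemma~\ref{lem:intclosed} lets us lift the reduction, including the splitting of the gcd. We then apply the base case to $h$ to get a minimising conjugacy with coefficients in $K$.
\item In the degree $1$ case, the maps $z\mapsto \mu z$ and $z\mapsto az+b$ with $b=(1-\mu)^{-1}$ are defined over $K$ as soon as $\mu$ is. This holds because $\mu$ is a multiplier, i.e.\ an eigenvalue of the reduction, hence algebraic over the coefficients.
\end{itemize}
Uniqueness up to equivalence in those theorems then shows that the resulting $L$ is equivalent to $M$.

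The main obstacle is the base case for $h$. We must show that the minimal-resultant conjugacy can be chosen over $K$, or at least be replaced by an equivalent one at scale $\eps_\star(h)$ that is defined over $K$. The plan is to use that a map without potential good reduction has a repelling structure: the point $\xg$ after conjugation is the Julia-set barycentre, which is determined by periodic points of small period. Fixed and period-two points are algebraic over $K$, so we can select three of them, or two together with a distance realised in $K$, and obtain a Möbius map over $K$ carrying the relevant type II point to $\xg$.
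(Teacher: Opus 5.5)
Your outline follows the paper's proof up to its last step: reduction to $f^l$, induction on the length via Theorem~\ref{thm:chain}, the base case with $L=\id$, a Möbius map over $K$ moving the fixed type II point to $\xg$, and a lift $h$ of the tangent map with coefficients in $K\cap B[0]$ via Lemma~\ref{lem:intclosed}. The gap is that last step. You flag it as the main obstacle, but you only sketch it, and the sketch does not work as stated.

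\emph{The target is ill-posed.} You propose to locate a distinguished type II point (a ``Julia-set barycentre'') and move it to $\xg$. Your coordinate $NL'$ agrees with $M$ only up to good reduction at scale $\eps_{m-1}$, so one may have $\eps(h)<\eps_\star(h)$. In that case, for a minimal-resultant conjugacy $L_1$ of $h$, both $h_{\eps_\star(h)}$ and $(L_1)_{\eps_\star(h)}$ are degenerate. Hence there is no point of $\P^{1,\an}_{\sH_{\eps_\star(h)}}$, in the coordinates of $h$, that $L_1$ sends to $\xg$. When $\eps_\star(h)=(1)$ there are no type II points at all. The barycentre characterization is also never justified.

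\emph{The concrete recipe can fail.} Fixed and period-two points need not give three points with distinct limits. Take $G(z)=a+z^2/(z-1)$ with $a^2+2a=1$. It has a double fixed point at $\infty$ and a fixed point of multiplier $-1$, so only two points of period dividing $2$. This $G$ arises as the tangent map, e.g.\ for $f_n=G+e^{-n}z^3$, whose baby limit at $\xg$ is $G$ itself at scale $(1)$. Then any three such points of $h$ have at most two distinct limits in the coordinates $L_1$, and the Möbius map they determine is not equivalent to $L_1$.

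The missing idea, which is exactly how the paper concludes, is that equivalence at $\eta:=\eps_\star(h)$ only requires $(L_1L^{-1})_\eta$ to be non-constant. Neither good reduction nor a distinguished type II point is involved.

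\emph{Non-constancy criterion.} It suffices that $L_1L^{-1}$ maps $0,1,\infty$ to three points with pairwise distinct images in $\P^1(\sH_\eta)$. A constant limit of Möbius maps has a single hole, so it would identify two of them.

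\emph{Source of the points.} The map $(L_1hL_1^{-1})_\eta$ has degree $\ge2$, hence infinitely many periodic points. By Lemma~\ref{lem:period}, each is the limit of $L_1(p)$ for some periodic point $p$ of $h$. Such $p$ lies in $\P^1(K)$, because $K$ is algebraically closed and contains the coefficients of $h$. Choose three such $p,q,r$ with distinct limits, allowing arbitrary periods, and let $L\in\PGL(2,K)$ send them to $0,1,\infty$.

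The same argument, applied directly to $M$ and to periodic points of $f^l$, proves the lemma without any induction whenever the limit has degree $\ge2$.
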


Now pick any  $g$-rescaling $(M,\eps)$ adapted to $f^l$ for some $l\in\N_+$ such that $\eps < (1)$. 
Write $g:=(L\cdot f^l \cdot L^{-1})_\eps$.  We claim that 
$R^-_\eps \cap K \subsetneq R^+_\eps \cap K$. The theorem then follows from Theorem~\ref{thm:numbervaluationring}.

\smallskip

If $\deg(g)=1$, then by (A2) $g$ does not have potential good reduction hence is conjugated to a homothety
of multiplier $|\la|>1$ since $\eps<(1)$. By the previous lemma, one can find $\mu \in K\cap B[0]$ whose image
in $\sH_\eps$ is equal to $\la$, so that $\mu$ belongs to $R^+_\eps \cap K\setminus R^-_\eps \cap K$, and the claim follows.

Suppose $\deg(g)=2$. Since $g$ does not have potential good reduction, its Julia set contains at least two points. 
These two points $x,x'$ belong to the closure of the set of periodic type I points by~\cite[Theorem~B]{FRL10}. 
We may thus find  periodic type I points $p_1, p_2$ in a neighborhood of $x$, and  $p_3, p_4$ close to $x'$
such that $[p_1,p_2]\cap [p_3,p_4]=\emptyset$, and
$[p_1,p_3]\cap [p_2,p_4]$ is a non-trivial segment $[y,y']$. 
In other words, their cross-ratio
\[
\la :=(p_{1}, p_{3}; p_{2}, p_{4}) = \frac{(p_{1} - p_{2})(p_{3} - p_{4})}{(p_{1}- p_{4})(p_{3} - p_{2})},\]
satisfies 
$|\la^{-1}| =  \exp(d_\H(y,y'))>1$. 

By the previous lemma, the coordinates of the four points
belong to the image of $K\cap B[0]$ in $\sH_\eps$ so that 
one can find $\mu \in K\cap B[0]$ whose image
in $\sH_\eps$ is equal to $\la$, which concludes the proof.
\end{proof}

\begin{proof}[Proof of Lemma~\ref{lem:special}]
For any scale $\eps$, denote by $B_K[\eps]$ (resp. $\tilde{B}_K[\eps]$) the image of the ring $K\cap B[0]$ in $\sH_\eps$ (resp. in $\tilde{\sH}_\eps$).

We argue by induction on the length of the $g$-rescaling as defined in Remark~\ref{rmk:length}.
When the length is $0$, then there is nothing to prove since we assumed that $\eps=\eps(f)=\eps_\star(f)$
so that $(\id,\eps)$ is equivalent $(L,\eps)$, and $f$ (hence $f^l$) has its coefficients in $K\cap B[0]$. 

Suppose now that the length of $(L,\eps)$ is equal to $n\ge 1$, and consider the $g$-rescaling $(L,\eps')$
with $\eps'< \eps$ which is adapted to $f^l$ and of length $n-1$. By Theorem~\ref{thm:chain}, $(L,\eps)$ is a baby $g$-rescaling
of $(L',\eps')$.
By the induction hypothesis, 
$(L,\eps')$ is equivalent to a $g$-rescaling $(L',\eps')$ defined over $K\cap B[0]$, and the limit is defined
over $B_K[\eps']$. Set $g = (L' \cdot f^l \cdot (L')^{-1})_{\eps'}$.

Since $(L,\eps)$ is a baby $g$-rescaling, it is associated with a type II periodic Julia point $x\in \P^1(\sH_{\eps'})$
of $g$ of period $m$. When $\deg_x(g^m)=1$, then there is a critical value $y$
such that $y, g^m(y)$ and $g^{2m}(y)$ satisfy $x = [y, g^m(y)] \cap [g^m(y), g^{2m}(y)] \cap [y, g^{2m}(y)]$.
Since the three points can be chosen to have projective coordinates in $B_K[\eps]$, we may conjugate by a Möbius transformation with coefficients in  
$B_K[\eps]$ and assume that $x=\xg$. 
When $\deg_x(g^m)\ge2$, then we can find three  type I periodic points $y_1, y_2, y_3$ satisfying 
$x = [y_1,y_2] \cap [y_2,y_3]\cap [y_3,y_1]$, and suppose as well that $x=\xg$.

We now look at the tangent map $\tilde{h} := T_{\xg} g^m$. We claim that it is determined by two homogeneous polynomials 
over $\tilde{B}_K[\eps]$.  Indeed,
we can be write in homogeneous coordinates $g^m = [R:S]$
so that the coefficients of $R$ and $S$ lie in $B_K[\eps]$.
By Lemma~\ref{lem:intclosed}, up to multiplication by a nonzero element of $\tilde{B}_K[\varepsilon]$, we may assume that $\tilde{R}$ and $\tilde{S}$ admit factorizations
$\tilde{R} = H R_1,$ and  $\tilde{S} = H S_1$
in $\widetilde{\sH}_\varepsilon$, where $H$, $R_1$, and $S_1$ have coefficients in $\tilde{B}_K[\varepsilon]$, and $R_1$ and $S_1$ are coprime.
This proves the claim.

Recall that the baby $g$-rescaling is  obtained by choosing
a lift $h$ of $\tilde{h}$ as a rational map of the same degree defined over $\sH_0$, 
and then by taking its fundamental $g$-rescaling. By what precedes, we may choose $h$ with coefficients in $K\cap B[0]$. 
In order to conclude the proof, we need to prove the existence of $L\in \PGL(2,K)$ such that
$(L,\eps_\star(h))$ is adapted to $h$. 

Pick $L_1\in \PGL(2,\sH_0)$ such that $\eps_\star(h) = \eps(L_1 \cdot h \cdot L_1^{-1})$. 
Choose three distinct points $p_1, q_1$ and $r_1$ in $\P^1(\sH_0)$ whose image in $\P^1(\sH_{\eps_\star(h)})$
are periodic for the limit  $(L_1 \cdot h \cdot L_1^{-1})_{\eps_\star(h)}$. 
Then we can find three periodic points $p,q$ and $r$ 
of $h$ in $\P^1(\sH_0)$ such that $L_1(p)=p_1$, $L_1(q)=q_1$ and $L_1(r)=r_1$.
Since $p,q$ and $r$ are defined over $K$, we may choose $L\in \PGL(2,K)$ so that 
$L(p)=0$, $L(q)=1$ and $L(r)=\infty$. 
Then $L_1\cdot L^{-1} (0)= p_1$, $L_1\cdot L^{-1} (1)= q_1$ and $L_1\cdot L^{-1} (\infty)= r_1$ hence $L_1\cdot L^{-1} \in \PGL(2,\sH_{\eps_\star(h)})$.
 It follows that $(L_1,\eps_\star(h))$ is  
equivalent to $(L,\eps_\star(h))$ and the proof is complete.
\end{proof}

%%%%%%%%%%%%%%%%%
\section{Iteration on $\cR(f)$ and PCF vertices}\label{sec:more-on-tree}

We continue our exploration of the structure of $\cR(f)$. 
We explain that it is invariant under iteration, and its upper-level trees depend only on the limit at the corresponding vertex. We then describe the situation in the case of analytic families and use this fact to prove an
upper bound on the number of non PCF limits. 
This ends the proof of Theorem~\ref{thmint:1} from the introduction.

%%%%%%%%%%%%%%%%%%

\subsection{The tree of $g$-rescalings and iteration}
Recall that if a cycle of $g$-rescalings $(M_*,\eps)$ is adapted to $f$, then 
$(M_{l*},\eps)$ is also adapted to $f^l$ for any $l\in \N_+$, so that there is a canonical map
$\cR(f)\to \cR(f^l)$.

A tree isomorphism  is a bijective map which preserves the order relation. 

 \begin{thm}\label{thm:periodic}
Let $f=(f_n)$ be a sequence of complex rational maps of degree $d\ge2$. 
\begin{enumerate}
\item
The map $\sigma_f(M_*,\eps) = (M_{*+1},\eps)$ on cycles of $g$-rescalings induces
a tree isomorphism on $\cR(f)$ fixing the root. 
\item
For any positive integer $l\in \N$, the canonical map $\cR(f) \to \cR(f^l)$ 
is a tree isomorphism which conjugates $(\sigma_f)^l$ to $\sigma_{f^l}$.
\end{enumerate}
\end{thm}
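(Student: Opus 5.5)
For (1), I would first check that shifting is well defined on cycles and on equivalence classes, and that it preserves adaptedness. If $(M_*,\eps)$ has period $l$ and is adapted to $f$, then $(M_{*+1},\eps)$ has the same period. Condition (D1) for the shifted cycle is (D1) for the original one with the index moved by one. Condition (D2) requires each $(M_{i+1},\eps)$ to be adapted to $f^l$, which is already part of (D2) for the original cycle. Equivalence is checked index by index, so it is preserved by the shift. The shift also preserves the order: if $\eps\le\eta$ and $(M_*,\eps)$ is equivalent to $(L_*,\eps)$, then the shifted cycles are still equivalent at scale $\eps$. Since $\sigma_f^l$ is the identity on any class of period $l$, the inverse of $\sigma_f$ is given by $M_*\mapsto M_{*-1}$, which has the same properties, so $\sigma_f$ is an order isomorphism. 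It fixes the root because the fundamental class is represented by a constant cycle $(\id,\eps_\star(f))$ of period $1$ (Theorem~\ref{thm:tree-rf}).

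For (2), the canonical map $\Phi\colon(M_*,\eps)\mapsto(M_{l*},\eps)$ is well defined by Lemma~\ref{lem:shift}, and it respects equivalence and order because both are checked index by index. The conjugacy follows from $M_{l(*+1)}=M_{l*+l}$, so $\Phi\circ\sigma_f^l=\sigma_{f^l}\circ\Phi$. Injectivity comes from Lemma~\ref{lem:equivcycle}. If $\Phi(M_*)$ and $\Phi(L_*)$ are equivalent, then $(M_0,\eps)$ is equivalent to $(L_0,\eps)$, and since both cycles are adapted to $f$, Lemma~\ref{lem:equivcycle} gives $M_*\sim L_*$. The same argument shows that $\Phi$ reflects the order. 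Indeed, if $\Phi(M_*,\eps)\le\Phi(L_*,\eta)$, then $\eps\le\eta$ and $(M_0,\eps)\sim(L_0,\eps)$. The cycle $(L_*,\eps)$ is adapted to $f$: (D1) at scale $\eps\le\eta$ follows from (D1) at $\eta$ via Figure~\ref{diagram:residue}, and (D2) comes from the chain in Theorem~\ref{thm:tree-rf}, since $(M_*,\eps)\sim(L_*,\eps)$ lies on that chain. Lemma~\ref{lem:equivcycle} then gives $(M_*,\eps)\le(L_*,\eta)$.

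The main obstacle is surjectivity, namely that a cycle $(N_*,\eps)$ of period $p$ adapted to $f^l$ comes from a cycle adapted to $f$. I would reduce this to the case of a single $g$-rescaling $(N,\eps)$ adapted to $f^{q}$, with $q=lp$, and construct $M_i$ for $i\in\Z/q$ such that $M_0=N$ and $(M_{i+1}\cdot f\cdot M_i^{-1})_\eps$ is non-constant. I would argue by induction on the length from Theorem~\ref{thm:chain}, using Theorem~\ref{thm:tree-rf}, which says that $(N,\eps)$ is a baby $g$-rescaling of its predecessor $(N,\eps')$ at a type II periodic Julia point $x$. By induction, the predecessor comes from a cycle $(M'_*,\eps')$ adapted to $f$. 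Let $g_i$ be the limits $(M'_{i+1}\cdot f\cdot M'_i^{-1})_{\eps'}$. The points $x_i:=g_{i-1}\circ\cdots\circ g_0(x)$ form a cycle of type II points under the semi-conjugacies of Lemma~\ref{lem:semi-conj}. At each $x_i$ I would take the baby $g$-rescaling $(M_i,\eps)$ given by Theorems~\ref{thm:baby2} or~\ref{thm:baby1}, applied to a suitable iterate, choosing the normalisation $(M_i\cdot M_i'^{-1})_{\eps'}(x_i)=\xg$ so that the maps $(M_{i+1}\cdot f\cdot M_i^{-1})_{\eps'}$ fix $\xg$. Their reductions are then the non-constant tangent maps $T_{x_i}g_i$, so by the naturality map $\P^1(\widetilde{\sH}_{\eps'})\to\P^1(\sH_\eps)$ their limits at $\eps$ are non-constant, which gives (D1). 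For (D2) I would use the uniqueness in Theorems~\ref{thm:baby2} and~\ref{thm:baby1}: the rescaling obtained by transporting $M_i$ along the cycle has the same defining properties (B1)--(B3), respectively (C1)--(C4), at $x_i$ for the relevant iterate, hence is equivalent to $M_i$. Checking that (B3), respectively (C3) together with the bad direction of infinite orbit, is preserved under these semi-conjugacies is the step I expect to be the most delicate. It uses that local degrees multiply along the cycle, that the cycle of tangent maps has total degree $\ge2$ exactly when the return map does, and Lemma~\ref{lem:baddy} to transport bad directions. Finally, uniqueness together with Lemma~\ref{lem:equivcycle} gives $\Phi(M_*)\sim N_*$.
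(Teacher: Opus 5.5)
Your proof of (1), and in (2) your arguments for well-definedness, order-compatibility, injectivity and the conjugacy $\Phi\circ\sigma_f^l=\sigma_{f^l}\circ\Phi$, are fine and agree with the paper. So does your plan for surjectivity: reduce to lifting a single $g$-rescaling adapted to $f^q$, as in Lemma~\ref{lem:goingtol}, and induct along the chain of baby $g$-rescalings.

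The gap is in the step you treat as routine, namely (D1). You argue that $(M_{i+1}\cdot f\cdot M_i^{-1})_{\eps'}$ maps $\xg$ to $\xg$ with non-constant reduction $T_{x_i}g_i$, and conclude by naturality that its limit at the larger scale $\eps$ is non-constant. Naturality (Figure~\ref{diagram:residue}) only gives a semi-conjugacy and the inequality $\deg(\,\cdot\,)_\eps\le\deg(\widetilde{\,\cdot\,})_{\eps'}$, exactly as in Theorem~\ref{thm:trichotomy}(3). The degree can drop to $0$.

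For instance, let $A_n(z)=b_nz$ with $b_n=\exp\bigl(1/\sqrt{\eps'_n\eps_n}\bigr)$. Then $|b|_{\eps'}=1$, so $A_{\eps'}$ has good reduction. Hence $A\cdot M_{i+1}$ satisfies your normalisation at $x_{i+1}$ equally well, with non-constant reduction at $\eps'$. Yet $A_\eps$ is the constant map $\infty$, so $(A\cdot M_{i+1}\cdot f\cdot M_i^{-1})_\eps$ is constant whenever $(M_{i+1}\cdot f\cdot M_i^{-1})_\eps$ is not. An argument using only data at scale $\eps'$ cannot tell these two choices apart.

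Relatedly, you silently assume that the baby $g$-rescalings at all the points $x_i$ live at the same scale $\eps$. In the repelling case the tangent return maps at the $x_i$ are cyclic rotations of one composition, hence only semi-conjugate. Nothing in your argument forces their lifts to have the same fundamental scale.

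The missing ingredient is Theorem~\ref{thm:semi-funda}, which is what the paper uses.

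\emph{Repelling case.} Lift the tangent maps to sequences $h_i$ of the same degrees, so that $(M_{i+1}\cdot f\cdot M_i^{-1})_\eps=h_{i,\eps}$ by Lemma~\ref{lem:upto}. The lifts $G_i=h_{i+q-1}\circ\cdots\circ h_i$ of the return maps satisfy $G_i=H\circ h_i$ and $G_{i+1}=h_i\circ H$, where $H=h_{i+q-1}\circ\cdots\circ h_{i+1}$. Each baby $M_i$ realises $\eps(G_i)=\eps_\star(G_i)$. Theorem~\ref{thm:semi-funda} then gives $\eps_\star(G_{i+1})=\eps_\star(G_i)$, so all scales equal $\eps$. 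It also gives that $h_{i,\eps}$ is non-constant, which is (D1). Its proof rests on matching periodic points of $H\circ h_i$ and $h_i\circ H$, not on naturality.

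\emph{Indifferent case.} The scales agree because the tangent return maps are conjugate. Non-constancy comes from the connecting map sending the infinite orbit of the bad direction at $x_i$ onto the one at $x_{i+1}$.

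Conversely, (D2), which you single out as the delicate point, is immediate from (B1) or (C1) once each $M_i$ is the baby $g$-rescaling at $x_i$ and the scales are known to agree.

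A minor point: in your order-reflection step, invoking $(M_*,\eps)\sim(L_*,\eps)$ to obtain (D2) is circular. Argue instead that $(L_0,\eps)\sim(M_0,\eps)$ is adapted to an iterate of $f$, so $\eps$ must be one of the chain scales of $(L_0,\eta)$, as in the proof of Theorem~\ref{thm:tree-rf}. The paper's notion of tree isomorphism does not require this step in any case.
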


\begin{rmk}\label{rem:idiot}
Suppose 
 $(M_*,\eps)$ is a cycle of $g$-rescalings of period $m$ adapted to $f$. Then its associated vertex in $\cR(f)$ has period dividing $m$ under $\sigma_f$. 

 If the limit has degree $\ge2$, one can prove that any vertex in $\cR(f)$ of period $m$ under $\sigma_f$ can be represented by a cycle of the same period $m$.
 \end{rmk}

\begin{proof}
We prove (2). 
Observe first that the canonical map $\cR(f)\to \cR(f^l)$ is injective, as the equivalence relation on cycles of
$g$-rescalings does not depend on the sequence $(f_n)$. 
To prove the surjectivity, we rely on the next lemma.
\begin{lemma}\label{lem:goingtol}
Suppose that $(M,\eps)$ is a $g$-rescaling adapted to $f^l$ for some $l\in\N_+$. 
Then there exists a cycle of $g$-rescalings $(M_*,\eps)$ of period $l$ adapted to $f$
such that $M_0=M$, and this cycle is unique up to equivalence. 
\end{lemma}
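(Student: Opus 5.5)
We argue by induction on the length of $(M,\eps)$ as an adapted $g$-rescaling of $f^l$ (Remark~\ref{rmk:length}), using Theorem~\ref{thm:chain} to climb from the fundamental $g$-rescaling. Uniqueness is handled separately at the end.

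\emph{Base case.} If $(M,\eps)$ has length $0$, then by Theorem~\ref{thm:funda} and Corollary~\ref{cor:iterate} it is equivalent to the fundamental $g$-rescaling $(M_\star,\eps_\star(f))$ of $f$. We take the constant cycle $M_i=M$. Condition (D1) holds because $(M\cdot f\cdot M^{-1})_{\eps_\star}$ has degree $d$. Condition (D2) holds by Lemma~\ref{lem:shift}.

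\emph{Inductive step.} Assume the length is $m\ge1$. Theorem~\ref{thm:chain} gives $\eps'<\eps$ such that $(M,\eps')$ is adapted to $f^l$ with length $m-1$, and $(M,\eps)$ is the baby $g$-rescaling of $(M,\eps')$ at a type II fixed Julia point $x=\xg$ of $g_0:=(M\cdot f^l\cdot M^{-1})_{\eps'}$.

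By induction there is a cycle $(M'_*,\eps')$ of period $l$ adapted to $f$ with $M'_0=M$. Write $\phi_i:=(M'_{i+1}\cdot f\cdot M'^{-1}_i)_{\eps'}$ and $g_i:=(M'_i\cdot f^l\cdot M'^{-1}_i)_{\eps'}$. These are non-constant maps with $\phi_i\circ g_i=g_{i+1}\circ\phi_i$ (Lemma~\ref{lem:semi-conj}). The points $x_i:=\phi_{i-1}\circ\cdots\circ\phi_0(x)$ are type II fixed points of $g_i$, with $x_l=x$.

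Next we choose $L_i$ equivalent to $M'_i$ at scale $\eps'$ with $(L_i\cdot M'^{-1}_i)_{\eps'}(x_i)=\xg$, taking $L_0=M$. Let $(L_i',\eps_i)$ be the baby $g$-rescaling of $(L_i,\eps')$ at $x_i$ for $f^l$, given by Theorem~\ref{thm:baby2} or~\ref{thm:baby1}. For $i=0$ this is $(M,\eps)$ by uniqueness.

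We then show that all $\eps_i$ equal $\eps$ and that $(L'_{i+1}\cdot f\cdot L_i'^{-1})_\eps$ is non-constant. The map $\tilde\phi_i$, the reduction of $(L_{i+1}\cdot f\cdot L_i^{-1})_{\eps'}$ at $\xg$, is a non-constant map that semi-conjugates the tangent maps $T_{x_i}g_i$ and $T_{x_{i+1}}g_{i+1}$. Lifting these reductions to $\sH_0$ as in the proof of Theorem~\ref{thm:baby2} yields sequences $h_i$ and $\psi_i$ with $\psi_i\circ h_i=h_{i+1}\circ\psi_i$ up to scales above $\eps'$ (Lemma~\ref{lem:upto}). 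Theorem~\ref{thm:semi-funda}, applied to $h_{i+1}\circ\psi_i$ and $\psi_i\circ\cdots$, would then give equality of fundamental scales and non-degeneracy of $\psi_{i,\eps}$ in degree $\ge2$.

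\emph{Expected obstacle.} This transfer is the main difficulty. Theorem~\ref{thm:semi-funda} is stated for a genuine factorization $f=h_1h_2$, $g=h_2h_1$, so it has to be applied cyclically to the $l$-fold composition $\psi_{l-1}\circ\cdots\circ\psi_0$. The degree-one case (Theorem~\ref{thm:baby1}) must instead be handled directly: we track the bad direction $v$ and its infinite orbit through $\tilde\phi_i$, using Lemma~\ref{lem:baddy} to see that the images remain in the hole divisor. With the $L'_i$ in hand, (D1) is the non-constancy just established and (D2) is the adaptedness of each baby $g$-rescaling. Setting $M_i:=L'_i$ for $0\le i<l$ then gives the required cycle.

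\emph{Uniqueness.} Suppose $(N_*,\eps)$ is another such cycle with $N_0=M$. Lemma~\ref{lem:equivcycle} shows it is equivalent to $(M_*,\eps)$, since the two cycles agree at index $0$.
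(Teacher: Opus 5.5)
Your proposal is correct and follows essentially the paper's proof. You climb the chain of Theorem~\ref{thm:chain} for $f^l$, transport the fixed type II point along the cycle, and take the baby $g$-rescaling at each point. You then obtain equality of scales and (D1) by applying Theorem~\ref{thm:semi-funda} to the cyclic factorizations $h_{j+l-1}\circ\cdots\circ h_j$ of lifts of the one-step tangent maps, track bad directions in the indifferent case, and get uniqueness from Lemma~\ref{lem:equivcycle}. The paper writes out only the first step, starting from the fundamental scale, and says the remaining scales are handled by the same construction inductively, which is exactly your induction on length.
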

Let $(M_*,\eps)$ be a cycle of period $m$ which is adapted to $f^l$. 
By Lemma~\ref{lem:goingtol} applied to $(M_0,\eps)$ which is adapted to $f^{ml}$, there exists a cycle of $g$-rescalings
$(L_*,\eps)$ 
of period dividing $lm$
and adapted to $f$ such that $L_0=M_0$.
By Lemma~\ref{lem:equivcycle} applied to $f^l$, $(L_{l*},\eps)$ and $(M_*,\eps)$ are equivalent
proving the canonical map $\cR(f)\to \cR(f^l)$  to be surjective. 

\smallskip

We now prove (1) and focus on $\sigma_f$. 
Let $(M_*,\eps)$ be any cycle of $g$-rescalings adapted to $f$. 
Then by Lemma~\ref{lem:shift}, $(M_{*+1},\eps)$ is also adapted. Moreover
$(M_*,\eps)$ is equivalent to $(L_*,\eps)$ iff $(M_{*+1},\eps)$ is equivalent to $(L_{*+1},\eps)$. It follows
that $\sigma_f(M_*,\eps):= (M_{*+1},\eps)$ descends to a well-defined bijective map on $\cR(f)$ with inverse
$\sigma^{-1}_f(M_*,\eps):= (M_{*-1},\eps)$. 
Observe that the period of $(M_{*+1},\eps)$ is the same as $(M_*,\eps)$. 
Since the fundamental $g$-rescaling is determined by a cycle of period $1$, 
it follows that it is fixed by $\sigma_f$. 

Note that $\sigma_f$ preserves the order relation on cycles of $g$-rescalings as well as the scale.
On any maximal chain $\mathfrak{C}$ of $\cR(f)$, the scales are strictly increasing, and
by Theorem~\ref{thm:boundscales}, $\mathfrak{C}$ is finite of cardinality at most $2d-1$. 
We conclude that  $\sigma_f\colon\cR(f)\to\cR(f)$ maps any maximal chain to another maximal chain, hence
induces a tree isomorphism.

Finally $(\sigma_f)^l=\sigma_{f^l}$ under the canonical identification $\cR(f) \to \cR(f^l)$ is clear, and the proof is complete. 
\end{proof}

\begin{proof}[Proof of Lemma~\ref{lem:goingtol}]
The uniqueness part directly follows  from Lemma~\ref{lem:equivcycle}. 
Let us construct the cycle of $g$-rescalings adapted to $f$. To simplify notation we assume $M=\id$.
By Theorem~\ref{thm:chain} applied to $f^{l}$, there exists a finite set of scales
$\eps_0:= \eps_\star(f^l) < \eps_1 < \cdots < \eps_n=\eps$ such that 
$(\id,\eps_i)$ is adapted to $f^l$. Moreover, $(\id,\eps_i)$ is a baby $g$-rescaling 
of $(\id,\eps_{i-1})$. 
In order to simplify the arguments we assume that $n=1$ referring to the end of the proof for the general case. 

By Theorem~\ref{thm:trichotomy}, $x_g$ is a fixed point of $f_{\eps_0}^l.$
Let $\{x_j\}:=\{f^j_{\eps_0}(\xg)\}$ be the orbit of the Gauss point under $f_{\eps_0}$. This forms a periodic cycle of 
type II Julia points of period dividing $l$. 

\smallskip

Suppose first that $\deg(T_{x_0}f^l_{\eps_0}) \ge 2$. All maps
$T_{x_j} f^l_{\eps_0}$ have the same degree.
We may thus apply Theorem~\ref{thm:baby2} to $f^l$, $M:=\id$ and $x_j$ with $j=1, \cdots, l-1$, and we obtain
a set of 
$g$-rescalings $(M_0, \eta_0), (M_1, \eta_1), \cdots, (M_{l-1},\eta_{l-1})$ adapted to $f^l$. 
By the uniqueness we may suppose that $M_0=M$ and $\eta_0=\eps_1$. 

We claim that $\eta_j= \eps_1$ for all $j$, and  the cycle of $g$-rescalings
$(M_*,\eps_1)$ is adapted to $f$ with $M_j= M_{j\, \mathrm{mod }\,l}$. 

To prove these claims, we choose for each $j\in\{0, \cdots, l-1\}$ a sequence of rational maps
$h_{j,n}$ such that
\[\tilde{h}_{j,\eps_0}=(\widetilde{M_{j+1} \cdot f \cdot M^{-1}_j})_{\eps_0}
\text { and } \deg (h_{j,n})=\deg(\tilde{h}_{j,\eps_0})~.\]
Observe that  $h_{j,\eps_0}$ represents $T_{x_j}f_{\eps_0}$
in projective coordinates given by $M_j$ as the source, and $M_{j+1}$ at the target.
Set $h_j = h_{j\, \mathrm{mod }\,l}$ for all $j\in\Z$.

By construction, we have  $\eta_j = \eps(g_j)=\eps_\star(g_j)$, with $g_j= h_{j+l-1}\circ \cdots \circ h_j$.
Since  $g_j =(h_{j+l-1} \cdots h_{l}) \circ (h_{l-1} \cdots h_j)$, and  $g_0=  (h_{l-1} \cdots h_j)\circ (h_{j+l-1} \cdots h_{l})$, 
Theorem~\ref{thm:semi-funda} applies, so that $\eps_1=\eps_\star(g_0)=\eps_\star(g_j)$,
and the degree of 
$(M_{j+1} \cdot f \cdot M_j^{-1})_{\eps_1}$
is at least $1$ for all $j$.
It follows that (D1) is valid.  Since 
$(M_0\cdot f^l \cdot M_0^{-1})_{\eps_1}= f^l_{\eps_1}$
and $(M,\eps_1)$ is adapted to $f$, then (D2) is also true
proving our claims.

\smallskip 

Suppose now that $\deg(T_{x_0}f^l_{\eps_0})=1$ so that $\deg(T_{x_j} f_{\eps_0})=1$ for all $j$. 
Recall that $(\id,\eps_1)$ is adapted to $f^l$, and observe that all maps $T_{x_j}f^l_{\eps_0}$ are conjugated.
It follows that 
there is a bad direction $v$ of $f^l_{\eps_0}$ at $x_0=\xg$ whose orbit under $f^l_{\eps_1}$ is infinite (so that $T_{x_0}f^j_{\eps_0}(v)$ is bad at $x_j$), and 
all maps 
$T_{x_j}f^l_{\eps_0}$ are simultaneously conjugated to a translation or to a linear map 
$z\mapsto \lambda z$ of infinite order (with $|\lambda|_{\eps_1}<1$ when $\eps_1< (1)$).
Note  that conjugating $f$ by a suitable sequence of Möbius transformations, we may (and shall) assume that 
the limit $f^l_{\eps_1}$ is either a homothety of infinite order, or a translation. 

We may thus apply Theorem~\ref{thm:baby1} to $f^l$, $M:=\id$, $x_j$ and to  $T_{x_0}f^j_{\eps_0}(v)$ for all $j=1, \cdots, l-1$, and we obtain
a set of 
$g$-rescalings $(M_0, \eps_1), (M_1, \eps_1)$, ...,  $(M_{l-1},\eps_1)$ adapted to $f^l$. 
By  the uniqueness we may suppose that $M_0=\id$. As before, (D2) is automatically true. 
By Proposition~\ref{prop: compo}, $(M_{i+1}\cdot f\cdot M_i^{-1})_{\eps_1}$ sends the infinite orbit of the bad direction at $x_j$ (given by the image of 
$T_{x_0}f^j_{\eps_0}(v)$ in $\P^1(\C)$) to the infinite orbit of the bad direction at $x_{j+1}$ up to finitely many exceptions, hence (D1) is true.

When $n$ is greater than $1$, then we proceed by induction, and construct in a similar way cycles
at scales $\eps_2, \eps_3, \cdots$ until we reach a cycle of $g$-rescalings at the required scale $\eps_n$.
\end{proof}

%%%%%%%%%

\subsection{Upper-level trees}
Let $s =(M_*,\eps)$ be any vertex in $\cR(f)$. An edge $e$ containing $s$ is said to be decreasing (resp. increasing) if 
the associated scale at the other endpoint of $e$ is $<\eps$ (resp. $>\eps$). Note that there exists a single decreasing edge, the one 
contained in the segment joining the fundamental $g$-rescaling to $s$. 

Our aim is to describe all increasing edges containing a given vertex. To simplify the discussion, we shall assume that
$s$ is represented by a cycle of period $1$ determined by a $g$-rescaling $(M,\eps)$.
By Theorem~\ref{thm:tree-rf}, if an increasing edge joins $(M,\eps)$  to $(L_*,\eta)$, then 
$(L_0,\eta)$ is a baby $g$-rescaling of $g:=(M\cdot f \cdot M^{-1})_{\eps}$
hence defines a type II periodic Julia point $x(e)$ of $g$ of some period $l$. 

When $\deg_{x(e)}(g^l)\ge2$, we say that $e$ is repelling; when  $\deg_{x(e)}(g^l)=1$, then we say that $e$ is indifferent.

The next result is a  direct consequence of Theorems~\ref{thm:baby2},~\ref{thm:baby1} and~\ref{thm:chain}.

 \begin{thm}\label{thm:tree-edge}
Let $(M,\eps)$ be a $g$-rescaling adapted to $f$, and denote by $g:= (M^{-1} \cdot f \cdot M)_\eps$ its limit. 
\begin{enumerate}
\item
The map $e \mapsto x(e)$ induces a bijection between the set of repelling increasing edges at $(M,\eps)$
and the set of repelling type II periodic  Julia points of $(M\cdot f \cdot M^{-1})_{\eps}$.
\item
The map $e \mapsto x(e)$ induces a surjection between the set of indifferent increasing edges at $(M,\eps)$
and the set of  indifferent type II periodic  Julia points of $(M\cdot f \cdot M^{-1})_{\eps}$. 
\end{enumerate}
\end{thm}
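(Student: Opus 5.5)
We define the map $e\mapsto x(e)$ using Theorem~\ref{thm:tree-rf}, then prove surjectivity and, in the repelling case, injectivity. Fix the vertex $s$ represented by $(M,\eps)$; after conjugating $f$ by $M$ we may assume $M=\id$, and we write $g=f_\eps$. Let $e$ be an increasing edge from $s$ to a vertex $(L_*,\eta)$ of period $l$. By Theorem~\ref{thm:tree-rf}, $(L_0,\eta)$ is the baby $g$-rescaling of $(\id,\eps)$, viewed as a $g$-rescaling adapted to $f^l$, associated with a type II periodic Julia point $x(e)$ of $g$. We would first check that $x(e)$ is well defined up to the labelling of the cycle. By (B2), respectively (C2), it is characterised as $(L_0)_\eps^{-1}(\xg)$. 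Changing $L_*$ within its equivalence class at scale $\eta>\eps$ changes $(L_0)_\eps$ by an element of good reduction, which fixes $\xg$. Hence $x(e)$ depends only on $e$. Shifting the index of $L_*$ moves $x(e)$ along its $g$-orbit, so we regard $x(e)$ as a point of the cycle.

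For surjectivity in both (1) and (2), take a type II periodic Julia point $x$ of $g$ of period $l$. If $\deg_x(g^l)\ge 2$, apply Theorem~\ref{thm:baby2} to $f^l$ and $x$. If $\deg_x(g^l)=1$, pick a bad direction with infinite orbit; it exists because $x$ is Julia. Replacing $l$ by $lN$ when the multiplier $\la$ is a primitive $N$-th root of unity, as in the remark after Theorem~\ref{thm:baby1}, we then apply Theorem~\ref{thm:baby1}. In either case we obtain a $g$-rescaling $(L,\eps_+)$ adapted to $f^l$ with $\eps_+>\eps$, and Lemma~\ref{lem:goingtol} promotes it to a cycle $(L_*,\eps_+)$ adapted to $f$. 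Since $(L,\eps)$ is equivalent to $(\id,\eps)$, this gives $(\id,\eps)<(L_*,\eps_+)$ in $\cR(f)$. By Theorem~\ref{thm:chain} applied to $f^l$, the vertices between them correspond to scales strictly between $\eps$ and $\eps_+$, and there are none. The reason is that by (B3), resp. (C4), the limit has good reduction at every intermediate scale, as in item (3) of that theorem. So the pair is consecutive, it spans an increasing edge $e$, and $x(e)=x$ by (B2)/(C2).

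For injectivity in the repelling case, let $e,e'$ be repelling edges with $x(e)=x(e')=x$, and let $l$ be a common multiple of the periods. Both $(L_0,\eta)$ and $(L'_0,\eta')$ are baby $g$-rescalings of $(\id,\eps)$ adapted to $f^l$ at the same point $x$. The uniqueness part of Theorem~\ref{thm:baby2} then gives $\eta=\eta'$ and that $(L_0,\eta)$ is equivalent to $(L'_0,\eta)$. By Lemma~\ref{lem:equivcycle} the two cycles are equivalent, so $e=e'$.

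The main obstacle is reconciling the periods. The cycle realising $e$ may have period $l$ while the point $x$ has a $g$-period dividing $l$, and we must make sure the uniqueness statement is applied to the same iterate of $f$. I would handle this with Theorem~\ref{thm:periodic}(2), identifying $\cR(f)$ with $\cR(f^{l})$ for a common multiple $l$; after that both edges come from fixed points of $g^{l}$. The same ambiguity in the indifferent case, which comes from the choice of bad direction and of the root-of-unity iterate, is exactly why only surjectivity is claimed in (2).
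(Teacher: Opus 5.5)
Your proposal is correct and is essentially the paper's argument: the paper simply records this statement as a direct consequence of Theorems~\ref{thm:baby2}, \ref{thm:baby1} (existence and uniqueness of baby $g$-rescalings) and \ref{thm:chain} (no adapted scale strictly between $\eps$ and $\eps_+$). You unpack exactly that, using Lemma~\ref{lem:goingtol} and passage to a common iterate to reconcile periods.

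One small clarification: vertices of $\cR(f)$ are labelled cycle classes, so $x(e)=(L_0)_\eps^{-1}(\xg)$ is a genuine point rather than merely a point of the cycle. Shifting the index gives the different edge $\sigma_f e$, with $x(\sigma_f e)=g(x(e))$. The pointwise definition is what your injectivity argument actually uses.
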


\begin{rmk}
It may happen that  different bad directions determine different baby $g$-rescalings, see \S\ref{sec:diffbad} for an example.
In particular, the map in (2) needs not be injective. 

Suppose that $(f_n)$ is induced by an analytic family as discussed later in~\S\ref{sec:analytic}. 
If $x$ is an indifferent type II fixed Julia point 
of the fundamental limit, it follows from Theorem~\ref{thm:family} below that $x$ defines a unique baby $g$-rescaling. 
In that case \emph{all} bad directions at $x$ determine the same adapted $g$-rescaling. 
\end{rmk}

The next example shows that an indifferent type II fixed Julia point might determine a cycle of $g$-rescalings of a higher period.
Compare with the assumption on the multiplier in Theorem~\ref{thm:baby1}, and with Remark~\ref{rem:idiot}.

\begin{example}\label{ex:cycle}
\normalfont
Let $f_n(z)=-(1-\tfrac{1}{n})z+\frac{e^{-n}}{z-1}.$ Set $\eps:=(1/n)$, and 
let $a,b\in \sH_{\eps}$ be  the elements represented by $(e^{-n})$ and $\bigl(-1+\tfrac{1}{n}\bigr)$, respectively. 
Note that 
$|a|<1$,   $|b|=1$, and  $f_{\eps}(z)=bz+\frac{a}{z-1}$. 
The Gauss point $\xg$ is fixed, and we have 
$\tilde f_{\eps}(z)=\tilde b z$.  Moreover, the direction defined by $1$ is a bad direction at $\xg$ whose forward orbit is infinite. 
It follows that $\xg$ is an indifferent fixed Julia point so that $f_\eps$ does not have potential good reduction and $(\id,1/n)$
is the fundamental rescaling.

Note  that we cannot apply Theorem~\ref{thm:baby1} to construct a baby $g$-rescaling from this fixed point as
$-1+\tfrac{1}{n}$ converges to $-1 \neq 1$. However, we have a $g$-rescaling adapted to $f^2$ which induces 
a cycle of $g$-rescalings  of period $2$, and is given explicitely as follows. 
Set
\[
M_{i,n}(z)=
\begin{cases}
\dfrac{-(n-1)^2}{2n-1}z+\dfrac{n^2}{2n-1}, & \text{if $i$ is even},\\[6pt]
\dfrac{(n-1)^3}{n(2n-1)}z+\dfrac{n^2}{2n-1}, & \text{if $i$ is odd}.
\end{cases}
\]
Direct computations yield
\[
\widetilde{(M_{i+1}\cdot f\cdot M_{i}^{-1})}_{\eps}=
\begin{cases}
\tilde b^{2}z+1, & \text{if $i$ is even},\\
\id, & \text{if $i$ is odd}.
\end{cases}
\]
From Lemma~\ref{lem:upto}, we infer 
$(M_{i}\cdot f^2\cdot M_i^{-1})_{\C}=z+1$
for all $i\in\N.$
Note that $M_{0,\eps}(1)=0$ defines a bad direction $v$ of  $T_{x_g}(M_0\cdot f^2\cdot M_0^{-1})_{\eps}$
whose image in $\mathbb{P}^1(\mathbb{C})$ has an infinite forward orbit. 
Hence $(M_0,(1))$ satisfies both conditions (A1) and (A3) for $(f^2)$, and 
$(M_*,(1))$ is a cycle of $g$-rescalings adapted to $(f_n)$ of period $2$.
\end{example}

\medskip

Our next result indicates that upper-level trees only depends on the limit of a given vertex. 
 \begin{thm}\label{thm:upper-tree}
Let $f=(f_n)$ (resp. $g=(g_n)$) be a sequence of complex rational maps of degree $d\ge2$ (resp. $\delta\ge2$).
Suppose that $(M,\eps)$ (resp. $(L,\eps)$) is a $g$-rescaling adapted to $f$ (resp. to $g$), and 
the limits $(M\cdot f\cdot M^{-1})_\eps$, $(L\cdot g\cdot L^{-1})_\eps$ are identical. 

The map
$(N_*,\eta)\mapsto(N_*\cdot M^{-1}\cdot L,\eta)$
is then a tree isomorphism 
between $\{x \in \cR(f), x \ge  (M,\eps)\}$ and $\{y \in \cR(g), y \ge  (L,\eps)\}$
sending $(M,\eps)$ to $(L,\eps)$, and conjugating $\sigma_f$ to $\sigma_g$.\end{thm}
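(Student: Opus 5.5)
First reduce to $M=L=\id$. Conjugating $f$ by $M$ and $g$ by $L$ changes $\cR(f)$ and $\cR(g)$ by the obvious isomorphisms $N_*\mapsto N_*\cdot M^{-1}$, and these commute with the shifts. So we may assume $f_\eps=g_\eps=:F$ with $(\id,\eps)$ adapted to both sequences, and the map to study is $(N_*,\eta)\mapsto(N_*,\eta)$.

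The key observation is that everything in the upper tree above $(\id,\eps)$ is computed from data at scales $\ge\eps$, and in fact from the reductions at scale $\eps$. Let $(N_*,\eta)\ge(\id,\eps)$ in $\cR(f)$, with $\eta>\eps$. Equivalence with $(\id_*,\eps)$ means $N_{i,\eps}\in\PGL(2,\sH_\eps)$ for all $i$. I would then show that the limits $(N_{i+1}\cdot f\cdot N_i^{-1})_\eta$ depend only on $F$ and on the $N_i$. By Proposition~\ref{prop: compo}, $(N_{i+1} f N_i^{-1})_\eps=N_{i+1,\eps}FN_{i,\eps}^{-1}$ is the same for $f$ and for $g$. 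This only gives equality of reductions when $N_{i,\eps}$ has good reduction, so instead I would use the chain of Theorem~\ref{thm:chain} together with Theorem~\ref{thm:tree-rf}. Above $(\id,\eps)$, every vertex is reached by successive baby $g$-rescalings. The first baby step is built from a type II periodic Julia point $x$ of $F$, and it only uses the reduction of $F^l$ after moving $x$ to $\xg$, that is, the tangent map $T_xF^l$, together with a bad direction. Theorems~\ref{thm:baby2} and~\ref{thm:baby1} produce the baby as a $g$-rescaling $(N,\eps_+)$ with $(N,\eps)\sim(\id,\eps)$. Its limit equals the limit of a lift $h$ of $\widetilde{(N f^l N^{-1})}_\eps$, and by Lemma~\ref{lem:upto} this depends only on that reduction, hence only on $F$.

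From this I would conclude that $f$ and $g$ have the same baby $g$-rescalings at each type II periodic Julia point of $F$, with identical limits. The argument: take the baby cycle for $f$, and check conditions (A1)--(A3) and (D1)--(D2) for $g$. For scales $\eta>\eps$, Lemma~\ref{lem:upto} applied to $N_{i+1}fN_i^{-1}$ and $N_{i+1}gN_i^{-1}$ gives equal limits, since their reductions at scale $\eps$ coincide: both are $\widetilde{N_{i+1,\eps}FN_{i,\eps}^{-1}}$. The hole divisors at scale $\eta$ are the pushforwards of the same reduced hole divisors. This is the point where degrees must match: $\deg$ at $\eta$ equals $\deg$ of the reduction, by (B3)/(C4). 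So (A3) transfers as well. Then I would induct on the length beyond $(\id,\eps)$. At the next level the two limits are again identical, and the same argument applies. This gives a well-defined order-preserving map from the upper tree of $f$ to that of $g$, and by symmetry an inverse, hence a bijection. Compatibility with $\sigma$ is immediate because the map acts index-wise on $N_*$.

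The main obstacle is the first step. The reductions $\widetilde{(N_{i+1} f N_i^{-1})}_\eps$ for $f$ and $g$ coincide only if the conjugation $N_{i,\eps}$ passes through the reduction. That requires either good reduction of $N_{i,\eps}$, or the non-degeneracy hypothesis of Proposition~\ref{prop: compo} applied to the reductions: the reduction of $N_{i,\eps}$ must be non-constant, or must not land in a hole. I would first try to normalize each $N_i$ within its equivalence class so that $N_{i,\eps}$ sends $\xg$ to the corresponding periodic point $x_i$, and then factor $N_i=A_i\cdot B_i$ with $A_i$ defined over $\sH_\eps$ and $B_{i,\eps}$ of good reduction. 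Good reduction of $B_{i,\eps}$ makes the reduction compose correctly. The factor $A_i$ is handled at the level of $F$ itself and is common to $f$ and $g$.
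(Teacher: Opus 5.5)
This is essentially the paper's argument: after conjugating, $(N_{i+1} f N_i^{-1})_\eps$ and $(N_{i+1} g N_i^{-1})_\eps$ are both $N_{i+1,\eps}FN_{i,\eps}^{-1}$ by Proposition~\ref{prop: compo} (because $N_{i,\eps}\in\PGL(2,\sH_\eps)$). Lemma~\ref{lem:upto} then makes the two limits agree at every scale $>\eps$, and (A1)--(A3), (D1)--(D2) transfer along the chain of Theorem~\ref{thm:chain}.

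The ``main obstacle'' in your last paragraph is illusory. The reduction of a sequence at scale $\eps$ depends only on its limit over $\sH_\eps$ (normalize the common factor to Gauss norm $1$), so you need neither good reduction of $N_{i,\eps}$ nor a factorization $N_i=A_iB_i$. You should also drop the hole-divisor sentence: it is false when $d\neq\delta$. It is also unnecessary, since (A3) only involves bad directions of the limit at a scale $\eps'\ge\eps$ of the chain, where the two limits already coincide.
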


\begin{proof}
It is only necessary to check that
$(N_*\cdot M^{-1}\cdot L,\eta)$
is adapted to $g$ and $> (L,\eps)$ whenever  $(N_*,\eta)$ is adapted and $> (M,\eps)$. 

Replacing $f$ by an iterate, we may assume that $(N_*,\eta)$ has period $1$, and we denote it by  $(N,\eta)$. 
Since it is assumed to be $> (M,\eps)$, we have $\eta>\eps$, and 
$\deg(N\cdot M^{-1})_\eps=1$.
Observe that
$(N\cdot M^{-1}\cdot L,\eps)$
is equivalent to $(L,\eps)$ since 
$\deg(N\cdot M^{-1}\cdot L\cdot L^{-1})_\eps=\deg(N\cdot M^{-1})_\eps=1$.

By Theorem~\ref{thm:chain}, there exist scales
$\eps= \eps_0 < \cdots < \eps_m=\eta$ such that 
$(N,\eps_i)$ is a baby $g$-rescaling of $(N,\eps_{i-1})$ for each $i$.

Write 
$L_+ =N\cdot M^{-1}\cdot L$.
Recall that we assumed $(M \cdot f \cdot M^{-1})_{\eps} = (L \cdot g \cdot L^{-1})_{\eps}$ so that conjugating by 
$(N\cdot M^{-1})_\eps$,
we obtain that 
$(N \cdot f \cdot N^{-1})_{\eps} =(L_+ \cdot g \cdot L_+^{-1})_{\eps}$.
It follows from Lemma~\ref{lem:upto} that 
\begin{equation}\label{eq:go-down}
(N \cdot f \cdot N^{-1})_{\eps_1} =(L_+ \cdot g \cdot L_+^{-1})_{\eps_1}.
\end{equation}
Hence, \((N,\eps_1)\) satisfies (A1)--(A3) for \(f\) if and only if \((L_+,\eps_1)\) satisfies (A1)--(A3) for $g$.
In particular, \((L_+,\eps_1)\) is adapted to $g$. 
By~\eqref{eq:go-down}, the limits of $g$-rescalings
$(N,\eps_1)$ for $f$ and $(L_+,\eps_1)$ for $g$ are identical, 
so that we can repeat the same argument. 
After $m$ steps, we obtain that $(L_+,\eta)$ is adapted to $g$
as required. 
\end{proof}

%%%%%%%%%%%%%%%%%%%%%

\subsection{Tree of $g$-rescalings for analytic families}\label{sec:analytic}
Let $\D$ be the unit disk in the complex plane. 
 A meromorphic family $(f_t)$  of rational maps of degree $d\ge 2$
 parametrized by $\D$ having a pole at $t=0$ is given in projective coordinates by two  
 homogeneous polynomials $P_t$ and $Q_t$ of degree $d\geq 2$ whose coefficients are
 holomorphic over $\D$ such that  $f_t[z_0:z_1] = [P_t : Q_t]$ for all $t\in \D^*$, and 
$P_t$ and $Q_t$ have no common factors for all $t \in \D^*$.

It follows that $|\Res(f_t)| = c |t|^k (1+o(1))$ for some $c>0$ and $k\in \N$. 
When $k$ is positive, then $\theta(|\Res(f_t)|)\asymp \theta(|t|)$.
We consider the field of formal Laurent series $\C((t))$, equipped with the $t$-adic norm normalized so that $|t| = e^{-1}$, and denote by 
$f_\na$ the rational map induced by the family $(f_t)$ over $\C((t))$.

Recall that $(f_t)$ is said to be bounded in moduli iff  $f_t$ induces a bounded family  in the moduli space of rational map of degree $d$ as $t\to0$.
By~\cite{zbMATH06455745KJ15} and~\cite{favre-blow-up}, this is equivalent to say that the rational map $f_{\na}$
has potential good reduction. This condition is also equivalent to the existence (after base change) of a holomorphic family of Möbius transformations
$(M_t)$ such that $g_t:=M_t\cdot f_t \cdot M_t^{-1}$ extends as a holomorphic family at $0$, i.e., $g_0$ is a rational map of degree $d$.

\begin{thm}\label{thm:family}
Let $(f_t)$ be a meromorphic family of rational maps having a pole at $t = 0$, and let $(t_n)_{n \in \N}$ be a sequence in $\D^*$ converging to $0$. Set $f_n := f_{t_n}$, and 
$\eps = \theta(|t_n|)$. 
Then, the following holds.
\begin{enumerate}
    \item The  map $f_\eps$ has potential good reduction iff $(f_t)$ is bounded in moduli. When it is the case, we have $\eps_\star(f)=(1)$ so that $\cR(f)$ is reduced to a point, and the only adapted scale is trivial.
    \item When $f_\eps$ does not have potential good reduction, then $\eps_\star(f)=\eps<(1)$, and the only other adapted scales is $(1)$.  Moreover, there is a natural bijective map 
 from the set of all type II periodic Julia points  of $f_\eps$ onto the set of non-fundamental cycles of $g$-rescalings adapted to $f$, and this map conjugates $f_\eps$ to $\sigma_f$.  
 \end{enumerate}    
\end{thm}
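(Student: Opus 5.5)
The plan is to compare everything with the non-Archimedean map $f_\na$ over $\C((t))$. Set $\eps=(\theta(|t_n|))$. The evaluation $t\mapsto (t_n)$ gives a field embedding $\C((t))\hookrightarrow \sH_\eps$. This map is isometric for the normalizations $|t|=e^{-1}$ and $|(t_n)|_\eps=\lim|t_n|^{\theta(|t_n|)}=e^{-1}$. Convergent Laurent series are sent to the sequences of their values, and $\C((t))$ is complete, so the image is a complete subfield $k$ of $\sH_\eps$. The coefficients of $P_t,Q_t$ are holomorphic, so $f_\eps$ is just the base change $f_\na\otimes_k \sH_\eps$ (after normalizing the coefficients by the appropriate power of $t$). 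A rational map has potential good reduction iff its Julia set is a single point, or equivalently iff it has zero entropy. This is invariant under complete base change of algebraically closed fields of residue characteristic $0$, using $\P^{1,\an}_{\sH_\eps}\to\P^{1,\an}_k$ and a Galois-free argument via the minimal resultant being determined over $\overline{k}$. Hence $f_\eps$ has potential good reduction iff $f_\na$ does. By Kiwi and \cite{favre-blow-up} this happens iff $(f_t)$ is bounded in moduli. In that case $|\res(f_{t_n})|$ stays bounded below, so $\eps_\star(f)=(1)$ and Example~\ref{eg: trivialtree} gives (1).

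For (2), assume $f_\eps$ does not have potential good reduction. First I would show $\eps_\star(f)=\eps$. Since $|\Res(f_t)|\sim c|t|^k$, we get $\eps(f)\ge\eps$ whenever $k>0$, and $\eps_\star(f)\ge\eps(f)$ in general. Conversely, $f_\eps$ has degree $d$ and no potential good reduction. Suppose $\eps_\star(f)>\eps$. Then we can pick $M$ with $\eps(M f M^{-1})=\eps_\star(f)>\eps$, so $(MfM^{-1})_\eps$ has good reduction by Theorem~\ref{thm:trichotomy}(1). To reach a contradiction I need $M_\eps$ to be a genuine Möbius map, and here I would use Theorem~\ref{thm:unique-rescaling}(3) in the other direction. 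Alternatively, Theorem~\ref{thm:unique-rescaling}(1)$\Leftrightarrow$(2) applied to $(\id)$ gives the contradiction directly, after replacing $f$ by a conjugate with $\eps(f)=\eps$. Such a conjugate exists by the algebraic minimal resultant of $f_\na$ over a finite base change $t=s^m$, which does not change the scale.

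Next I would show every non-fundamental adapted scale equals $(1)$. By Theorem~\ref{thm:chain} applied to iterates and Theorem~\ref{thm:tree-rf}, every non-fundamental vertex descends from a baby $g$-rescaling of the fundamental one, say at a type II periodic Julia point $x$ of $f_\eps$. Because $f_\eps$ is defined over $k$, after a finite base change $x$ is defined over $\C((s))$. So $x=M_\eps(\xg)$ with $M$ a meromorphic family of Möbius maps. The tangent map $T_x f^l_\eps$ is then defined over $\C$, the residue field of $\C((s))$. Its lift $h$ in Theorems~\ref{thm:baby2} and~\ref{thm:baby1} can be taken with constant complex coefficients, so the baby rescaling has scale $\eps_+=\eps_\star(h)=(1)$ by Proposition~\ref{prop:definedoverc}-type reasoning; indeed $h$ is constant, so it is bounded in moduli. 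In the indifferent case, the multiplier $\la$ is a complex constant. If $\la$ is a root of unity, pass to an iterate as in the remark after Theorem~\ref{thm:baby1}; then $\eps_+=(1)$ by that remark, and $\la\neq0$ since $\tilde f$ has infinite order. At scale $(1)$ nothing more branches, since $\sH_{(1)}=\C$. So the tree has depth one, and the only scales are $\eps$ and $(1)$.

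Finally I would build the bijection. Send a type II periodic Julia point $x$ of period $l$ to the cycle of baby $g$-rescalings produced in Lemma~\ref{lem:goingtol} from the orbit $x,f_\eps(x),\dots$. The equivariance with $\sigma_f$ is immediate from shifting indices. Injectivity follows because distinct type II points give inequivalent Möbius families at scale $\eps$. Surjectivity follows from Theorem~\ref{thm:tree-edge} together with the previous step. The main obstacle is injectivity in the indifferent case, since Theorem~\ref{thm:tree-edge}(2) is only surjective and different bad directions could give different babies. Here I would use that everything is defined over $\C((s))$. The baby rescaling is $(M,(1))$ with $M$ the meromorphic family sending $\xg$ to $x$, normalized so that $\widetilde{MfM^{-1}}$ is $z\mapsto\la z$ or $z+1$. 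Any two such normalizations differ by a Möbius map commuting with the tangent map, which is a complex constant affine map. So the equivalence class at scale $(1)$ is independent of the bad direction.
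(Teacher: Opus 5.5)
Your overall architecture matches the paper's. You embed $\C((t))$ isometrically in $\sH_\eps$ and view $f_\eps$ as the base change of $f_\na$. You use \cite{favre-blow-up} for (1). You then show that tangent maps at type II periodic Julia points are defined over $\C$, so every baby $g$-rescaling lives at the trivial scale, and you use that bad directions are complex to see that an indifferent point yields a single baby.

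However, the step on which all of (2) rests is not justified. You claim that ``because $f_\eps$ is defined over $k$, after a finite base change $x$ is defined over $\C((s))$''. Being defined over $k$ gives nothing of the sort for type II periodic points. Already for $z\mapsto z^2$, every $\zeta(1,r)$ with $0<r<1$ is a fixed type II point of $\P^{1,\an}_{\sH_\eps}$, since the value group of $\sH_\eps$ is $\R_+^*$. By contrast, a point $M_\eps(\xg)$ with $M$ a meromorphic family in $s=t^{1/m}$ has radius in $e^{\mathbb{Q}}$. These points $\zeta(1,r)$ are Fatou, and membership of $x$ in the Julia set is exactly what your argument never uses.

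The missing idea is the one in the paper. By \cite{FRL10}, $J(f_\eps)$ is contained in the closure of the rigid periodic points, which are algebraic over $\C((t))$. Hence $J(f_\eps)$ lies in the canonical copy of $\P^{1,\an}_\L$ inside $\P^{1,\an}_{\sH_\eps}$ \cite{Po13}, where $\L$ is the completed algebraic closure of $\C((t))$; this field has residue field $\C$ and value group $e^{\mathbb{Q}}$. So $x$ corresponds to a periodic Julia point in $\H_\L$ of the base change of $f_\na$ to $\L$, and such a point is of type II over $\L$. Therefore $x=\zeta(a,e^{-p/q})$ with $a\in\L$, and approximating $a$ by a Puiseux series gives your meromorphic family $M$ after a base change $t=s^m$. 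Only from this point do the rest follow: the complex tangent map, the constant lift $h$, the triviality of $\eps_+$, and the complexity of the bad directions.

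The same input is what turns your base-change claim in (1) into an argument. If $f_\eps$ had potential good reduction, its Julia set would be a single totally invariant type II point. By the above, this point lies in $\P^{1,\an}_\L$ and is of type II there, so $f_\na$ would have potential good reduction. Appealing to ``the minimal resultant being determined over $\overline{k}$'' merely restates this.

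There are also two smaller points. First, for $\eps_\star(f)=\eps$ you need neither a minimizing conjugate nor a base change. Since $f_\eps$ does not have good reduction, Theorem~\ref{thm:trichotomy}(1) gives $\eps(f)\le\eps$, while $|\Res(f_t)|\sim c|t|^k$ gives $\eps(f)\ge\eps$. Thus $\eps(f)=\eps$, and Theorem~\ref{thm:unique-rescaling}, (2)$\Rightarrow$(1) with $M=\id$, gives $\eps_\star(f)=\eps$.

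Second, for injectivity, ``inequivalent M\"obius families at scale $\eps$'' is the wrong scale, since at scale $\eps$ every baby is equivalent to the fundamental $g$-rescaling. Argue at scale $(1)$ instead. If the babies $(L,(1))$ and $(L',(1))$ attached to $x$ and $x'$ were equivalent, then $\theta(|\Res(L'\cdot L^{-1})|)=(1)>\eps$. Hence $(L'\cdot L^{-1})_\eps$ has good reduction and fixes $\xg$, and (B2) or (C2) forces $x=x'$.
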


\begin{proof}
Suppose first that  $(f_t)$ is bounded in moduli. Conjugating by a a suitable family of Möbius transformations, we may suppose that 
$(f_t)$ is a holomorphic family at $0$ and $f_0$ has degree $d$. The limit of $f_n$ at the trivial scale $(1)$ is then equal to $f_0$. 
By Theorem~\ref{thm:unique-rescaling} (2), we conclude that  $\eps_\star(f)=(1)$. Since $\eps <(1)$, Theorem~\ref{thm:trichotomy}  implies that 
$f_\eps$ has good reduction. This gives one implication in (1).

\smallskip

Suppose now that $(f_t)$ is not bounded in moduli. Then $f_{\na}$ does not have potential good reduction, see, e.g.,~\cite[Theorem~1.5]{favre-blow-up}.
Now observe that there exists a unique isometric embedding $\C((t))\subset \sH_\eps$ sending $t$ to the image of the sequence $(t_n)$ in $\sH_\eps$ 
(note that $|t| = \lim_n |t_n|^{\theta(|t_n|)}=1/e$), and  $f_\eps$ is the base change of $f_\na$ under this field extension. 
It follows that $f_\eps$ does not have potential good reduction either, so that $\eps_\star(f)=\eps$. 
This implies (1).

Let $\L$ be the completed algebraic closure of $\C((t))$ in $\sH_\eps$. Recall from~\cite[Corollaires~3.7 et~3.14]{Po13} that we have a canonical continuous 
embedding $\P^{1,\an}_\L\subset \P^{1,\an}_{\sH_\eps}$. The Julia set of $f_\eps$ is included in $\P^{1,\an}_\L$. Indeed it is contained in the closure of the set of rigid periodic
points which are all defined over $\L$.  It follows that any type II fixed Julia point $x$ of $f_\eps$ belongs to $\P^{1,\an}_\L$ and the tangent map $T (f_\eps)_x$ is defined over
the residue field of $\L$ which is the field of complex numbers. By Proposition~\ref{prop:definedoverc}, we conclude that the baby $g$-rescaling $(L,\eta)$ attached to $x$ 
has a trivial scale $\eta =(1)$. This implies that the only non-fundamental adapted scale is $(1)$. 

In view of Theorem~\ref{thm:tree-edge}, to conclude the proof it is sufficient to argue that an indifferent type II fixed Julia point $x$ gives rise to a single
baby $g$-rescaling no matter which bad direction we choose. This follows from the fact that all bad directions and all fixed directions of $T(f_\eps)_x$ are defined over $\C$, 
hence stay different at the trivial scale $(1)$. 
\end{proof}

%%%%%%%%%%%%%%%%%%%%%%%%

\subsection{Flexible Lattès maps}\label{sec:flexible}
Recall that a flexible Lattès map (over an algebraically closed field $k$) is a rational map $f$ such that 
there exists an elliptic curve $E/k$, a two-to-one map $\pi\colon E\to \P^1_k$, a point $z_0\in E(k)$, and an integer $n$ such that 
$f \circ \pi (z) = \pi ( nz+z_0)$. Suppose that $\car(k)=0$, and pick any square integer $d=n^2\ge4$. 
For any Weierstrass equation $y^2 = x^3+ ax +b$  with $4a^3+27b^2\neq0$ determining an elliptic curve $E_{a,b}$, there is a rational map $\FL^+_{a,b}$ of degree $d$
such that
$\FL^+_{a,b}(x)=x(nP)$ where $nP$ is the $n$ sum of the point $P=(x,\pm y)\in E_{a,b}$.
Observing that $E_{a,b}$ is isomorphic to $E_{s^{-4}a,s^{-6}b}$, 
we obtain an algebraic family of flexible Lattès map $\FL^+_t$ parametrized by an affine curve $\La_+$. 
When $d$ is even, any flexible Lattès maps belongs to this family. When $d$ is odd, 
any pair of a Weierstrass equation and a $2$-torsion point on $E_{a,b}$ gives rise to a flexible Lattès map, and we get
another algebraic family $\FL^-_t$ parametrized by an affine curve $\La_-$. 
Any flexible Lattès map is then conjugated to an element in one of the two families $\FL ^\pm$. 
See~\cite[\S{5}]{zbMATH05354061} or~\cite[\S{6.5}]{silverman2010arithmetic}.

\begin{thm}\label{thm:flexible-lattes}
Let $(f_n)$ be a sequence of flexible Lattès maps of degree $d\ge2$, and denote by $E_n$ the complex elliptic curve associated with $f_n$.
\begin{enumerate}
\item
The fundamental scale $\eps_\star(f)$ is trivial iff $(E_n)$ converges in the moduli space of elliptic curve
$\H/\PSL(2,\Z)$.
\item
When $\eps_\star(f)$ is non-trivial, then there exist infinitely many non-equivalent cycles of $g$-rescaling adapted to $f$, and
all $g$-rescaling limits of $f$ are conjugated to a monomial map or to a Tchebychev polynomial. In particular, the only non-trivial adapted
scale is $\eps_\star(f)$.
\end{enumerate}
\end{thm}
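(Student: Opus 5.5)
The plan is to read everything off the limit elliptic curve at the fundamental scale, since a flexible Lattès map is determined by its curve (up to finitely many choices of $n$, $z_0$ and the $2$-torsion datum).

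\textbf{Step 1: reduction to algebraic families.} After conjugating each $f_n$ by a Möbius transformation, we may assume $f_n=\FL^\pm_{t_n}$ with $t_n\in\La_\pm$, and after passing to an $\om$-big set, a single sign. The curve $\La_\pm$ is affine and maps finitely onto the $j$-line. Hence $(E_n)$ converges in $\H/\PSL(2,\Z)$ iff $j(E_n)$ stays bounded iff $(t_n)$ stays in a compact subset of $\La_\pm$.

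\textbf{Step 2: part (1).} If $(t_n)$ stays in a compact set, then the $f_n$ converge in $\Rat_d$ to a degree $d$ map, so $\eps_\star(f)=(1)$. Conversely, suppose $|j(E_n)|\to\infty$, and set $\eps:=\theta(|1/j(E_n)|)$. The element $t=(t_n)\in\sH_\eps$ gives a Lattès map $f_\eps=\FL^\pm_{t_\eps}$ over $\sH_\eps$ attached to the Weierstrass curve $E_{t_\eps}$. This curve has $|j|=e>1$, so it is a Tate curve with multiplicative reduction. For such a curve, the flexible Lattès map does not have potential good reduction: its Julia set is a segment in $\P^{1,\an}$, the image under $\pi$ of the skeleton circle of the Tate curve. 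Once $f_\eps$ is known to be of degree $d$ in suitable coordinates, Theorem~\ref{thm:unique-rescaling} gives $\eps_\star(f)=\eps<(1)$.

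\textbf{Step 3: part (2), description of the tree.} The Julia set $J$ of $f_\eps$ is the segment $\pi(\text{skeleton})$, whose endpoints come from the two ramification-fixed points of the involution on the skeleton circle.
\begin{itemize}
\item Multiplication by $n$ on the circle $\R/\Z$ is expanding. Hence there are infinitely many periodic points on the circle, and their images give infinitely many type II periodic Julia points of $f_\eps$.
\item At such a point $x$ of period $l$, the tangent map $T_x f^l_\eps$ is computed from the reduction of the Tate curve, which is $\mathbb{G}_m$ with $[n]$ acting as $z\mapsto z^{n}$ composed with the involution quotient. So $T_x f^l_\eps$ is conjugate to $z^{\pm n^l}$ at interior points of the segment. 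At the endpoints, the quotient by $z\mapsto z^{-1}$ turns it into the Chebyshev polynomial $T_{n^l}$.
\item These local degrees are at least $2$, so Theorem~\ref{thm:baby2} together with Theorem~\ref{thm:tree-edge}(1) produces infinitely many pairwise non-equivalent baby cycles.
\end{itemize}

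\textbf{Step 4: the limits and the scales.} The baby limits are monomial or Chebyshev maps defined over $\C$. By Proposition~\ref{prop:definedoverc} their scale is $(1)$. Their further descendants would come from type II Julia points of complex maps, which do not exist, so the tree stops there. By Theorem~\ref{thm:tree-rf}, every vertex other than the root is such a baby. Hence every limit is either the Lattès root $f_\eps$ (at the non-trivial scale) or monomial/Chebyshev, and the only non-trivial adapted scale is $\eps_\star(f)$.

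\textbf{Main obstacle.} The step I expect to be hardest is the explicit computation in Step 3: identifying $J(f_\eps)$ as the segment and computing the tangent maps at its periodic points through the Tate uniformization. I would first try working on the Berkovich analytification of the Tate curve and using the functoriality of reduction under the degree $2$ map $\pi$.
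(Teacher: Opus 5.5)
Your proposal is correct. Its architecture matches the paper's proof:
\begin{itemize}
\item the fundamental limit is the Lattès map of a curve with multiplicative reduction;
\item its Julia set is the folded skeleton;
\item infinitely many repelling type II periodic points feed Theorem~\ref{thm:tree-edge};
\item the babies are defined over $\C$, hence sit at scale $(1)$.
\end{itemize}
The key inputs, however, are obtained differently.

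\textbf{The paper's route.} The paper realizes $(f_n)$ as $(f_{t_n})$ for a meromorphic family of Lattès maps, so Theorem~\ref{thm:family} gives at once that the only adapted scales are $\eps_\star(f)$ and $(1)$. It then quotes \cite[Proposition~5.2]{FRL10} for the Julia set being a segment $I$ with dense repelling type II periodic points. It gets monomial tangent maps at interior points from the total invariance of the two directions along $I$. At the endpoints it only identifies the tangent maps as ``monomial or Chebyshev'', via the multiplier-rigidity theorems \cite{zbMATH07691681,zbMATH07680768}.

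\textbf{Your route.} You stay at the scale $\theta(1/|j(E_n)|)$ directly.
\begin{itemize}
\item Naturality of limits along $\La_\pm$ identifies $f_\eps$ with the Lattès map of $E_{t_\eps}$, where $t_\eps\in\La_\pm(\sH_\eps)$; it lies in the affine curve because $|j(t_\eps)|=e$.
\item Theorem~\ref{thm:trichotomy} then forces $\eps=\eps(f)$, since $f_\eps$ has degree $d$ and no good reduction. Theorem~\ref{thm:unique-rescaling} upgrades this to $\eps=\eps_\star(f)$.
\item The Tate uniformization gives the tangent maps explicitly. At a skeleton point fixed by $[-1]$, the reduction of $\pi$ is a Möbius function of $u+u^{-1}$.
\item The baby scale $(1)$ simply comes from $\eps_+=\eps_\star(h)=(1)$, for a lift $h$ conjugate to a constant complex map.
\end{itemize}
This avoids both the meromorphic-family reduction and the rigidity theorems. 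It also gives sharper information: interior periodic points give power maps, and endpoints give Chebyshev maps.

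\textbf{What remains to do.} You must actually carry out the Tate-curve computation you flag as the main obstacle, and this includes the $\FL^-$ family.
\begin{itemize}
\item There the $2$-torsion translation turns the skeleton map into $s\mapsto ns$ or $s\mapsto ns+\tfrac12$. In the second case the two endpoints form a $2$-cycle.
\item The translation can also introduce a sign. For instance, translation by $-1$ gives the tangent map $-T_n$ at the fixed endpoint, and for odd $n$ this is not conjugate to $T_n$.
\end{itemize}
So the endpoint limits are $\pm T_m$ rather than always $T_{n^l}$. These are still Chebyshev maps in the usual sense of the rigidity theorems, so your conclusion stands.
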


\begin{proof}
Observe that  up to conjugacy we have $f_n = \FL^\pm_{t_n}$ for some parameter $t_n\in\La^\pm$ so that we may find a meromorphic family $(f_t)$ of flexible Lattès maps
parametrized by the unit disk and $t_n \to0$ such that $f_n=f_{t_n}$. By construction, we also get a meromophic family of elliptic curve $E_t$ 
associated with $f_t$. 
By Theorem~\ref{thm:family}, $\eps_\star(f)$ is trivial iff $f_t$ is bounded in moduli. When $(E_t)$ is bounded in moduli, then $f_n$ converges to 
the Lattès map associated with $E_0$ and $\eps_\star(f)=(1)$. When $(E_t)$ is not bounded in moduli, up to a base change, we may suppose
that the degeneration of $E_t$ is semi-stable and locally isomorphic to $E_t = \C^*/\langle z \mapsto t^qz \rangle$ for some $q\ge1$, see~\cite[\S C.12]{zbMATH05549721}. 
By~\cite[Proposition~5.2]{FRL10}, then 
the Julia set of $f_{\na}$ is a segment $I$, and the set of type II repelling periodic points is dense in $I$.
Each of them determines a $g$-rescaling so that there exists infinitely many non-equivalent cycles of $g$-rescaling adapted to $f$
by Theorem~\ref{thm:tree-edge}.
The tangent map at any periodic point in the interior of $I$ is conjugated to a monomial map
since both direction given by $\partial I$ are totally invariant. The tangent map of any extremal point of $I$ is a polynomial whose multipliers
are all but finitely many equal to the integer $\sqrt{d}$. It follows from~\cite{zbMATH07691681,zbMATH07680768} that it is either monomial or a Tchebychev polynomial. 
We conclude observing that all non-fundamental $g$-rescaling limits of $f$ are defined over $\C$ and associated with a periodic point in $I$. 
\end{proof}

%%%%%%%%%%%%%%%%%%%%%

\subsection{PCF vertices}
We say that a cycle of $g$-rescalings $(M_*,\eps)$ of period $l$ and adapted to $f$ is PCF
if there exists an integer $i\in\Z$ such that $(M_i\cdot f^l \cdot M_i^{-1})_\eps$ has degree at least $2$, and  
is a post-critically finite rational map. 
Since $(M_i\cdot f^l \cdot M^{-1}_i)_\eps$ is semi-conjugated to $(M_0\cdot f^l \cdot M^{-1}_0)_\eps$ by Lemma~\ref{lem:semi-conj}, 
we may always take $i=0$. 
Note also that  equivalent cycles are PCF simultaneously.

We may thus declare that a vertex $v\in \cR(f)$ is PCF, when any cycle of $g$-rescalings determining $v$ is itself PCF. 
\begin{thm}\label{thm:non-PCF}
If $\eps_\star(f)<(1)$, then the subset $\cR'(f)$ of non-PCF vertices forms a $\sigma_f$-invariant connected subtree of $\cR(f)$
containing the fundamental $g$-rescaling. 
\end{thm}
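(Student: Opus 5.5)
We need to prove three things: $\sigma_f$-invariance, that the fundamental $g$-rescaling is non-PCF, and connectedness. The last amounts to showing that if a vertex is PCF, then every vertex above it is PCF.

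\emph{Invariance.} By Lemma~\ref{lem:semi-conj}, the limits $(M_i\cdot f^l\cdot M_i^{-1})_\eps$ are all semi-conjugate. Being PCF is preserved under semi-conjugacy by a non-constant map, and degrees are preserved. Since $\sigma_f$ only shifts indices, a vertex is PCF iff its image under $\sigma_f$ is PCF, so $\cR'(f)$ is $\sigma_f$-invariant.

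\emph{The root is non-PCF.} Since $\eps_\star(f)<(1)$, the fundamental limit $g:=(M_\star\cdot f\cdot M_\star^{-1})_{\eps_\star}$ is a degree $d$ map over the non-Archimedean field $\sH_{\eps_\star}$ without potential good reduction (Theorem~\ref{thm:trichotomy2}). I will argue by contradiction, assuming $g$ is PCF. The plan is to use McMullen's rigidity theorem in its non-Archimedean form \cite{zbMATH04032044,zbMATH07691681}: a PCF map whose multipliers are algebraic over $\C$ is conjugate to a map defined over $\overline{\Q}$, hence over a field where it has potential good reduction after base change. Concretely, I will use the standard fact that the moduli of PCF maps of degree $d$ (excluding flexible Lattès families) form a set of isolated, hence $\overline{\Q}$-points of $\rat_d$. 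Then $g$, being a point of $\rat_d(\sH_{\eps_\star})$ which is PCF, is conjugate to a complex rational map, and Proposition~\ref{prop:definedoverc} forces $\eps_\star=(1)$, a contradiction. The flexible Lattès case must be handled separately through Theorem~\ref{thm:flexible-lattes}: there the fundamental limit is itself a non-Archimedean flexible Lattès map, which is not PCF in the sense required. This needs care, since flexible Lattès maps are PCF. I expect this to be the main obstacle: either the definition of PCF vertex is meant to exclude the non-isotrivial Lattès case, or one shows directly that a flexible Lattès map over $\sH_{\eps}$ without potential good reduction has postcritical orbits that are infinite in $\P^1(\sH_\eps)$. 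I would try the latter using the Tate uniformization, where critical values are images of $2$-torsion points, which are finite.

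\emph{Connectedness.} By Theorem~\ref{thm:tree-rf} every vertex is reached from the root by successive baby $g$-rescalings. After replacing $f$ by an iterate using Theorem~\ref{thm:periodic}, it suffices to show the following: if $(M,\eps)$ has a PCF limit $g$ of degree $\ge2$, then every baby $g$-rescaling $(L,\eps_+)$ has a PCF limit of degree $\ge 2$, or more precisely is again PCF in the sense of the definition.

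For a repelling type II periodic point $x$, the baby limit $(L\cdot f\cdot L^{-1})_{\eps_+}$ is the fundamental limit of a lift $h$ of the reduction $T_xg^m$ (Theorem~\ref{thm:baby2}). Critical points of $T_xg^m$ correspond to directions containing critical points of $g^m$, and finiteness of the postcritical set of $g$ gives finiteness of the postcritical set of $T_xg^m$ after reduction. Passing from $\tilde h_{\eps}$ to $h_{\eps_+}$ uses the natural semi-conjugacy of Figure~\ref{diagram:residue}, together with Lemma~\ref{lem:crit} relating the critical divisors, which shows that critical orbits stay finite.

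For an indifferent type II periodic point $x$, the Julia property forces a critical point of $g$ with infinite orbit (\cite[Theorem~4.1]{zbMATH06455745KJ15}), which is impossible when $g$ is PCF. So above a PCF vertex all increasing edges are repelling, and the children are PCF by the argument above. By induction along chains, the set of PCF vertices is upward closed, so $\cR'(f)$ is downward closed and hence a connected subtree containing the root.
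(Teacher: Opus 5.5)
\textbf{There are two genuine gaps.}

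\textbf{First gap: the root step.} Your step showing that the root is non-PCF cannot be completed, because what it aims at is false. Take a sequence of flexible Latt\`es maps whose elliptic curves leave every compact set of $\H/\PSL(2,\Z)$. By Theorem~\ref{thm:flexible-lattes}, $\eps_\star(f)<(1)$, and the fundamental limit is a flexible Latt\`es map over $\sH_{\eps_\star}$. Over any algebraically closed field of characteristic $0$, the postcritical set of such a map lies in $\pi(E[2])$, so the map is PCF. The Tate-uniformization computation you propose can only confirm this; you note yourself that the images of $2$-torsion points are finite. In that example every vertex is PCF, so $\cR'(f)=\emptyset$.

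The statement therefore has to be read as: $\cR'(f)$ is a connected subtree that contains the root whenever it is non-empty. The paper remarks right after the theorem that $\cR'(f)$ is empty iff $f$ is PCF. Read this way, your rigidity argument is unnecessary even where it is valid (the non-Latt\`es case). Downward closedness of $\cR'(f)$ already forces the root into any non-empty $\cR'(f)$, and downward closedness is all the paper proves.

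\textbf{Second gap: passing through bad directions.} In the connectedness step you claim that finiteness of the postcritical set of $g$ gives finiteness of the postcritical set of $T_xg^m$. This is not justified. When the orbit of a critical direction $\vec v$ under $T_xg^m$ passes through a bad direction, the image of a critical point lying in that ball need not lie in the next direction of the orbit. From that point on, the orbit of directions and the orbit of the critical point are no longer linked.

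The missing ingredient is Kiwi's lemma \cite[Lemma~4.2]{zbMATH06455745KJ15}, which is exactly what the paper uses in Lemma~\ref{lem:assocriti}. Argue as follows.
\begin{enumerate}
\item Suppose $\vec v$ has infinite orbit. Then its iterates are pairwise distinct, and there are only finitely many bad directions, so there is a last time $j_0$ at which the orbit visits a bad direction.
\item Kiwi's lemma provides, in that bad ball, a critical point whose image lies in the next direction of the orbit.
\item After time $j_0$ the orbit runs through pairwise distinct good directions, so this critical point has infinite orbit.
\item If the orbit never meets a bad direction, use instead a critical point inside the good critical ball $U(\vec v)$.
\end{enumerate}
This gives the contrapositive you need: if $T_xg^m$ is not PCF, then $g$ is not PCF.

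The rest of your argument is sound. The passage from $\widetilde{F}_\eps$ to $F_{\eps_+}$ via Lemma~\ref{lem:crit} and the degree equality (B3) works. Your treatment of indifferent edges matches the second half of the paper's proof. With the repair above, your edge-by-edge upward argument is the contrapositive of the paper's proof. The paper instead argues downward: Lemma~\ref{lem:assocriti} produces, in one stroke, a critical point of $f$ in $\P^1(\sH_0)$ whose orbit is infinite at every scale $\eta\le\eps$.
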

\begin{rmk}
Using Theorem~\ref{thm:upper-tree}, McMullen's rigidity theorem and the results
of \S\ref{sec:flexible}, one can show that a PCF vertex is either an endpoint of $\mathcal{R}(f)$ or has infinitely many outgoing edges that lead to ends.
\end{rmk}
Note that $\cR'(f)$ is empty iff $f$ is itself PCF.
The key step in the proof of Theorem~\ref{thm:non-PCF} is given by the next lemma, which is essentially due to Kiwi~\cite{zbMATH06455745KJ15}.

\begin{lemma} \label{lem:assocriti}
Suppose that \( (\mathrm{id}, \eps) \) is a \( g \)-rescaling adapted to \( (f_n)\). Let \( c_\eps \) be a critical point of \( f_\eps \) whose orbit is infinite. Then, there exist an integer $J\in\N$ and a  critical point \( c'\in\P^1(\sH_0) \) of \( f \) such that $ (f^{j}(c'))_\eta$ has an infinite orbit for all scale $\eta \le \eps$, and
\[  (f^{j}(c'))_\eps=f_\eps^{j+J}(c_\eps)\]
for all $j\in\N$.
\end{lemma}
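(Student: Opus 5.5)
We use Lemma~\ref{lem:crit} to locate the critical points of $f$ that lie over $c_\eps$, and then Lemma~\ref{lem:period} together with the structure of the holes to push orbits forward. By Lemma~\ref{lem:crit}, the limit divisor $\cC(f)_\eps$ equals $\cC(f_\eps)+2D_\eps(f)$. The point $c_\eps$ lies in the support of $\cC(f_\eps)$, so there is at least one critical point $c\in\P^1(\sH_0)$ of $f$ whose image in $\P^1(\sH_\eps)$ is $c_\eps$. The difficulty is that $c_\eps$, or some iterate $f_\eps^j(c_\eps)$, may lie in the support of the hole divisor $D_\eps(f)$. At such a step the naive identity $(f(c))_\eps=f_\eps(c_\eps)$ fails; Proposition~\ref{prop: compo} only gives it away from holes. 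The plan is Kiwi's strategy: follow the orbit of $c_\eps$ and, whenever it passes through a hole, change the critical point.

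\textbf{Step 1 (orbit avoiding holes).} The support $|D_\eps(f)|$ is finite and the orbit of $c_\eps$ is infinite. Hence the orbit meets holes only finitely many times unless it lands in a hole infinitely often. The latter is impossible because the points $f_\eps^j(c_\eps)$ are pairwise distinct (an infinite orbit is not preperiodic). So there is $J_0$ such that $f_\eps^j(c_\eps)\notin|D_\eps(f)|$ for all $j\ge J_0$. For any point $p\in\P^1(\sH_0)$ with $p_\eps=f_\eps^{J_0}(c_\eps)$, Proposition~\ref{prop: compo} applied to the constant maps (the points) then gives $(f^j(p))_\eps=f_\eps^{j+J_0}(c_\eps)$ for all $j$ by induction.

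\textbf{Step 2 (producing a genuine critical point over the orbit).} We must find an actual critical point $c'$ of $f$ and an integer $J$ with $(c')_\eps=f_\eps^J(c_\eps)$, where $J\ge J_0$. I expect this to be the main obstacle. Following Kiwi, I would argue by counting. Choose $J\ge J_0$ such that $f_\eps^J(c_\eps)$ is not a critical point of $f_\eps$ other than along the orbit; more precisely, choose it so that $q:=f_\eps^{J}(c_\eps)$ avoids holes and the orbit from $J$ on avoids holes. Take all preimages under $f$ of lifts of $q$ and compare degrees.

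Near a point $p$ with $p_\eps\notin|D_\eps(f)|$, the local degree of $f$ at lifts of $p$ summed over the lifts equals $\deg_{p_\eps}(f_\eps)$. This follows from Lemma~\ref{lem:crit} by counting critical points with multiplicity: $\cC(f)_\eps$ and $\cC(f_\eps)$ agree off holes. So if $f_\eps^{J-1}(c_\eps)$ is critical for $f_\eps$ and not a hole, a critical point of $f$ specializes to it. If instead the orbit between $c_\eps$ and the last hole visit passes through a hole, then the critical mass of $f_\eps$ at $c_\eps$ has to be accounted for by the extra $2D_\eps(f)$ term. In that case I would replace $c_\eps$ by the last iterate at which a critical point of $f_\eps$ (necessarily present, since $\cC(f)_\eps$ and $\cC(f_\eps)$ agree off holes) sits on the orbit after the last hole visit. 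If no such iterate exists, I would use Kiwi's argument~\cite[Theorem~4.1]{zbMATH06455745KJ15}: a degree-drop at a hole forces a critical point of $f_n$ to converge into the orbit. Concretely, a small disk around $f_\eps^{j}(c_\eps)$ avoiding holes is mapped with the correct degree, so the critical point $c$ of $f$ above $c_\eps$ can be transported by taking its forward image. If $c_\eps$ itself is not a hole, set $c'=c$ and $J=0$. Otherwise take $c'$ to be a critical point of $f$ in the cluster specializing to the hole, and use the fact that the total critical multiplicity there exceeds $2\deg D_\eps(f)$ at that point. This excess is exactly the critical multiplicity of $f_\eps$, which yields a critical point of $f$ not swallowed by the hole.

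\textbf{Step 3 (smaller scales).} For $\eta\le\eps$ there is a natural map $\P^1(\sH_\eta)\to\P^1(\sH_\eps)$ compatible with limits (Figure~\ref{diagram:residue}). It sends $(f^j(c'))_\eta$ to $(f^j(c'))_\eps=f_\eps^{j+J}(c_\eps)$. The latter points are pairwise distinct, so the former are pairwise distinct too, and the orbit at scale $\eta$ is infinite.
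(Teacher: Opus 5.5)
Your Steps~1 and~3 are fine, but Step~2 carries the whole content of the lemma and has a genuine gap. First, the target you set there is generally unattainable. By Lemma~\ref{lem:crit}, every critical point $c'$ of $f$ specializes into $|\cC(f_\eps)|\cup|D_\eps(f)|$. So requiring $c'_\eps=f_\eps^J(c_\eps)$ with $J\ge J_0$ forces a critical point of $f_\eps$ to lie on the orbit of $c_\eps$ after its last visit to a hole, and typically there is none (e.g.\ if $c_\eps$ is itself a hole and no later iterate is critical). In general the correct $c'$ specializes to a hole $h=f_\eps^J(c_\eps)$, namely the last one the orbit meets. The real task is then to find a critical point clustering at $h$ with $(f(c'))_\eps=f_\eps(h)$.

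None of your fallbacks achieves this:
\begin{itemize}
\item Setting $c'=c$ when $c_\eps$ is not a hole breaks down once a later iterate lands in $|D_\eps(f)|$. For $p_\eps\in|D_\eps(f)|$, the point $(f(p))_\eps$ is not determined by $f_\eps$, so the identity $(f^j(c))_\eps=f_\eps^j(c_\eps)$ is no longer available.
\item Forward images of $c$ are not critical points.
\item The excess of critical multiplicity over $2D_\eps(f)$ at $h$ only counts how many critical points specialize to $h$. At scale $\eps$ these are indistinguishable, and nothing at that scale tells you where their images go.
\end{itemize}
So no single-scale argument can select $c'$.

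The missing idea is to zoom in below $\eps$. By Theorem~\ref{thm:chain} there are scales $\eps_\star(f)=\eps_0<\cdots<\eps_m=\eps$ with each $(\id,\eps_i)$ adapted. At $\eps_{m-1}$ the Gauss point is fixed, and $\tilde f_{\eps_{m-1}}=T_{\xg}f_{\eps_{m-1}}$ has the same degree as $f_\eps$ and is semi-conjugate to it. Hence $c_\eps$ lifts to a critical direction $v$ at $\xg$ with infinite orbit. There are two cases:
\begin{itemize}
\item If no iterate of $v$ is a bad direction, the ball $U(v)$ contains a critical point of $f_{\eps_{m-1}}$ whose orbit specializes to that of $c_\eps$.
\item Otherwise, at the last bad iterate $\tilde f^{j_0}_{\eps_{m-1}}(v)$, Kiwi's lemma~\cite[Lemma~4.2]{zbMATH06455745KJ15} gives a critical point in that ball whose image lies in the ball of the image direction. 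Its orbit therefore specializes at scale $\eps$ to the tail $\{f_\eps^{j+j_0}(c_\eps)\}_{j\ge 0}$.
\end{itemize}
Either way this yields a critical point $c_{m-1}$ of $f_{\eps_{m-1}}$ with infinite orbit, and you repeat the argument down the chain to $\eps_0$. There $\deg f_{\eps_0}=d$, so $D_{\eps_0}(f)=0$, and Lemma~\ref{lem:crit} lifts the critical point to $\sH_0$ with no hole issues. This multi-scale descent is exactly how the paper proves the lemma, and without it your Step~2 is not established.
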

\begin{proof}
By Theorem~\ref{thm:chain}, there exists a finite set of scales
$\eps_0:= \eps_\star(f) < \eps_1 < \cdots < \eps_m=\eps$ such that 
$(\id,\eps_i)$ is adapted to $f$. Moreover, $(\id,\eps_i)$ is a baby $g$-rescaling 
of $(\id,\eps_{i-1})$. 

We shall construct critical points for maps $f_{\eps_j}$ for all $j= 0, \cdots, m$. 
Set $c_m:= c_\eps$. 
Since the degree of $\tilde{f}_{\eps_{m-1}}=T_{\xg}f_{\eps_{m-1}}$ is equal to $f_{\eps_m}$, $c_m$ can be lifted to a critical point
$\tilde{c}_{m-1}$ of $\tilde{f}_{\eps_{m-1}}$ such that $\tilde{f}^j(\tilde{c}_{m-1})_{\eps}=f_\eps^{j}(c_m)$ for all $j$.
Then two cases may appear. Denote by $v$ the direction  at $\xg\in\P^1(\sH_{\eps_{m-1}})$ determined by $\tilde{c}_{m-1}$, and note it has an infinite orbit.
Then either $\tilde{f}_{\eps_{n-1}}^j(v)$ is not bad for all $j$, or $\tilde{f}_{\eps_{m-1}}^j(v)$ is bad for some maximal integer $j=j_0$.
In the former case, $f_{\eps_{n-1}}$ admits a critical point $c_{m-1}$ in the direction of $v$, and 
  $(f^{j}_{\eps_{m-1}}(c_{m-1}))_{\eps_m}=f_{\eps_m}^{j}(c_m)~,$ for any $j$. 
 In the latter case, by~\cite[Lemma~4.2]{zbMATH06455745KJ15} the open ball determined by the direction $\tilde{f}_{\eps_{m-1}}^{j_0}(v)$ contains a critical point $c_{m-1}$
such that 
  \[  (f^{j}_{\eps_{m-1}}(c_{m-1}))_{\eps_m}=f_{\eps_m}^{j+j_0}(c_m)~,\] 
  for all $j$.
  
  We  apply the same argument to $c_{m-1}$, and we get a critical point $c_{m-2}$ of 
  $f_{\eps_{m-2}}$ such that $(f^{j}_{\eps_{m-2}}(c_{m-2}))_{\eps_{m-1}}=f_{\eps_{m-1}}^{j+j_1}(c_{m-1})$. 
 The proof follows by iterating this construction $m$ times. 
\end{proof}

\begin{proof}[Proof of Theorem~\ref{thm:non-PCF}]
Let $(M_*,\eps)$ be any cycle of $g$-rescalings of period $m$ adapted to $f$ such that $g:=(M_0\cdot f^m\cdot M_0^{-1})_\eps$ is not PCF.
If $\deg(g)\ge2$, then $g$ has a critical point $c$ having an infinite orbit. By Lemma~\ref{lem:assocriti}, there is a critical point of $M_0\cdot f^m\cdot M_0^{-1}$ in $\sH_0$
having an infinite orbit at all scales $\eta \le \eps$. This implies that none of the rational maps $(M_0\cdot f^m\cdot M_0^{-1})_\eta$ 
is PCF for all $\eta \le \eps$. In particular, the segment in $\cR(f)$ joining the fundamental vertex to $(M_*,\eps)$ contains no PCF vertex. 

When $\deg(g)=1$, consider the cycle of $g$-rescalings $(M_*,\eps')$ with $\eps'<\eps$ such that $(M_*,\eps')$ and $(M_*,\eps)$ are consecutive edges in $\cR(f)$. 
Then $g' =(M_0\cdot f^m\cdot M_0^{-1})_{\eps'}$ has degree at least $2$ and contains an indifferent periodic type II Julia point. This implies the existence of 
a critical point for $g'$ whose orbit is infinite so that $g'$ is not PCF. We conclude by applying the previous arguments to $g'$.
\end{proof}

We proceed and prove a bound on the size of $\cR'(f)$. 
To state this bound properly, recall that $\sigma_f$ is a tree isomorphism
fixing the fundamental vertex so that the quotient space 
$\cR(f)/\langle\sigma_f\rangle$ is again a simplicial tree in the sense of \S\ref{sec: simplicialtree}. 
Also $\sigma_f$ preserves $\cR'(f)$, and $\cR'(f)/\langle\sigma_f\rangle$ is 
a simplicial tree as well. 

Recall that an anti-chain is a subset for which no two distinct elements are  comparable.
\begin{thm}\label{thm:anti-chain}
 Any anti-chain in $\cR'(f)/\langle\sigma_f\rangle$ has cardinality at most $2d-2$.
\end{thm}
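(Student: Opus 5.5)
Our plan is to attach to each element of an anti-chain a critical point of the sequence $(f_n)$, i.e. a critical point $c'\in\P^1(\sH_0)$ of $f$ viewed over $\sH_0$. We will show that distinct elements of the anti-chain receive critical points in distinct grand orbits. Since $f$ has at most $2d-2$ critical points counted with multiplicity, the bound $2d-2$ follows.

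Let $A$ be an anti-chain in $\cR'(f)/\langle\sigma_f\rangle$, and choose representatives $v_1,\dots,v_k\in\cR'(f)$. By passing to a suitable descendant we may assume each $v_a$ has a limit of degree $\ge2$ that is not PCF. If the limit has degree $1$, the proof of Theorem~\ref{thm:non-PCF} replaces $v_a$ by its predecessor, and this may break the anti-chain property. So instead we replace $v_a$ by a successor: the indifferent point is followed by the non-PCF predecessor, which we keep. More precisely, the degree-one case still provides a bad direction with infinite orbit. Then Kiwi's Lemma~4.2 (as used in Lemma~\ref{lem:assocriti}) gives a critical point in the predecessor's limit whose orbit feeds into that direction.

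Using Theorem~\ref{thm:periodic}, we pass to an iterate $f^L$, with $L$ a common multiple of the periods, so that every $v_a$ becomes a $\sigma$-fixed vertex given by a single $g$-rescaling $(M_a,\eps_a)$. Passing to iterates changes the set of critical points of $f^L$. We will handle this by tracking, for $f$ itself, the forward orbit of the critical point. The key step is Lemma~\ref{lem:assocriti}, applied to $M_a\cdot f^L\cdot M_a^{-1}$ and a critical point $c_a$ of infinite orbit of the limit. It produces a critical point $c'_a$ of $f^L$, hence an iterate $f^{j}(c''_a)$ of a critical point $c''_a$ of $f$. The $\eps_a$-limits of the forward orbit of $c''_a$, in the coordinates $M_a$, eventually follow the infinite orbit of $c_a$.

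The main obstacle is to show that $c''_a$ and $c''_b$ cannot lie in the same grand orbit when $a\neq b$. Suppose $f^{p}(c''_a)=f^{q}(c''_b)$. Then a common forward orbit $w=(w_n)$ has, for large $j$, infinitely many distinct limits in $\P^1(\sH_{\eps_a})$ after conjugating by $M_a$, and likewise at $\eps_b$ by $M_b$. Assume $\eps_a\le\eps_b$. Non-comparability of $v_a,v_b$, together with the tree structure in Theorem~\ref{thm:tree-rf} and the chain description in Theorem~\ref{thm:chain}, means the two vertices branch at some common ancestor $u$ at scale $\eps_u<\eps_a$. In the limit at $u$, the $M_a$-orbit and the $M_b$-orbit (up to $\sigma_f$-translation) correspond to distinct periodic type II Julia points $x_a\neq x_b$, or to distinct bad directions. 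The orbit of $w$ at scale $\eps_u$ must eventually stay in the direction at $x_a$ toward the next vertex of the chain. It cannot simultaneously eventually accumulate in the corresponding direction at $x_b$, because the sequence $f^j(w)_{\eps_u}$ would have to be eventually confined to two disjoint Berkovich balls. Making this last step rigorous is where we expect the difficulty. The tool we would try first is the naturality of the maps $\P^1(\widetilde{\sH}_{\eps})\to\P^1(\sH_\eta)$ of Figure~\ref{diagram:residue}, combined with Lemma~\ref{lem:baddy}.

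Finally, distinct grand orbits of critical points give at most $2d-2$ classes, so $k\le 2d-2$.
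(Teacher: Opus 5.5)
\textbf{There is a genuine gap: the comparability step is never proved.} Your overall plan matches the paper: attach to each non-PCF vertex a critical point of $f$ via Lemma~\ref{lem:assocriti}, then show that incomparable vertices cannot share it. But you do not prove that two vertices whose attached critical points have a common forward orbit must be comparable, and you flag this yourself as the difficulty.

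The route you sketch would not easily close it. Descending to the last common ancestor $u$ and separating $x_a\neq x_b$ in $\P^{1,\an}_{\sH_{\eps_u}}$ misses the situation of \S\ref{sec:diffbad}. There, two incomparable vertices come from two different bad directions at the \emph{same} indifferent point, so no pair of disjoint Berkovich balls exists at scale $\eps_u$. Even when $x_a\neq x_b$, the orbit does not ``eventually stay in a direction at $x_a$''. It must visit infinitely many directions at $x_a$, since its limits at the child scale form an infinite set. What you would need is that it eventually avoids the single direction at $x_a$ containing $x_b$, and symmetrically at $x_b$. That statement is exactly what is left unproved.

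\textbf{The missing idea is to compare the two coordinate systems at the smaller scale.} The paper uses no tree combinatorics at all.
\begin{itemize}
\item Take representatives $(M_*,\eta)$ and $(L_*,\eta')$ with $\eta\le\eta'$, and a common period $m$. Pick a point $w$ of the common forward orbit such that, after shifting indices, $\{(M_0f^{jm}(w))_\eta\}_j$ and $\{(L_0f^{jm}(w))_{\eta'}\}_j$ are infinite.
\item The second sequence is the image of a genuine infinite orbit of a limit map under a non-constant map (this is what Lemma~\ref{lem:assocriti} and Proposition~\ref{prop:attach-crit} produce). So it takes each value only finitely often.
\item Hence so does its lift $(L_0f^{jm}(w))_\eta$ at scale $\eta$. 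In particular, for $j$ large it avoids the single point of $|D_\eta(M_0L_0^{-1})|$.
\item Proposition~\ref{prop: compo} then gives $(M_0f^{jm}(w))_\eta=(M_0L_0^{-1})_\eta\big((L_0f^{jm}(w))_\eta\big)$ for all large $j$.
\item If $(M_0L_0^{-1})_\eta$ were constant, the left-hand side would be eventually constant, contradicting that its value set is infinite.
\item So $(M_0L_0^{-1})_\eta$ is a M\"obius map and $(M_0,\eta)$ is equivalent to $(L_0,\eta)$. Lemma~\ref{lem:equivcycle} then gives $(M_*,\eta)\le(L_*,\eta')$.
\end{itemize}
This argument handles the repelling case and the two-bad-directions case uniformly. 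It is also precisely the ``eventual avoidance'' your disjoint-balls picture requires.

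\textbf{Your degree-one paragraph is incoherent.} A vertex whose limit is a M\"obius map is a leaf, so there is no successor to pass to. What is needed is an attached critical point for the leaf itself. Your Kiwi-lemma idea must produce one whose orbit is infinite in the leaf's coordinates at the leaf's own scale. It is not enough that the critical point lands in the bad direction at the parent's scale.
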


\begin{cor}
\label{cor:generaltree}
The number of non-PCF vertices in $\cR(f)$ is finite, and
its cardinality in $\cR(f)/\langle\sigma_f\rangle$ is at most \(4d^2-8d+5 \).
\end{cor}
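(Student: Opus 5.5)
The plan is a purely combinatorial count. We combine the width bound of Theorem~\ref{thm:anti-chain} with the height bound of Corollary~\ref{cor:boundlength}, applied inside the quotient tree $T:=\cR'(f)/\langle\sigma_f\rangle$. By Theorem~\ref{thm:non-PCF}, $\cR'(f)$ is a $\sigma_f$-invariant connected subtree of $\cR(f)$ containing the fundamental vertex. Hence $T$ is a rooted simplicial tree whose root is the image of the fundamental $g$-rescaling (if $\eps_\star(f)=(1)$, Example~\ref{eg: trivialtree} gives a single vertex and there is nothing to prove). The order on $T$ is the one induced from $\cR(f)$.

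First I check that chains in $T$ have at most $2d-1$ elements. The map $\sigma_f$ preserves scales, so each vertex of $T$ carries a well-defined scale. A strict inequality $[x]<[y]$ in $T$ is realized by representatives $x<y$ in $\cR(f)$, which forces the scale of $x$ to be strictly smaller than the scale of $y$. Thus along any chain of $T$ the scales strictly increase. By Theorem~\ref{thm:boundscales} at most $2d-1$ scales are adapted, so every chain of $T$ has at most $2d-1$ elements. Equivalently, every chain contains at most $2d-2$ vertices besides the root.

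Next, because chains are finite, every vertex of $T$ lies below some maximal element of $T$: start from the vertex and extend upward until the chain can no longer be lengthened. The maximal elements are pairwise incomparable, so they form an anti-chain, and Theorem~\ref{thm:anti-chain} gives at most $2d-2$ of them. Since $T$ is a tree, the vertices lying below a given maximal element form a single chain from the root. Each such chain contributes at most $2d-2$ non-root vertices, and the root is shared by all of them. Therefore
\[
\# T \le 1 + (2d-2)(2d-2) = 4d^2-8d+5 .
\]

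Finally, finiteness in $\cR(f)$ itself follows because each $\sigma_f$-orbit is finite. Indeed, a vertex represented by a cycle of period $l$ has $\sigma_f$-period dividing $l$ (Remark~\ref{rem:idiot}). So $\cR'(f)$ is a finite union of finite orbits.

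The only delicate point is the compatibility of the order with the quotient: I must make sure that comparability in $T$ genuinely comes from comparability of representatives in $\cR(f)$. I expect this to follow directly from the fact that $\sigma_f$ is a tree isomorphism fixing the root (Theorem~\ref{thm:periodic}).
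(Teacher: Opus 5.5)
Your proposal is correct and is the same argument as the paper's: the height bound (chains of length at most $2d-2$, from Corollary~\ref{cor:boundlength} via Theorem~\ref{thm:boundscales}) multiplied by the width bound of Theorem~\ref{thm:anti-chain}, plus the root, gives $(2d-2)^2+1=4d^2-8d+5$. Your extra checks make explicit what the paper's three-line proof leaves implicit: that scales strictly increase along chains of the quotient, that every vertex lies below a maximal one, and that $\sigma_f$-orbits are finite, which yields finiteness in $\cR(f)$ itself.
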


We refer to \S\ref{sec:example} for pictures of the tree 
$\cR(f)/\langle\sigma_f\rangle$ in some explicit examples, e.g., for quadratic rational maps. 

\begin{rmk}
For any $d\ge 2$, Luo~\cite[Proposition~7.2]{Luo222}
has constructed a sequence of degree $d$ rational maps admitting infinitely many non-monomial, pairwise non-equivalent Kiwi rescalings. 
\end{rmk}

\begin{proof}
 By Corollary~\ref{cor:boundlength}, the maximal length of a maximal chain is at most $2d-2$. By Theorem~\ref{thm:anti-chain},  there are at most $2d-2$ non-PCF vertices which are not comparable. The total number of non-PCF vertices modulo $\sigma_f$ is thus bounded from above by  
$(2d - 2)(2d - 2) + 1 = 4d^2 - 8d + 5.$
\end{proof}

The rest of this section is devoted to the proof of Theorem~\ref{thm:anti-chain}.
We suppose that $\eps_\star:= \eps(f)=\eps_\star(f)$ so that $f_{\eps_\star}$ does not have potential good reduction, and
we pick an anti-chain $W\subset \cR'(f)$. 

We shall say that a critical point $c$ of $f$ is attached to a cycle of $g$-rescalings $(M_*,\eps)\in \cR'(f)$ of period $l$
if $\{(M_i\cdot f^{ln}(c))_\eps\}$ forms an infinite set in $\P^1(\sH_\eps)$ for some $i\in \Z$.

We rely on the next proposition.
\begin{prop}\label{prop:attach-crit}
Let  $(M_*,\eps)$ be any non-PCF cycle of $g$-rescalings adapted to $f$. Then there exists a
critical point $c$ of $f$ attached to $(M_*,\eps)$.
\end{prop}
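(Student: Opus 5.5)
The plan is to reduce to Lemma~\ref{lem:assocriti}, applied to the iterate $f^l$. Let $l$ be the period of $(M_*,\eps)$ and put $F:=M_0\cdot f^l\cdot M_0^{-1}$. By Lemma~\ref{lem:shift} (or directly by (D2)), $(\id,\eps)$ is a $g$-rescaling adapted to the sequence $F$. We split according to $g:=F_\eps$.

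\emph{Case $\deg(g)\ge 2$.} Since the vertex is non-PCF, $g$ has a critical point $c_\eps$ with infinite orbit. Lemma~\ref{lem:assocriti} applied to $F$ gives a critical point $c'\in\P^1(\sH_0)$ of $F$ and an integer $J$ such that $(F^j(c'))_\eps=g^{j+J}(c_\eps)$ for all $j$. This set is infinite, so $\{(M_0\cdot f^{lj}\cdot M_0^{-1}(c'))_\eps\}_j$ is infinite. It remains to convert $c'$ into a critical point of $f$. By the chain rule, $M_0^{-1}(c')$ is critical for $f^l=f\circ\cdots\circ f$. Hence there is $0\le k<l$ such that $c:=f^k(M_0^{-1}(c'))$ is a critical point of $f$, as a point of $\P^1(\sH_0)$ (ultraproduct of critical points of $f_n$).

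We then compare the orbit of $c$ with that of $M_0^{-1}(c')$. We have $f^{l-k}(c)=f^l(M_0^{-1}c')$, so $(M_0\cdot f^{l(j+1)-k}(c))_\eps$ runs over an infinite set. Put $i:=k$, or more precisely the index with $i\equiv -k \pmod l$ arranged so that $M_i$ is the correct chart. Using (D1) and Proposition~\ref{prop: compo}, the map $(M_0\cdot f^{l-k}\cdot M_i^{-1})_\eps$ is a non-constant rational map over $\sH_\eps$. It intertwines $(M_i\cdot f^l\cdot M_i^{-1})_\eps$ with $g$, by the semi-conjugacy of Lemma~\ref{lem:semi-conj}.

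A non-constant map sends finite orbits to finite sets. Therefore, if $\{(M_i f^{lj}(c))_\eps\}$ were finite, its image $\{(M_0 f^{lj+l-k}(c))_\eps\}$ would be finite, a contradiction. Here we need the compatibility $(A\cdot x)_\eps=A_\eps(x_\eps)$ whenever $x_\eps$ avoids the hole divisor of $A$. This holds because a non-constant limit map has finitely many holes and the orbit is infinite, so we may discard finitely many terms. This gives that $c$ is attached with index $i$.

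\emph{Case $\deg(g)=1$.} As in the proof of Theorem~\ref{thm:non-PCF}, we pass to the preceding vertex $(M_*,\eps')$ in the tree, with $\eps'<\eps$. There $g'$ has degree $\ge2$ and an indifferent periodic type II Julia point, hence a critical point whose orbit passes through the bad direction $v$. By (A3), $v$ maps to a point with infinite orbit under $g$ in $\P^1(\sH_\eps)$. We would use~\cite[Lemma~4.2]{zbMATH06455745KJ15}, as in the proof of Lemma~\ref{lem:assocriti}, to get a critical point of $g'$ landing in the ball of $v$, and then pull it back via Lemma~\ref{lem:assocriti} at scale $\eps'$ to a critical point of $F$. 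Its image at scale $\eps$ then lies in the hole divisor $D_\eps$ orbit, which is infinite; equivalently, the reduction maps the infinite $\tilde g'$-orbit of $v$ to the infinite $g$-orbit. We then conclude as in the first case.

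The main obstacle is the degree-one case. There we must check that passing from scale $\eps'$ (reduction) to scale $\eps$ preserves the infinitude of the critical orbit, using the diagram of Figure~\ref{diagram:residue}. In the first case, the main care is the bookkeeping of indices and the avoidance of holes.
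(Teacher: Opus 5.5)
Your plan is the paper's: apply Lemma~\ref{lem:assocriti} to $F=M_0\cdot f^l\cdot M_0^{-1}$, use the chain rule to get $0\le k<l$ with $c=f^k(M_0^{-1}(c'))$ critical for $f$, and move the infinite orbit between charts using the non-constant maps supplied by (D1). The gap is in this last transfer, which you run in the wrong direction. You assume $\{(M_k\cdot f^{lj}(c))_\eps\}_j$ is finite and push it forward by $\phi:=(M_0\cdot f^{l-k}\cdot M_k^{-1})_\eps$. However, the identity $(M_0\cdot f^{l(j+1)-k}(c))_\eps=\phi\bigl((M_k\cdot f^{lj}(c))_\eps\bigr)$ is only available when $(M_k\cdot f^{lj}(c))_\eps$ lies outside $|D_\eps(M_0\cdot f^{l-k}\cdot M_k^{-1})|$. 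Under your contradiction hypothesis these points range over a finite set, which may be contained in that hole set for infinitely many (even all) $j$. At a hole the limit map says nothing about the images, so a finite source set is compatible with an infinite target orbit, and no contradiction arises. Your justification (``the orbit is infinite, so we may discard finitely many terms'') uses the infinitude of the orbit on the target side, whereas hole-avoidance is needed on the source side.

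The repair is the paper's argument: transfer forward from the orbit you already know to be infinite. By (D1) and Proposition~\ref{prop: compo}, $\psi:=(M_k\cdot f^k\cdot M_0^{-1})_\eps$ is non-constant, and $M_k\cdot f^{lj}(c)=(M_k\cdot f^k\cdot M_0^{-1})(F^j(c'))$. The set $\{(F^j(c'))_\eps\}_j=\{g^{j+J}(c_\eps)\}_j$ is infinite, so it stays infinite after removing the finitely many holes of $\psi$. On the remaining indices $(M_k\cdot f^{lj}(c))_\eps=\psi\bigl((F^j(c'))_\eps\bigr)$, and since $\psi$ is finite-to-one, $\{(M_k\cdot f^{lj}(c))_\eps\}_j$ is infinite. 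Note also that the correct index is $i=k$; your alternative $i\equiv -k \pmod l$ is wrong.

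Your separate degree-one case does not appear in the paper's proof, which takes a critical point of the limit and so tacitly assumes degree $\ge 2$. Your sketch there is workable, but only if you choose the critical point of $g'$ in the ball of the \emph{last} bad iterate $(\tilde{g}')^{j_0}(v)$, as in the proof of Lemma~\ref{lem:assocriti}; a critical point merely landing in the ball of $v$ can be thrown off by a later bad direction. You must then again transfer forward as above.
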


\begin{proof}[Proof of Theorem~\ref{thm:anti-chain}]
    In view of the previous proposition, and since $f$ admits at most $2d-2$ critical points, it is sufficient
to show the following statement. Suppose a critical point $c$ is attached to two non-PCF vertices $v,v' \in \cR'(f)$ which are not in the same orbit of $\sigma_f$. Then
we have either $v\leq v'$ or $v'\leq v$. 

 Let \( c\in\P^1(\sH_0)\) be a critical point of $f$ that is attached to two distinct vertices $v, v'$.   Choose representatives \((M_*, \eta)\) and  \((L_*, \eta')\)  defining $v$ and $v'$ respectively, and let \( m \) be the least common multiple  of their periods. Without loss of generality, we may suppose that both sequences  
\[
\left\{ \left(M_{0} \cdot f^{jm}(c)\right)_\eta \right\}_{j \geq 0} \quad \text{and} \quad \left\{ \left(L_{0} \cdot f^{jm}(c)\right)_{\eta'} \right\}_{j \geq 0}
\]  
are infinite, and \( \eta \le \eta' \). By the latter condition,  the set  
$\{ \left(L_{0} \cdot f^{jm}(c)\right)_\eta \}_{j \geq 0}$  is also infinite. Therefore, for $j$ large enough,  $(L_{0} \cdot f^{jm}(c))_\eta$ does not belong to the support of the hole divisor 
$D_{\eta}(M_{0} \cdot L_{0}^{-1})$.
By Proposition~\ref{prop: compo}, we have
\begin{align*}
 (M_{0} \cdot f^{jm}(c))_\eta=(M_{0} \cdot L_{0}^{-1})_\eta\,  (L_{0} \cdot f^{jm}(c))_\eta.
\end{align*}
Hence $(M_{0} \cdot L_{0}^{-1})_\eta$ is not a constant map, i.e., $\deg(M_{0} \cdot L_{0}^{-1})_\eta= 1$.
This implies $(M_*,\eta)$ and $(L_*,\eta)$ to be equivalent so that 
$v' \geq v.$
\end{proof}

 \begin{proof}[Proof of Proposition~\ref{prop:attach-crit}]
Let $(M_*,\eps)$ be any cycle of $g$-rescalings of period $l$ adapted to $f$. Suppose that $M_0=\id$ and 
$f ^l_\eps$ is not PCF.  Then there exists a critical point \(c_\eps\) of \(f^l_\eps\) with an infinite orbit.  
By Lemma~\ref{lem:assocriti}, there exist a  critical point \(c'\) of \(f^l\)  and an integer $J$ such that  $(f^{jl}(c'))_\eps=f_\eps^{jl+J}(c_\eps)$ for all $j$.
In particular, we get an index $0\le i\le l-1$ such that
$c=f^i(c')$ is critical for $f$.  

By (D1) and Proposition~\ref{prop: compo}, the map
\[
(M_i\cdot f^i)_{\varepsilon}
=
(M_i\cdot f^i\cdot M_0^{-1})_{\varepsilon}
=
(M_{i}\cdot f\cdot M_{i-1}^{-1})_{\varepsilon}\circ
 \cdots 
\circ(M_{1}\cdot f\cdot M_0^{-1})_{\varepsilon}.
\]
is not constant. 
For all sufficiently large \(j\in\mathbb{N}\), we have $
(f^{jl}(c'))_{\varepsilon} \notin \bigl|D_{\varepsilon}(M_i\cdot f^{i})\bigr|.$
It follows that
\[
(M_i \cdot f^{jl}(c))_{\varepsilon}=(M_i \cdot f^{jl+i}(c'))_{\varepsilon}
=
(M_i \cdot f^{i})_{\varepsilon}
\, (f^{jl}(c'))_{\varepsilon},
\]
which forms an infinite set as $j$ varies.
Therefore, \(c\) is attached to \((M_*,\varepsilon)\).
 \end{proof}

 \subsection{Proof of Theorem~\ref{thmint:1}}
Let us explain how the results of this section and the preceding ones imply Theorem~\ref{thmint:1}.

Recall that $\cR_+(f)$ is defined in the introduction as  the set of equivalence classes of cycles of $g$-rescalings adapted to $f$
whose limit has degree $\ge2$. It is a subset of the poset $\cR(f)$ containing the fundamental limit so that Theorem~\ref{thm:boundscales} already implies (2).
We claim that $\cR_+(f)$ is a subtree of $\cR(f)$. Note that this claim and Theorem~\ref{thm:tree-rf} (resp. Theorems~\ref{thm:non-PCF} and~\ref{thm:anti-chain}) implies (1) (resp. (3)).

Suppose that $(M,\eps)$ is a $g$-rescaling adapted to $f$. Assume its limit $(M\cdot f\cdot M^{-1})_\eps$ has degree $\ge 2$. 
Then Theorem~\ref{thm:chain} implies that for all $g$-rescalings 
$(M,\eta)$ with $\eta<\eps$, the limit has degree $\ge \deg(M\cdot f\cdot M^{-1})_\eps$. This implies our claim.

\section{Examples}\label{sec:example}

In this last section, we review some examples 
illustrating our theory of $g$-rescalings, and prove Theorem~\ref{thm:int3}.

\subsection{Quadratic rational maps}
Let $(f_n)$ be a sequence of complex quadratic rational maps. 
We shall always assume that $\eps_\star:=\eps(f)=\eps_{\star}(f)$. 
To simplify notation we write $f_\star= f_{\eps(f)}$, denote by $J_\star$
its Julia set,  and by $\H_\star$
the hyperbolic space in $\P^{1,\an}_{\sH_{\star}}$ with the convention
$\H_\star=\emptyset$ when $\eps_\star =(1)$.

\begin{thm}
\label{thm: quadratic}
Let $(f_n)_{n\in\mathbb{N}}$ be a sequence of quadratic rational maps as above. 
Then exactly one of the following mutually exclusive cases occurs:
\begin{enumerate}
    \item One has $J_\star \cap \H_\star=\emptyset$, and $\cR(f)$ is reduced to the fundamental vertex $v_{\star}$.
\[
\begin{tikzcd}
(\eps_\star, 2)
\end{tikzcd}
\]
    \item The set $J_\star \cap \H_\star$ is the grand orbit of an indifferent periodic cycle, and the quotient tree
    $\mathcal{R}(f)/\langle\sigma_f\rangle$ is a segment joining the fundamental vertex to the baby $g$-rescaling associated with this cycle. 
\[ \begin{tikzcd}
(\eps_\star, 2) \ar[r] &(\eta,1)
\end{tikzcd}\]
\item The set  $J_\star \cap \H_\star$ is the grand orbit of a repelling periodic point $p$ of period at least $2$. 
    \begin{itemize}
    \item[(3a)] The tree $\mathcal{R}(f)/\langle\sigma_f\rangle$ is a segment joining the fundamental vertex to the baby $g$-rescaling associated with $p$. The non-fundamental
    rescaling is at the trivial scale, and its limit is a complex quadratic rational map with a multiple (i.e., parabolic) fixed point.
\[
\begin{tikzcd}
(\eps_\star, 2) \arrow{r}{p} &((1),2)
\end{tikzcd}
\]
    \item[(3b)] The tree $\mathcal{R}(f)/\langle\sigma_f\rangle$ is totally ordered, and contains three points: the fundamental vertex; the baby $g$-rescaling associated with $p$
    at a scale $\eta < (1)$ and whose limit is a quadratic rational map with a multiple fixed point; and a $g$-rescaling at the trivial scale whose limit is a translation. 
\[
\begin{tikzcd}
(\eps_\star, 2) \arrow{r}{p} &(\eta,2) \ar[r]& ((1),1)
\end{tikzcd}
\] 
\end{itemize}
\item The set  $J_\star\cap \H_\star$ contains two distinct repelling periodic cycles $p,q$, and  $\cR(f)/\langle\sigma_f\rangle$ has three vertices: the fundamental vertex and 
the two $g$-rescalings associated with $p$ and $q$. One of the rescaling is at the trivial scale, and the limit is a complex quadratic rational map with a multiple fixed point $\alpha$, 
and a critical point eventually mapped to $\alpha$. 
The other limit is a quadratic polynomial at a scale $\eps_\star < \eta \le (1)$. 
\[
\begin{tikzcd}
(\eps_\star, 2) \arrow{r}{q} \arrow{rd}{p}  &(\eta,2)
\\&((1),2)
\end{tikzcd}
\]
\end{enumerate}
\end{thm}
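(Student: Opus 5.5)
The plan is to combine Kiwi's classification~\cite{Ki14} of the dynamics of quadratic rational maps over a non-Archimedean field of residual characteristic $0$ with the structure theorems above. These are Theorem~\ref{thm:tree-rf} (consecutive vertices correspond to baby $g$-rescalings), Theorem~\ref{thm:tree-edge} (increasing edges correspond to type II periodic Julia points of the limit), Theorem~\ref{thm:upper-tree} (the subtree above a vertex depends only on its limit) and Theorem~\ref{thm:boundscales} (at most $2d-1=3$ scales). If $\eps_\star=(1)$ there is nothing to do: Example~\ref{eg: trivialtree} gives case (1). So assume $\eps_\star<(1)$, so that $f_\star$ is a quadratic map over the spherically complete field $\sH_{\eps_\star}$ without potential good reduction. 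Kiwi's classification says that $J_\star\cap\H_\star$ is one of the following: empty (Cantor-type Julia set in $\P^1$); the grand orbit of a single indifferent type II periodic cycle; the grand orbit of a single repelling type II periodic point $p$ of period at least $2$, whose first return $T_p f_\star^l$ is a quadratic map; or the grand orbits of exactly two repelling type II cycles $p,q$. I would first restate this in the paper's language, checking that the quadratic constraint on local degrees forces these four possibilities.

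Next I would build the first level of the tree. By Theorem~\ref{thm:tree-edge}, the increasing edges at $v_\star$ modulo $\sigma_f$ correspond to the periodic type II Julia cycles of $f_\star$. In case (1) there are none, so the tree is $\{v_\star\}$. In case (2) there is one indifferent cycle. The baby $g$-rescaling has degree $1$ limit and no further baby (a degree $1$ map has no type II Julia points), so the quotient is a segment. The surjectivity in Theorem~\ref{thm:tree-edge}(2) could a priori give several baby rescalings. I would use that for $d=2$ there is only one critical point with infinite orbit per indifferent cycle (Kiwi's theorem~4.1 and Lemma~\ref{lem:baddy}), hence essentially a single bad direction with infinite orbit up to the dynamics, and hence a single vertex modulo $\sigma_f$.

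For cases (3) and (4) the baby at a repelling cycle gives a vertex whose limit is the quadratic reduction map, at some scale $\eta_1>\eps_\star$. Here I would use Kiwi's explicit normal forms. In case (3) the reduction at $p$ has a multiple fixed point corresponding to the direction of the grand orbit (the direction pointing back toward the rest of $J_\star$). When $\eta_1=(1)$ we are in (3a). When $\eta_1<(1)$, the limit $g$ is a quadratic map over $\sH_{\eta_1}$ with a multiple fixed point and no potential good reduction. I would then rerun the classification for $g$: its Julia set in $\H$ must be a single indifferent fixed point (the Gauss point with tangent map a translation after coordinate change). Its baby, by Theorem~\ref{thm:baby1} with $\la=1$, lies at scale $(1)$ with limit a translation, which gives (3b). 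Theorem~\ref{thm:boundscales} caps the chain at three scales, which rules out anything longer. In case (4) the two babies give the two vertices. Kiwi's description shows that one of $p,q$ has tangent map defined over the residue field $\C$ with the stated parabolic and critical-preperiodic structure, so its scale is $(1)$ by Proposition~\ref{prop:definedoverc}. The other has a polynomial tangent map (totally invariant direction), and its upper tree must be trivial by the scale count and the same analysis.

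The main obstacle is this last step. We need to show that in cases (3a), (4) and at the second vertex of (3b) the limits admit no further type II periodic Julia points, and that the tangent maps have exactly the stated forms (parabolic fixed point, polynomial, critical point mapped to $\alpha$). I would attack it by writing Kiwi's normal forms $f_\star(z)=\la z+ \dots$ near each cycle and computing the reductions directly. This gives the degree count: a quadratic limit at scale $<(1)$ must itself fall into one of cases (1)–(4), and the total of at most $2d-2=2$ non-trivial scales then leaves room only for the listed configurations.
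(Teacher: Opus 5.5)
Your overall route is the paper's: Kiwi's non-Archimedean classification (valid in residue characteristic $0$), babies produced by Theorems~\ref{thm:baby2}, \ref{thm:baby1} and Lemma~\ref{lem:goingtol}, and the parabolic normal form for (3b). However, there is a genuine gap in case (4), and the uniqueness step in case (2) is not justified.

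\textbf{Case (4).} You argue that one tangent map is ``defined over the residue field $\C$'', so that Proposition~\ref{prop:definedoverc} gives the trivial scale. This is false here. The residue field is $\widetilde{\sH}_{\eps_\star}$, which is much larger than $\C$: for $\eps_\star=(1/n)$, the class of $(n)$ is a unit whose residue is not a complex number. Complex tangent maps are an artifact of Puiseux series, and case (3b) shows that a repelling cycle can produce a baby at a scale $<(1)$.

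Counting scales cannot prove that one of the two limits sits at $(1)$. Theorem~\ref{thm:boundscales} allows two non-trivial scales. So it does not exclude a parabolic vertex at some $\eta'<(1)$, with a translation child at $(1)$, together with a polynomial vertex at $\eta'$ or at $(1)$.

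The missing ingredient is the claim that the second critical point of the parabolic limit is eventually mapped to the multiple fixed point. The paper derives it from the dynamics of $f_\star$:
\begin{enumerate}
\item Write $\cO=\{x_0,\dots,x_{l-1}\}$ with $\deg_{x_0}f_\star=2$ and $\deg_{x_i}f_\star=1$ for $i\neq 0$, where $\cO=\partial U$ for a fixed Rivera domain $U$.
\item The only bad directions of $f_\star^l$ at $x_0$ are the direction $v_0$ pointing to $U$, which is the multiple fixed point of $T_{x_0}f_\star^l$, and one direction mapped onto $v_0$.
\item The critical direction at $x_0$ containing a point of $\cO'$ cannot be good for all iterates, because an analytic self-map of an open ball has no repelling type II periodic point. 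So it eventually lands on $v_0$.
\item Lemma~\ref{lem:easyform} then forces $\eta=(1)$. Indeed, at a scale $<(1)$ the limit has no potential good reduction by (A2), so both its critical orbits converge to $\zeta(0,|a|)$ and never reach the multiple fixed point.
\end{enumerate}
Once this claim is known, your route can be repaired: the normal-form parameter $a$ satisfies a non-trivial algebraic equation over $\mathbb{Q}$, so the tangent map is conjugate to a complex map. But the claim is needed first. Similarly, the polynomial vertex has no children because a quadratic polynomial without potential good reduction has no type II Julia point, not because of the scale count.

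\textbf{Case (2).} Your reason for uniqueness modulo $\sigma_f$ does not work. A priori $f_\star^l$ has up to $l$ bad directions at $x$, one pulled back from the bad direction at each $f_\star^j(x)$. Several of these could have infinite orbit under $T_xf_\star^l$ without lying in a common orbit. Counting critical points with infinite orbit does not control this; for a quadratic map, the bad direction at a point of local degree $1$ contains both critical points.

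The paper uses Kiwi's Proposition~4.2 instead: $\cO$ is the boundary of a fixed Rivera domain $U$. After relabelling so that the critical points lie in the component of the complement of $U$ containing $x$, the bad directions of $f_\star^l$ at $x$ are exactly the fixed direction towards $U$ and the critical direction. Hence there is a single bad direction of infinite orbit, and Theorem~\ref{thm:baby1} gives a unique baby.
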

In the statement above, we depicted a geometric realization of the graph $\cR(f)/\langle \sigma_f\rangle$ in each case. 
Each vertex is marked by a pair $(\eps,\delta)$ consisting 
of the scale of the $g$-rescaling and the degree of its limit. 

\medskip

We shall need the following well-known results on quadratic rational maps having a parabolic fixed point. 
\begin{lemma}
\label{lem:easyform}
Over any field of characteristic $0$, any quadratic rational map $f$ having a parabolic fixed point is conjugate to a map of the form
\[
z \longmapsto a+\frac{z^2}{z-1},
\]
for some $a\in k$. When $k$ is non-Archimedean with residue characteristic $0$, then
$f$ does not have potential good reduction iff $|a|>1$. When it is the case,
$\zeta(0,|a|)$ is an indifferent fixed Julia point whose tangent map is a translation, and  the iterates of both critical points 
converge to $\zeta(0,|a|)$.
\end{lemma}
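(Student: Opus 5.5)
For the normal form, I will move the parabolic fixed point to $\infty$. Let $p$ be the parabolic fixed point. Since its multiplier is $1$, it is a multiple fixed point, and conjugating by a Möbius map defined over $k$ sends $p$ to $\infty$. A quadratic rational map fixing $\infty$ with multiplier $1$ there can be written as
\[
f(z)=z+b+\frac{c}{z-e},
\]
with $c\neq 0$, because $\deg f=2$. I then translate so that $e=1$, and rescale $z\mapsto \lambda z$ to make $c=1$; conjugating by $z\mapsto \lambda z$ normalizes both $e$ and $c$ together. The cleaner route is to use affine conjugacies $z\mapsto \alpha z+\beta$. These send $c\mapsto \alpha^2 c$ and $e\mapsto \alpha e+\beta$, while the translation part becomes $\alpha b$. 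Using the identity
\[
\frac{z^2}{z-1}=z+1+\frac{1}{z-1},
\]
it suffices to reach $c=1$, $e=1$, which is done by choosing $\alpha=c^{-1/2}$ (available since $k$ is algebraically closed) and then $\beta$ to place the pole at $1$. The map becomes $z+(b'+1)-1+\frac{1}{z-1}$, i.e. $a+\frac{z^2}{z-1}$ with $a=b'-1$. Characteristic $0$ is used only to ensure that a quadratic map with a multiplier-$1$ fixed point still has a genuine simple pole elsewhere, i.e. that $c\neq0$ is forced by the degree.

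For the non-Archimedean part, I will first handle $|a|\le 1$. In that case the coefficients of $(z^2+a(z-1)) : (z-1)$ are integral and the reduction is $\tilde a+\frac{z^2}{z-1}$, which still has degree $2$. Its resultant is a unit, so $f$ has good reduction at $\xg$.

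Now suppose $|a|>1$. I conjugate by $z\mapsto az$ to get
\[
g(w)=1+\frac{a w^2}{a w-1}=1+w+\frac{1}{a}+\frac{1}{a(aw-1)}.
\]
On the point $\xg$ this gives $\tilde g(w)=w+1$. So $\xg$, which corresponds to $\zeta(0,|a|)$ for $f$, is fixed with local degree $1$, and its tangent map is the translation $w\mapsto w+1$.

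Next I identify a bad direction. The pole $w=1/a$ lies in the direction $\tilde 0$, so $\tilde 0$ is a bad direction. This matches Lemma~\ref{lem:baddy}, since the hole divisor is $\tilde D=[0]$. Under the translation $w\mapsto w+1$ the orbit of $\tilde 0$ is infinite in residue characteristic $0$. Hence $\xg$ is an indifferent fixed point lying in the Julia set, and $f$ does not have potential good reduction, because the Julia set contains a point of local degree $1$ together with the preimages along bad directions and is therefore not a single repelling-type point.

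The critical points of $f$ are the roots of $z^2-2z=0$, namely $0$ and $2$. Both lie in the direction $\tilde 0$ at $\zeta(0,|a|)$. Their images are $f(0)=a$ and $f(2)=a+4$, both of norm $|a|$, so they land in other directions at the fixed point. Their further orbits move in the residue directions along $\tilde w\mapsto \tilde w+1$ and never return to a single rigid point. Since the tangent map is a translation with no fixed direction other than $\infty$, I expect these orbits to accumulate on $\zeta(0,|a|)$, so both critical orbits converge to it.

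The main obstacle is this convergence statement: showing that points in directions not containing the pole are mapped isometrically along translated directions and drift toward $\xg$, rather than merely showing that the orbits are infinite. I would handle it with the estimate $|g(w)-w-1-1/a|<1$ on the good directions and a distance computation in $\H$.
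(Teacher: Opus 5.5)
\paragraph{Review of the proposal.} Up to the last assertion your argument follows the paper's proof and is correct. You put the multiple fixed point at $\infty$ and use the affine normalization $c=1$, $e=1$, which matches the paper's choice of $1$ as the finite preimage of $\infty$ and $0$ as a critical point. You check good reduction for $|a|\le 1$. The rescaling $w=z/a$ gives $\tilde g(w)=w+1$, and the pole $1/a$ makes $\tilde 0$ a bad direction with infinite orbit. Hence $\zeta(0,|a|)$ is an indifferent Julia point, and $J(f)$ cannot be a singleton, since a singleton Julia set is totally invariant and would need local degree $2$.

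One small correction: $c\neq 0$ is forced by the degree in every characteristic, so that is not where characteristic $0$ enters. The proof really uses algebraic closedness of $k$, to get $\sqrt{c}$ (equivalently, the critical points $e\pm\sqrt{c}$ over $k$). It uses residue characteristic $0$ for $|2|=|4|=1$ and for $\tilde w\mapsto\tilde w+1$ to have infinite orbits.

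\paragraph{The gap.} The convergence of the critical orbits is the genuine gap. You leave it as an expectation, and the route you suggest would not work. You think one must show more than that the orbits are infinite, namely that the iterates ``drift toward $\xg$'', via a distance computation in $\H$. But $g^k(0)$ and $g^k(2/a)$ are rigid points, at infinite $d_\H$-distance from every point of $\H_k$. They never get metrically closer to $\xg$: they all stay on the sphere $|w|=1$. The convergence is in the Berkovich topology, and it comes from directions, not distances.

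\paragraph{How to close it.} For $|w|=1$ you have $|aw-1|=|a|$, hence $|g(w)-w-1|\le |a|^{-1}<1$. Since $g(0)=1$ and $g(2/a)=1+4/a$, induction gives $|g^k(c)|=1$ and $\widetilde{g^k(c)}=\tilde k$ for both critical points $c\in\{0,2/a\}$ and all $k\ge 1$. This is exactly the paper's estimate $|f^k(x_i)|=|a|$, $|f^k(x_i)-x_i-ka|\le 1$, written in the coordinate $w=z/a$. In residue characteristic $0$ the directions $\tilde k$ at $\xg$ are pairwise distinct.

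The missing ingredient is an elementary fact: every neighborhood of a type II point $x$ contains $U(\vec v)$ for all but finitely many $\vec v\in Tx$. Indeed, a basic neighborhood is a connected open set with finitely many boundary points, and it contains $U(\vec v)\cup\{x\}$ as soon as $U(\vec v)$ contains none of those boundary points. So a sequence meeting each direction at most once converges to $x$. This gives convergence, not merely accumulation, of both critical orbits to $\zeta(0,|a|)$.
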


\begin{proof}
    The normal form is obtained by putting the multiple fixed point at $\infty$, and assuming that $1$ is a preimage of $\infty$, and $0$ is a critical point with infinite forward orbit.
It is clear that $f$ has good reduction when $|a|\le 1$. Suppose $|a|>1$. 
Then $\widetilde{(M\cdot f \cdot M^{-1})}_\eps = 1+ z$
with $M(z)=a^{-1}z$ so that $\zeta(0,|a|)$ is fixed by $f$ and the direction pointing to $1$ is bad. 
It follows that $\zeta(0,|a|)$ belongs to the Julia set, and $f_\eps$ does not have potential good reduction. 
A direct computation shows that $f'(z)=\frac{z^2-2z}{(z-1)^2}$, hence the two critical points are given by $c_1=0$ and $c_2=2$. 
Observe that $|x_1|=|x_2|=|a|>1$ with $x_i:= f(c_i)$. 
By induction, we get 
$|f^k(x_i)|= |a|$ and $|f^k(x_i)-x_i-ka|\leq 1$
for all $k\geq 1$, 
which concludes the proof. 
\end{proof}

We now give explicit examples of all cases appearing in the previous theorem. 

\subsubsection{Case (1)}
\label{sec: QR1}
Pick any scale $\eps < (1)$, and set
\[
f_n(z) = z^2 + e^{\frac{1}{\eps_n}}.
\]
Observe that $(f_n)$ arises from the meromorphic family $f_t(z)=z^2+1/t$ and that the quadratic polynomial map $f_{\na}$ defined over $\C((t))$ does not
have any type II periodic point. From Theorem~\ref{thm:family}, we get $\eps(f)=\eps$ and we fall into case (1).

\subsubsection{Case (2)}
\label{sec: QR2}
Pick any scales $\eps < \eta \le (1)$, and consider $a ,b \in \sH_0$ with
$b_n = \exp(1/\eta_n)$ and  $a_n = \exp(-1/\eps_n)$. 
Define \[
f_n(z) = 1+ b_n z + \frac{a_n}{z-1}.\]
Observe that  $|a_\eps|<1$ and $|b_\eps|=1$, so that 
$\xg$ is fixed with local degree $1$ 
and $1$ is the (unique) bad direction at $\xg$ with infinite orbit. 
Hence $x_g$ belongs to the Julia set of $f_\eps$.
It follows that
$f_{\eps}(z) = b_\eps z + \frac{a_\eps}{z-1}$
does not have potential good reduction,
and the baby $g$-rescaling associated with $\xg$
and the direction $1$ is equal to $(\id,\eta)$. Its limit is then $z \mapsto 1+ b_\eta z$. 

\subsubsection{Case (3a)}
\label{sec:QR3}
Pick any $\eps < (1)$, set $a_n=\exp(-1/\eps_n)$ and
\[
f_n(z) = -a_n - \frac{1 + a_n^2}{z} + \frac{a_n}{z^2}.
\]
Then $(f_n)$ arises from a meromorphic family so that $(\id,\eps)$ is the fundamental $g$-rescaling.
Note that  $|a_\eps| < 1$, so that $f_\eps$ 
maps the point $x_1 = \zeta(0,e^{-1})$ to $x_2 = \zeta(0,e)$ and vice-versa. 
A direct computation yields
\[
\widetilde{(M \cdot f^2 \cdot  M^{-1})_\eps}(z) = -1 + \frac{z^2}{z-1},
\]
with $M(z)=a^{-1} z$ hence the cycle $\{x_1,x_2\}$ is repelling. Its associated $g$-rescaling is
$(M,(1))$ which is adapted to $f^2$, and its limit 
is $z \longmapsto -1 + \frac{z^2}{z-1}$. 
For this map, $\infty$ is a parabolic fixed point, and both critical points $0$ and $2$ have infinite forward orbits.

\subsubsection{Case  (3b)}
\label{sec: QR4}
This example was described by Kiwi.
Pick any scales  $\eps < \eta < (1)$,  set
$a_n = \exp(-1/\eps_n)$, $b_n = \exp(1/\eta_n)$, and
\[
f_n(z) = b_n a_n - \frac{1 + a_n^2}{z} + \frac{a_n}{z^2}.
\]
As in the previous case, $x_1 = \zeta(0,e^{-1})$ to $x_2 = \zeta(0,e)$ forms a repelling $2$-cycle
for $f_\eps$, as 
\[
\widetilde{(M \cdot f^2 \cdot  M^{-1})_\eps}(z) = \tilde{b}_\eps + \frac{z^2}{z-1},
\]
with $M(z)=a^{-1} z$. It follows that $(\id, \eps)$ is the fundamental $g$-rescaling. 
Note that  $|b|_\eta>1$, and $(M \cdot f^2 \cdot  M^{-1})_\eta$
has a rigid repelling fixed point at $z = \frac{b_{\eta}}{b_{\eta} + 1}$ with multiplier $-b_{\eta}^2 - b_{\eta}$. 
It follows that  $(M,\eta)$ is the baby $g$-rescaling
associated with the cycle $\{x_1,x_2\}$. Its limit is $b_\eta + \frac{z^2}{z-1}$
which admits a  type II Julia fixed point $\zeta(0,|b_\eta|)$ whose tangent map is a translation. 
And $(L,(1))$ is an adapted $g$-rescaling with
$L(z) = b^{-1}a^{-1} z$.

\begin{rmk}\label{rem:noKiwi}
Observe that  these examples admit  a non-fundamental $g$-rescaling whose limit has degree $\ge2$, but no Kiwi rescalings. 
\end{rmk}

\subsubsection{Case (4)}
\label{sec:QR5}
This example was also described by Kiwi.
Pick any scales  $\eps < \eta \le (1)$,  set
$a_n = \exp(-1/\eps_n)$, $b_n = \exp(1/\eta_n)$, and
\[
f_n(z) = a_n - b_n a_n^{5} - \frac{1 + a_n^2}{z} + \frac{a_n}{z^2}.
\]
Direct computations show
the existence of a two cycle: $f_\eps(y_0)=y_1$ and $f_\eps(y_1)=y_0$
with $y_0=\zeta(0,e)$ and $y_1 =\zeta(0,e^{-1})$. The tangent map
is given by $T_{\zeta(0,e)}f^2= z/(z-1)^2$ which has a parabolic fixed point at $0$. 

We also have a repelling $3$-cycle $f_\eps(x_0) = x_1$, $f_\eps(x_1)=x_2$ and $f_\eps(x_2)=x_3$ 
with  $x_0 = \zeta(0, e^{-3})$, 
$x_1= \zeta(0, e^5)$, and $x_2 = \zeta(a_\eps, e^{-5})$. 
Define $M_0(z) = a^{-3} z$,  
$M_1(z) = a^{5} z$, and $  
M_2(z) = a^{-5} z - a^{-4}.$
We get 
$\widetilde{(M_1 \cdot f_{\eps} \cdot M_0^{-1})}(z) = 1/z^2$, $
\widetilde{(M_2 \cdot f_{\eps} \cdot M_1^{-1})}(z) = -\tilde{b} - 1/z$, and $ 
\widetilde{(M_0 \cdot f_{\eps} \cdot M_2^{-1})}(z) = -z$
so that
\[\widetilde{(M_0 \cdot f_{\eps}^3 \cdot M_0^{-1})}(z) = z^2 + \tilde{b}.\] 
We conclude that $(\id,\eps)$ is the fundamental $g$-rescaling, that
$(M_0,\eta)$ is a $g$-rescaling adapted to $(f^3)$ with a quadratic polynomial limit, and 
$(L,(1))$ is a $g$-rescaling adapted to $(f^2)$ with a complex quadratic rational map having parabolic fixed point
with $L(z)=a z$.

\begin{rmk}
The proof of Theorem~\ref{thm: quadratic} below is based on the description of the dynamics of a 
quadratic rational map defined over the field of complex Puiseux series given by Kiwi in~\cite{zbMATH06455745KJ15}. 
Note however that most results in op. cit. including Theorem 1, Proposition 4.1, and  Lemma 5.1
remain valid (with the same proof) over any field with residue characteristic $0$.
\end{rmk}
\begin{proof}[Proof of Theorem~\ref{thm: quadratic}]
Recall that $(f_n)$ is a sequence of quadratic rational maps such that $\eps_\star:=\eps(f)=\eps_\star(f)$. 
Let $f_\star$ be the fundamental limit. If $\eps_\star =(1)$, then we are in Case (1). 
Assume from now on that $\eps_\star<(1)$. If $J_\star \cap \H_\star=\emptyset$, then 
$f_\star$ has no type II periodic points, hence there are no non-fundamental $g$-rescaling and we are again in Case (1). 

\smallskip

Suppose that $J_\star \cap \H_\star$ contains an indifferent type II periodic cycle $\cO=\{x, \cdots, f^{l-1}(x)\}$ of length $l$. 
By \cite[Theorem~1]{zbMATH06455745KJ15}, $J_\star \cap \H_\star$ coincides with the grand orbit of $\cO$.
By \cite[Proposition~4.2]{zbMATH06455745KJ15}, $\cO$ is the boundary of a fixed Rivera domain $U$. 
We may suppose that the critical locus is contained in the component of $U$ containing $x$. 
It follows that the direction at $x$ pointing to the critical locus, is 
the unique bad direction having an infinite orbit under $f^l$ (the other is pointing to $U$).
By Theorem~\ref{thm:baby1}, we are in Case (2).

\smallskip

Suppose that $J_\star \cap \H_\star$ contains a unique repelling periodic cycle $\cO$ of period $l$. By \cite[Theorem~1(3)]{zbMATH06455745KJ15}, 
$J_\star \cap \H_\star$ is the grand orbit of $\cO$, 
and for any $x \in \cO$, the  
reduction map $T_x f_\star^l$ is a quadratic rational map with a multiple fixed point.
By Theorem~\ref{thm:baby2} and Lemma~\ref{lem:goingtol},
the point $x$ induces a cycle of $g$-rescalings $(M_*, \eta)$ adapted to $f$, and 
its rescaling limit is a quadratic rational map with a multiple fixed point. Note that $\sigma_f^j(M_*, \eta)$ is the $g$-rescaling associated with $f_{\star}^j(x)$
so that  all these $g$-rescalings are identified to a single point in the quotient tree $\cR(f)/\langle \sigma_f\rangle$. 

If $\eta = (1)$, then we are in Case (3a). If $\eta <(1)$,  
the limit 
$(M\cdot f^l\cdot M^{-1})_\eta$
does not have potential good reduction. By Lemma~\ref{lem:easyform},
it has a unique indifferent type II fixed Julia point 
whose tangent map in a translation. 
We are thus in Case (3b). 

\smallskip

Finally if none of the above cases occurs, then by \cite[Theorem~1]{zbMATH06455745KJ15},
$f_\star$ admits two  repelling periodic cycles $\cO$ and $\cO'$, and 
$J_\star \cap \H_\star$ is the union of their grand orbits. 
They give rise to two cycles of $g$-rescalings $(M_*, \eta)$ and $(M'_*, \eta')$. 
The limit associated with $M'$ is a quadratic polynomial, and with $M$ is a quadratic rational map $g$ possessing a multiple fixed point $p$.
Observe that $g$ has one critical point having an infinite orbit. We claim that the other critical point is eventually mapped to $p$. 
It follows from Lemma~\ref{lem:easyform} that $\eta =(1)$, and the proof is complete. 

In order to prove the claim, we need some more information on the dynamics of $f_\star$. 
It is proved in op. cit., that $\cO = \{x_0, \cdots, x_{l-1}\}$ where $f(x_{i-1})=x_{i\, \mathrm{mod}\, l}$, and 
$\deg_{x_0}(f_\star)=2$, and $\deg_{x_i}(f_\star)=1$ for all $i\neq 0$. 
Moreover there is a fixed (starlike) Rivera domain $U$ such that $\partial U= \cO$. 

There is no bad direction at $x_0$, and exactly one  $v_i\in Tx_i$
for $i\neq 0$: this is the one pointing
towards any point in $U$. It follows that $f^l_\star$ admits exactly two bad directions at $x_0$: 
the direction $v_0$ pointing to $U$, and another one $v$ which is mapped under $T_{x_0}f_\star$ to $v_1$. 

Recall that $x_0$ belongs to the geodesic joining the two critical points of $f_\star$. 
One critical direction is good for all iterates $f_\star^j$ and has infinite orbit. 
The other critical direction $w\in T_{x_0}f_{\star}$ contains a point in $\cO'$. 
It cannot be good for all iterates $f_\star^j$ since an analytic self-map of an open ball
cannot have a repelling type II periodic point. It follows that $w$ is eventually mapped
by $T_{x_0} f_\star^l$ to $v_0$. 
This implies our claim. 
\end{proof}

\subsection{Proof of Theorem~\ref{thm:int3}}

We proceed by contradiction and suppose that we can find some integer $l\in\N_+$ 
 and a sequence of quadratic rational maps $f_n$
having five periodic points $p^1_n, \cdots, p^5_n$  in distinct cycles such that $f_n^l(p^i_{n})=p_n^i$ and  $|df_n^l(p^i_n)| \le (1+1/n)^l$ for all $i\in\{1,\cdots ,5\}$. We thus get a quadratic rational map $f$ defined over $\sH_0$ which admits $5$ distinct periodic cycles containing $p^1,\cdots, p^5$  respectively 
such that $\mu(p^i):= |df^l(p^i)|_{(1)}^{1/l}
= \lim |df_n^l(p^i_n)|^{1/l} \le 1$ for all $i$.

Up to conjugating by a sequence of Möbius transformations, we may assume that $|\res(f_n)|=|\Res(f_n)|$ for all $n$.
If $(f_n)$ does not degenerate in the moduli space, then $f_{(1)}$ is a quadratic rational map defined over $\C$
with $5$ indifferent periodic cycles. This contradicts~\cite[Corollary~1]{Mitsu}.

If $(f_n)$ degenerates, then its fundamental limit $f_\star$ is defined over a non-Archimedean field, and 
the multiplier of each periodic point $p^i$ has norm $\le 1$. 
It follows that the Fatou component $U_i$ of $f_\star$ containing $p^i$ is a Rivera domain. 
Note that $\bigcup_i \partial U_i$ is a finite set of periodic type II Julia points. 

This excludes Case (1) of Theorem~\ref{thm: quadratic}.
In Case (2), there is a unique cycle $x, \cdots, f_\star^{l-1}(x)$ contained in $J_\star\cap \H_\star$, and 
$\deg(T_xf_\star^l)=1$. We may suppose that $x \in \partial U_i$ for all $i$.
However, since $x$ lies in the Julia set, 
 $T_xf_\star^q$ is not of finite order and has exactly two periodic cycles.
  This forces $f$ to contain
 at most $2$ indifferent type I periodic points which is a contradiction. 
 
 In the remaining cases, $f_\star$ admits  a unique Rivera domain $U$ which is fixed, see~\cite[Lemma 5.1]{zbMATH06455745KJ15}. Its boundary
 consist of a cycle $x, \cdots, f_\star^{l-1}(x)$ with $\deg(T_xf_\star^l)=2$, and the direction $v$ pointing to $U$ at $x$
 is a fixed point of multiplicity $2$. It follows from Rivera-Letelier's fixed point formula~\cite[Theorem~10.2]{zbMATH07045713benedetto}, that $U$
 contains $2$ fixed points, and no other periodic cycles. 
 Up to reordering, we may thus assume that $p^1, p^2$ and $p^3$ do not belong to $U$. 
 
 In Case (3), $x, \cdots, f_\star^{l-1}(x)$ is the unique periodic cycle in $J_\star\cap \H_\star$. 
 It follows that $U^1, U^2$ and $U^3$ are necessary open balls whose boundary belong to this cycle. 
 We may suppose that $\partial U^i=\{x\}$ for all $i$. Each direction $v_i$ at $x$ determined by $U_i$, or equivalently by $p^i$
 is periodic. The $g$-rescaling limit associated with $x$ is a quadratic rational map $g$
 defined at a scale $\eta > \eps_\star$. The direction $v$ is mapped in $\P^1(\sH_\eta)$ to a multiple fixed point, 
 and each $v_i$ to the type I periodic point $p^i_\eta$. By Lemma~\ref{lem:period}, the multiplier of $p^i_\eta$
 for $g$ is the image of the multiplier of $p^i$ for $f_\star$, hence has norm $\le 1$ in $\sH_\eta$. 
  In Case (3a), we have $\eta = (1)$, hence $g$ is a quadratic rational map having  one parabolic fixed point, 
  and three other indifferent periodic cycles, which is a contradiction. 
  In Case (3b), $\eta <(1)$ and $g$ is a quadratic rational map having a unique indifferent  type II periodic cycle in its Julia set. 
  By our analysis of Case (2), $g$ admits at most $2$ indifferent rigid periodic cycles, and we get a contradiction. 
  
Finally in Case (4), we have two type II periodic cycles  $x, \cdots, f_\star^{l-1}(x)$ and $y, \cdots, f_\star^{m-1}(y)$ in the Julia set. 
We may assume that $\deg(T_xf_\star)=\deg(T_yf_\star)=2$ and $T_xf^l_\star$ is a quadratic rational map having a parabolic fixed point;
 and $T_yf^m_\star$ is a quadratic polynomial. 

\begin{lemma}\label{lem:1233}
There exists at most
one cycle of Rivera domains whose closure does not intersect the cycle $x, \cdots, f_\star^{l-1}(x)$. 
\end{lemma}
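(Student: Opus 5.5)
Let $\cO_x=\{x,\dots,f_\star^{l-1}(x)\}$ and $\cO_y=\{y,\dots,f_\star^{m-1}(y)\}$. The plan rests on three facts about Case (4) taken from the proof of Theorem~\ref{thm: quadratic}: $J_\star\cap\H_\star$ is the union of the grand orbits of the two cycles; there is a unique fixed Rivera domain $U$ with $\partial U=\cO_x$; and the cycle $\cO_x$ carries the single degree-two point $x_0=x$.

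First I reduce the boundary of any relevant Rivera domain. Let $V$ be a periodic Rivera domain whose closure misses $\cO_x$. Its boundary is a finite set of type II periodic Julia points, hence $\partial V\subset \cO_x\cup\cO_y$, and so $\partial V\subset \cO_y$. Since $\cO_y$ is a finite set of points of the tree, and $V$ is a connected component of $\P^{1,\an}\setminus J_\star$ whose boundary points are all periodic of the same cycle, I will show $V$ is a single direction (an open ball) at some point of $\cO_y$. Replacing $V$ by an iterate, I may assume its boundary is $\{y\}$. If $\partial V$ had two points $y_i,y_j$, the segment $]y_i,y_j[$ would lie in $V$ and would contain a point of the convex hull of $\cO_x\cup\cO_y$. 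I expect to rule this out by noting that this hull meets the Julia set densely along the grand orbits, but I am not sure this step goes through cleanly.

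Next I translate the count into one about the reduction. Such a $V$ corresponds to a direction $v\in Ty$ that is periodic under $T_yf^m_\star$ and never bad along its orbit. Periodic non-bad directions whose orbit stays in the Fatou set give periodic Fatou points of the reduction $\tilde h=T_yf^m_\star$, which is a quadratic polynomial (after conjugation it is the limit of the baby $g$-rescaling at $y$, as in Theorem~\ref{thm:baby2}). The direction at $y$ pointing towards $\cO_x$ (equivalently towards $U$ and the critical locus away from $y$) corresponds to $\infty$ for $\tilde h$, and it is the only bad direction up to its backward orbit. So a periodic direction $v\ne\infty$ whose ball is Fatou gives a bounded periodic Fatou component of $\tilde h$ over the residue field, i.e.\ an attracting or indifferent cycle of $\tilde h$ in $\P^1(\widetilde{\sH}_\star)$.

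Finally I conclude with a critical point count. A quadratic polynomial has a single finite critical point, so it has at most one non-repelling finite cycle. Over a residually trivial valued field this is the Fatou–Douady count: each such cycle must attract the finite critical direction, since otherwise the ball would map injectively forever and the tangent map would have infinite-order behaviour, contradicting Rivera-Letelier's fixed point formula applied as in Case (3). Hence there is at most one such cycle of directions, and so at most one cycle of Rivera domains avoiding $\cO_x$. The main obstacle is this last step: turning \emph{the ball of $v$ is a Rivera domain} into \emph{$v$ is a non-repelling periodic point of $\tilde h$ capturing the critical direction}. I would attack it with Lemma~\ref{lem:period} on multipliers, together with the fact that the second critical point of $f_\star$ lies in the direction at $x_0$ eventually mapped to $U$, as shown in the proof of Theorem~\ref{thm: quadratic}. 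That fact leaves only one critical point available to be attached to the cycles at $y$.
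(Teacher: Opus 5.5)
There is a genuine gap in your second and third steps. The residue field $\widetilde{\sH}_{\eps_\star}$ carries no nontrivial absolute value, so ``attracting or indifferent cycle of $\tilde h$'' has no content there. The Fatou--Douady count you invoke also fails in this setting: in non-Archimedean dynamics a Rivera domain need not capture any critical point. Concretely, let $v\neq\infty$ be a periodic point of $\tilde h=T_yf^m_\star$ of period $q$ whose cycle avoids the finitely many bad directions of $f^m_\star$ at $y$ and the critical point of $\tilde h$. Then $f_\star^{mq}$ maps the open ball $U(v)$ bijectively onto itself. A disk mapped into itself lies in the Fatou set, so $U(v)$ is a Rivera domain with $\partial U(v)=\{y\}$. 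A quadratic polynomial over an algebraically closed field of characteristic $0$ has infinitely many cycles, so there are infinitely many cycles of Rivera domains whose closures miss $\cO_x$.

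So the statement cannot follow from the dynamics of $f_\star$ alone. No critical-point bookkeeping can rescue it, including your remark that the second critical point lies in the direction at $x_0$ eventually mapped to $U$. The lemma sits inside the proof of Theorem~\ref{thm:int3}. There, the Rivera domains in question contain the periodic points $p^i$ with $\lim|df^l_n(p^i_n)|^{1/l}\le 1$. This multiplier bound persists at every scale, and it is the ingredient your proposal never uses.

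The paper's route is as follows. First, $V$ cannot lie in the direction $v_x$ at $y$ pointing to $\cO_x$. If it did, it would contain an initial piece of $[y,x]$, which (with $x,y$ the points of local degree $2$ in their cycles) lies on the segment joining the two critical points. There $f_\star$ has local degree $2$, contradicting injectivity on a Rivera domain. Hence $V$ is an open ball $U(v_1)$ with $v_1$ periodic; this is the clean replacement for your uncertain first step.

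One then passes to the baby $g$-rescaling at $y$ given by Theorem~\ref{thm:baby2}. Its limit $g$ is a quadratic polynomial over $\sH_\eta$ with $\eta>\eps_\star$. By Lemma~\ref{lem:period}, the direction $v_1$ descends to a periodic point of $g$ whose multiplier is the image of that of $p^i$, hence has norm $\le 1$. If $\eta<(1)$, then $g$ has no potential good reduction and all its finite periodic points are repelling, a contradiction. So $\eta=(1)$ and $g$ is a complex quadratic polynomial. Only at this trivial scale does the classical count apply: a single finite critical point gives at most one non-repelling cycle.
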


We may thus suppose that $U_1$ and $U_2$ are Rivera domains lying in different cycles, distinct from 
$U$ and having $x$ in their boundaries. The orbit of $U_1$ and $U_2$ does not intersect the critical set, 
hence the closure of both $U_1$ and $U_2$ is disjoint from the cycle $y, \cdots, f_\star^{m-1}(y)$. 
It follows that $U_1$ and $U_2$ are open balls whose directions are periodic. 

Consider the $g$-rescaling limit associated with $x$, and recall its scale is trivial $(1)$. 
It is thus a complex quadratic rational map having the 
image of  the direction pointing to 
$U$
as a multiple fixed point. Both directions associated with $U_1 $ and $U_2$
descend to $\P^1(\sH_{(1)})$ as indifferent periodic cycles, which again contradicts
Shishikura's theorem. 

The proof is complete.

\begin{proof}[Proof of Lemma~\ref{lem:1233}]
Suppose $V$ is a Rivera domain 
whose boundary  does not intersect the cycle $x, \cdots, f_\star^{l-1}(x)$. We may suppose that  $y \in \partial V$. 
Recall that the direction $v_x$ pointing to $x$ at $y$ is the unique bad direction which is totally invariant under $T_y f^m$. 
Since $V$ is a Rivera domain, it does not intersect the critical set so that it does not define $v_x$. 
We conclude that $V$ is an open ball, and that the direction $v_1\in Ty$ determined by $V$ 
is a periodic cycle. 
Let $g$ be the $g$-rescaling limit associated with $y$. It is a quadratic polynomial defined over $\sH_\eta$
 for some $\eta >\eps_\star$. 
 If $\eta <(1)$, then $g$ does not have potential good reduction, hence has only repelling periodic cycle which is absurd. 
 It follows that  $\eta =(1)$, and $g$ contains at most one indifferent periodic cycle. 
 This forbids the existence of another Rivera domain satisfying the assumption of the lemma.  
 \end{proof}

\subsection{Examples with a maximal number of adapted scales}
\label{sec: optimal}
Let us fix  any integer $d \ge 2$ and $2d-1$ scales
\[
\eps_{2d-2} < \cdots < \eps_0 = (1). 
\]
We claim that there exists a sequence $(f_{n})$ of degree $d$ rational maps for which each of the scales $\eps_i$ arise as an adapted scale. In particular, the bound in Theorem~\ref{thm:boundscales} is optimal.

\smallskip

Our construction proceeds by induction on the degree. More precisely, for each \( k\ge 2 \), we construct a sequence \( (f_{k,n})\) of degree $k$ rational maps satisfying the following properties:
\begin{enumerate}
\item $\infty$ is a non critical fixed point;
\item \( (\id, \eps_{2k-2}) \) is the fundamental \( g \)-rescaling of \( (f_{k,n}) \);
\item for each $i\in\{0,\cdots, 2k-2\}$, \( f_{k} \) admits an adapted cycle of $g$-rescalings 
at the scale $\eps_i$ whose limit has degree at least $2$.
\end{enumerate}

We begin with the case \( k = 2 \). Set
$a_n = \exp(-1/\eps_{2,n})$,  $
b_n = \exp(-1/\eps_{1,n})$, and 
define $
g_n(z)
= a_n
- b_n a_n^5
- \frac{1 + a_n^2}{z}
+ \frac{a_n}{z^2}.$
By the discussion in \S\ref{sec:QR5}, $(\id, \eps_2)$ is the fundamental scale, and 
\(g\) admits two other cycles of $g$-rescalings with adapted scales
$\eps_1$ and  $\eps_0 = (1)$ respectively. 
All corresponding rescaling limits have degree \(2\).

Pick any non critical fixed point $p$ of $g$, and a sequence of Möbius transformations $M$
such that $M(p)=\infty$, and $M_{\eps_2}\in \PGL(2,\sH^\circ_{\eps_2})$. Then 
$f_2:= M\cdot g \cdot M^{-1}$ satisfies all required properties.

\smallskip

Now suppose that the sequence \( (f_{k,n}) \) has already been constructed satisfying properties (1)--(3).
Write
\[
f_{k,n}(z)
=
\frac{a_{k,n} z^k + \cdots + a_{0,n}}
{b_{k-1,n} z^{k-1} + \cdots + b_{0,n}}
\]
in normalized form so that
$\max\{ |a_{i,n}|, |b_{j,n}| \} = 1.$
Define
\[
f_{k+1,n}(z)
=
\frac{\alpha z^{k+1} + a_{k,n} z^k + \cdots + a_{0,n}}
{\alpha \beta z^k + b_{k-1,n} z^{k-1} + \cdots + b_{0,n}},
\]
where  $\alpha_n = \exp\big(-\eps_{2k,n}^{-1}\big)$ and $
\beta_n = \exp\big(\eps_{2k-1,n}^{-1}\big).$ Note that 
$|\alpha|_{\eps_{2k-2}} = |\alpha\beta|_{\eps_{2k-2}}=0$
as $\eps_{2k} < \eps_{2k-1}< \eps_{2k-2}$,
and
\( (\id, \eps_{2k-2}) \) is a \( g \)-rescaling adapted to \( f_{k+1} \) whose limit is the degree $k$ rational map
$f_{k,\eps_{2k-2}}$.
 By the induction step and Theorem~\ref{thm:upper-tree}, all scales $\eps_i$ with $i=0,\cdots, 2k-2$,
 are adapted scales for $f_{k+1}$, and their associated limits have degree at least $2$. 

Since $\eps_{2k}$ is the largest scale $\eps$ for which $\deg(f_\eps)=k+1$, it follows
from Lemma~\ref{lem:adapt-cool} that  \( (\id, \eps_{2k}) \) is the fundamental \( g \)-rescaling of \( f_{k+1} \). 

Set $M(z)=\alpha z$ so that 
\[
(M\cdot f_{k+1}\cdot M^{-1})_{\eps_{2k-1}}(z)= \frac{z^{2}+a_kz}{\beta z+b_{k-1}}.
\]
Note that \( |\beta_k^{-1}|_{\varepsilon_{2k}} > 1 \), which is the multiplier of \((M\cdot f_{k+1}\cdot M^{-1})_{\eps_{2k-1}} \) at \( \infty \).
It follows that $(M,\eps_{2k-1})$ is adapted to $f_{k+1}$ and its limit has degree $2$,
which concludes the proof of the induction step. 

Iterating this construction yields the desired sequence \( (f_{d,n}) \). 

\medskip

Observe that all the $g$-rescaling limits constructed above are of period one. 
Accordingly, the tree $\cR(f)$ of $g$-rescalings
has the following geometric realization. 
As above, each vertex is marked by a pair
$(\eps,\delta)$ where $\eps$ denotes the scales of the $g$-rescaling and $\delta$
the degree of its associated limit. 
The fundamental $g$-rescaling corresponds to the unique vertex of degree $d$.

\[
\begin{tikzcd}
(\eps_{2d-2},d) \ar[rd] \ar[r]&  (\eps_{2d-4}, d-1) \ar[rd] \ar[r] & \cdots \ar[r]&(\eps_2,3) \ar[rd] \ar[r] & ((1),2)
\\
& (\eps_{2d-3},2) & (\eps_{2d-5},2) & & (\eps_1,2)
\end{tikzcd}
\]

\subsection{Two bad directions defining different baby $g$-rescalings}
\label{sec:diffbad}
Let 
\[
f_n(z)=2z+\frac{e^{-n}}{(z-\frac{1}{n})(z-1)}.
\]
Let  $\eps=(1/n)$, and $a,b \in \sH_{\eps}$ be represented by the sequences $(e^{-n})$ and $(1/n)$, so that  $|a|<1$ and $|b|=1$.
The associated map at scale $\eps$ is
\[
f_{\eps}(z)=2z+\frac{az}{(z-b)(z-1)},
\]
whose reduction is equal to  $
\tilde{f}_{\eps}(z)=2z.$ Note that $b$ and $1$ define two bad directions at $\xg$ with infinite forward orbit.
Therefore, $\xg$ belongs to the Julia set of $f_{\eps}$. In particular, $f_{\eps}$ does not have potential good reduction, and $\eps$ is the fundamental scale of $f$ by Theorem~\ref{thm:unique-rescaling}.

Set $M_n(z)=n z$. Then
\[
(M\cdot f \cdot M^{-1})_{(1)}(z)=2z,
\]
and $M(b)_{(1)}=1$ has an infinite forward orbit under $(M\cdot f \cdot M^{-1})_{(1)}$. It follows that $(M,(1))$ defines a $g$-rescaling adapted to $(f_n)$, induced by the bad direction $b$. Similarly, $(\mathrm{id},(1))$ defines a $g$-rescaling induced by the bad direction $1$. However, $(M,(1))$ and $(\mathrm{id},(1))$ are not equivalent.

\bibliographystyle{alpha}
\bibliography{ref}

\end{document}